\newcommand{\B} {{ \mathcal B }}
\newcommand{\N} {{{\mathbb N}}}
\newcommand{\M} {{ \mathcal M }}
\newcommand{\bpf}{\begin{proof}}
\newcommand{\epf}{\end{proof}}
\newcommand{\ee}{{\rm e}}
\newcommand{\E}{\mathbb{E}}
\newcommand{\ind}{\mathbf{1}}
\newcommand{\D}{\mathbb{D}}
\newcommand{\Z}{\mathbb{Z}}
\newcommand{\R}{\mathbb{R}}
\renewcommand{\S}{\textsc{s}}
\newcommand{\Card}{\mathrm{Card}}
\newcommand{\Var}{\mathrm{Var}}
\newcommand{\Vect}{\mathrm{Vect}}
\renewcommand{\P}{\mathbb{P}}
\newcommand{\pen}{\mathrm{pen}}
\newcommand{\x}{\mathbf{x}}
\newcommand{\norm}[1]{\left\Vert#1\right\Vert}
\def\B{\mathcal{B}}
\def\interior#1{\smash{\mathop{#1}\limits^{\lower1pt\hbox{$\scriptscriptstyle\circ$}}}}
\theoremstyle{plain}
\newtheorem{theorem}{Theorem}[section]
\newtheorem{lemma}[theorem]{Lemma}
\newtheorem{corollary}[theorem]{Corollary}
\newtheorem{definition}[theorem]{Definition}
\newtheorem{proposition}[theorem]{Proposition}
\newtheorem{remark}[theorem]{Remark}
\numberwithin{equation}{section}
\numberwithin{theorem}{section}
\numberwithin{figure}{section}
\numberwithin{section}{chapter}
\numberwithin{table}{section}
\def\be{\begin{eqnarray}}
\def\ee{\end{eqnarray}}
\def\ben{\begin{eqnarray*}}
\def\een{\end{eqnarray*}}
\title{Statistical inference for epidemic processes in a homogeneous community}
\author{Catherine Lar\'edo (with Viet Chi Tran for Chapter 4)}
\begin{document}

\maketitle

This document is the Part IV of the book \textit{Stochastic Epidemic Models with Inference} edited by Tom Britton and Etienne Pardoux \cite{brittonpardoux}. It is written by Catherine Lar\'edo, with the contribution of Viet Chi Tran for the Chapter 4.

 \tableofcontents


\setcounter{chapter}{0}

\chapter*{Introduction}
\addcontentsline{toc}{chapter}{Introduction}

Mathematical modeling of epidemic spread and estimation of key parameters from data provided much insight in the understanding of public health problems related to infectious diseases. These models are naturally  parametric models, where the present parameters  rule the evolution of the epidemics under study.

Multidimensional continuous-time Markov jump processes {$(\mathcal{Z}(t))$} on $\Z^p$ form a usual set-up for modeling epidemics on the basis of   compartmental approaches as for instance   the  $SIR$-like  (Susceptible-Infectious-Removed)  epidemics (see Part I of these notes and also   \cite{and00IV}, \cite{die13IV}, \cite{kee11IV}).
However, when facing incomplete epidemic data, inference based on {$(\mathcal{Z}(t))$} is not easy to be achieved.

There are different situations where missing data are present.
One situation concerns  Hidden Markov Models, which are in most cases  Markov processes observed with noise. It corresponds for epidemics to the fact that the exact status of all the individuals  within a population are not observed, or that detecting the status has some noise (see \cite{cap05IV} for instance).
Another situation comes from the fact that observations are performed at discrete times. They can also be aggregated  (e.g.\ number of infected per day).
A third case, for multidimensional processes, is that some coordinates cannot be observed in practice.
 While the statistical inference has a longstanding theory for complete data, this is no longer true for many cases that occur in practice.
 Many methods have been proposed to fill this gap  starting from  the Expectation-Maximization algorithm (\cite{dempsterlairdrubinIV}, \cite{kuh04IV}) up to various Bayesian methods (\cite{cau04IV}, \cite{oneillrobertsIV}), Monte Carlo methods (\cite{gil96IV}, \cite{one02IV}), based on particle filtering (\cite{fea08IV}, \cite{fea12IV}), Approximate Bayesian Computation methods (\cite{bea09IV}, \cite{blumIV}, \cite{sis18IV}, \cite{tonietalIV}), maximum iterating filtering (\cite{ion11IV}),  Sequential Monte Carlo or  Particle MCMC (\cite{and10IV}, \cite{dou01IV}), see also the R package POMP (\cite{kin16IV}).
  Nevertheless, these methods do not completely circumvent the issues related to incomplete data. Indeed, as summarized in  \cite{bri16IV},
there are some limitations in practice due to the size of missing data  and to the various tuning parameters to be adjusted. \\

The aim of this part  is to provide some tools to estimate the parameters ruling the epidemic dynamics on the basis of available data.
We begin with a chapter  about inferential methodology  for stochastic processes which is not specific to applications to epidemics but  is the  backbone
 of the various inference methods  detailed in the next chapters of this part.

The methods used to build estimators are  linked with the precise nature of the observations, each kind of observations generating a different statistical problem.
We detail these facts in the first chapter.  We have intentionally omitted in this chapter the additional  problem of noisy observations, which often occurs in practice.
This is another layer which comes on top. It  entails Hidden Markov Models  and State space Models (see  \cite{cap05IV} or  \cite{vanh08IV}) and  also the R-package
Pomp (\cite{kin16IV}).

Chapter \ref{StatMC} is devoted to
the statistical inference  for Markov chains. Indeed, discrete time Markov chains models are interesting here because many questions that arise for more complex epidemic models can be illustrated in this set-up.

We had rather focus here on parametric inference since epidemic models always include in their dynamics  parameters that need to be estimated in order to derive predictions.
At the early stage of an outbreak, a good approximation for the epidemic dynamics is to consider that the population of Susceptible is infinite and that
Infected individuals evolve according to a branching process (see   Part I,  Section \ref{sec-early-stage}  of these notes).
We also present in this chapter some classical statistical results in this domain.

As detailed in Part I, Chapter \ref{TB-EP_chap_StochMood},  epidemics in a close population of size $N$ are naturally modeled  by pure jump processes $(\mathcal{Z}^N(t))$.
However, inference for such models requires that all the jumps (i.e.\  times of infection and recovery for the $SIR$ model) are observed. Since these data are rarely available in practice, statistical methods often rely on data augmentation, which allows us to complete the data and add in the analysis all the missing jumps. For moderate to large populations, the complexity increases rapidly, becoming the source of additional problems.
Various approaches were developed during the last years to deal with partially observed epidemics. Data augmentation and likelihood-free methods such as the Approximate Bayesian Computation
(ABC) opened some of the most promising pathways for improvement  (see e.g. \cite{bre09IV}, \cite{mck09IV}). Nevertheless, these methods do not completely circumvent the issues related to incomplete data. As stated also in \cite{bri16IV}, \cite{cau12IV},  there are some limitations in practice, due to the size of missing data and to the various tuning parameters to be adjusted (see also \cite{and00IV}, \cite{one10IV}).

In this context,  it appears that diffusion processes  satisfactorily approximating epidemic dynamics  can be profitably used for inference of model parameters for epidemic data, due  to their analytical power  (see e.g.\ \cite{fuc13IV}, \cite{ros09IV}).
More precisely,   when normalized by $N$, $(Z^N(t)= N^{-1} \mathcal{Z}^N(t))$ satisfies an ODE  as the population size $N$ goes to infinity  and moreover, in the first part of these notes, it is proved that the  Wasserstein $L_1$-distance between  $(Z^N(t))$  and a multidimensional diffusion process with   diffusion coefficient proportional to $1/\sqrt{N}$ is of order $o(N^{-1/2})$ on a finite interval $[0,T]$ (see Part I, Sections \ref{TB-EP_sec_CLT} and
\ref{TB-EP_sec_DiffusApprox}).  Hence, in the case of a major outbreak in a large community, epidemic dynamics can be described  using  multidimensional diffusion processes
$(X^N(t))_{t\geq 0}$ with a small diffusion coefficient proportional to $1/\sqrt{N}$.
We detail in Chapter \ref{Diffusions} the parametric inference for epidemic dynamics described  using  multidimensional diffusion processes
$(X^N(t))_{t\geq 0}$ with a small diffusion coefficient proportional to $1/\sqrt{N}$ based on discrete observations. Since epidemics are usually observed over limited time periods, we consider the parametric inference based on observations of the epidemic dynamics on a fixed interval $[0,T]$.

The last  chapter is devoted to the inference for the continuous time $SIR$ model.
We present several algorithms which address the problem of incomplete data in this set-up: Expectation-Maximization algorithm,  Monte Carlo methods and Approximate Bayesian Computation methods.
Finally, all the classical statistical results  required for this part are detailed  in the Appendix.

\chapter{Observations and Asymptotic Frameworks} \label{chap1:intro}

Multidimensional continuous-time Markov jump processes {$(\mathcal{Z}(t))$} on $\Z^p$ form a usual set-up for modeling epidemics on the basis of   compartmental approaches as for instance   the  $SIR$-like  (Susceptible-Infectious-Removed)  epidemics (see Part I of these notes and also   \cite{and00IV}, \cite{die13IV}, \cite{kee11IV}).
However, when facing incomplete epidemic data, inference based on {$(\mathcal{Z}(t))$} is not easy to be achieved.

Assume that a stochastic process  $(\mathcal{Z}(t), t \in [0,T])$ models the epidemic dynamics with parameters associated with this process (transition kernels depending on a parameter $\theta$ for Markov chains, drift and diffusion coefficients for a diffusion process, infinitesimal generator for a Markov pure jump process). The observed process corresponds to the value $\theta_0$ of this parameter. This value $\theta_0$ is called  the true (unknown) parameter value. Our concern
here is the estimation of $\theta_0$ from the observations  that are available
and the study of their properties.
The methods used to build estimators are  linked with the precise nature of the observations, each kind of observations generating a different statistical problem.
We detail these facts in the next sections.  We have intentionally omitted in this chapter the additional  problem of noisy observations, which often occurs in practice.
This is another layer which comes on top. It  entails Hidden Markov Models  and State space Models (see  \cite{cap05IV} or  \cite{vanh08IV}) and  also the R-package
Pomp (\cite{kin16IV}).

\section{Various kinds of observations and  asymptotic frameworks} \label{various}
As developed in  Part I of these notes, the epidemic dynamics is  modeled by a stochastic process $(\mathcal{Z}(t))$ defined on $[0,T]$ with values in $\R^p$, which
 describes at each time $t$ the number of individuals in each of the $p$ health states (e.g.\ $p=3$ for the $SIR$ model).
Inference for epidemic models is complicated by the fact  that collected observations usually do not contain all the information on the whole path of
$(\mathcal{Z}(t),0\leq t\leq T)$. Moreover, the inference method  relies on an asymptotic framework which allows us to control the properties of estimators.
We  detail here  in a general set-up  these facts, which are not specific to the inference for epidemic dynamics, but rely on general properties  of inference for stochastic processes,  this  knowledge being useful for applications to epidemics.

\subsection{Observations} \label{varobs}
Historically, continuous  observation  of $(\mathcal{Z}(t),0\leq t\leq T)$ was systematically assumed in the literature concerning the statistics of continuous time stochastic processes (see  \cite{ibr81IV}, \cite{lip01IV}, \cite{lip201IV}). It is justified by the property that theoretical results can be obtained. However, many various cases can occur in practice. Among them, including the complete case,  the more frequent are \\
\textbf{ Case (a)}. Continuous observation  of $(\mathcal{Z}(t))$ on $[0,T]$.\\
 \textbf{ Case (b)}.  Discrete observations:  $(\mathcal{Z}(t_1),\dots,\mathcal{Z}(t_n))$ with $0\leq t_1<t_2<\dots < t_n \leq T$.\\
\textbf{ Case (c)}. Aggregated observations $(J_0,\dots, J_{n-1}) $ with $J_i=\int_{t_{i}}^{t_{i+1} }\mathcal{Z}(s) ds$.\\
\textbf{ Case (d)}.  Model with latent variables: Some coordinates of $(\mathcal{Z}(t), t \in [0,T])$ are unobserved.\\

Case (a)  corresponds to complete data. For the $SIR$ epidemics, it means that the times of infection and recovery are observed for each individual in the population.
Case (b)  corresponds to the fact that observations are made at successive known times (one observation per day or per week during the epidemic outburst
(see \cite{bjo02IV}, \cite{cau08IV}, \cite{bre09IV}, \cite{cau12IV}).  Case (c) occurs in epidemics when the available observations are the number of Infected individuals and Removed per week  for instance.  Case (d) deals with the fact that, in routinely collected observations of epidemic models, one or several model variables are unobserved (or latent) (see e.g.\ \cite{cap05IV}, \cite{fea08IV} for general references and  \cite{bre09IV}, \cite{bri16IV}, \cite{ion11IV}, \cite{ion06IV},  \cite{oneillrobertsIV}, \cite{tonietalIV}
  for applications to epidemics).

\subsection{Various asymptotic frameworks}\label{varframe}
Taking into account an asymptotic framework is necessary to study and compare the
properties of different estimators. It is also a preliminary step for the study of non-asymptotic properties.
While for  i.i.d.\ observations, the natural asymptotic framework is that the number $n$ of observations goes to infinity, for stochastic processes various approaches are used according to the model properties or to the available observations.
Two different situations need to be considered according to the time interval of observation $ [0,T]$, where $T$ either goes to infinity or is fixed.

\subsubsection{Increasing time of observation $ [0,T]$ with $T\rightarrow \infty$}\label{IncreasT}
If  $(\mathcal{Z} (t))$  on $[0,T]$ is continuously  observed, a general theory is available
 for ergodic processes and for stationary mixing processes.
 Inference can also be performed for
some special models but  does no longer rely on a general theory. This occurs for  supercritical branching processes  and for the explosive $AR(1)$ process.

Let us consider the case of discrete observations of a continuous time process with regular sampling $\Delta$.
The observations are:  $(\mathcal{Z} (t_1),\mathcal{Z} (t_2),\dots,\mathcal{Z} (t_n))$  with $t_i=i\Delta$ and  $T=n\Delta$.\\
Two distinct cases arise from the study of parametric inference for diffusion processes \\
\textbf{ (1)}  The sampling interval $\Delta$ is fixed (  $T=n\Delta$ and $n\rightarrow \infty $).\\
\textbf{ (2)}  The sampling interval $ \Delta=\Delta_n \rightarrow 0$ with $T=n\Delta_n\rightarrow \infty$ as $n\rightarrow \infty$.\\
Since the likelihood is not explicit and difficult to compute, it raises many theoretical problems. References for the inference in these  cases are  Kessler \cite{kes97IV},  \cite{kes00IV}  followed by many others \cite{kes12IV}.\\

\noindent
In practice, when a sampling interval $\Delta$ is present in the data collecting, it might be important to take it explicitly into account.
Deciding whether $\Delta$ is small or not depends more on the time scale than on its precise value.
However this parameter $\Delta$ explicitly enters in the estimators,  and some estimators with apparently good properties  for $\Delta$ fixed might explode for small $\Delta$.
It corresponds in theory to different rates of convergence for the various coordinates of the unknown parameter $\theta$
as $n\rightarrow \infty$.
This typically occurs for discrete observations of a diffusion process (see Section \ref{ch1_Illus}).

\subsubsection{Fixed observation time $[0,T]$}\label{FixedT}
Several asymptotic frameworks are used.\\

\noindent(1) \emph{Discrete observations on [0,T] with $T=n\Delta_n$  fixed} \\
The sampling interval $\Delta_n \rightarrow 0$ while the number of observations $n$ tends to infinity.\\
For diffusion processes, only parameters in the diffusion coefficient can be estimated (see \cite{gen93IV}, \cite{jac12IV}).\\

\noindent(2) \emph{Observation of   $k$ i.i.d.\ sample paths of  $(\mathcal{Z}^i(t),0 \leq t \leq T)$, $i=1,\dots k$ with $k\rightarrow \infty$.}\\
Observations of $(\mathcal{Z}^i(t))$ can be continuous or discrete.This framework is relevant for panel data  which describe for instance the dynamics of several epidemics
in different locations. It allows us to include covariates or additional random effects in the model.
The assumption is that the number of paths  $k$ goes to infinity (see e.g \cite{gut91IV}).\\

\noindent(3) \emph{Presence of a ``Small parameter'' $\epsilon >0: (\mathcal{Z}^{\epsilon}(t),0\leq t\leq T) $, and  $\epsilon \rightarrow 0$.}\\
 Inference is studied in the set-up of  a family of stochastic models $(\mathcal{Z}^{\epsilon}(t), 0\leq t\leq T)$ depending on a parameter    $\epsilon >0$.
 Such a family of processes  naturally appears in the theory of  "Small perturbations of dynamical systems", where $(X^{\epsilon}(t))$ denotes a diffusion process with small diffusion coefficient $\epsilon \sigma(\cdot) $ (see e.g.\ \cite{fre84IV}).
The presence of a small parameter  occurs in the study of epidemics in large closed populations of size $N$, when they are density dependent. The small parameter $\epsilon$ is associated to the population size $N$ by the relation $\epsilon= 1/\sqrt{N}$ leading to the family of processes $\mathcal{Z}^{\epsilon} (t)= \epsilon^2 \mathcal{Z} (t)$ (normalization by the population size of the process). From a probability perspective, we refer to  Part I, Sections
 \ref{TB-EP_sec_CLT} and \ref{TB-EP_sec_DiffusApprox} (see also \cite[Chapter 8]{eth05IV}). For statistical purposes, we investigate in Chapter \ref{Diffusions} of this part
the asymptotic framework  "$\epsilon\rightarrow 0$"  and, for discrete observations,  the cases where the sampling interval $\Delta$ can be fixed or $\Delta=\Delta_n \rightarrow  0$.\\

\noindent(4) \emph{Asymptotics on the initial population number.}\\
 It consists in assuming that one coordinate of $(\mathcal{Z}(t))$ at time $0$ satisfies that  $\mathcal{Z}^i(0)= M\rightarrow \infty$.
 The parametric  inference for the continuous time $SIR$ model is performed in this framework (see  the results recalled in Section  \ref{sec:likelihood} or \cite{and00IV}).
This is also used for  subcritical branching processes where the initial number of ancestors goes to $\infty$ (see e.g.\cite{gut91IV}).

\subsection{Various estimation methods} \label{varest}
As pointed out in the introduction of this part, we are mainly concerned by  the problem of parametric inference. There exist several estimation methods.\\

\noindent
\textbf{ Maximum Likelihood Estimation}\\
This entails that one can compute the likelihood of the observation. For a continuously observed process, this is generally possible, but for a discrete time observation of a continuous-time process or for other kinds of incomplete observations, it is often intractable. This opens the whole domain of stochastic algorithms which aim at completing the data in order to estimate parameters with Maximum Likelihood methods. In particular, the well-known Expectation-Maximisation algorithm (\cite{dempsterlairdrubinIV}) and other related algorithms  (see e.g.\ \cite{and10IV}, \cite{kuh04IV}, \cite{oneillrobertsIV}) are based on the likelihood. For regular statistical models, Maximum Likelihood Estimators (MLE) are consistent and efficient (best theoretical variance).\\


\noindent
\textbf{ Minimum Contrast Estimation or Estimating Functions}\\
 When it is difficult to use the accurate (exact) likelihood, pseudo-likelihoods (contrast functions; approximate likelihoods,..), or pseudo -score functions (approximations of the score function, estimating functions) are often used. When they are well designed, these methods lead to consistent estimators converging at the right rate. They might loose the efficiency property of MLE in regular statistical models (see e.g. \cite{vaa00IV} for the general theory  and \cite{dac93IV}, \cite{hop14IV} for stochastic processes).\\

\noindent
\textbf{ Empirical  and non-parametric Methods}\\
This comprises  all the methods that rely on limit theorems (such as the ergodic theorem) associated with various functionals of the observations. Among these methods, we can refer to Moments methods and  Generalized Moment Methods (see e.g. \cite{vaa00IV} for the general theory and
\cite{hans95IV} for discrete observation of continuous-time Markov processes).\\

\noindent
\textbf{ Algorithmic Methods}\\
Many methods have been developed to perform estimation for incomplete data. It is difficult to be exhaustive. Let us quote  \cite{and10IV}, \cite{dou01IV}  for Particle Markov Monte Carlo methods; \cite{bea02IV}, \cite{blumIV}, \cite{blumtranIV},  \cite{sis18IV}, \cite{tonietalIV} for Approximate Bayesian Computation; \cite{cau04IV}, \cite{mck09IV}  for Bayesian MCMC; \cite{ion11IV}, \cite{kin16IV} for iterated filtering  and the R-package POMP.  In the last chapter of this  part, MCMC and ABC methods are detailed for the $SIR$ model.

\section{An example illustrating the  inference in these various situations}\label{ch1_Illus}
Let us investigate here the consequences of these various situations for the statistical inference
on  a simple stochastic model for describing  a population dynamics: the AR(1)  model which is a simple model for describing  dynamics in discrete time, its continuous time description corresponding  to the Ornstein--Uhlenbeck diffusion process.
 Besides studying a simplified population  model, the main interest of this example lies in the property that  computations are explicit for the various inference approaches
listed in the previous section.

 \subsection{A simple model for population dynamics: AR(1)}\label{subAR1}
The AR(1) model is a classical model for describing population dynamics in discrete time.
On $(\Omega,\mathcal{ F},\P) $  a probability space, let   $(\epsilon_{i})$ be a sequence of  i.i.d.\ random variables on $\R$ with distribution $\mathcal{ N}(0,1)$.
Consider the autoregressive process on $ \R$ defined, for $i\geq 0$,
\begin{equation}\label{AR1}
X_{i+1} = a X_{i} + \gamma \epsilon_{i+1}, \quad X_0=x_0.
\end{equation}
In order to compare  this model with its continuous  time version, the Ornstein--Uhlenbeck diffusion process,
we assume that $a>0$ and that $ x_0$ is deterministic  and known.
The observations are $(X_i, i=1,\dots,n)$ and  the unknown parameters $(a, \gamma ) \in \ (0,+\infty)^2 $.
The distribution  $\P^n_{a,\gamma}$ of the $n$-tuple $(X_1,\dots,X_n)$ is easy to compute, since the random
variables $(X_i-a X_{i-1}, i=1,\dots,n)$ are independent and identically distributed $\mathcal{ N}(0,\gamma ^2)$. If $\lambda_n$
denotes the Lebesgue measure on $\R^n$, then
$$\frac{d\P_{a,\gamma}^n}{d\lambda_n}(x_i, i=1,\dots,n)= \frac{1}{(\gamma \sqrt {2 \pi})^n}
\exp(-\frac{1}{ 2\gamma ^2}\sum_{i=1}^n (x_i-ax_{i-1})^2).$$
Hence, the loglikelihood function is
\begin{equation}\label{loglikAR}
\log L_n(a,\gamma) ={\ell}_n (a,\gamma)= -\frac{n}{2} \log (2\pi)
-\frac{n}{2} \log \gamma^2 - \frac{1}{2\gamma^2} \sum_{i=1}^n (X_i-aX_{i-1})^2.
\end{equation}
The maximum likelihood estimators are
\begin{equation} \label{estimagamma}
	{\hat a}_n = \frac{\sum_{i=1}^n X_{i-1}X_i}{\sum_ {i=1}^n X_{i-1}^2};\quad
	{\hat \gamma^2}_n = \frac{1}{n}\sum_{i=1}^n (X_i-{\hat a}_n X_{i-1})^2.
\end{equation}
The properties of  $({\hat a}_n,{\hat \gamma^2}_n)$ can be studied as  $n \rightarrow \infty$:
$({\hat a}_n,  {\hat \gamma^2}_n )$ is strongly consistent: $$({\hat a}_n,  {\hat \gamma^2}_n ) \rightarrow (a,\gamma^2)\; \;  \mbox{a. s. under } \P_{a,\gamma}  \mbox { as } n\rightarrow \infty.$$
The rates of convergence differ according to the probabilistic properties of $(X_i)$.\\
\textbf{(1)} If $0<a<1$, $(X_i)$ is a Harris recurrent  Markov chain with stationary distribution\\ $\mu_{a,\gamma}(dx) = \mathcal{ N}(0, \frac{\gamma^2}{1-a^2 })$. The estimators  ${\hat a_n},{\hat \gamma}^2_n$ are asymptotically independent and satisfy
\begin{equation}\label{limARagamma}
\begin{pmatrix}\sqrt{n}({\hat a_n} - a)\\ \sqrt{n}( {\hat \gamma}^2_n - \gamma^2) \end{pmatrix} \rightarrow \mathcal{ N}_2 \left(0, \begin{pmatrix} 1-a^2  & 0  \\ 0 & 2 \gamma^4  \end{pmatrix}\right).
\end{equation}
\textbf{(2)} If $a=1$, $(X_i)$ is a null recurrent random walk and  $n ({\hat a_n} - 1)$ converges to a non-Gaussian distribution, while ${\hat \gamma^2}_n$ has the properties of
Case (1).\\
\textbf{(3)} If $a>1$ and $x_0=0$, $(X_i)$ is explosive. One can prove that
$a^n({\hat a_n} - a)$ converges to a random variable $Y= \eta Z $, where  $\eta,Z$ are two independent  random variables, $Z \sim \mathcal{ N}(0,1)$ and $\eta$  is an explicit positive random variable.
 The estimator  ${\hat \gamma^2}_n$ keeps the properties of Case (1).\\
\subsection{Ornstein--Uhlenbeck diffusion process with increasing observation time }\label{subOUlargeT}
This section is based on  Chapter 1 of \cite{gen18IV} where all the statistical inference is detailed.
It is presented here as a starting point for problems that arise when dealing with epidemic data.
In order to investigate  the various situations  detailed in  Section \ref{various}, let us now consider the continuous time version of the $AR(1)$  population model, the  Ornstein--Uhlenbeck diffusion process defined by the stochastic differential equation
\begin{equation}\label{OUsec1}
	d\xi_t= \theta \xi_t dt +\sigma d W_t; \; \xi_0=x_0.
\end{equation}
where
$(W_t, t\geq 0) $ denotes a standard Brownian motion on $(\Omega,\mathcal{ F},P)$, and $x_0$  is either deterministic or is a random variable independent of  $(W_t)$.
Then, $(\xi_t, t\geq 0)$ is a diffusion process on $\R$ with continuous sample paths.
This equation can be solved, setting $Y_t=e^{-\theta t} \xi_t$, so that
\begin{equation}\label{OUexpl}
	\xi_t= x_0 e^{\theta t}+ e^{\theta t} \int_0^t  e^{-\theta s} dW_s.
\end{equation}

Let us first consider the case where  $(\xi_t)$ is observed with regular sampling intervals $\Delta$. The observations $(\xi_{t_i};  i=1, \dots,n)$ with $t_i=i\Delta$  satisfy
\begin{equation}\label{OUdisc}
\xi_{t_{i+1}}=  e^{\theta \Delta}\xi_{t_i}+ \sigma e^{\theta (i+1)\Delta} \int_{i\Delta}^{(i+1)\Delta}
e^{-\theta s} dW_s.
\end{equation}
Hence,  $(\xi_{t_{i+1}}- e^{\theta \Delta} \xi_{t_i})$ is independent of $\mathcal{ F}_{t_i}$, where $\mathcal{ F}_{t }= \sigma( \xi_0, W_s, s\leq t)$  and the sequence
$(\xi_{t_i}, i\geq 0)$ is  the autoregressive model $AR(1)$   defined in (\ref{AR1})  setting
\begin{equation}\label{agamma}
X_i= \xi_{t_i}, \quad a=  e^{\theta \Delta}, \quad \gamma^2= \frac{\sigma^2}{2\theta} (e^{2\theta \Delta}-1),
\end{equation}
since the random variables   $((\sigma e^{\theta (i+1)\Delta} \int_{i\Delta}^{(i+1)\Delta}
e^{-\theta s} dW_s), 1\leq i \leq n)$  are independent  Gaussian $\mathcal{ N}(0, \gamma^2)$.\\

\noindent
Cases (1), (2), (3) of the $AR(1)$ are respectively $\{\theta<0\}$, $\{\theta=0\}$ and $\{\theta>0\}$.\\

\noindent\underline{\textbf{ Case (a)} Continuous observation on $[0,T]$.}\\
Let us first start with the parametric inference associated with the complete observation of $(\xi_t)$ on $[0,T]$ \label{subcontOU}.
The space of observations is $(C_T, \mathcal{ C}_T)$,
the space of continuous functions   from $[0,T] $ into $\R$ and $\mathcal{ C}_T$ is the Borel $\sigma$-algebra. associated with the topology of uniform convergence on
$[0,T]$. Let   $\P_{\theta,\sigma^2}$ denote
the probability distribution on $(C_T,\mathcal{ C}_T)$ of the observation $(\xi_t,0\leq t\leq T) $ satisfying (\ref{OUsec1}) . 
It is well known that if $\sigma^2 \neq {\tau}^2$,
the distributions $\P_{\theta,\sigma^2}$ and  $\P_{\theta,\tau^2}$ are singular on $(C_T, \mathcal{ C}_T)$ (see e.g.\ \cite{lip01IV}).
Indeed, the quadratic variations of $(\xi_t)$
satisfy, as $\Delta_n=t_i-t_{i-1} \rightarrow 0$,
\[\sum_{i=1}^n (\xi_{t_i}-\xi_{t_{i-1}})^2 \rightarrow \sigma ^2 T\mbox{ in }\P_{\theta,\sigma^2}\mbox{-probability}.\]
Therefore,  the set $A = \{\omega, \sum_{i=1}^n (\xi_{t_i}-\xi_{t_{i-1}})^2 \rightarrow \sigma^2 T\}$ satisfies
$\P_{\theta,\sigma^2}(A) =1$ and
 $\P_{\theta,\tau^2}(A) =0$ for  $ \tau^2 \neq.\sigma^2$.\\
 A statistical consequence is that the diffusion coefficient is identified when $(\xi_t)$ is continuously observed.

We assume that  $\sigma $ is fixed and known and omit it in this section.
 Let $\P_{0,\sigma^2}= \P_0$ the distribution
associated with  $\theta=0$ (i.e $ d\xi_t= \sigma dW_t$).
The Girsanov formula gives an expression of the likelihood function on $[0,T]$,
\begin{equation}\label{LikOU}
	L_T(\theta)= \frac{d\P_{\theta}}{d\P_0}(\xi_t, 0\leq t \leq T)= \exp \left(\frac{\theta }{\sigma^2}\int_0^T \xi_t\; d \xi_t -\frac{\theta ^2}{2\sigma^2}
\int_0^T \xi_t ^2 dt\;\right).
\end{equation}
Substituting  $(\xi_t)$ by its expression in (\ref{OUsec1}), the MLE is
\begin{equation}\label{MLEOU}
	{\hat \theta}_T= \frac{\int_0^T \xi_t d\xi_t}{\int_0^T\xi_t^2 dt}= \theta +{\sigma}  \frac{\int_0^T \xi_t dW_t}
{\int_0^T\xi_t^2 dt}.
\end{equation}
 The estimator ${\hat \theta}_T$ defined in \eqref{MLEOU}  reads as
\begin{equation}\label{MLEbOU}
{\hat \theta}_T= \theta+ \frac{M_T}{\langle M\rangle_T} \mbox{ with } \;  M_t= \frac{1}{\sigma}\int_0^t  \xi_s dW_s.
\end{equation}
where $(M_t)$ is a $(\mathcal{ F}_t)$-martingale in $L^2$  with  angle bracket $\langle M\rangle_t$ (i.e.\  the process such that $(M_t^2-\langle M\rangle_t)$ is a martingale). Noting that $\langle M\rangle_T \rightarrow \infty $  as $T\rightarrow \infty$, the law of large numbers yields that
$\displaystyle{\frac{M_T}{\langle M\rangle_T} \rightarrow 0}$.
 Hence the MLE defined by (\ref{MLEOU}) is consistent.
As for the $AR(1)$- model, the  rate of convergence of ${\hat \theta}_T$ to $ \theta$  depends on the properties of $(M_t)$.
Three different cases can be listed as $T\rightarrow \infty$:\\
\textbf{(1)} $\{\theta <0\}$: $(\xi_t)$ is a positive recurrent process with stationary distribution $\mathcal{ N} (0,\frac{\sigma^2}{2|\theta|})$ and $\sqrt{T}({\hat \theta}_T-\theta ) \rightarrow _{\mathcal{ L}} \mathcal{ N}(0,2 |\theta|)$.\\
\textbf{(2)} $\{\theta =0\}$: $(\xi_t)$ is a null recurrent diffusion;  $T {\hat \theta}_T$ converges to a fixed distribution.\\
\textbf{(3)} $\{\theta > 0\}$:  $(\xi_t)$ is a transient diffusion;
  $e^{\theta T} ({\hat \theta}_T-\theta )$ converges in distribution to $Y= \eta\;  Z$, where $\eta,Z$
are two independent random variables, $Z \sim \mathcal{ N}(0,1)$ and $\eta$ is an explicit a positive random variable.\\

\noindent
\underline{\textbf{ Case (b)-1} Discrete observations with sampling interval  $\Delta$ fixed.}\\
Let $t_i=i\Delta, T= n\Delta$ and assume that  the number of observations $n \rightarrow \infty$.\\
Using (\ref{agamma}), $(X_i= \xi_{t_i})$ is an AR(1) with
$a=  e^{\theta \Delta}\;, \gamma^2= \sigma^2  v(\theta) \mbox{ with } v(\theta)= \frac{1}{2\theta} (e^{2\theta \Delta}-1).$

 Let $\phi_{\Delta}:  (0 ,+\infty)^2 \rightarrow  \R \times  (0 ,+\infty) $
 $$\phi_{\Delta} \;:\; m= \begin{pmatrix} a \\ \gamma^2 \end{pmatrix} \rightarrow  \begin{pmatrix} \theta= \frac{\log a }{\Delta}  \\ \sigma^2= \frac{a^2 - 1}{2 \log a} \Delta \gamma^2
 \end{pmatrix}. $$
 This is a $C^1 $-diffeomorphism and
the MLE for $\theta$ and $\sigma^2$ can be deduced from $({\hat a}_n, {\hat \gamma}_n^2)$ obtained in Section
\ref{AR1}. This yields
$${\hat \theta}_n= \frac{1}{\Delta}\log \left(\frac{\sum _{i=1}^n X_{i-1}X_i}{\sum _{i=1}^n X_{i-1}^2}\right); \quad {\hat \sigma}_n^2=
\frac{1}{n}\sum_{i=1}^n (X_i- \exp({\hat \theta}_n \Delta) \;X_{i-1})^2.$$
These two estimators inherit the asymptotic properties  of the maximum likelihood estimators $({\hat a}_n, {\hat \gamma}_n^2)$ obtained in Subsection \ref{subAR1}, their asymptotic variance is obtained using  Theorem \ref{phimle} stated  in the Appendix, Section \ref{Miscstat} (see also \cite{vaa00IV}, Theorem 3.1).
Therefore,  $ ({\hat \theta}_n,{\hat \sigma}^2_n)$ is consistent and, using that
$\displaystyle{a_n( \hat{m}_n-m)}$ converges to a random variable $Y$ yields
\begin{equation}\label{phiestim}
a_n \begin{pmatrix}\hat{\theta}_n- \theta\\ \hat{\sigma}^2_n - \sigma^2 \end{pmatrix} \rightarrow_{\mathcal{ L}} \nabla_x\phi_{\Delta}(m) Y,
\end{equation}
where  $a_n$ is respectively for Cases (1), (2), (3) the matrix
$$ \begin{pmatrix}\sqrt{n}&0\\0&\sqrt{n} \end{pmatrix}, \quad \begin{pmatrix} n&0\\0&\sqrt{n}\end{pmatrix},\quad \begin{pmatrix} e^{n \Delta \theta}&0\\0&
  \sqrt{n} \end{pmatrix}.$$

In particular, for Case (1) where $Y\sim \mathcal{ N}_2(0,\Sigma)$,  the limit distribution   $\mathcal{ N}_2(0, \nabla_x \phi_{\Delta} (m) \Sigma (\nabla_x \phi (m))^*)$ where $\Sigma$  is the matrix obtained in \eqref{limARagamma}.

Looking precisely at the theoretical asymptotic variance of ${\hat \theta}_n$ obtained in \eqref{phiestim},
 we can observe that, for small  $\Delta$, this  variance  is  $ \frac{2|\theta|}{\Delta} $   and therefore explodes.
It corresponds to the property that $\sqrt{n}$ is not the right rate of convergence of $\theta$ for small  $\Delta$. \\

\noindent\underline{\textbf{ Case (b)-2}  Discrete observations with sampling interval  $\Delta=\Delta_n \rightarrow 0$}\\
We just detail  Case (1), which corresponds to  the ergodic  Ornstein--Uhlenbeck process,  first  studied in \cite{kes97IV}. Under the condition $n\Delta_n^2 \rightarrow 0$, the estimators
${\hat \theta}_n,{\hat\sigma}^2_n $ are consistent and converge at different rates under $\P_{\theta}$,\\
\begin{equation}\label{diffrate}
\begin{pmatrix}\sqrt{n \Delta_n}(\hat{\theta}_n- \theta) \\ \sqrt{n}({\hat \sigma}^2_n -\sigma^2) \end{pmatrix} \overset {\mathcal{ L}}  \rightarrow \mathcal{ N}_2 \left( 0,  \begin{pmatrix} 2 |\theta|& 0 \\
0&2 \sigma^4\end{pmatrix} \right) .
\end{equation}\\

\noindent\underline{\textbf{ Case (c)-1}  Aggregated observations on intervals $[i\Delta,(i+1) \Delta] $ with $\Delta$ fixed.}\\
Assume  now that the available observations are  aggregated data   on successive intervals, $(J_i)$ 
with
\begin{equation}\label{Ji}
J_i= \int_{t_{i}}^{t_{i+1}} \xi_s \; ds.
\end{equation}
The inference  problem has first  been studied by  \cite{glo00IV},  \cite{glo01IV} for an ergodic stationary  diffusion process. It entails that $\theta <0$ and that  $X_0$ is random, independent of $(W_t, t\geq 0)$, distributed according to the stationary distribution of $(\xi_t)$, $\mathcal{ N}(0,\frac{\sigma^2}{2|\theta|})$.

The process $(J_i)_{i\geq 0}$ is a  non-Markovian strictly stationary centered Gaussian process. Using (\ref{OUexpl}) and (\ref{Ji}), $J_i$ and $J_{i+1}$ are linked by the relation
\begin{align}
J_{i+1}- e^{\theta \Delta} J_i= \frac{\sigma}{\theta }\int_{i\Delta}^{(i+1)\Delta}( e^{\theta\Delta}
&- e^{\theta((i+1)\Delta -s)} )d W_s \label{ARMA}\\
&+ \; \frac{\sigma}{\theta } \int_{(i+1)\Delta}^{(i+2) \Delta}\;(e^{\theta ((i+2)\Delta -s)}-1) d W_s.\nonumber
\end{align}
Hence, for all $ i\geq 1$, $(J_{i+1}- e^{\theta \Delta} J_i )$ is independent of $(J_0,\dots,J_{i-1})$ and  $(J_i)$
possesses the structure of an ARMA(1,1) process, for which the statistical inference
is derived with other tools. Indeed,
 \begin{eqnarray*}
 \mathrm{Var}(J_i) = \sigma ^2 r_0(\theta) \;& ; &\;  \mathrm{Cov}(J_i,J_j)= \sigma ^2 r_{i-j}(\theta)  \quad \mbox{  with}\\
r_0(\theta)= \frac{1}{\theta^2}\left(\Delta+\frac{1-e^{\theta \Delta}}{\theta} \right)\; &;&\;
r_k(\theta)= -\frac{1}{2\theta^3} e^{-\theta \Delta}(e^{\theta \Delta}-1)^2 \; e^{\theta \Delta |k|} \mbox{ if } k \neq 0.
\end{eqnarray*}
Its spectral density has also an explicit expression, $f_{\theta,\sigma^2}(\lambda)= \sigma ^2 f_{\theta}(\lambda)$.

The likelihood function is known theoretically but its exact expression is intractable.
Instead of the exact likelihood, a well-known method to derive estimators  is to use the Whittle  contrast $U_n(\theta,\sigma^2)$ which provides efficient estimators. It is based  on the periodogram: if ${j}$ denotes now the complex number ${j}^2=-1$,
$$ U_n(\theta,\sigma^2)= \frac{1}{2\pi}\int_{-\pi}^{\pi}\left( \log f_{\theta,\sigma^2}(\lambda)+
\frac{I_n(\lambda)}{f_{\theta,\sigma^2}(\lambda)} \right) d\lambda , \; \mbox { with }\;
I_n(\lambda) = \frac{1}{n}|\sum_{k=0}^{n-1}J_k e^{- {j}k\lambda}|^2. $$
The estimators are then defined as any solution of
$ U_n({ \tilde \theta}_n,{\tilde \sigma}^2_n)= \inf_{\theta,\sigma^2} U_n(\theta,\sigma^2) $.
This yields consistent and asymptotically Gaussian estimators at rate $\sqrt{n}$.  \\

\noindent\underline{\textbf{ Case (c)-2}  Aggregated observations on intervals $[i\Delta,(i+1) \Delta] $ with $\Delta=\Delta_n \rightarrow 0$.}\\
Let us now consider the case of $\Delta=\Delta_n \rightarrow 0, T= n\Delta_n  \rightarrow \infty$ as $ n\rightarrow \infty$.
 Let $J_{i,n} = \int_{i\Delta_n}^{(i+1)\Delta_n} \xi_s ds$.
 Assume that $\theta<0$. The diffusion is positive recurrent with stationary measure $\mu_{\theta,\sigma^2}(dx)\sim \mathcal{ N}(0,\frac{\sigma^2}{2|\theta|})$ . The following two convergences hold in probability (see \cite{glo00IV}).
 \begin{eqnarray*}
 \frac{1}{n} \sum_{i=0}^{n-1}(\Delta_n^{-1} J_{i+1,n}-\Delta_n^{-1} J_{i,n})^2  &\rightarrow & \frac{2}{3}\sigma^2 ,\mbox{  while }\\
\frac{1}{n} \sum_{i=0}^{n-1}( \xi_{(i+1)\Delta_n}- \xi_{i \Delta_n})^2 &\rightarrow &  \sigma^2.
\end{eqnarray*}
Hence,  for small $\Delta_n$, the
heuristics $\frac{1}{\Delta_n} J_{i,n} \sim \xi_{i\Delta_n}$ is too rough and
does not yield good statistical results. The two processes corresponding to these two kinds of observations are structurally distinct:  $(\xi_{i\Delta_n})$ is an  AR(1) process while
$(\frac{1}{\Delta_n} J_{i,n} )$ is ARMA(1,1). Ignoring this can lead to  biased estimators.

\subsection{ Ornstein--Uhlenbeck diffusion with fixed observation time}\label{OUfixed}
\underline{\textbf{ Case (a)} Continuous observation on $[0,T]$}\\
As in Section \ref{subOUlargeT} Case \textbf{ (a)},  the parameter $\sigma^2$ is identified from the  continuous observation of $(\xi_t)$.
Therefore we assume that $\sigma^2$ is known.  The expression for the likelihood \eqref{LikOU} holds.  We get that, without additional assumptions, as for instance the presence of a small parameter $ \epsilon$, the MLE given in \eqref{MLEOU}
 $\hat{\theta}_T$  has a fixed distribution.
 On a fixed time interval, parameters in the drift term of a diffusion cannot be consistently estimated.\\

\noindent\underline{\textbf{ Case (b)-1} Discrete observations with fixed sampling $\Delta$}\\
The number of observations $n$ is fixed. Without additional assumptions, neither $\theta$ nor $\sigma^2$ can be consistently estimated.\\

\noindent\underline{\textbf{ Case (b)-2}  Discrete observations with sampling $\Delta_n \rightarrow 0$}\\
Let $\Delta=\Delta_n=T/n \rightarrow 0$ as  $n\rightarrow \infty$.
Equation (\ref{OUdisc}) holds and  (\ref{loglikAR}) is the likelihood.  The maximum likelihood estimator  ${\hat\theta}_n$ satisfies
\begin{equation}
	{\hat \theta}_n= \frac{1}{\Delta_n} \log \left( 1+ \Delta_n \frac{\sum_{i=1}^n \xi_{t_{i-1}}( \xi_{t_i}-\xi_{t_{i-1}})}
	{\Delta_n \sum_{i=1}^n \xi_{t_{i-1}}^2}\right).
\end{equation}
Since  $t_i=i \frac{T}{n}$, using the property of stochastic integrals and the Lebesgue integral  yields that, under $\P_{\theta}$,
$$ \sum_{i=1}^n \xi_{t_{i-1}}( \xi_{t_i}-\xi_{t_{i-1}}) \rightarrow \int_0^T \xi_s d\xi_s \mbox{ in probability};\quad
\sum_{i=1}^n \Delta_n \;\xi_{t_{i-1}}^2 \rightarrow \int_0^T\xi_s^2ds \mbox{ a.s.}
$$
Therefore, as $n\rightarrow \infty$, ${\hat \theta}_n$ converges to the  random variable
$\theta_T= \frac{\int_0^T \xi_s d\xi_s}
{\int_0^T\xi_s^2ds}.$
Hence ${\hat \theta}_n$ is not consistent.
Note that $\theta_T$ is precisely the MLE for $\theta$  obtained for continuous observation, which possesses good properties only if $T \rightarrow \infty$.\\
 The story is different for the estimation of $\sigma^2$.
 The normalized  quadratic variations of $(\xi_t)$ is a consistent estimator of $\sigma^2$ and
  $\sum (\xi_{t_i}-\xi_{t_{i-1}})^2  \rightarrow \sigma ^2 T$ in probability. Moreover,
\begin{equation}\label{quadsig}
	\tilde{\sigma}^2= \frac{1}{T}  \sum_{i=1}^n (\xi_{t_i}-\xi_{t_{i-1}})^2  \mbox{ satisfies that } \sqrt{n}({\tilde \sigma}^2- \sigma^2) \rightarrow  \mathcal{ N}(0, 2 \sigma^4).
\end{equation}
Note that this result holds whatever the value of $\theta$.\\

\noindent\underline{\textbf{ Case (c)-1}  Aggregated observations on intervals $[i\Delta,(i+1) \Delta] $ with $\Delta$ fixed}\\
As in Case (b)-1, $\theta$ and $\sigma^2$ cannot be  consistently estimated.\\

\noindent\underline{\textbf{ Case (c)-2}  Aggregated observations on intervals $[i\Delta,(i+1) \Delta] $ with $\Delta=\Delta_n \rightarrow 0$}\\
This has been studied in \cite{glo00IV}.
 Then,  as  $\Delta_n\rightarrow 0$, in probability,
 \begin{equation*}
  \sum_{i=0}^{n-1}(\Delta_n^{-1} J_{i+1,n}-\Delta_n^{-1} J_{i,n})^2  \rightarrow  \frac{2}{3}\sigma ^2 T \;\mbox{ while } \;\sum_{i=0}^{n-1}( \xi_{(i+1)\Delta_n}- \xi_{i \Delta_n})^2 \rightarrow  \sigma^2 T .
 \end{equation*}
 Here again, the heuristics  $\frac{1}{\Delta_n} J_{i,n} \sim \xi_{i\Delta_n}$ is too rough and
does not yield good statistical results.

\subsection{Ornstein--Uhlenbeck diffusion with small diffusion coefficient}\label{OUsmalldiff}
This  asymptotic framework is "$\epsilon \rightarrow 0$".  It naturally occurs for diffusion approximations of epidemic processes.
The equation under study is now
\begin{equation} \label{OUeps}
d\xi_t= \theta \xi_t dt + \epsilon \sigma dW_t \; \; \xi_0=x_0.
\end{equation}
We detail the results for fixed observation time $[0,T]$. \\

\noindent\underline{\textbf{ Case (a)}  Continuous observation on $[0,T]$} \\
As before, we assume that $\sigma^2$ is known and omit it.
Let $\P_{\theta} ^{\epsilon}$  the distribution on $(C_T, \mathcal{ C}_T)$ of $(\xi_t)$  satisfying \eqref{OUeps}. The likelihood is now
\begin{equation}\label{LikOUeps}
	L_{T,\epsilon} (\theta)= \frac{d\P_{\theta} ^{\epsilon}}{dP_{0} ^{\epsilon}}(\xi_s, 0\leq s\leq T)= \exp (\frac{\theta }{\epsilon^2 \sigma^2}\int_0^T \xi_s\; d \xi_s -\frac{\theta ^2}{2\epsilon^2 \sigma^2}
\int_0^T \xi_s ^2 ds\;).
\end{equation}
\begin{equation}\label{MLEOUeps}
	{\hat \theta}_{T,\epsilon}= \theta +\epsilon {\sigma}  \frac{\int_0^T \xi_t dW_t}
{\int_0^T\xi_t^2 dt}.
\end{equation}

Therefore $ {\hat \theta}_{T,\epsilon}  \rightarrow  \theta$ in probability under $P_{\theta} ^{\epsilon}$ as $\epsilon\rightarrow 0$.
Moreover, using results of \cite{kut84IV}),
$$\epsilon^{-1} ({\hat \theta}_{T,\epsilon}- \theta) \rightarrow _{\mathcal{ L} } \mathcal{ N}(0, \tau ^2),\quad \mbox {with }  \tau^{2}= \frac{2 \theta \sigma^2}{ x_0 ^2 (e^{2\theta T}-1)}. $$

\noindent\underline{\textbf{ Case (b)-1} Discrete observations with fixed sampling interval $\Delta$} \\
 If $\Delta$ is fixed, only $ \theta$ can be consistently  estimated (see \cite{guy14IV}).
 This is detailed in Chapter \ref{Diffusions}, Section \ref{LFO}.
 Setting $a= e^{\theta \Delta}$, and $X_i=\xi_{i\Delta}$, then, using \eqref{AR1},
 $$X_i=a X_{i-1}+ \epsilon \gamma \eta_{i}, \quad \mbox{where} \quad \gamma^2= \frac{e^{2 \theta \Delta} -1}{2 \theta} \sigma^2  ,$$
and  $(\eta_i) $ i.i.d.\ $\mathcal{ N}(0,1)$
 random variables.  Using \eqref{loglikAR} and \eqref{estimagamma} yields
 $$\hat{a}_{ \epsilon,\Delta}=  a + \epsilon \gamma \frac{\sum_{i=1} ^n  X_{i-1} \eta_i} {\sum_{i=1}^n X_{i-1}^2}.$$
 Therefore, as $\epsilon\rightarrow 0$, $\hat{a}_{ \epsilon,\Delta}$ is consistent and
 $$\epsilon^{-1}( \hat{a}_{ \epsilon,\Delta}- a) \rightarrow \mathcal{ N} (0,V_\Delta), \quad \mbox{with }
 V_{\Delta}= \gamma^2 \frac{ e ^{2\theta \Delta}-1}{x_0^2 (e^{2\theta T}-1)}= \sigma^2 \frac{ (e ^{2\theta \Delta}-1)^2 }{2 x_0^2 \theta (e^{2\theta T}-1)} .$$
 Note that for small $\Delta$, $V_{\Delta} \sim \frac{\Delta}{x_0^2 (e^{2\theta T}-1)}\sigma^2$. \\

\noindent\underline{\textbf{ Case (b)-2} Discrete observations with sampling  $\Delta=\Delta_n\rightarrow 0$} \\
 This was first studied in \cite{glo09IV}, \cite{sor03IV}  and is detailed in Chapter \ref{Diffusions}.
 Let $T=n\Delta_n$ (the number of observations $n \rightarrow \infty $ as $\Delta_n\rightarrow 0$).
 Both $\theta$ and $\sigma$ can be estimated from discrete observations. One can prove that they converge at different rates: under $\P_{\theta}$
 as $\epsilon\rightarrow 0,n\rightarrow \infty$,
\begin{equation} \begin{pmatrix}\epsilon^{-1} (\hat{\theta}_{\epsilon,n}-\theta\\
\sqrt{n}( \hat{\sigma}^2_n-\sigma^2 \end{pmatrix}  \rightarrow  \mathcal{ N}_2\left( 0, \begin{pmatrix}\frac{2 \theta \sigma^2}{ x_0 ^2 (e^{2\theta T}-1)}& 0\\0& 2 \sigma^4
\end{pmatrix} \right).
\end{equation}

\subsection{Conclusions}
This detailed example based on the Ornstein--Uhlenbeck  diffusion studied under various asymptotic frameworks and various kinds of observations shows that, before estimating parameters ruling the process under study, one has to carefully consider how the available observations are obtained from the process and to study
their properties. Some approximations are  relevant and keep good statistical properties, while other ones lead to estimators which are not even consistent.

\chapter{Inference for Markov Chain Epidemic Models}\label{StatMC}
In order to present an overview of the statistical problems, we first detail
the statistical inference  for Markov chains. Indeed, discrete time Markov chains models are interesting here because many questions that can arise for more complex models can be illustrated in this set-up. Moreover, continuous-time stochastic models are often observed in practice at discrete times, which might sum up to
a Markov chain model. Therefore, this point of view allows us to illustrate some classical statistical methods for stochastic models used in epidemics.
We have rather focus here on parametric inference since epidemic models always include in their dynamics  parameters that need to be estimated in order to derive predictions.
A recap on parametric inference for Markov chains is given in the Appendix, Section \ref{recapMC}, together with some notations and basic definitions.
We apply  in this chapter these results on
some classical stochastic models used in epidemics (see  Part I, Chapter \ref{TB-EP_chap_StochMood} and also \ \cite{and00IV}, \cite{die13IV}).

\section{Markov chains with countable state space}\label{MCcount}
Markov chain models occur  when assuming  that a latent period of fixed length follows the receipt of infection by any susceptible.
According to the epidemic model, the state space of the Markov chain can be finite if the epidemics takes place in a fixed finite population, countable
(birth and death processes,  branching processes, open Markov Models detailed in Part I, Chapter \ref{TB-EP_chap_OpenMarkovMod} of these notes), or continuous
 (see e.g.\ the simple AR(1) dynamic model).\\

 Let us first consider a  Markov chain $(X_n)$ with finite state space  $E=\{0,\dots, N\}$ and  transition matrix
 $(Q(i,j), i,j \in E)$. Assume that $X_0=x_0$ is deterministic and known.
 Our aim is to  estimate  the transition matrix $Q$, which corresponds to $ q= N(N+1)$ parameters since, for all $i \in E$,
$\sum_{j=0}^N Q(i,j)=1 $.\\
Following the definitions recalled in  Section \ref{ApStatMC} in the Appendix, denote by  $\P_{Q}$ the distribution on $(E^{\N},\mathcal{ E}^{\N})$ of $(X_n)$ and
 $\mathcal{ F}_n= \sigma(X_0,\dots,X_n)$.
Let  $\mu_n=  \otimes_{k=1}^n \nu_ k$ with $\nu_k (\cdot)$  the measure on $E$ such that  $\nu_k (i)=1$ for $ i \in E$.

For $A$ a subset of $E$, let $\delta_{A}(\cdot)$ denote  the  Dirac function: $\delta_{A}(x) =1$ if $ x \in A$,  $\delta_{A}(x) =0$ if $x \notin A$.
Define
\begin{equation}\label{Nkl}
N^{ij}_n= \sum_{k=1}^n \delta_{\{i,j\}}(X_{k-1},X_k); \quad N_n^{i.}= \sum_{k=1}^n \delta_{\{i\}}(X_{k-1}).
\end{equation}
Using \eqref{Nkl}, the likelihood   and the loglikelihood read as
\begin{equation}\label{LnQ}
L_n(Q)=  \frac{d \P_{Q}}{d\mu_n}(X_k,k=1,\dots,n)= \prod_{k=1}^n \; Q(X_{k-1},X_k)=  \prod_{i,j \in E } \; Q(i,j)^{N_{ij}^n}\; ,
\end{equation}
\begin{equation}\label{lnQ}
 \ell_n(Q)=  \sum_{i,j \in E} N_n^{ij} \; \log Q(i,j).
 \end{equation}
The computation of the Maximum Likelihood Estimator, $(\hat{Q}_n(i,j,), i,j \in E)$, corresponds to the maximization of $\ell_n(Q)$ under the $(N+1)$ constraints
$ \{\sum_{j=0}^N Q(i,j)\; -1=0 \}$ .
This yields that
\begin{equation}\label{MLEQ}
\hat{Q}_n(i,j)= \frac{N_n^{ij}}{N_n^{i.}}.
\end{equation}
Since the  random variables $ (N^{ij}_n, i \neq j) $ are  equal to the number of transitions from $i$ to $j$ up to time $n$ and $N_n^{i.}$ is the time spent
in state $i$ up to time $n$,  the estimators  $\hat{Q}_n(i,j)$ are equal to the empirical estimates of the transitions.

To study the properties of the MLE, we assume
\begin{enumerate}[(H1)]
\item[\textbf{ (H1)}]  The Markov chain $(X_n)$ with transition matrix  $Q$  is positive recurrent aperiodic on $E$.
\end{enumerate}

Denote by   $\lambda_Q(\cdot)$  the stationary distribution of $(X_n)$. Then, the following holds.
 \begin{proposition}\label{TCLMCF}
 Under (H1), the MLE $(\hat{Q}_n(i,j), i,j \in E)$ is strongly consistent and, under $\P_Q$,
	 $$  \quad \sqrt{n}\left(\hat{Q}_n(i,j)- Q(i,j)\right) _{0\leq i \leq N,0\leq j \leq N-1} \rightarrow _{\mathcal{ L}} \mathcal{ N}_q(0, \Sigma) \mbox{  with } q=N(N+1),$$
$$\Sigma_{ij,ij}=  \frac{Q(i,j)(1-Q(i,j))}{\lambda_Q(i)};\quad  \Sigma_{ij, ij^{\prime}}= - \frac{Q(i,j) Q(i,j^{\prime})}{\lambda_Q(i)};\quad\Sigma_{ij,i^{\prime}j^{\prime}}=0  \mbox{ if } i^{\prime} \neq i.$$
\end{proposition}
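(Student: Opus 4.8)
The plan is to prove consistency and asymptotic normality by recognizing the MLE as a smooth function of empirical averages and applying the ergodic theorem together with a martingale central limit theorem. First I would establish strong consistency. Under (H1) the chain is positive recurrent aperiodic with stationary distribution $\lambda_Q$, so the ergodic theorem for Markov chains applies to the additive functionals in \eqref{Nkl}. I would show that $\frac1n N_n^{i.} = \frac1n \sum_{k=1}^n \delta_{\{i\}}(X_{k-1}) \to \lambda_Q(i)$ a.s., and similarly $\frac1n N_n^{ij} = \frac1n \sum_{k=1}^n \delta_{\{i,j\}}(X_{k-1},X_k) \to \lambda_Q(i) Q(i,j)$ a.s., the latter by applying the ergodic theorem to the two-step chain $(X_{k-1},X_k)$ whose stationary law has mass $\lambda_Q(i)Q(i,j)$ on $(i,j)$. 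Dividing then gives $\hat Q_n(i,j) = N_n^{ij}/N_n^{i.} \to Q(i,j)$ a.s. under $\P_Q$, which is the strong consistency claim.

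Next I would treat the fluctuations. The key observation is that, for fixed $i$, the centered quantity $N_n^{ij} - Q(i,j)\, N_n^{i.} = \sum_{k=1}^n \big(\delta_{\{i,j\}}(X_{k-1},X_k) - Q(i,j)\,\delta_{\{i\}}(X_{k-1})\big)$ is a martingale with respect to $\mathcal F_n = \sigma(X_0,\dots,X_n)$. Indeed, writing $D_k^{ij} = \delta_{\{i\}}(X_{k-1})\big(\ind_{\{X_k=j\}} - Q(i,j)\big)$, we have $\E[D_k^{ij}\mid \mathcal F_{k-1}] = \delta_{\{i\}}(X_{k-1})\big(Q(X_{k-1},j) - Q(i,j)\big) = 0$ since the factor $\delta_{\{i\}}(X_{k-1})$ forces $X_{k-1}=i$. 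I would then apply the martingale central limit theorem to the vector $(D_k^{ij})_{i,j}$. The predictable quadratic variation is computed from the conditional covariances: on $\{X_{k-1}=i\}$ the increment $\ind_{\{X_k=j\}} - Q(i,j)$ has conditional variance $Q(i,j)(1-Q(i,j))$ and conditional covariance $-Q(i,j)Q(i,j')$ with $\ind_{\{X_k=j'\}}-Q(i,j')$ for $j'\neq j$, while increments attached to distinct states $i\neq i'$ are orthogonal because $\delta_{\{i\}}(X_{k-1})\delta_{\{i'\}}(X_{k-1})=0$. By the ergodic theorem the normalized bracket $\frac1n \sum_{k=1}^n \E[D_k^{ij}D_k^{i'j'}\mid\mathcal F_{k-1}]$ converges a.s. to $\lambda_Q(i)$ times these conditional (co)variances, giving a limiting covariance matrix $V$ with $V_{ij,ij}=\lambda_Q(i)Q(i,j)(1-Q(i,j))$, $V_{ij,ij'}=-\lambda_Q(i)Q(i,j)Q(i,j')$, and $V_{ij,i'j'}=0$ for $i'\neq i$.

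Finally I would combine the pieces. The martingale CLT gives $\frac{1}{\sqrt n}\big(N_n^{ij}-Q(i,j)N_n^{i.}\big) \to_{\mathcal L} \mathcal N(0,V)$. Writing $\sqrt n\big(\hat Q_n(i,j)-Q(i,j)\big) = \frac{n}{N_n^{i.}}\cdot\frac{1}{\sqrt n}\big(N_n^{ij}-Q(i,j)N_n^{i.}\big)$ and using $\frac{N_n^{i.}}{n}\to\lambda_Q(i)$ a.s. from the consistency step, Slutsky's lemma yields the stated Gaussian limit with covariance obtained by dividing $V_{ij,i'j'}$ by $\lambda_Q(i)\lambda_Q(i')$; this reproduces exactly $\Sigma_{ij,ij}=Q(i,j)(1-Q(i,j))/\lambda_Q(i)$, $\Sigma_{ij,ij'}=-Q(i,j)Q(i,j')/\lambda_Q(i)$, and $\Sigma_{ij,i'j'}=0$ for $i'\neq i$. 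The main obstacle is verifying the hypotheses of the martingale CLT rigorously—in particular the a.s.\ convergence of the predictable bracket and the conditional Lindeberg condition (which is immediate here since the increments $D_k^{ij}$ are uniformly bounded). Everything else reduces to the ergodic theorem under (H1) and elementary conditional-variance bookkeeping.
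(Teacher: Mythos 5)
Your proposal is correct and follows essentially the same route as the paper's proof: your increments $D_k^{ij}$ are exactly the paper's $Y_k^{ij}$, and both arguments combine the ergodic theorem (for consistency and for the predictable bracket), the multidimensional martingale CLT, and Slutsky's lemma via the identity $\hat Q_n(i,j)-Q(i,j)=M_n^{ij}/N_n^{i.}$. Your explicit remark that the conditional Lindeberg condition is immediate from boundedness of the increments is a welcome detail the paper leaves implicit.
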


\begin{proof}
Under (H1), successive applications of the ergodic theorem yield that, almost surely under $\P_Q$,\\
$\frac{1}{n} N_n^{ij} \rightarrow \lambda_Q(i)Q(i,j) $, $ \frac{1}{n}N_n^{i.} \rightarrow \lambda_Q(i)$ so that  $\hat{Q}_n(i,j) \rightarrow  Q(i,j)$. \\
Let us study  $(\hat{Q}_n(i,j)- Q(i,j))$.  For $0\leq i\leq N,0 \leq j \leq N-1$, define
\begin{equation}\label{Ykl}
Y^{ij}_k=  \left(\delta_{\{j\}}(X_{k})- Q(i,j)\right)  \delta_{\{i\}}(X_{k-1}), \quad  M_n^{ij}= \sum_{k=1}^n Y^{ij}_k .
\end{equation}

Then
\begin{equation}\label{QnQ}
\hat{Q}_n(i,j)- Q(i,j)= \frac{N_n^{ij}-Q(i,j)N_n^{i.}}{N_n^{i.}} = \frac{M_n^{ij}}{N_n^{i.}}= \frac{\sum_{k=1}^n Y_k^{ij}}{N_n^{i.}}.
\end{equation}
Clearly, $E_Q(Y_k^{ij}|\mathcal{ F}_{k-1})= 0$  and $(M_n^{ij})$ is a centered $ \mathcal{ F}_n$-martingale with values in $\R^q$.
Its angle bracket  is  the random matrix $\langle M\rangle_n$  with  indices $(ij),(i'j')$ \\
$$ \langle M\rangle_n^{ij,i'j'}=\sum_{k=1}^n  E_{Q}(Y_k^{ij} Y_i^{i'j'} | \mathcal{ F}_{k-1}).$$
Straightforward computations yield that
\begin{align*}
E_{Q}(Y_k^{ij} Y_k^{ij} |\mathcal{ F}_{k-1})) &=  Q(i,j)(1-Q(i,j))  \delta_{\{i\}} (X_{k-1}), \\
E_{Q}(Y_k^{ij} Y_k^{ij'} |\mathcal{ F}_{k-1})) &= - Q(i,j)Q(i,j') \delta_{\{i\}} (X_{k-1})\mbox{ if } j' \neq j\mbox{ and} \\
E_{Q}(Y_k^{ij} Y_i^{i'j'} | \mathcal{ F}_{k-1}) &= 0\mbox{ if }i'\neq i.
\end{align*}
%
Define the $q$-dimensional matrix  $ J_Q$ by
\begin{align*}
J_Q(ij,ij) &= Q(i,j) (1-Q(i,j)) \lambda_Q(i),\\
J_Q(ij,ij') &=- Q(i,j)Q(i,j') \lambda_Q(i)\mbox{ for }j' \neq j\mbox{ and}\\
J_Q (ij,i'j') &=0\mbox{ if }i'\neq i.
\end{align*}
Then, the ergodic theorem yields that $\frac{1}{n}\langle M\rangle_n ^{ij,i'j'} \rightarrow J_Q(ij,i'j')$  a.s.\ under  $\P_Q$.\\
Applying the Central Limit Theorem  for multidimensional martingales (see Appendix, Section \ref{TCLMartmult} )
yields that, under  $\P_Q$,
 $\frac{1}{\sqrt n} M_n \rightarrow \mathcal{ N}(0,  J_Q)$ in distribution.
Finally,  using that $\frac{1}{n} N_n^{i.} \rightarrow \lambda_Q(i)$ a.s.,  an application of Slutsky's lemma to \eqref{QnQ} achieves the proof  of Proposition \ref{TCLMCF}.
\end{proof}

\subsection{Greenwood model} \label{Greenwood}
This is a  basic model which was introduced by Greenwood \cite{greenwood31IV} to study measles epidemics in United Kingdom.
It  is an $SIR$ epidemic in a finite population of size $N$.
The latent period is fixed and equal $1$ with infectiousness confined to a single time point.   At the moment of infectiousness of any given infective,
the chance of contact with any specified susceptible, sufficient or adequate to transmit the infection is $ p=1-q$. Infected
individuals are removed from the infection chain. At time $0$,  assume that the number of Susceptible $S_0$ and Infected $I_0$  verify $S_0+I_0=N$.\\
 Denote by $S_n,I_n$ the number of Susceptible and Infected at time $n$. Then,  for all $n\geq 0$,
\begin{equation}\label{GrenSI}
S_n= I_{n+1}+S_{n+1},
\end{equation} 
and, at each generation the actual number of new cases has a Binomial distribution depending on the parameter $p$.
In the Greenwood model, the chance of a susceptible of being infected depends only on the presence of some infectives and not on their actual number. Hence,  if $I_n= 0$, the epidemic terminates immediately since there is no further infectives. If $I_n\geq1$,
  the conditional distribution of $I_{n+1}$ given the past
$\mathcal{ F}_n=\sigma ((S_i, I_i), i=0,\dots n)$ is
$$\mathcal{ L}( I_{n+1}|\mathcal{ F}_n)=\mathrm{Bin}(S_n,p) \quad \mbox{and } S_{n+1}= S_n- I_{n+1}.
$$
The process keeps going on up to the time where there is no longer Infected in the population.
Noting that  $\mathcal{ F}_n=\sigma (S_i, i=0,\dots n)$, $(S_n)$ is a Markov chain on $\{0,\dots,S_0\}$ with transition matrix
\begin{equation}
Q_p(i,j)= \begin{pmatrix}i\\{i-j}\end{pmatrix} p^{i-j} (1-p)^{j} \mbox { if }
0 \leq j\leq i \leq S_0; \quad
Q_p(i,j)= 0  \mbox{ otherwise.}
\end{equation}
\underline{Parametric inference} \\
Assume that the successive numbers of Susceptible $(s_0,s_1,\dots,s_n)$ have been observed up to time $n$.
In this model,  $(S_n)$ decreases  with $n$, and  extinction occurs after a geometric number of generations. Therefore,
the inference framework is to assume that $S_0$ (hence $N$) $\rightarrow \infty$.  \\
Let $\P_p$ the probability associated to the Markov chain with  transition $Q_p$ and initial condition $s_0$.
The likelihood associated with parameter $p$ and observations $(s_1,\dots,s_n)$ is, if
$s_0\geq s_1 \dots \geq s_n$,
\begin{equation}
L_n(p;s_1,\dots,s_n) =
 \prod_{k=1}^{n} \P_p(S_{k}=s_{k}| S_{k-1}=s_{k-1})=C(s_0,\dots,s_n )p^{s_0-s_n} (1-p)^{\sum_{k=1}^n s_n}.
\end{equation}

All the quantities independent of $p$  have been gathered in the term $C(s_0,,\dots,s_n)$. They depend on the model and the observations, and therefore have no influence on the estimation of $p$.
Elementary computations yield that the value of $p$ that maximizes the likelihood is
$$\hat{p}_n= \frac{s_0-s_n}{\sum_{k=0}^{n-1} s_k}=  \frac{1}{ \mbox{``mean time to infection''}}.$$

Another approach for estimating parameters of a stochastic process is the Conditional Least Squares (CLS) method.
This is the analog of the traditional Least Squares method for i.i.d.\ observations.
It is  especially  relevant when computing the likelihood is intractable.
Noting that $\E_{p}(S_k |\mathcal{ F}_{k-1})= (1-p)S_{k-1}$, it reads as
\begin{equation}    \label{CLSGreen}
U_n(p,S_1,\dots,S_n)= \sum_{k=1}^n\;\left( S_k- \E_{p}(S_k |\mathcal{ F}_{k-1})\right)^2=  \sum_{k=1}^n\;\left( S_k- (1-p) S_{k-1})\right)^2.
\end{equation}
The  associated Conditional Least Squares  estimator is
\begin{equation}\label{pCLS}
	\tilde{p}_n= 1- \frac{\sum_{k=1}^n s_{k-1}s_k}{\sum_{k=1}^n s_{k-1}^2}.
\end{equation}
A concern in statistics is to answer the question: how does such an estimator (or other ones) behave according to the asymptotic framework
(here   $S_0 \rightarrow \infty$).  Is one of these two estimators better?

\subsection{Reed--Frost model}\label{RF}
It is also a chain  Binomial $SIR$ model relevant to model the evolution of an ordinary influenza in a small group of individuals.
The latent period is long with respect to a short infectious period and new infections occur at successive generations separated by latent periods. It is assumed that latent periods are  equal to $1$, contacts between Susceptibles and Infected are independent, and  that
the probability of contact between a Susceptible and an Infected is  $p=1-q$.
Therefore the probability of a Susceptible escaping infection given $I$ Infected is $q^I$, and if $\mathcal{ F}_n= \sigma((S_0,I_0),\dots,(S_n,I_n))$,
$$\mathcal{ L}(I_{n+1}| \mathcal{ F}_n)= \mathrm{Bin}(S_n,p_n ) \; \mbox{ with } p_n=1-q^{I_n}  \quad \mbox{and  } S_{n+1}= S_n-I_{n+1}.$$
Then $(S_n,I_n)$ is a Markov chain on $\N^2$ with probability transitions,
\begin{eqnarray*}
 Q_q((s_n,i_n), (s_{n+1},i_{n+1}))&=& \begin{pmatrix} s_n \\ s_{n+1} \end{pmatrix} (q^{i_n})^{s_{n+1}}(1-q^{i_n})^{i_{n+1}} \;  \mbox{ if } s_{n+1}+i_{n+1}= s_n,\\
 &=& 0 \; \mbox{ otherwise}.
\end{eqnarray*}
\underline{Parametric inference} \\
Assume that the successive numbers of Susceptible and Infected
have been observed up to time $n$ and consider the estimation of $q=1-p$.
Denote $\P_q$ the probability associated with the Markov chain with transition $Q_q$ and initial condition $(s_0,i_0)$.
Then, if $s_{k+1}+i_{k+1}= s_{k}$ for $k=0,\dots,n-1$,
\begin{equation}\label{LnRF}
L_n(q;(s_1,i_1\dots, (s_n,i_n))= \prod_{k=0}^{n-1} \begin{pmatrix} s_k \\s_{k+1} \end{pmatrix} (q^{i_k})^{s_{k+1}}
(1-q^{i_k})^{i_{k+1}}.
\end{equation}
Therefore,
$ \log L_n(q) = C((s_k,i_k)) + \sum _{k=0}^{n-1} (s_{k+1}i_ k\log q + i_{k+1} \log(1-q^{i_k})).$ \\
Differentiating with respect to $q$ yields
\begin{equation*}
	\frac{d\log L_n}{dq}= \frac{1}{q}\sum _{k=0}^{n-1}\frac{i_k}{1-q^{i_k}}(s_{k+1}-s_kq^{i_k}).
\end{equation*}
The maximum likelihood estimator $\hat{q}_n$  of $q$ is a solution of the equation
$$\sum _{k=0}^{n-1}\frac{i_k}{1-q^{i_k}}(s_{k+1}-s_k q^{i_k})=0.$$
 Its properties can be studied as the number of observations increases (implying that the initial population grows to infinity).

Here a problem which  occurs in practice already appears in this simple model, the case of  ``Partially Observed Markov Processes'': it
corresponds to the fact that both coordinates  $(S_n,I_n)$ are not  observed, but  only  the successive numbers of Infected individuals   $(I_{k},  k=0,\dots ,n-1)$
 are available.
In the special case of Hidden Markov Models (see  the Appendix, Section \ref{Miscstat} for the definition of H.M.M.),
  the theory for inference  is  now well known (\cite{cap05IV}, \cite{vanh08IV}), while there is no general theory for partially observed Markov processes. Many methods and algorithms have been proposed to deal with it  in practice (see e.g.\ \cite{dou01IV}, \cite{fea12IV}, \cite{ion11IV}). For applications specific to epidemics, many authors have addressed this problem (see e.g.\ \cite{cau04IV}, \cite{cha19IV}, \cite{hoh05IV},  \cite{ion11IV}, together with  the development of   packages (see R package POMP  \cite{kin08IV})

 \subsection{Birth and death chain with re-emerging}\label{BDC}
We consider now the example of an epidemic model with re-emerging in  a large infinite population.
It can be  described by a birth and death chain on $\N$ with reflection at $0$.
This models for instance  farm animals epidemics  when infection can also be produced by the environment.
Let $p,q$ denote the birth rate and death rates  with $\{0<p,q<1,p+q<1\}$. We assume that  $I_0= i_0 \geq 1$ and that $(I_n)$, the number of infected at time $n$,
evolves as follows:
\begin{enumerate}
\item[-] if $k \geq 1 $, then $\P(I_{n+1}=k+1|I_n=k)=p$, $\P(I_{n+1}=k-1|I_n=k)=q $,  $ \P(I_{n+1}=k | I_n=k)=1-p-q$; \\
\item[-] if $k=0$, then $\P(I_{n+1}=1|I_n=0)=p$,  $\P(I_{n+1}=0|I_n=0)= 1-p$ (re-emerging probability).
\end{enumerate}

The Markov chain $(I_n)$ is irreducible aperiodic on $\N$ and, if $p < q $, $(I_n)$  is
positive recurrent with stationary distribution
$$\lambda_{(p,q)} (i)= \left(1-\frac{p}{q}\right) \left(\frac{p}{q}\right)^i.$$

\noindent\underline{Parametric inference} \\
Let $\Theta= \{(p,q), 0<p<q<1 \mbox{ with  } p+q<1\}$ and let $\theta_0$ be the true parameter value.
  Assume that  $I_0 = i_0 > 0$ is non-random and fixed and
consider the estimation of $\theta= (p,q) \in \Theta$ based on the observation of the successive numbers of Infected
 up to time $n$. \\
 Let  $(Q_{\theta}(i,j), i,j \in \N)$ denote the transition kernel $(I_n)$:
\begin{enumerate}
\item[-] if $i\neq  0$, then $Q_{\theta}(i,j)= p \delta_{\{i+1\}}(j)+q \delta_{\{i-1\}}(j) + (1-p-q)  \delta_{\{i\}}(j)$,\\
\item[-] if $i=0$, then $Q_{\theta}(0,j)= p \delta_{1}(j)+(1-p) \delta_{0}(j) $.
\end{enumerate}
Noting that for $j \neq\{ i-1,i, i+1\}$, $N_n^{ij}=0$,  the loglikelihood $\ell_n(\theta)$  satisfies
\begin{align*}
\ell_n(\theta)&=  \sum_{i,j  \in\ N} N_n^{ij} \log Q_{\theta}(i,j)\\
&=    B_n \log p + D_n \log q + R_n \log (1-p-q) + N_n^{0,0} \log(1-p), \mbox{ with}
\end{align*}
\begin{equation}\label{BDR}
B_n= \sum_{i\geq 0} N_n^{i,i+1}, \; D_n=\sum_{i\geq 1} N_n^{i,i-1}, \; R_n=\sum_{i\geq 1}N_n^{i,i}.
\end{equation}
Since the Markov chain $(I_n,I_{n+1})$ is positive recurrent  on $\N^2$ with stationary measure $ \lambda_{\theta}(i) Q_{\theta}(i,j)$, we can study directly the limit behaviour of $\ell_n(\theta)$.
Applying the ergodic theorem to $(I_n,I_{n+1})$ yields that,  almost surely under $\P_{\theta_0}$,
\begin{align*}
\frac{1}{n} N_{n}^{i,i+1} &\rightarrow  p_0 \lambda_{\theta_0}(i)\mbox{ for }i \geq  1,\\
\frac{1}{n} N_{n}^{i,i-1} &\rightarrow q_0 \lambda_{\theta_0}(i),\\
\frac{1}{n} N_{n}^{i,i}  &\rightarrow  r_0 \lambda_{\theta_0}(i),\\
\frac{1}{n} N_{n}^{0,0} &\rightarrow (1-\frac{p_0}{q_0}) (1-p_0).
\end{align*}

Therefore, using \eqref{BDR},
\begin{align*}
\frac{1}{n} B_n  &\rightarrow  p_0,\\
\frac{1}{n} D_n &\rightarrow  q_0 \times \frac{p_0}{q_0}= p_0,\\
\frac{1}{n}R_n &\rightarrow  \frac{r_0 p_0}{q_0},\\
\frac{1}{n} N_{n}^{0,0} &\rightarrow (1-\frac{p_0}{q_0}) (1-p_0).
\end{align*}
Joining these results, under $P_{\theta_0}$, as $n \rightarrow \infty$,
$$\frac{1}{n} \ell_n(\theta) \rightarrow p_0 \log p + p_0 \log q +\frac{r_0 p_0}{q_0}\log r + (1-\frac{p_0}{q_0}) (1-p_0) \log(1-p) := J(\theta_0,\theta).$$
We can check directly that $\theta \rightarrow J(\theta_0,\theta)$ possesses a unique global  maximum at $\theta_0$.
The associated maximum likelihood estimator $ \hat{\theta}_n$ is
\begin{equation}\label{MLEBD}
\hat{p}_n= \frac{1}{n}B_n;\quad \hat{q}_n= \frac{B_n}{D_n+R_n} (1-\frac{1}{n}B_n).	
\end{equation}
Successive applications of the ergodic theorem yield that $(\hat{p}_n,\hat{q}_n)$ converges $\P_{\theta_0}$ a.s. to $(p_0,q_0)$.\\

To study the limit distribution of $(\hat{p}_n,\hat{q}_n)$, we use
  the results of Section \ref{MLEMC}  in the Appendix.
Let $Q= (Q(i, j)) $ denote the (unnormalized) transition kernel on $\N\times \N$:
\begin{align*}
Q(i, i + 1) &= 1= Q(i,i )\mbox{ for }i \in \N\mbox{ and}\\
Q(i, i-1) &= 1\mbox{ for }i \geq 1.
\end{align*}
According to  \eqref{Qf},  the family $(Q_{\theta}, \theta \in \Theta)$ is dominated by $Q $
 with associated function $f_{\theta} (i,j)$:
\begin{align*}
f_{\theta}(i,i+1) &=p\mbox{ for } i \geq 0,\\
f_{\theta}(i,i-1)&=q\mbox{ for }  i \geq 1,\\
f_{\theta}(i,i) &=1-p-q,\\
f_{\theta}(0,0) &=1-p.
\end{align*}

Except the compactness assumption of $\Theta$ (only  required for the consistency of the MLE),
the Markov chain satisfies Assumptions (H1)--(H8) of Section \ref{MLEMC},
Therefore,  under $\P_{\theta_0}$,
$$\sqrt{n}  \begin{pmatrix} \hat{p}_n-p_0\\  \hat{q}_n-q_0\end{pmatrix} \rightarrow_{\mathcal{ L}} \mathcal{ N}(0,I^{-1}(\theta_0)),$$
with, using  Definition (\ref{FisherCh3}),
$$ I(\theta_0)= \sum_{i\geq 0} \lambda_{\theta_0}(i)\sum_{j\geq 0} \frac{\nabla_{\theta}f_{\theta_0}(i,j) \nabla_{\theta}^* f_{\theta_0}(i,j)}{f_{\theta_0}(i,j)^2} Q_{\theta_0}(i,j).$$
Hence $I(\theta_0)$ can be explicitly computed:  for $\theta=(p,q)$, we get
\begin{equation}
I(p,q)=\begin{pmatrix}
\frac{r+p^2}{p(1-p)r}& \frac{p}{qr}\\
\frac{p}{qr}&\frac{p(1-p)}{rq^2}
\end{pmatrix}
 \Rightarrow  I^{-1}(p,q)= \begin{pmatrix} p(1-p)& -pq\\
-pq&\frac{q^2(p^2+r)}{p(1-p)}
\end{pmatrix}.
\end{equation}

\subsection{Modeling an infection chain in an Intensive Care Unit}\label{ICU}
This example is taken from Chapter 4 of \cite{die13IV}.
 It aims at describing nosocomial infections (i.e.\  infections acquired in a hospital). The incidence of these infections is highest in  an Intensive Care Unit, which is characterized by a small number of beds (about 10 beds at most) and rapid turnover of patients by way of admission and discharge. 
There are two routes for infection (colonization) for a patient.
\begin{enumerate}
\item[-] The endogenous route ($\alpha$ mechanism): bacteria are already present in a newly admitted patient  but at low undetectable levels and resistant bacteria  develop
because of antibiotic treatments during the stay. Let  $e^{-\alpha}=(1-a) $ the probability per individual per time unit of getting infected by this route.
\item[-] The exogenous route ($\beta$ transmission):  it  models the probability of infection  of a Susceptible  by an Infected  in the ICU per time unit,
 $e^{-\beta}= (1-b)$.
 \end{enumerate}

To describe the  composition of the ICU  in terms of Infected and Susceptible individuals on long time intervals, a Markov chain model  can be  used  as follows.
Each patient has probability $ d$ of being discharged by unit of time. Discharge and admission take place every day at noon; new admitted individuals are susceptible.
 Observations are obtained from a bookkeeping scheme that
concerns  the state of the ICU immediately after discharge (12h05).\\

Consider the simplest example, an ICU with two beds.
 It corresponds to three possible states: State $0$ (both patients are Susceptible), State $1$ (one Susceptible, one  Infected) and $2$ (both are Infected).
 Denote by $X_n$ the composition of the ICU at time $n$.
 Let us compute  according to $\theta= (a,b,d)$ the  transition  matrix $Q_{\theta}$ of $(X_n)$.
Introduce
$\bar{X}_{n+1}$ the state of the ICU just before discharge (at 11h55)
 on the next day.
If   $X_n=0$,   $\P(\bar{X}_{n+1}=0)=a^2$, $\P(\bar{X}_{n+1}=1)=2 a (1-a)$ and $\P(\bar{X}_{n+1}= 2)= (1-a)^2$. If  $X_n=1$, $\P(\bar{X}_{n+1}=1)= ab$, and $\P(\bar{X}_{n+1}=2)= (1-ab)$.
Finally, if  $X_n=2$, $\P(\bar{X}_{n+1}=2)= 1$.
This yields that, after discharge (12h05), $X_{n+1}=  0,1,2$ with respective probabilities,
\begin{equation*}
Q_{\theta}= \begin{pmatrix}(a+(1-a)d)^2& 2(1-a)(1-d)(a+(1-a)d)&(1-a)^2(1-d)^2\\
abd+(1-ab)d^2& 2(1-ab)d(1-d)+ab (1-d)& (1-ab)(1-d)^2\\
d^2& 2d(1-d)& (1-d)^2
\end{pmatrix}.
\end{equation*}

 Let $\Theta= (0,1)^3$. Assume that the states $(X_i)$ of the ICU after discharge have been observed up to time $n$.
 The maximum likelihood estimator of  $ \theta$ reads,
using (\ref{Nkl}),
 \begin{eqnarray*}
 \ell_n(\theta) &=&\sum _{k=1}^n \log Q_{\theta}(X_{k-1},X_{k})=
\sum _{i, j \in\{0,1,2\}}N_n^{ij} \log Q_{\theta}(i,j),\\
\hat{\theta}_n&=& \mbox{ argsup}_{\theta \in \Theta}\;  \ell_n(\theta).
\end{eqnarray*}
Since $(X_n)$ is a positive recurrent Markov chain on $\{0,1,2\}$, we can apply  the results stated in the Appendix, Section \ref{ApStatMC}. The MLE $\hat{\theta}_n$ is consistent and converges at rate
$\sqrt{n}$ to a Gaussian law $\mathcal{ N}_3(0, I^{-1}(\theta))$, where $I(\theta)$ is the Fisher information matrix defined in \eqref{FisherCh3}.\\

Assume now that  there is no systematic control of  the exact status of the patients  after discharge, but that each  patient is tested with probability p.
Then, the observations are no longer $(X_n)$, but $(Y_n)$, with conditional transition matrix  $ (T_p(i,j)= P(Y_n=j |X_n=i), 0\leq i, j \leq 2)$,
$$ T_p= \begin{pmatrix}
(1-p)^2& 2p(1-p)& p^2\\
p(1-p)& p^2+ (1-p)^2&p(1-p)\\
p^2&2p(1-p)& (1-p)^2
\end{pmatrix}.
$$
If only $(Y_n)$ is observed, we have to deal with a Hidden Markov Model $(X_n, Y_n)$ as  defined in the Appendix, Section \ref{Miscstat}.
The estimation of $\theta$ or $(\theta,p)$ has to take into account this additional noise to be efficient (see e.g \cite{cap05IV}).

\section{Two extensions to continuous state and continuous time  Markov chain models}
\subsection{A simple model for population dynamics.} \label{AR1details}
The $AR(1)$ model is a classical model of population dynamics with continuous state space and allows us to illustrate explicitly various inference questions.
  Consider the autoregressive process on $\R$s introduced in Section \ref{AR1}  defined by
$$ X_0= x_ 0 ; \quad  \mbox{ and for } i\geq 1,  X_{i} = a X_{i-1} + \gamma \epsilon_{i},$$
where  $(\epsilon_{i})$ is a sequence of  i.i.d.\ random variables on $\R$ with distribution  $f_{\theta}(x) dx$, independent of $X_0$.\\
This is a Markov chain on $(\R,\mathcal{ B}(\R))$ with transition kernel
$Q_{\theta,a}(x,dy)=  f_{\theta}(y-ax) dy$. If  $X_0$ is known, choosing as dominating kernel  the Lebesgue measure on $\R$, the
likelihood reads as
$$L_n(a,\theta )=  \prod_{i=1}^n \; f_{\theta}(X_i-aX_{i-1}).$$

The Gaussian $AR(1)$ corresponds to  $\epsilon_i \sim  \mathcal{ N}(0, \gamma^2)$:
\begin{eqnarray*}
Q_{a,\gamma^2}(x,dy) &=& \frac{1}{\gamma \sqrt{2\pi}} \exp-(\frac{1}{ 2\gamma ^2}( y-ax)^2) dy,\mbox{ and}\\
L_n(a,\gamma^2)&= & \prod_{i=1}^n\; \frac{1}{\gamma \sqrt{2\pi}} \exp(-\frac{1}{ 2\gamma ^2}( X_i-aX_{i-1})^2),\\
\ell_n(a, \gamma^2)&=&
-(n/2) \log \gamma^2 - 1/(2\gamma^2) \sum_{i=1}^n (X_i-aX_{i-1})^2.
\end{eqnarray*}
 The properties of the MLE have been presented in Chapter \ref{chap1:intro}.
\subsection{Continuous time Markov epidemic model}
We just recall here results for the $SIR$ Markov jump process (see Section \ref{sec:likelihood}).
 Assume that the jump process $(\mathcal{Z}^N(t))$ is continuously observed on $[0,T]$.
 Its dynamics is described by the two parameters $(\lambda,\gamma)$.
The  Maximum Likelihood Estimator $ (\hat{\lambda},\hat{\gamma})$ is  explicit (see  \cite{and00IV} or  Section \ref{sec:likelihood}).
Indeed, let  $(T_i)$ denote   the successive jump times  and set   $J_i=0$ if we have an infection and $J_i=1$ if we have a recovery.
 Let $K_N(T)= \sum_{i\geq 0}1_{T_i\leq T}$. Then
 \begin{align*}
\hat{\lambda}_N&= \frac{1}{N} \frac{ \sum_{i=1}^{K_N(T)} (1-J_i)}{\int_0^T S^N(t) I^N(t) dt}= \frac{1}{N} \frac{\mbox{ \# Infections} }{\int_0^T S^N(t) I^N(t) dt},\\
 \hat{\gamma}_N&= \frac{1}{N} \frac{ \sum_{i=1}^{K_N(T)} J_i}{\int_0^T I^N(t) dt} = \frac{\mbox{\# Recoveries}}{\mbox{``Mean infectious period''}}.
 \end{align*}
As the population size $N$ goes to infinity, $(\hat{\lambda}_N,\hat{\gamma}_N)$  is consistent and
  $$\sqrt{N} \begin{pmatrix}\hat{ \lambda}_N-\lambda \\ \hat{\gamma}_N-\gamma \end{pmatrix} \rightarrow \mathcal{ N}_2\left(0, I^{-1}(\lambda,\gamma)\right), \mbox{ where } I(\lambda,\gamma)= \begin{pmatrix} \frac{\int_0^T s(t)i(t) dt}{\lambda} &0\\0& \frac{\int_0^Ti(t) dt} {\gamma}
\end{pmatrix} , $$
and  $(s(t),i(t) )$  is the solution of the ODE associated with the limit behaviour of the normalized process $(\mathcal{Z}^N(t)/N)$:
 $\frac{ds}{dt} = -\lambda s(t)i(t); \frac{di}{dt}=  \lambda s(t)i(t)-\gamma i(t)$.\\

The matrix $I(\lambda,\gamma) $ is the Fisher information matrix of this statistical model.\\

\section{Inference for Branching processes} \label{Chapter: Statbranching}

At the early stage of an outbreak, a good approximation for the epidemic dynamics is to consider that the population of Susceptible is infinite and that
Infected individuals evolve according to a branching process (see Section \ref{sec-early-stage} of Part I).
We present here some classical statistical results in this domain.
This Markov chain model is an example of  non-ergodic processes which leads  to different statistical results.

\subsection{Notations and preliminary results}
Some basic facts
 on discrete time branching processes (or Bienaym\'e--Galton--Watson processes) are given in  Part I, Section \ref{TB-EP_sec_Br-proc} of these notes (see also the classical monographs on branching processes  \cite{ath72IV} or \cite{jag75IV}).
 We complete these facts with some properties useful for the inference.

Consider an ancestor $Z_0=1$ has $\xi_0$ children according to an offspring law $G$ defined by
$$\P(\xi_0=k)= p_k ,\;\; k \geq 0  \;\mbox{ and   } \sum_{k \geq 0} p_k=1.$$
Let $m=\E(\xi_0)$ and  $g(s)= \E(s^{\xi_0 })$. The $i$-th of those children has $\xi_{1,i}$ children, where the random variables $\{\xi_{k,i}, k\geq 0, i\geq 1\} $
are i.i.d.\ with distribution $G$.
Let $Z_n$ denote the number of individuals in generation $n$. Then,
\begin{equation} \label{ZGW}
	Z_{n+1}= \sum_{i=1}^{Z_n}\xi_{n,i}.
\end{equation}
Denote by  $E= \{ \exists \; n ,\;  Z_n=0\}$ the set of extinction. \\
If $ m\leq 1$ and  if $p_1 \neq 1$, the process $Z_n$ has a probability $q=1$ of extinction.\\
If $m>1$, the process is supercritical and has a probability  of extinction $q<1$, which is the smallest solution of the equation
$g(s)=s$ on $[0,1]$. The set $E^c$ is equal  to $\{\omega,Z_n(\omega )\rightarrow \infty\}$.\\

This extinction probability is an important parameter for the early stages of an epidemic. It corresponds to the probability of a minor outbreak.\\
We complete the results given in  Part I, Section \ref{TB-EP_sec_Br-proc}.
Let $\mathcal{ F}_n=\sigma (Z_0,\dots, Z_n) $ and define $ W_n= m^{-n}Z_n$.
Then  $(W_n)$ is a $\mathcal{ F}_n$-martingale.
\begin{theorem}\label{W}
Assume that $m>1$ and that the offspring law $G$ has finite variance $\sigma^2$. Then,
 there is a non-negative random variable $W$ such that
 \begin{enumerate}
\item[(i)]  $W_n \rightarrow W$ as $n\rightarrow \infty$  a.s. and in $L^2$.
\item[(ii)] $\{W>0\}= \{Z_n\rightarrow \infty \} =E^c$ and  $\{W=0\}=\{\lim_n\;  Z_n=0\}=E$.
\item[(iii)] Moreover,  $\E W =1, \; \mathrm{var}(W)= \frac{\sigma^2}{m(m-1)}$.
\end{enumerate}
\end{theorem}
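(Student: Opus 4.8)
The plan is to recognize $(W_n)$ as a non-negative $\mathcal{F}_n$-martingale and to upgrade its almost-sure convergence (guaranteed by the martingale convergence theorem, since $\E|W_n| = \E W_n = 1$) to $L^2$ convergence by controlling its second moments; the finite-variance hypothesis is exactly what makes this possible.

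First I would establish the $L^2$ bound. Conditioning on $\mathcal{F}_n$ and using that $Z_{n+1} = \sum_{i=1}^{Z_n}\xi_{n,i}$ is a sum of $Z_n$ i.i.d.\ copies of $\xi_0$ gives $\E(Z_{n+1}^2 \mid \mathcal{F}_n) = m^2 Z_n^2 + \sigma^2 Z_n$. Taking expectations and setting $u_n = \E(Z_n^2)$ yields the recursion $u_{n+1} = m^2 u_n + \sigma^2 m^n$ (using $\E Z_n = m^n$). Dividing by $m^{2(n+1)}$ turns this into $\E(W_{n+1}^2) = \E(W_n^2) + \sigma^2 m^{-(n+2)}$, whence $\E(W_n^2) = 1 + \sigma^2 m^{-2}\sum_{k=0}^{n-1} m^{-k}$, which is bounded because the geometric series converges for $m>1$, with limit $1 + \sigma^2/(m(m-1))$. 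Thus $(W_n)$ is bounded in $L^2$, so it converges a.s.\ and in $L^2$ to a limit $W \ge 0$, which is (i). The $L^2$ convergence transfers the moments to the limit: $\E W = \lim \E W_n = 1$ and $\E W^2 = \lim \E W_n^2 = 1 + \sigma^2/(m(m-1))$, giving $\var(W) = \E W^2 - (\E W)^2 = \sigma^2/(m(m-1))$, which is (iii).

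For (ii), the inclusion $E \subseteq \{W=0\}$ is immediate: on the extinction set $Z_n$ is eventually $0$, forcing $W_n$ and hence $W$ to vanish. The reverse inclusion is the delicate point, and I would exploit the branching (self-similarity) decomposition. Conditionally on $Z_1 = k$, the population splits into $k$ independent subtrees each distributed as the original process, and the martingale limit decomposes as $W = m^{-1}\sum_{i=1}^{k} W^{(i)}$ with the $W^{(i)}$ i.i.d.\ copies of $W$. Hence $\{W=0\}$ occurs iff every subtree has $W^{(i)}=0$, so $\P(W=0 \mid Z_1 = k) = \rho^k$ where $\rho := \P(W=0)$; summing over $k$ shows $\rho = g(\rho)$, i.e.\ $\rho$ is a fixed point of the offspring generating function on $[0,1]$. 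Since $m>1$, the only solutions of $g(s)=s$ in $[0,1]$ are the extinction probability $q<1$ and $1$; and since $\E W = 1 > 0$ forces $\P(W>0)>0$, i.e.\ $\rho<1$, we conclude $\rho = q = \P(E)$. Combined with $E \subseteq \{W=0\}$ this gives $\{W=0\}=E$ up to a null set, and the identifications $\{W>0\}=E^c=\{Z_n\to\infty\}$ follow from the dichotomy recalled before the theorem.

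The main obstacle is precisely the reverse inclusion $\{W=0\}\subseteq E$: almost-sure and $L^2$ convergence alone give no information on whether the limit is strictly positive, and it is the self-similar branching structure (yielding $\rho = g(\rho)$) together with the non-degeneracy $\E W = 1$ that pins down $\P(W=0)$ as the extinction probability.
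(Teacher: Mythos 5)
The paper itself gives no proof of Theorem \ref{W}: it is stated as a recap of classical facts on supercritical Galton--Watson processes, with the reader referred to the monographs of Athreya--Ney and Jagers. Your argument is correct and complete, and it is precisely the classical proof those references give: the second-moment recursion $\E(Z_{n+1}^2\mid\mathcal{F}_n)=m^2Z_n^2+\sigma^2 Z_n$ yields $\E W_n^2 \uparrow 1+\sigma^2/(m(m-1))$, so the non-negative martingale $(W_n)$ is $L^2$-bounded and converges a.s.\ and in $L^2$, giving (i) and (iii); and the branching decomposition $W=m^{-1}\sum_{i=1}^{Z_1}W^{(i)}$ forces $\P(W=0)$ to be a fixed point of $g$, hence equal to $q$ (it cannot be $1$ since $\E W=1$), which combined with the trivial inclusion $E\subseteq\{W=0\}$ and the dichotomy $E^c=\{Z_n\to\infty\}$ gives (ii).
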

\noindent
\begin{corollary} \label{CorSnSnp}
If  $m>1$, then, almost surely  	
\begin{equation}\label{Sn}
 \frac{1}{m^n} \sum_{i=1}^n Z_i  \rightarrow  \frac{m}{m-1}W; \quad  \frac{1}{m^n} \sum_{i=1}^n Z_{i-1} \rightarrow \frac{1}{m-1} W.
 \end{equation}
\end{corollary}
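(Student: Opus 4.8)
The plan is to reduce everything to the martingale convergence already established in Theorem \ref{W}. Recall that $W_n = m^{-n}Z_n$, so $Z_i = m^i W_i$, and the first partial sum rewrites as
$$\frac{1}{m^n}\sum_{i=1}^n Z_i = \sum_{i=1}^n m^{i-n} W_i = \sum_{k=0}^{n-1} m^{-k}W_{n-k},$$
after the substitution $k = n-i$. Since $m>1$, the weights $m^{-k}$ are summable with $\sum_{k\ge 0}m^{-k} = m/(m-1)$, and Theorem \ref{W}(i) gives $W_n \to W$ almost surely. Heuristically, each term tends to $m^{-k}W$ and the whole sum should tend to $\frac{m}{m-1}W$; the task is to justify passing the limit through the sum.

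To make this rigorous I would work on the almost sure convergence event and argue by dominated convergence with respect to the counting measure. Since the sequence $(W_n(\omega))$ converges, it is bounded, say $|W_n(\omega)| \le C(\omega)$ for all $n$. Then the $k$-th summand $m^{-k}W_{n-k}(\omega)\,\mathbf{1}_{\{k\le n-1\}}$ is dominated by the summable sequence $C(\omega)\,m^{-k}$, while for each fixed $k$ it converges to $m^{-k}W(\omega)$ as $n\to\infty$. Dominated convergence then yields
$$\sum_{k=0}^{n-1}m^{-k}W_{n-k}(\omega) \longrightarrow W(\omega)\sum_{k\ge 0}m^{-k} = \frac{m}{m-1}\,W(\omega),$$
which is the first assertion of the corollary.

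For the second sum I would simply shift the index rather than repeat the argument. Writing $\sum_{i=1}^n Z_{i-1} = Z_0 + \sum_{i=1}^{n-1} Z_i$, one gets
$$\frac{1}{m^n}\sum_{i=1}^n Z_{i-1} = \frac{Z_0}{m^n} + \frac{1}{m}\cdot\frac{1}{m^{n-1}}\sum_{i=1}^{n-1}Z_i.$$
The first term tends to $0$, and applying the first part of the corollary (with $n-1$ in place of $n$) shows the second term converges to $\frac{1}{m}\cdot\frac{m}{m-1}W = \frac{1}{m-1}W$, as required.

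The only genuine obstacle is the interchange of limit and infinite summation; the observation that makes it routine is that an almost surely convergent sequence is almost surely bounded, which supplies the summable dominating sequence needed for the counting-measure dominated convergence step. Everything else is bookkeeping on the geometric factors $m^{-k}$.
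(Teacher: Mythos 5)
Your proof is correct, but it takes a different route from the paper's. The paper disposes of both limits in one line by invoking the Toeplitz lemma (stated right after the corollary): writing $\sum_{i=1}^n Z_i = \sum_{i=1}^n m^i\,(Z_i/m^i)$ and taking $a_i = m^i$, $x_i = W_i \to W$, the Toeplitz lemma gives $\bigl(\sum_{i=1}^n m^i W_i\bigr)/\bigl(\sum_{i=1}^n m^i\bigr) \to W$ a.s., and the prefactor $\frac{1}{m^n}\sum_{i=1}^n m^i \to \frac{m}{m-1}$ finishes the first limit; the second follows by the same algebra. You instead reverse the index, $\frac{1}{m^n}\sum_{i=1}^n Z_i = \sum_{k=0}^{n-1} m^{-k} W_{n-k}$, and pass to the limit by dominated convergence for series, using that a convergent sequence is bounded to get the summable dominating sequence $C(\omega)m^{-k}$; your index-shift reduction of the second limit to the first is clean and legitimate. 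The two arguments are genuinely different tools: yours is self-contained and elementary (no external summability lemma), but it leans on the geometric summability $\sum_k m^{-k} < \infty$, i.e.\ on the specific structure of the weights; the Toeplitz lemma is the more general device, needing only $\sum_i a_i \to \infty$ for nonnegative weights, and is the standard workhorse in the branching-process literature (it is reused implicitly elsewhere in this chapter). Both are fully rigorous here, and your handling of the interchange of limit and sum — the only delicate point — is exactly right.
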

\begin{proof}
We  write $ \sum_{i=1}^n Z_i= \sum_{i=1}^n m^i \frac{Z_i}{m^i}$. Using Theorem \ref{W}, $\frac{Z_n}{m^n} \rightarrow W$  a.s.
An application of the Toeplitz lemma stated below and some algebra yield the two results.
\begin{lemma}\label{Toeplitz}
(Toeplitz Lemma)	Let $ (a_n)$ a sequence of non-negative real numbers and $(x_n)$ a sequence on $\R$. If
	 $ \; \sum_{i=1}^n\;a_i \rightarrow \infty$ and if $(x_n)\rightarrow  x \in \R $ as $n\rightarrow \infty$, then
	$$\frac{\sum_{i=1}^n a_i x_i}{\sum_{i}^n\;a_i} \rightarrow x  \mbox{ as } n\rightarrow \infty. $$
\end{lemma}
\end{proof}

Assume that the offspring distribution  $G_{\theta}(\cdot)$ depends  on a parameter $\theta$ with finite mean $m(\theta)>1$ and finite variance
$\sigma^2(\theta)$.
Denote by $\P_{\theta}$ the law on $(\N^{\N}, \mathcal{ B}(N^{\N}))$ of the branching process $(Z_n) $  with offspring law  $G_{\theta}(\cdot)$.
Then  $(Z_n, n \geq 0)$ is a Markov chain with state space $\N$, initial condition $Z_0=1$  and transition matrix,
\begin{equation}\label{transbranching}
Q_{\theta}(i,j)= G_{\theta}^{\star i}(j),
\end{equation}
 where $\star$ denotes the convolution product of two functions and  $f^{\star i}$ is the $i$-fold convolution product of $f(\cdot)$.
Let $\mu_i$ denote the measure  $\mu_i(k)=1$ for all $ k \in \N$, $\lambda_n=\otimes_{i=1}^n\mu_i$.
Then, the likelihood reads as
\begin{equation}\label{LnBGW}
	\frac{d\P_{\theta}}{d\lambda_n}(Z_0,\dots,Z_n)= L_n(\theta)= \prod_{i=1}^n G_{\theta}^{\star Z_{i-1}}(Z_i); \quad \ell_n(\theta)= \sum _{i=1}^n \log (G_{\theta}^{\star Z_{i-1}}(Z_i)).
	\end{equation}
Under this expression, studying the likelihood for general offspring laws is intractable.  We detail in the next section a framework where it is possible to study this likelihood, and in the next section another method  based on Weighted Conditional Least Squares.

\subsection{Inference  when the offspring law belongs to an exponential family}
Among parametric families of distributions, exponential families of distributions, widely used in statistics, provide here  a nice framework
to study this likelihood. A short recap is given in the Appendix  Section \ref{Miscstat}  (see e.g. the classical monograph  \cite{bic15IV} for the complete exposition).

Assume that the offspring law is a power series distribution:\\
\begin{equation}\label{Psd}
	p(k)= A(\zeta)^{-1}\; a_k \;\zeta ^k, \quad \mbox{with  } A(\zeta)=\sum _{k\geq 0} a_k \zeta ^k.
\end{equation}
Setting $\theta= \log \zeta$, $\Theta=\{\theta \in \R,\; A(e^\theta )< \infty\}$, $h: k \rightarrow h(k)=  a_k$ and $\phi:\theta \rightarrow \phi(\theta)= \log A(\log ( e^{\theta}))$, we get that
it is a special case  of an exponential family of distributions on $\N$ with $T(X)=X$ and
\begin{equation}\label{pk}
	p(\theta,k)= h(k)\exp{(k \theta-\phi(\theta))}.
\end{equation}
The random variable $X$ satisfies that
\begin{equation}\label{momentexp}
m(\theta):= \E_{\theta}(X)= \nabla_{\theta} \phi (\theta);\quad \sigma^2(\theta):= Var_{\theta}(X)= \nabla_{\theta}^2 \phi (\theta).
\end{equation}
Moreover, if $X_1,\dots,X_n$ are i.i.d.\ with distribution \eqref{Psd},  then
\begin{equation}\label{densitysum}
\P(X_1+ \dots+X_n=k) =H(n,k) \exp(k\theta- n\phi(\theta))\; \mbox{ where } H(n,k)= h^{*n}(k).
\end{equation}
Therefore for offspring distributions satisfying \eqref{Psd} or \eqref{pk}, the transition kernel is
$$Q_{\theta}(i,k)=  H(i,k) \exp(k\theta-i\phi(\theta)).$$

Let us note that several families of classical distributions on $\N$ are included in this set-up:
\begin{enumerate}
\item[\textbf{-}] Geometric distributions on $\N^*$ with parameter $p$ (i.e.\  $P(X=k)= p (1-p)^{k-1})$: \\
       $ \theta= \log(1-p)$; $h(k)=1$ and  $\phi(\theta)= \log\frac{e^{\theta}}{1-e^{\theta}}$.\\
\item[\textbf{-}] Binomial distributions $(\mathcal{ B}(N,p), p\in (0,1))$ with  $N$ fixed: \\
  $\theta= \log\frac{p}{1-p}$, $h(k)= \frac{N!}{k!(N-k)!}$ and  $\phi(\theta)= N \log(1+e^{\theta}) $.\\
\item[\textbf{-}]  Poisson distributions $\mathcal{ P}(\lambda)$: $\theta= \log \lambda$, $h(k)= \frac{1}{k!}$ and
$\phi(\theta)= e^{\theta}$.  \\
\item[\textbf{-}] Negative Binomial distributions $ (\mathcal{ NB}(r,p), p\in (0,1))$  with $r$ fixed (i.e.   $P(X=k)= \frac{\Gamma(k+r)}{\Gamma(r) k!}p^r (1-p)^k$:\\
$\theta= \log(1-p)$, $h(k)= \frac{(k+r)\dots (r+1)}{k!}$ and $\phi(\theta)= r\log(1-e^{\theta})$).
\end{enumerate}

Let us come back to the likelihood (\ref{LnBGW}).  Let $\theta_0 \in\Theta$ be the true value of the parameter.
We assume
\begin{enumerate}[(A4)]
\item[\textbf{ (A1)}] The offspring distribution $G_{\theta}$ belongs to  an exponential power series family:
For  all  $k \in \N, \; G_{\theta}(k)= h(k)\exp{(k \theta-\phi(\theta))} $.
 \item[\textbf{ (A2)}] $\Theta$ is a compact subset of $\{\theta, \sum_{k\geq 0} h(k) e^{\theta k}< \infty\}$, $\theta_0 \in \mbox{Int}(\Theta)$.
 \item[\textbf{ (A3)}]  For all $ \theta \in \Theta$, $m(\theta)>1$ and $\sigma ^2(\theta)$ finite.
 \item[\textbf{ (A4)}] There exists a $\delta >0$ such that   $ E (Y^{2+\delta}) = \mu_{2+\delta} < \infty$  where  $Y \sim G_{\theta}$.
\end{enumerate}
Consider the estimation of $\theta$ when the successive generation sizes $(Z_1, \dots Z_n)$ are observed.
Under (A1)--(A3),  the loglikelihood is, using  \eqref{densitysum},
\begin{equation}\label{lnexp}
	\ell_n(\theta)= C(Z_0,\dots,Z_n)+  \sum_{i=1} ^n (\theta Z_i-\phi(\theta)Z_{i-1}),
	\end{equation}
with  $C(Z_0,\dots,Z_n)=  \sum_{i=1}^n \log H(Z_{i-1},Z_i).$
The constant $C(Z_0,.\dots,Z_n)$ depends only on the observations and brings no  information on $\theta$.

The M.L.E $\hat{\theta}_n$, defined as any solution of $\nabla_{\theta}{\ell}_n(\theta)=0$, satisfies
$$\sum _{i=1}^n Z_i-\nabla_{\theta}\phi(\hat{\theta}_n) \sum_{i=1}^n Z_{i-1}=0.$$
Using that $\nabla_{\theta}\phi(\theta)= m(\theta)$ (see (\ref{momentexp})), $\hat{\theta}_n$ satisfies
\begin{equation}\label{mtheta}
m(\hat{\theta}_n)= \frac{\sum _{i=1}^n Z_i}{\sum_{i=1}^n Z_{i-1}}.
\end{equation}
By Theorem \ref{W},  $m(\theta_0)^{-n}Z_n$ converges a.s. and in $L^2$ under $\P_{\theta_0}$ to  a random variable $W$ such that $W>0$ on $E^c$, the non-extinction set, which satisfies $\P_{\theta_0}(E^c)= 1-q >0$   under  (A3).
\begin{theorem}\label{MleGW}
Assume (A1)--(A4).
Then, on  $E^c$, $m(\hat{\theta}_n)$ satisfies
\begin{enumerate}
\item[\textbf{ (i)}] $m(\hat{\theta}_n) \rightarrow m(\theta_0)$ a.s. under $\P_{\theta_0}.$
\item[\textbf{ (ii)}] $ m(\theta_0)^{n/2} (m(\hat{\theta}_n) -m(\theta_0)) \rightarrow _\mathcal{ L}  \sqrt{(m(\theta_0) -1)\sigma^2(\theta_0) } \;\eta^{-1} \;  N $,
where  $\eta, N$ are independent r.v.s,
$N \sim\mathcal{ N}(0,1)$, and $ \eta$ is the positive  variable defined by $\eta^2=W$ on $E^c$.
\end{enumerate}
\end{theorem}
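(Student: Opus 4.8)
The plan is to reduce both statements to the study of a single martingale and then to combine an almost-sure limit (from Corollary~\ref{CorSnSnp}) with a martingale central limit theorem. Starting from the identity \eqref{mtheta}, I would write
$$m(\hat\theta_n)-m(\theta_0)=\frac{\sum_{i=1}^n Z_i-m(\theta_0)\sum_{i=1}^n Z_{i-1}}{\sum_{i=1}^n Z_{i-1}}=\frac{M_n}{\sum_{i=1}^n Z_{i-1}},\qquad M_n:=\sum_{i=1}^n\bigl(Z_i-m(\theta_0)Z_{i-1}\bigr).$$
Using the branching identity \eqref{ZGW}, the increment $Z_i-m(\theta_0)Z_{i-1}=\sum_{j=1}^{Z_{i-1}}(\xi_{i-1,j}-m(\theta_0))$ is, conditionally on $\mathcal F_{i-1}$, a centred sum of $Z_{i-1}$ i.i.d.\ terms; hence $(M_n)$ is a centred $\mathcal F_n$-martingale with $\E_{\theta_0}[(Z_i-m(\theta_0)Z_{i-1})^2\mid\mathcal F_{i-1}]=\sigma^2(\theta_0)Z_{i-1}$, so that its angle bracket is $\langle M\rangle_n=\sigma^2(\theta_0)\sum_{i=1}^n Z_{i-1}$.

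For (i), I would divide numerator and denominator of $M_n/\sum Z_{i-1}$ by $m(\theta_0)^n$ and apply Corollary~\ref{CorSnSnp}: on $E^c$ one has $W>0$ and
$$m(\hat\theta_n)=\frac{m(\theta_0)^{-n}\sum_{i=1}^n Z_i}{m(\theta_0)^{-n}\sum_{i=1}^n Z_{i-1}}\xrightarrow[n\to\infty]{}\frac{\tfrac{m(\theta_0)}{m(\theta_0)-1}W}{\tfrac{1}{m(\theta_0)-1}W}=m(\theta_0)\quad\text{a.s. under }\P_{\theta_0},$$
which is the consistency statement (the ratio is well defined on $E^c$ since eventually $Z_{i-1}\ge1$).

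For (ii), the plan is to separate a deterministic rate from the self-normalised martingale. Writing
$$m(\theta_0)^{n/2}\bigl(m(\hat\theta_n)-m(\theta_0)\bigr)=\frac{M_n}{\sqrt{\langle M\rangle_n}}\cdot\sigma(\theta_0)\sqrt{\frac{m(\theta_0)^n}{\sum_{i=1}^n Z_{i-1}}},$$
Corollary~\ref{CorSnSnp} shows that the second factor converges almost surely on $E^c$ to $\sigma(\theta_0)\sqrt{(m(\theta_0)-1)/W}$. I would then invoke the central limit theorem for martingales (Appendix, Section~\ref{TCLMartmult}) to obtain, conditionally on the non-extinction set $E^c$, that $M_n/\sqrt{\langle M\rangle_n}\to\mathcal N(0,1)$ in the stable (mixing) sense, the limiting Gaussian $N$ being independent of $W$. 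An extended Slutsky argument exploiting this stable convergence then yields
$$m(\theta_0)^{n/2}\bigl(m(\hat\theta_n)-m(\theta_0)\bigr)\xrightarrow{\mathcal L}\sigma(\theta_0)\sqrt{\frac{m(\theta_0)-1}{W}}\,N=\sqrt{(m(\theta_0)-1)\sigma^2(\theta_0)}\,\eta^{-1}N,$$
with $\eta^2=W$ on $E^c$, which is exactly the claimed limit.

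The main obstacle is the martingale CLT itself, because the process is supercritical and hence non-ergodic: the increments $Z_i-m(\theta_0)Z_{i-1}$ grow like $m(\theta_0)^{i/2}$, so $\max_{i\le n}|Z_i-m(\theta_0)Z_{i-1}|/\sqrt{\langle M\rangle_n}$ does \emph{not} tend to $0$ and the classical Lindeberg/Lyapunov conditions fail, the last few increments each contributing a fixed fraction of the variance. What rescues a Gaussian limit is that every normalised increment $\bigl(Z_i-m(\theta_0)Z_{i-1}\bigr)/\sqrt{Z_{i-1}}$ is itself asymptotically $\mathcal N(0,\sigma^2(\theta_0))$ (an ordinary i.i.d.\ CLT along the growing populations $Z_{i-1}\to\infty$ on $E^c$), so the limit is a convergent sum of independent Gaussians, hence Gaussian, with total variance normalised to $1$. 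Making this rigorous---controlling the conditional Lindeberg remainder through the $(2+\delta)$-moment assumption (A4) and establishing the stable convergence that guarantees the independence of $N$ and $W$ (equivalently, working throughout conditionally on $E^c$)---is the delicate part; alternatively one may quote the corresponding limit theorem for supercritical Galton--Watson processes in \cite{gut91IV}.
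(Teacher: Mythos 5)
Your part (i) is the paper's argument verbatim, and your part (ii) keeps the paper's skeleton: the same martingale $M_n=\sum_{i=1}^n(Z_i-m(\theta_0)Z_{i-1})$ with bracket $\langle M\rangle_n=\sigma^2(\theta_0)\sum_{i=1}^n Z_{i-1}$, the same ratio representation of $\hat m_n-m(\theta_0)$, and Corollary \ref{CorSnSnp} to turn the random norming into the factor $\eta^{-1}$. Where you genuinely differ is at the central limit step, and your version is the sounder one. The paper normalises by the deterministic $s_n^2=\E_{\theta_0}\langle M\rangle_n$ and applies Theorem \ref{TCLMart1} after checking the Lyapunov-type condition (H1') via the identity $\E_{\theta_0}(|X_i|^{2+\delta}\mid\mathcal{F}_{i-1})=Z_{i-1}\,\E_{\theta_0}|Y-m(\theta_0)|^{2+\delta}$; but absolute moments of order $2+\delta>2$ are not additive over a sum of $Z_{i-1}$ i.i.d.\ centred variables (by Rosenthal's inequality the conditional moment is of order $Z_{i-1}^{(2+\delta)/2}$), and with the correct order the Lyapunov ratio converges on $E^c$ to a positive multiple of $W^{1+\delta/2}$ rather than to $0$. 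So your observation that the Lindeberg/Lyapunov conditions genuinely fail---the last increments each carrying a non-vanishing fraction of $\langle M\rangle_n$, at the Gaussian scale $\sqrt{Z_{i-1}}\sim m^{i/2}$---is correct, and it in fact exposes a gap in the paper's own verification of (H1'). Your rescue is the classical Heyde--Dion route: $m^{-n/2}M_{n-k}=O_P(m^{-k/2})$ uniformly in $n$, while each of the last $k$ increments, normalised by $\sqrt{Z_{i-1}}$, is asymptotically $\mathcal{N}(0,\sigma^2(\theta_0))$ and conditionally independent of the earlier ones, so the limit is a geometric sum of independent Gaussians with the stated total variance; and since $W$ equals $Z_{n-k}/m^{n-k}+o_P(1)$, hence is asymptotically $\mathcal{F}_{n-k}$-measurable, this truncation also delivers the independence of $N$ and $\eta$ that your extended Slutsky step needs. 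What your plan still owes is the execution of that truncation argument (or, as you propose, a precise citation of the corresponding theorem in \cite{gut91IV}, which is legitimate given that the paper itself invokes \cite{gut91IV} in the proof of Theorem \ref{MceGW}); as written you correctly name the delicate point but do not discharge it.
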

Clearly, $m(\theta)$ is the parameter that is naturally estimated here.
\begin{proof}
 Let us write
$$m(\hat{\theta}_n)= \frac{\frac{\sum_{i=1}^n Z_i}{m^n}}  { \frac{\sum_{i=1}^nZ_{i-1}}{m^n}}.$$
Using  Corollary \ref{CorSnSnp},
both terms of the above fraction converge a.s. so  that $ m(\hat{\theta}_n)\rightarrow m(\theta_0)$ a.s.\\
Let us prove (ii). The score function reads as
$$\nabla_{\theta}{\ell}_n(\theta)= \sum_{i=1}^n Z_i - m(\theta) \sum_{i=1}^n Z_{i-1}=   \sum_{i=1}^n (Z_i - m(\theta)  Z_{i-1}). $$
Under $\P_{\theta_0}$,  $\nabla_{\theta}{\ell}_n(\theta_0)$ is a centered $\mathcal{ F}_n$-martingale $(M_n)$ with increments $X_i= Z_i -m(\theta_0) Z_{i-1}$.
Conditionally on $\mathcal{ F}_{i-1}$,  $X_i$
 is the sum of $Z_{i-1}$ independent centered random variables so that
$$\E_{\theta_0}(X_i^2|\mathcal{ F}_{i-1})= \sigma^2 (\theta_0) Z_{i-1} ; \quad    \langle M\rangle_n=\sigma^2 (\theta_0)\sum_{i=1}^n Z_{i-1} .$$
Hence
$$ s_n^2 (\theta_0)= \E_{\theta_0} (\langle M\rangle_n) = \sigma^2 (\theta_0) \sum_{i=1}^{n}m(\theta_0)^{i-1}= \sigma^2(\theta_0) \frac{m(\theta_0)^{n }-1}{ m(\theta_0)-1}.$$
Therefore  $s_n^2(\theta_0) \rightarrow \infty $ as $n \rightarrow  \infty$ and
\begin{equation} \label{snBGW}
\frac{s_n^2(\theta_0)}{m(\theta_0)^n} \rightarrow \frac{\sigma^2(\theta_0)}{m(\theta_0)-1}.
\end{equation}
Let us check the conditions of the Central limit theorem for martingales  (see \ref{TCLMart1})
recalled in the Appendix
Under (A3), $(M_n)$ is a square integrable  centered $\mathcal{ F}_n$-martingale such that
 $\E_{\theta_0}(\langle M_n\rangle)= s_n(\theta_0)^2 \rightarrow \infty$.
 Let us  check (H2). We have
  $$\frac{1} {s_n(\theta_0)^2} \langle M_n\rangle =  \frac{m^n(\theta_0)}{ s_n^2(\theta_0)}\frac{\sigma^2(\theta_0) }{m^n(\theta_0)} \sum_{i=1}^n Z_{i-1}.$$
  Hence according to Corollary \ref{CorSnSnp} and  Theorem \ref{W},$\frac{1} {s_n(\theta_0)^2} \langle M_n\rangle \rightarrow  W $ in probability  under $\P_{\theta_0}$
 with $ W>0$ on $E^c$ and $E_{\theta_0}(W)=1$.  Therefore we can set $ W= \eta^2$ and obtain  (H2). \\
  It remains to check the asymptotic negligibility Assumption  (H1').
 We have, for $X_i= Z_i-m(\theta_0)Z_{i-1}$,
 \[\E_{\theta_0}(|X_i|^{2+\delta }|\mathcal{ F}_{i-1})= Z_{i-1}\E_{\theta_0}(|Y-m(\theta_0)|^{2+\delta}).\]
  Under (A4), using that $\E_{\theta_0}(|Y-m(\theta_0)|^{2+\delta}) \leq C  (\mu_{2+\delta} + m(\theta_0)^{2+\delta}) <\infty$ yields
  \begin{equation}\label{checkH1}
  \frac{1}{s_n^{2+\delta}} \sum _{i} ^n \E_{\theta_0}(|X_i|^{2+\delta }|\mathcal{ F}_{i-1})=  (\frac{1} {s_n^{\delta}})  \E_{\theta_0}(|Y-m(\theta_0)|^{2+\delta})
 ( \frac{1}{s_n^2} \sum_{i=1} ^n Z_{i-1}).
 \end{equation}
 Using   Corollary \ref{CorSnSnp} and   \eqref{snBGW} yields that the last term of \eqref{checkH1}  is bounded in probability under $\P_{\theta_0}$.
 Since $\delta>0$, the first term of \eqref{checkH1} tends to $0$, which achieves the proof of  (H1').\\
 Therefore, we get that on the non-extinction set ,  under $\P_{\theta_0}$,
\begin{equation}\label{TCLbranch}
	(\frac{M_n}{s_n},\frac{\langle M\rangle_n}{s_n^2}) \rightarrow_\mathcal{ L} (\eta N,\eta^2),
\end{equation}
with  $\eta,N$  independent, $\eta= W^{1/2}$  and $N\sim \mathcal{ N}(0,1)$.\\
To study the limit distribution of $\hat{m}_n$,  we write
$$ \hat{m}_n-m(\theta_0)= \frac{ \sum_{i=1}^n (Z_i-m(\theta_0) Z_{i-1})}{\sum_{i=1}^n Z_{i-1}} = \sigma^2(\theta_0) \frac{M_n}{\langle M_n\rangle}.$$
This yields that
$$m(\theta_0)^{n/2}(\hat{m}_n-m(\theta_0)) = \sigma^2(\theta_0)   \frac{m(\theta_0)^{n/2}}{s_n} \frac{ \frac{M_n}{s_n}}{ \frac{\langle M_n\rangle}{s_n^2}}.
$$
Using  \eqref{snBGW} and \eqref{TCLbranch} achieves the proof of (ii).
\end{proof}

Let us stress that here, contrary to the previous models,  the Fisher information,
$ E_{\theta} \langle M\rangle_n=  \sigma^2(\theta) \frac{m(\theta)^{n}-1}{ m(\theta)-1}$ converges to infinity at a much faster rate than ``usually''  for  $m(\theta)>1$.
Indeed, the information contained in $(Z_1,\dots,Z_n)$ is of the same order as the information in the last observation $Z_n$. In that respect, the model is explosive in terms of growth of information.


Note that this result could  be obtained using the MLE Heuristics presented in the Appendix, substituting $\sqrt{n}$ by $s_n$ and using that\\
$\nabla_{\theta}^2 \ell_n(\theta)= - \nabla_{\theta}^2\phi(\theta)\sum_{i=1}^{n}Z_{i-1}= - \sigma^2(\theta)  \sum_{i=1}^{n}Z_{i-1}=-   \langle M(\theta)\rangle_n$.\\

Now we have estimated $m(\theta)$ instead of  $\theta$. To estimate $\theta$, we just have to consider the application $ \theta  \rightarrow m (\theta)$. Assuming that there exists $\phi$ differentiable  such that $\phi(y) =\theta= m^{-1} (y)$, an application of Theorem \ref{phimle} yields the result for $\theta$.

\subsection{Parametric inference for general Galton--Watson processes}
We assume now that the offspring distribution  $G(\cdot)$ has mean  $m$ and  finite variance $\sigma^2 $ and
consider
 the Galton--Watson process with  initial condition $Z_0=1$ and offspring distribution $G$. We assume
\begin{enumerate}[(B2)]
\item[\textbf{ (B1)}]  The offspring law $G$ satisfies $m>1$ and  $\sigma^2 <\infty$.
\item[\textbf{ (B2)}]  The offspring law $G$ has  a finite moment of  order $4$: $ E (Y^4) = \mu_{4} < \infty$  where  $Y \sim G$.
\end{enumerate}

 On the basis on the successive population  sizes $(Z_1,\dots,Z_n)$, we are concerned with the estimation of $\theta=(m, \sigma^2)$.
Denote by $\P_{\theta}$ the distribution on $(\N^{\N} ,\mathcal{ B}(N^{\N} ) )$ of  $(Z_n)$.
Under (B1), the branching process is supercritical ($m>1$) and the non-extinction set $E^c$  has a positive probability.
Clearly, studying estimators based on (\ref{LnBGW}) is intractable. Therefore, we had rather study estimators based on Conditional Least Square methods.
The conditional mean and variance of $Z_n$ with respect to $\mathcal{ F}_{n-1}$ write
\begin{equation}\label{conditmv}
	\E_{\theta}(Z_n|\mathcal{ F}_{n-1})= m Z_{n-1};\quad Var_{\theta}(Z_n|\mathcal{ F}_{n-1})= \sigma^2 Z_{n-1}.
\end{equation}
On the non-extinction set $E^c$, let us consider  the contrast function (which is a weighted Conditional Least Square method):
\begin{equation}\label{WCLS}
	U_n(\theta)= \sum_{i=1}^n \frac{1}{Z_{i-1}}(Z_i-m Z_{i-1})^2.
	\end{equation}
Note that $U_n(\theta)$ only depends on $m$ and therefore $\sigma^2$ cannot be estimated using $U_n$.\\
 Define $ \tilde{m}_n$ as
  a solution of
$$U_n(\tilde{m}_n)=  min_{\theta \in \Theta} U_n(\theta).$$
Hence it satisfies $\nabla_{\theta}U_n(\tilde{m}_n)=0$, which yields
\begin{equation} \label{mtilde}
 \tilde{m}_n=\frac{\sum_{i=1}^n Z_i}{\sum_{i=1}^n Z_{i-1}}.
 \end{equation}
 The simplest approach for  estimating $\sigma^2$  is to use the residual variance:
\begin{equation}\label{sigmatilde}
\tilde{\sigma}^2_n=\frac{1}{n}\sum_{i=1}^n \frac{1}{Z_{i-1}}(Z_i-\tilde{m}_n Z_{i-1})^2.
\end{equation}
Then the following holds.
\begin{theorem}\label{MceGW}
Assume (B1)--(B2). Then, on the non-extinction set $E^c$, the estimators $(\tilde{m}_n, \tilde{\sigma}^ 2_n)$
defined in \eqref{mtilde}--\eqref{sigmatilde} satisfy, as $n\rightarrow \infty$,  under $\P_{\theta}$,
\begin{enumerate}
\item[\textbf{ (i)}] $\tilde{m}_n \rightarrow m$ almost surely.
\item[\textbf{ (ii)}] $ m^{n/2} (\tilde{m}_n -m) \rightarrow _\mathcal{ L}  \sqrt{(m-1) \sigma^2 } \;\eta^{-1} \;  N $,
where  $\eta, N$ are independent r.v.s, $N \sim\mathcal{ N}(0,1)$, $ \eta$ is the positive  variable defined by $\eta^2=W$.
\item[\textbf{ (iii)}]  $\tilde{\sigma}^2_n \rightarrow \sigma^2 $ in probability under $\P_{\theta}.$
\item[\textbf{ (iv)}] $\sqrt{n} ( \tilde{\sigma}^2_n-\sigma^2) \rightarrow _{\mathcal{ L}} \mathcal{ N}(0,2 \sigma^4) .$
\end{enumerate}
\end{theorem}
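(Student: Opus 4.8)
The plan is to handle (i)--(ii) by reduction to the already proved Theorem~\ref{MleGW}, and to reserve the real work for (iii)--(iv). Since $\tilde m_n=\sum_{i=1}^n Z_i/\sum_{i=1}^n Z_{i-1}$ is the very same statistic as the estimator of Theorem~\ref{MleGW}, parts (i) and (ii) follow by the identical argument: writing $\tilde m_n$ as the ratio of $m^{-n}\sum_{i=1}^n Z_i$ and $m^{-n}\sum_{i=1}^n Z_{i-1}$ and invoking Corollary~\ref{CorSnSnp} gives $\tilde m_n\to m$ a.s.\ on $E^c$; and with $X_i=Z_i-mZ_{i-1}=\sum_{j=1}^{Z_{i-1}}U_{i,j}$ (the $U_{i,j}$ being the offspring of generation $i-1$ minus $m$, conditionally i.i.d.\ centred with variance $\sigma^2$) one has $M_n=\sum_{i=1}^n X_i$ a centred $\mathcal F_n$-martingale with $\langle M\rangle_n=\sigma^2\sum_{i=1}^n Z_{i-1}$ and $\tilde m_n-m=M_n/\sum_{i=1}^n Z_{i-1}$, so that checking (H1$'$)--(H2) under (B1)--(B2) exactly as before yields the mixed Gaussian limit of (ii) at rate $m^{n/2}$ with $\eta^2=W$.

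For (iii)--(iv) I would first strip off the plug-in error. Writing $Z_i-\tilde m_n Z_{i-1}=X_i-(\tilde m_n-m)Z_{i-1}$, expanding the square and using $\tilde m_n-m=M_n/\sum_{i=1}^n Z_{i-1}$, the two correction terms collapse into the exact identity
\[
\tilde\sigma^2_n=\hat\sigma^2_n-\frac{M_n^2}{n\sum_{i=1}^n Z_{i-1}},\qquad \hat\sigma^2_n:=\frac1n\sum_{i=1}^n\frac{X_i^2}{Z_{i-1}}.
\]
By (ii) together with Corollary~\ref{CorSnSnp} one has $M_n^2=O_{\P}(m^n)$ while $\sum_{i=1}^n Z_{i-1}\sim W m^n/(m-1)$ on $E^c$, so the correction is $O_{\P}(1/n)=o_{\P}(n^{-1/2})$. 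It therefore suffices to establish (iii)--(iv) for the oracle statistic $\hat\sigma^2_n$. Since $V_i:=X_i^2/Z_{i-1}$ has $\E_\theta(V_i\mid\mathcal F_{i-1})=\sigma^2$, we get $\hat\sigma^2_n-\sigma^2=\tfrac1n\sum_{i=1}^n(V_i-\sigma^2)$, and the crucial step is the decomposition
\[
V_i-\sigma^2=\underbrace{\frac{1}{Z_{i-1}}\sum_{j=1}^{Z_{i-1}}\bigl(U_{i,j}^2-\sigma^2\bigr)}_{A_i}+\underbrace{\frac{1}{Z_{i-1}}\sum_{j\ne k}U_{i,j}U_{i,k}}_{B_i}
\]
into two conditionally centred martingale increments with $\Var_\theta(A_i\mid\mathcal F_{i-1})=(\mu_4^{\ast}-\sigma^4)/Z_{i-1}$ and $\Var_\theta(B_i\mid\mathcal F_{i-1})=2\sigma^4(Z_{i-1}-1)/Z_{i-1}$, where $\mu_4^{\ast}:=\E_\theta((Y-m)^4)<\infty$ by (B2).

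On $E^c$ Theorem~\ref{W} gives $Z_{i-1}\sim W m^{i-1}$ with $W>0$, hence $\sum_i 1/Z_{i-1}<\infty$ a.s.; consequently $\sum_i A_i$ is an $L^2$-bounded martingale and converges a.s., so $\tfrac1n\sum A_i\to0$ a.s.\ and $\tfrac1{\sqrt n}\sum A_i\to0$. The diagonal part contributes nothing and the entire fluctuation is carried by the off-diagonal part $B_i$.

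It remains to apply the martingale central limit theorem of the Appendix to $\tfrac1{\sqrt n}\sum_{i=1}^n B_i$. The normalised bracket converges, $\tfrac1n\sum_{i=1}^n\Var_\theta(B_i\mid\mathcal F_{i-1})=2\sigma^4\bigl(1-\tfrac1n\sum_i 1/Z_{i-1}\bigr)\to2\sigma^4$; this is precisely where the weights $1/Z_{i-1}$ tame the explosive $m^n$ growth, forcing the rate $\sqrt n$ and a genuine (non-mixed) Gaussian limit. For the negligibility assumption (H1$'$) I would verify the Lyapunov form $\tfrac1{n^2}\sum_i\E_\theta(B_i^4\mid\mathcal F_{i-1})\to0$: because $B_i$ is built from products $U_{i,j}U_{i,k}$ of two distinct indices, no index occurs with multiplicity greater than four in the expansion of $B_i^4$, so $\E_\theta(B_i^4\mid\mathcal F_{i-1})$ uses only moments of $Y$ up to order four and is $O(1)$ uniformly in $i$, whence the Lyapunov sum is $O(1/n)$. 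The main obstacle is exactly this moment bookkeeping: bounding $\E_\theta((V_i-\sigma^2)^4\mid\mathcal F_{i-1})$ directly would demand $\E(Y^8)$ through the diagonal term $A_i$, and it is the split that lets (B2) suffice---$A_i$ being disposed of by almost sure convergence and only $B_i$ needing a fourth-moment bound. The martingale CLT then gives $\tfrac1{\sqrt n}\sum B_i\to_{\mathcal L}\mathcal N(0,2\sigma^4)$; combining this with the negligibility of $\sum A_i$ and of the plug-in correction yields (iv), while the strong law of large numbers for these $O(n)$-bracket martingales gives the consistency (iii).
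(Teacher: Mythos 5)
Your proof is correct, and while its skeleton matches the paper's --- (i)--(ii) by reduction to Theorem \ref{MleGW}, then a decomposition of $\tilde{\sigma}^2_n-\sigma^2$ into the oracle martingale $\frac{1}{n}\sum_i\bigl(\frac{1}{Z_{i-1}}(Z_i-mZ_{i-1})^2-\sigma^2\bigr)$ plus plug-in corrections --- the technical execution differs in two ways that are worth recording. First, where the paper keeps the two correction terms $A_n^2=(m-\tilde m_n)^2\sum_i Z_{i-1}$ and $A_n^3=2(m-\tilde m_n)M_n$ separate and argues each is $O_P(1)$ via (ii) and \eqref{TCLbranch}, you observe they collapse exactly to $-M_n^2/\sum_i Z_{i-1}$, giving the cleaner identity $\tilde\sigma^2_n=\hat\sigma^2_n-\frac{M_n^2}{n\sum_i Z_{i-1}}$ with the correction directly $O_P(1/n)$; this is the same estimate, obtained more transparently. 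Second, and more substantively: the paper applies the martingale SLLN and CLT to the full increment $X_i=\frac{1}{Z_{i-1}}(Z_i-mZ_{i-1})^2-\sigma^2$, and for the Lindeberg-type condition (H1$'$) it explicitly has to \emph{assume a moment of order $4+\delta$} for the offspring law --- an assumption stronger than (B2) as stated in the theorem. Your diagonal/off-diagonal split $V_i-\sigma^2=A_i+B_i$ avoids this: the diagonal part $A_i$ has conditional variance $(\mu_4^*-\sigma^4)/Z_{i-1}$, so on $E^c$ its cumulative bracket is a.s.\ finite (by $Z_{i-1}\sim Wm^{i-1}$) and the martingale $\sum_i A_i$ converges a.s.\ on that set, contributing nothing at either rate; the CLT then only has to be applied to the off-diagonal part $B_i$, whose fourth conditional moment involves offspring moments of order at most four (each index appears at most once per pair factor, hence at most four times in $B_i^4$), so the Lyapunov condition holds under (B2) alone. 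Your argument therefore proves (iv) under exactly the hypotheses of the theorem, closing a small gap in the paper's own proof. Two cosmetic caveats: the martingale $\sum_i A_i$ is not $L^2$-bounded unconditionally --- the correct citation is a.s.\ convergence of a square-integrable martingale on the event $\{\langle M\rangle_\infty<\infty\}$ (e.g.\ Hall and Heyde \cite{hal80IV}); and, as in the paper, the CLT for $\frac{1}{\sqrt n}\sum_i B_i$ should be stated in the joint/mixing form of Theorem \ref{TCLMart1}, so that the limit can legitimately be restricted to $E^c$, where the normalized bracket tends to the deterministic constant $2\sigma^4$.
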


\begin{proof} The study of the asymptotic properties of $\tilde{m}_n$  is similar to the previous section, since $\tilde{m}_n $
has the same expression with respect to the observations that $\hat{m}_n(\theta)$.
The proofs of (iii) and (iv) are derived from  \cite{gut91IV}, Chapter 3.
Let us prove (iii). We have $\tilde{\sigma}^2_n -\sigma^2 =\frac{1}{n} (A_n^1+ A_n^2+A_n^3)$ with
\begin{align*}
A_n^2&= (m-\tilde{m}_n)^2 (\sum_{i=1}^n Z_{i-1}), \\
A_n^3&= 2 (m-\tilde{m}_n) \sum_{i=1}^n (Z_i-mZ_{i-1})\mbox{ and}
\end{align*}
\begin{equation}
A_n^1= \sum_{i=1}^n  X_i \quad \mbox { with  } X_i= \frac{1}{Z_{i-1}}(Z_i- mZ_{i-1})^2 -\sigma^2.
\end{equation}
Let us  study the first term $A_n^1$. 
It is a centered $\mathcal{ F}_n$-martingale under $\P_{\theta}$. The computation of $\E_{\theta}(X_i^2|\mathcal{ F}_{i-1})$ relies on the property that, for i.i.d.\ random variables $Y_i$ with $\E(Y_i)=m, \Var Y_i=\sigma^2$ and finite fourth moment $\E(Y^4)= \mu_4$,  $ \bar{Y} = \frac{1}{n} \sum_{i=1}^nY_i$ satisfies
$$\E(\bar{Y}- m)^4 =  \frac{3 \sigma ^4}{n^2}+ \frac{1}{n^3}(\mu_4- 3\sigma ^4).$$
Hence  on the non-extinction set $E^c$,
\begin{equation} \label{crochetAn}
\langle A^1\rangle_n= \sum_{i=1}^n (2 \sigma ^4 + \frac{1}{Z_{i-1}}(\mu_4-3 \sigma ^4)).
\end{equation}
Hence $ \Var(A_1^n) \leq 2 n(\sigma^4+\mu^4)$. Therefore, applying a strong law of large numbers for martingales (\cite{hal80IV}, Theorem 2.18) yields
$\frac{1}{n} A_n^1 \rightarrow 0 \; \P_{\theta}$-a.s.\\
  The second term is $A_n^2=  \left(m^n(\tilde{m}_n -m)^2\right) \left(\frac{1}{m^n} \sum_{i=1}^nZ_{i-1}\right)$. By (ii) and  Corollary \ref{CorSnSnp}, we
get that these two terms converge in distribution  so that  $A_n^2$ is bounded in probability. \\
Noting that  $M_n= \sum_{i=1}^n(Z_{i-1}-mZ_{i-1})$ is the martingale studied in the previous section yields that
$A_n^3=  [m^{n/2} (m-\tilde{m}_n)] [ \frac{1}{m ^{n/2} }M_n]$. By (ii)  $[m^{n/2} (m-\tilde{m}_n)]$ converges in distribution. The CLT for $(M_n)$ stated in  \eqref{TCLbranch}  yields that  $m ^{n/2} M_n $ converges in distribution. Hence, $A_n^3$ is also bounded in probability.
 Joining these results  yields that $\frac{1}{n} (A_n^1+ A_n^2+A_n^3) \rightarrow 0$, which
  achieves  the proof of (iii).

Let us prove (iv). The previous computations yield that $n^{-1/2} A_n^2$ and  $ n^{-1/2} A_n^3$ both converge to $0$.
The martingale $(A_n^1)$ is centered square integrable and  $s_n^2= E_{\theta}\langle M\rangle_n$ satisfies  $ \frac{1}{n} s_n^2 \rightarrow 2 \sigma ^4$.
Condition (H1') is satisfied assuming the existence of a moment of order  $4+\delta$ with $\delta>0$ for  the offspring law $G$.
 Therefore, the CLT for martingales (see Theorem \ref{TCLMart1}) yields that  $\frac{1}{n}\langle M\rangle_n \rightarrow  2 \sigma^4$  a.s.
Joining these results achieves the proof of (iv).
 \end{proof}
With similar arguments, one can prove the asymptotic independence of $(\tilde{m}_n,\tilde{\sigma}_n^2)$.\\

The extinction probability  is an important parameter in many applications. In the early stages of an epidemic, the extinction probability corresponds to the probability of a minor outbreak.
However, unless the extinction probability $q$ is a function of $m$ and $\sigma^2$ only, it cannot be consistently estimated observing the generation sizes.
A parametric setting $(G_{\theta} ,\theta \in \Theta) $ is required for the offspring law.
Let $g(\theta,s)$ denote the generating function of $G_{\theta}$
 and define
$$\tilde{q}_n= \inf\{s, g(s,\tilde{\theta}_n)=s\}.$$
Then, according to \cite{gut91IV}, under additional regularity assumptions,  $\tilde{q}_n$ is consistent if  $\tilde{ \theta}_n $ is consistent,
and  converges at the same rate $ m(\theta_0)^{n/2}$ as  $\tilde{ \theta}_n $.

\subsection{Examples}
\textbf{ Example 1.} Let us consider the supercritical branching process with offspring law $\mathcal{ P}oi(\lambda)$ with $\lambda>1$ and initial condition $Z_0=1$.
Theorem  \ref{MleGW} applies here and yields that, under $\P_{\lambda_0}$, on the non-extinction set $E^c$,
\begin{eqnarray*}
\hat{\lambda_n} &=& \frac{\sum_{i=1}^n Z_i}{\sum_{i=1}^n Z_{i-1}}  \rightarrow \lambda_0 \quad  \mbox{ a.s.},\\
 \lambda_0 ^{n/2} (\hat{\lambda_n} -\lambda_0 )& \rightarrow &  \sqrt{(\lambda_0-1)\lambda_0}\;  \eta^{-1} N, \mbox{ with }  \eta, N  \mbox{ independent }
 N \sim \mathcal{ N}(0,1) , \eta^2= W,
 \end{eqnarray*}
where  $W>0$ on $E^c$, $EW=1, \Var W=\frac{1}{\lambda_0-1}$.\\

\textbf{ Example 2.} Consider now the supercritical branching process with offspring law the Geometric distribution $ G$ on $N^*$ with parameter $p$ ($G(k)= p(1-p)^{k-1}, k\geq 1$).
First, note that  $\P_p(E) =0 $ and if $ Y \sim G$ , $\E(Y)= 1/p$ and $\Var Y= \frac{1-p}{p^2}$.
Assume that $0<p<1$ and that $Z_0=1$.  Theorem  \ref{MleGW}  yields that, under $\P_{p_0}$,
\begin{eqnarray*}
 \frac{1}{\hat{p}_n}  &=& \frac{\sum_{i=1}^n Z_{i}} {\sum_{i=1}^n Z_{i-1}} \rightarrow \frac{1}{p_0} \quad  \mbox{ a. s.},\\
{p_0}^{-n/2} (\frac{1}{ \hat{p}_n}-\frac{1}{p_0})& \rightarrow &\sqrt{\frac{(1-p_0)^2}{p_0^3}}\; \eta^{-1} N \mbox{ with }  \eta^2= W, \E(W)=1, \Var W=1.
\end{eqnarray*}
To estimate $p$, an application of Theorem \ref{phimle} with  $\phi(y)=1/y$ yields that, under $\P_{\theta_0}$,
$$\hat{p}_n=  \frac{\sum_{i=1}^n Z_{i-1}} {\sum_{i=1}^n Z_{i}}\rightarrow p_0,\quad
p_0^{-n/2} ( \hat{p}_n - p_0) \rightarrow \sqrt{p_0}(1-p_0) \;\eta^{-1} W.$$

\textbf{ Example 3.} Consider the  general fractional linear branching process with offspring law $G$:
$G(0)=a, G(k)= (1-a)p (1-p)^{k-1}, k\geq 1$.
Then the mean offspring is $m=\frac{1-a}{p}$ and $\sigma^2= \frac{(1-a)}{p^2} (1-p+a)$. Assume that $m>1$, the extinction set has probability $q= \frac{a}{1-p}$.
On $E^c$,
$m$ is estimated at rate $m^{n/2}$ while $\sigma^2$ is estimated at rate $\sqrt{n}$. Therefore,  $\hat{q}_n $, which  depends on $\hat{m}_n$
and $\hat{\sigma}^2_n$, is estimated at rate $\sqrt{n}$.

\subsection{Variants of Branching processes}
A large class of branching processes  are used for modeling Epidemic dynamics. It encompasses subcritical or critical branching processes (a),  branching processes with immigration (b), multitype branching processes with immigration, Crump-Mode-Jagers branching process, which are continuous time branching processes  which are no longer Markov if the time between successive generations is not exponential (c).

Case (a) can be studied either assuming that the initial population size   $\{Z_0 \rightarrow  \infty\}$ or conditionally on late  extinction (leading to quasi-stationary
distributions). Cases (b) and (c) can be studied along similar lines than the ones in the previous section.
Stating all these results is beyond the scope of these notes. We had rather choose to present accurately the simplest case, which already contains many
problems arising in these other models.

\chapter[Inference Based on the Diffusion Approximation of Epidemic Models]{Inference Based on the Diffusion Approximation of Epidemic Models}\label{Diffusions}
\chaptermark{Inference Based on Diff.\ Approx.\ of Epidemic Models}
\section{Introduction}
\label{Diffusions:intro}

The contents of this chapter is mainly based on the three papers \cite{guy14IV}, \cite{guy15IV} and  \cite{guy16IV}.\\

In the first part of these notes, several mathematical models have been proposed to describe  Epidemic dynamics  in a closed homogeneous community.
 The properties of the stochastic  $SEIR$ model have been studied in the first part of these notes.
Several mathematical formalisms were proposed to describe transitions of individuals between states: ODE/PDE (\cite{die13IV}),
difference equations and continuous or discrete-time stochastic processes (see Part I, Sections  of these notes and also \cite{dal01IV}, \cite{die13IV}),  such as point processes, Pure jump processes, renewal processes, branching processes, diffusion processes. When data are available, key parameters can be estimated using these models through likelihood-based or M-estimation methods sometimes coupled to Bayesian methods (see e.g. \cite{die13IV}). However, these data are most often partially observed (e.g.\ infection and recovery dates are not observed for all individuals during the outbreak, not all the infectious individuals are reported) and also temporally and/or spatially aggregated. In this case, estimation via likelihood-based approaches is rarely straightforward, regardless to the mathematical formalism.\\

For instance, the natural modeling of  epidemics by pure jump processes  presents systematically the drawback that inference for such models
requires that all the jumps are observed. Since these data are rarely available in practice, statistical methods rely on data augmentation in order to complete the data and add in the analysis all the missing jumps. For moderate to large populations, the complexity increases rapidly, becoming the source of additional problems.
Various approaches were developed during the last years to deal with partially observed epidemics. Data augmentation and likelihood-free methods such as the Approximate Bayesian Computation
(ABC) opened some of the most promising pathways for improvement (see e.g.\cite{bre09IV}, \cite{mck09IV}). Nevertheless, these methods do not completely circumvent the issues related to incomplete data. As stated also in \cite{cau12IV}, \cite{bri16IV}, there are some limitations in practice, due to the size of missing data and to the various tuning parameters to be adjusted (see also \cite{and00IV}, \cite{one10IV}).
Moreover, identifiability issues are rarely addressed.\\

\noindent
In this context, it appears that diffusion processes, satisfactorily approximating epidemic dynamics (see e.g.\ \cite{fuc13IV}, \cite{ros09IV}), can be profitably used for inference of model parameters from epidemiological data.  In Part I, Sections \ref{TB-EP_sec_LLN} and \ref{TB-EP_sec_CLT}, the  Markov jump process $(\mathcal{Z}^N(t))$ in a closed population of size $N$, when normalized by $N$, $(Z^N(t)= N^{-1} \mathcal{Z}^N(t))$ satisfies an ODE  as the population size $N$ goes to infinity.  In Section \ref{TB-EP_sec_DiffusApprox},  it is proved the  Wasserstein $L_1$-distance between  $(Z^N(t))$  and a multidimensional diffusion process with   diffusion coefficient proportional to $1/\sqrt{N}$ is of order $o(N^{-1/2})$ on a finite interval $[0,T]$.
Hence, epidemic dynamics can be described  using  multidimensional diffusion processes
$(X^N(t))_{t\geq 0}$ with a small diffusion coefficient proportional to $1/\sqrt{N}$. Since epidemics are usually observed over limited time periods, we consider in what follows the parametric inference based on observations of the epidemic dynamics on a fixed interval $[0,T]$. Let us stress that this approach assumes  a major outbreak
in a large community.\\

Historically, statistics for diffusions were developed for continuously observed processes which renders possible getting an explicit formulation of the likelihood (\cite{kut84IV}, \cite{lip01IV}).  In this context, two asymptotics exist for estimating  parameters in the drift coefficient of a diffusion continuously observed on a time interval $[0,T]$: $T\rightarrow \infty$ for recurrent diffusions and \{$T$ fixed and the diffusion coefficient tends to $0$\}. As mentioned above, in practice, epidemic data are not continuous, but partial, with various mechanisms underlying the missingness and leading to intractable likelihoods: trajectories can be discretely observed with a sampling interval (low frequency or high frequency observations, i.e.\  $n \rightarrow \infty$); discrete observations can correspond to integrated processes; some coordinates can be unobserved. Since the 1990s, statistical methods associated to the first two types of data have been developed (e.g.\ \cite{gen93IV}, \cite{gen00IV}, \cite{glo01IV}), \cite{kes00IV}). Recently proposed approaches for multidimensional diffusions are based on the filtering theory (\cite{fea08IV}, \cite{gen06IV}). Concerning diffusions with small diffusion coefficient from discrete observations, it was first studied in  \cite{gen90IV}, \cite{glo09IV}, \cite{sor03IV}, and more devoted to epidemic dynamics in \cite{guy14IV}, \cite{guy15IV}.
Statistical inference for diffusion processes entails some special features, that we recall for sake of clarity in  \ref{Recapdiff}.
It reveals that, in the context of discrete observations, it is  important to distinguish parameters in the drift and parameters in the diffusion  coefficients because they are not estimated at the same rate.
 We detail and extend here some  recent work (\cite{guy14IV}, \cite{guy15IV}, \cite{guy16IV}) where  we focus on the parametric  inference in the drift coefficient
$b(\alpha,X^\epsilon(t))$
and in the diffusion
coefficient $\epsilon \sigma( \beta,X^\epsilon(t))$ of a multidimensional diffusion model
$\left( X^\epsilon(t) \right)
_{t\geq 0}$ with small diffusion coefficient,
when it is observed at
discrete times on a fixed time interval in the asymptotics $\epsilon \rightarrow 0$.

Section \ref{sec:diff_approx}   presents the diffusion approximation of the Markov jump process describing the epidemic dynamics starting from its  $Q$- matrix and detail these approximations for several epidemic models  studied in Part I of these notes, where another method is used to get these approximations (see  Part I, Sections \ref{TB-EP_sec_CLT} and \ref{TB-EP_sec_DiffusApprox}).
 We then consider the parametric inference when the epidemic dynamics is observed at discrete times on a finite interval, which corresponds to one outbreak  of the  epidemics.
The inference is studied for small sampling intervals (Section \ref{HFO}) and fixed sampling intervals (Section \ref{LFO}).
On simulated data sets of two epidemic models, the $SIR$ and the $SIRS$ with seasonal forcing (see \cite[Chapter 5]{kee11IV}), we study  the properties
of our estimators based on discrete observations of these two jump Markov processes, and compare our results to the optimal inference for these jump processes, which is obtained when all the jumps (i.e. observations of all the times of infection and recovery within the population) are observed (Section \ref{sec:simul}).

It often occurs that in practice some components of the epidemics are not observed. In the $SIR$  epidemics, the successive numbers of Susceptible for instance might be unobserved and the data consist of the successive increments of the number of Infected on each time interval.
We study in Section \ref{sec:partial}  the inference when one coordinate of  the process is observed at discrete times. We detail the results on two examples, the 2-dimensional Ornstein--Uhlenbeck diffusion process and the  diffusion approximation of the $SIR$-model when only the successive numbers of Infected are available (Section \ref{sec:SIRpart}). Finally,  Section \ref{sec:SIRSgrippe} is devoted to the estimation  based on the real data set on Influenza epidemics, which is described by an $SIRS$ epidemic model.

\section{Diffusion approximation of jump processes modeling epidemics}
\label{sec:diff_approx}

This section starts from the definition of the stochastic epidemic model by a Pure jump Markov process  $(\mathcal{Z}^N(t))$ on $\Z^d$ specified by its  $Q$ - matrix.
 We detail how to get the diffusion approximation of $(\mathcal{Z}^N(t))$ from this description, which is another way for getting  the diffusion process obtained  in  Part I, Section \ref{TB-EP_sec_DiffusApprox} of these notes.
Using limit theorems for stochastic processes, we characterize the limiting Gaussian process. Then, based on the theory of  small perturbations of dynamical systems
(\cite{fre84IV}), we link the normalized process to a diffusion process with small diffusion coefficient.
These approximations are then applied to $SIR$, $SEIR$, and $SIRS$ models for epidemic dynamics.

\subsection{Approximation scheme starting from the jump process $Q$-matrix} \label{generic}

Let $(\mathcal{Z}^N(t))$   a multidimensional Markov jump process  with state space $ E \subset  \Z^p$ which describes the epidemic dynamics in a closed population of size $N$, the integer ``$ p$'' corresponding to the number of health states  in the infection dynamics model.

This process is described by an initial distribution  on $E$ and  a collection of non-negative functions $(\beta_j(t, \cdot): E \rightarrow \R^+)$  indexed by $j \in \Z^p$, $j \neq (0,\dots,0)$, that satisfy,
\begin{equation}\label{H:bound_alpha}
\forall i \in E, 0 <\sum _{j \in\Z^p}
\beta_j(t,i)  =\beta(t,i)< \infty.
\end{equation}
These functions are the transition rates of the process   $(\mathcal{Z}^N(t))$  with $Q(t)$-matrix having as elements
\begin{equation}\label{trans}
q_{i, i+j}(t)=  \beta_j(t,i) \; \mbox{ if } \;  j\neq 0,\; \mbox{ and  } q_{i,i}(t)=-\beta(t,i)  \; \mbox{  for  } \; i ,i+j  \in E.
\end{equation}
Another useful description of  $(\mathcal{Z}^N(t))$   is based on the joint distribution of its jump chain and holding times.
The process stays in  each state $i \in E$ during an exponential time $\mathcal{ E}(\beta(t,i))$, and then jumps to the state $i+j$ according to a Markov chain $(X_n)$ with transition probabilities ${\mathbb P}(X_{n+1}=i+j\; | \; X_n=i)= \beta_j(t,i)/\beta(t,i).$ \\

\noindent
We consider  the class  of  density dependent Markov jump processes $(\mathcal{Z}^N(t))$  which possess a limit behaviour when normalized by the population size $N$.
Let us define the two sets
\begin{equation} \label{E}
E= \{0,\dots,N\}^p \quad E^-=\{-N,\dots, N\}^p.
\end{equation}
The state space of  $(\mathcal{Z}^N(t))$  is $E$ and  its jumps belong to $E^-$. \\
From the original jump process $(\mathcal{Z}^N(t))$  on $E= \{0,\dots, N\}^p$, let  
\begin{equation}\label {ZN}
 Z^N(t)=\frac{\mathcal{Z}^N(t)}{N} \mbox{ with state space } E_N=\{N^{-1}i,  i \in E\}.
\end{equation}
Its jumps are now   $y=j/N$  and  transition rates from $z \in E_N$ to $z+j/N$ at time $t$ defined using \eqref{trans},
\begin{equation}\label{barZN}
q^{N}_{z,z+y}(t)= \beta_{Ny}(t,Nz). 
\end{equation}

\noindent
Denote for  $x =(x_1,\dots, x_p) \in {\mathbb R}^d$, $[x] = ([x_1], \dots, [x_p ] )\in {\mathbb Z}^p$, where  $[x_i]$ is the integer part of $x_i$.\\

\noindent
We assume in the sequel  that  $(\mathcal{Z}^N(t))$   is density dependent,  i.e.\  there exist a collection of functions $\beta_j : \R^+ \times [0,1]^p \rightarrow \R^+$ such that,
\begin{enumerate}[(H2)]
\item[\textbf{ (H1)}] $\forall j,\; \forall z \in  [0,1]^p\;\;$
$\frac{1}{N} \beta_{j}(t,[Nz]) \rightarrow \beta_j(t,z)$  as $N \rightarrow \infty$  locally uniformly in $t$.
\noindent
\item[\textbf{ (H2)}] $\forall  j\in E^-, \; \beta_j(t,z) \in C^2(\R^+, [0,1]^p).$
\end{enumerate}

\noindent
Then, define the two functions $b^N (t,z) $ and $ b (t,z): \R^+\times [0,1]^p \rightarrow {\mathbb R}^p$ and  the two $p\times p$ positive symmetric matrices $\Sigma^N$ and $\Sigma$  (with the notation $M^{\star}$ for the transposition of a matrix or of a  column vector $j$ in  $E$),
\begin{equation}\label{defb}
 b^N(t,z)= \frac{1}{N}\sum_{j \in E^-} \beta_j(t,[Nz]) j ; \quad  b(t,z) = \sum_{j \in E^-} \beta_j(t,z) j  ;
\end{equation}
\begin{equation}\label{def:Sigma}
\Sigma^N (t,z)=  \frac{1}{N}\sum _{j\in E^-} \beta_j(t, [Nz]) j j^{\star};\quad  \Sigma(t,z)=  \sum _{j \in E^-} \beta_j(t,z) j j^{\star}.
\end{equation}
\noindent
Under (H1)  the functions $b(t,z)$ and $ \Sigma(t,z)$ are well defined and $b(t,z)$ is Lipschitz under  (H2). Therefore,
there exists  a unique smooth  solution $z(t)$ to the ODE
\begin{equation}\label{ODEPARTIV}
\frac{dz}{dt}= b(t, z(t))dt\; ; \quad z(0)=x.
\end{equation}
Let $\nabla_z b(t,z)$ denote the gradient of $b(t,z)$
\begin{equation}\label{gradb}
\nabla_z b(t,z) = \bigl(\frac{\partial b_i}{\partial z_j}(t,z)\bigr)_{1\leq i,j\leq p} \; .
\end{equation}
The  resolvent matrix  $\Phi(t,u)$ associated with \eqref{ODEPARTIV} is defined as the solution
 \begin{equation}\label{def:phi}
		\frac{d\Phi}{dt}(t,s)=\nabla_z b(t, z(t))\Phi(t,s) ; \quad \Phi(s,s)=I_p .
\end{equation}

\noindent
Under (H1), (H2)  the following holds:  if   $Z^N(0)  \rightarrow x$ as $N\rightarrow \infty$,  then,  locally uniformly in $t$,
\begin{equation}\label{CVU}
	\forall t \geq 0, \lim_{N\rightarrow \infty}  \parallel
Z^N(t)-z(t) \parallel \; =0\;\mbox{  a.s.}
\end{equation}
where $z(t)$ is solution of \eqref{ODEPARTIV}.\\
Let $(D,\mathcal{ D})$ denote the space of ``cadlag''
functions $\{f: \R^+ \rightarrow \R^p\}$ endowed with the Skorokhod topology. Then,
\begin{equation}\label{G}
\sqrt{N}(Z^N(t) -z(t))_{t\geq 0} \rightarrow  (G(t))_{t\geq 0}  \; \mbox{in distribution in } (D,\mathcal{ D}),
\end{equation}
\noindent
where  $(G(t))$ is a centered  $p$-dimensional Gaussian process with covariance matrix
 \begin{equation}\label{CovG}
\mathrm{Cov}(G(t),G(r))= \int_0 ^{t\wedge r}\Phi(t,u) \Sigma(u,z(u))\; \Phi^{\star}(r,u) du.
\end{equation}
The proofs of these results are given under a general form in  Part I, Sections \ref{TB-EP_sec_LLN}  and \ref{TB-EP_sec_CLT} of these notes,
and based on this presentation in  \cite{guy14IV}, \cite{guy15IV}.\\

\noindent
Heuristically,  there is an approach which yields the diffusion approximation of $(Z^N(t)) $; it rests on an expansion of the generator  $\mathcal{ A}_N$ of  $(Z^N(t))$ (\ref{ZN}).
For $f \in C^2(\R^+\times\R^p,\R) $, it reads as
$$\mathcal{ A}_N f(t,z)= \sum_{j\in E ^-} \beta_j(t, Nz)(f(t,z+\frac{j}{N})-f(t,z)).$$
A Taylor expansion of $\mathcal{ A}_N f(t,z)$ yields, using (H1), (H2) and (\ref{defb}), for $j= (j_1,\dots,j_p)^*  \in E ^-$,
\begin{align*}
\mathcal{ A}_N f (t,z) =& \sum_{j\in E ^-}  N \beta_j(t, z) (f(t,z+\frac{j}{N})-f(t,z))+ o(1/N)\\
 = &\, (\nabla_z f(t,z))^{\star} \; b(t,z) +  \frac{1}{2N}  \left( \sum_{j\in E ^-}
\beta_j(t,z) \sum_{k,l= 1}^d   j_k  \; j_l \; \nabla^2_{z_k z_l } f(t,z) \right)\\
&\phantom{\,(\nabla_z f(t,z))^{\star} \; b(t,z)}+ o(1/N) \\
 =&\, (\nabla_z f(t,z))^{\star} \; b(t,z) + \frac{1}{2N}\sum_{k,l= 1}^d \Sigma_{kl}(t,z) \;\nabla^2_{z_k z_l } f(t,z) + o(1/N),
\end{align*}
where the last equality is obtained using (\ref{def:Sigma}). The first two terms of the last expression correspond to the generator of a diffusion process on $\R^p$ with drift coefficient $b(t,\cdot)$ and diffusion matrix  $\frac{1}{N} \Sigma(t,\cdot)$,
\begin{equation}\label{XN}
dX^N(t)= b(t,X^N(t)) dt + \frac{1}{\sqrt{N} }\sigma (t,X^N(t)) dB(t) \; ; \; \; X^N(0)=x,
\end{equation}
where   $(B(t)_{t\geq 0})$ is a Brownian motion on $\R^p$ defined on a probability space $(\Omega,(\mathcal{ F}_{t})_{t \geq 0}, {\mathbb P})$ independent of $X^N(0)$, and
$\sigma(t,\cdot)$ is a square root of $\Sigma(t,\cdot)$:
  $\sigma(t,z)\; \sigma(t,z)^{\star} =\Sigma(t,z).$\\

\noindent
These approaches can be connected together  a posteriori  using  the theory of random perturbations of dynamical systems  (\cite{aze82IV}, \cite{fre84IV}) and the following theorem.

\begin{theorem} \label{th_Taystoch}
Setting $\epsilon= 1/\sqrt{N}$, the paths of $X^N(\cdot)$ satisfy, as $\epsilon \rightarrow 0$,
\begin{equation}\label{TS}
	X^N (t) =X^{\epsilon}(t)= z(t)+ \epsilon g(t)+ \epsilon^2 R^{\epsilon}(t),\; \mbox{ with } \sup_{t\leq T} \parallel \epsilon R^{\epsilon}(t)\parallel \rightarrow 0 \mbox{  in probability,}
\end{equation}
where  $z(t)$ is the solution of \eqref{ODEPARTIV}, $B(t)$ is a $p$-dimensional Brownian motion and $(g(t))$ is the process  satisfying  the  SDE
$$ dg(t)= \nabla_z b(t,z(t)) g(t)dt +\sigma(t,z(t)) dB(t),\;\; g(0)=0. $$
\end{theorem}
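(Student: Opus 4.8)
The plan is to adapt the classical expansion from the theory of small random perturbations of dynamical systems (\cite{fre84IV}, \cite{aze82IV}) to the time-inhomogeneous coefficients at hand. Recall that $X^\epsilon$ solves \eqref{XN}, namely $dX^\epsilon(t)=b(t,X^\epsilon(t))\,dt+\epsilon\sigma(t,X^\epsilon(t))\,dB(t)$ with $X^\epsilon(0)=x=z(0)$. I would introduce the rescaled fluctuation
\[
U^\epsilon(t)=\frac{X^\epsilon(t)-z(t)}{\epsilon}.
\]
Since $\epsilon R^\epsilon(t)=U^\epsilon(t)-g(t)$ by the very definition \eqref{TS} of $R^\epsilon$, the assertion $\sup_{t\le T}\norm{\epsilon R^\epsilon(t)}\to 0$ in probability is \emph{exactly} the statement that $U^\epsilon\to g$ uniformly on $[0,T]$ in probability, and this is what I would establish.

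Next I would derive the equation satisfied by $U^\epsilon$. Subtracting the ODE \eqref{ODEPARTIV} from \eqref{XN} and dividing by $\epsilon$ gives
\[
dU^\epsilon(t)=\tfrac{1}{\epsilon}\bigl(b(t,X^\epsilon(t))-b(t,z(t))\bigr)\,dt+\sigma(t,X^\epsilon(t))\,dB(t),\qquad U^\epsilon(0)=0.
\]
Under (H2) the drift $b$ is $C^2$ and $\sigma$ is Lipschitz in $z$ (on the compact range relevant to the problem these may be taken bounded with bounded derivatives after a smooth extension, a routine point I would note but not dwell on). A first-order Taylor expansion of $b(t,\cdot)$ around $z(t)$ then yields $b(t,X^\epsilon)-b(t,z)=\nabla_z b(t,z)\,(X^\epsilon-z)+\rho^\epsilon$ with $\norm{\rho^\epsilon}\le C\norm{X^\epsilon-z}^2=C\epsilon^2\norm{U^\epsilon}^2$, while $\sigma(t,X^\epsilon)=\sigma(t,z)+O(\epsilon\norm{U^\epsilon})$. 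Hence
\[
dU^\epsilon=\nabla_z b(t,z)\,U^\epsilon\,dt+\sigma(t,z)\,dB+\epsilon\,dH^\epsilon,
\]
where $H^\epsilon$ gathers the drift remainder $\epsilon^{-1}\rho^\epsilon=O(\epsilon\norm{U^\epsilon}^2)$ and the diffusion correction $O(\norm{U^\epsilon})$.

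Comparing with the linear SDE $dg=\nabla_z b(t,z)\,g\,dt+\sigma(t,z)\,dB$, $g(0)=0$, the difference $V^\epsilon=U^\epsilon-g$ solves $dV^\epsilon=\nabla_z b(t,z)\,V^\epsilon\,dt+\epsilon\,dH^\epsilon$ with $V^\epsilon(0)=0$, so by variation of constants through the resolvent $\Phi$ of \eqref{def:phi} one gets $V^\epsilon(t)=\epsilon\int_0^t\Phi(t,s)\,dH^\epsilon(s)$, and it remains to show this is uniformly small. The main obstacle is the quadratic feedback term $\epsilon\norm{U^\epsilon}^2$ inside $H^\epsilon$: a priori $U^\epsilon$ need not be bounded, so the remainder is only conditionally $O(\epsilon)$. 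I would first establish a uniform a priori moment bound $\sup_\epsilon \EE\bigl[\sup_{t\le T}\norm{U^\epsilon(t)}^2\bigr]<\infty$ by applying the Burkholder--Davis--Gundy inequality and Gronwall's lemma directly to the $U^\epsilon$-equation (whose drift is linear and whose diffusion coefficient is bounded). With this tightness in hand, BDG applied to $\epsilon\int_0^t\Phi(t,s)\,dH^\epsilon(s)$ followed by a final Gronwall step gives $\EE[\sup_{t\le T}\norm{V^\epsilon(t)}^2]=O(\epsilon^2)$, whence $\sup_{t\le T}\norm{\epsilon R^\epsilon}=\sup_{t\le T}\norm{V^\epsilon}\to 0$ in probability. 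A cleaner alternative that avoids delicate moment bookkeeping is to localize at $\tau^\epsilon_K=\inf\{t:\norm{U^\epsilon(t)}\ge K\}$, prove the convergence on $[0,T\wedge\tau^\epsilon_K]$ where every remainder is genuinely $O(\epsilon)$, and then let $K\to\infty$, using the a priori bound to force $\PP(\tau^\epsilon_K\le T)\to 0$.
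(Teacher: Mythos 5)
Your argument is correct in substance and is essentially the proof the paper implicitly relies on: Theorem \ref{th_Taystoch} is stated without proof, with a citation to the small-perturbation literature (\cite{aze82IV}, \cite{fre84IV}; see also \cite{glo09IV}, invoked for the parametric version in Proposition \ref{TStheta}), and that literature's argument is exactly your scheme --- rescale the fluctuation $U^\epsilon=(X^\epsilon-z)/\epsilon$, Taylor-expand the coefficients around $z(t)$, compare with the linear SDE for $g$ through the resolvent $\Phi$ of \eqref{def:phi}, and kill the remainder by a priori moment bounds plus Burkholder--Davis--Gundy and Gronwall (or localization). The one quantitative slip is that $\EE\bigl[\sup_{t\leq T}\norm{V^\epsilon(t)}^2\bigr]=O(\epsilon^2)$ cannot be extracted from second moments of $U^\epsilon$ alone, since the drift remainder is quadratic in $U^\epsilon$ (and $t\mapsto\int_0^t\Phi(t,s)\,dH^\epsilon(s)$ is not a martingale, so either factor $\Phi(t,s)=\Phi(t,0)\Phi(0,s)$ or apply Gronwall to the SDE for $V^\epsilon$ directly); this is harmless, because the bounded coefficients give moments of every order, an $L^1$ bound $\EE\bigl[\sup_{t\leq T}\norm{V^\epsilon(t)}\bigr]=O(\epsilon)$ already yields the claimed convergence in probability, and your localization variant sidesteps the issue altogether.
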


\noindent
This stochastic differential equation can be solved explicitly and we get, using (\ref{def:phi}), that
\begin{equation}\label{eq:gtstoch}
	g(t)= \int_0^t\Phi(t,s)\sigma(s, z(s)) dB(s).
\end{equation}
Hence, $(g(t))$ is a centered  Gaussian process having the same
covariance matrix (\ref{CovG}) as the process $(G(t))$  defined in (\ref{G}).
Therefore, for
$\epsilon= 1/\sqrt{N}$, $\sqrt{N}(Z^N_t-z(t))_{t\geq 0}$
and $\epsilon^{-1} \left(Z^{\epsilon}(t)-z(t)\right)_{t\geq 0}$ converge to a Gaussian process
having the same distribution.\\

It is moreover proved in Part I, Section \ref{TB-EP_sec_DiffusApprox} of these notes,
that the  Wasserstein $L^1$ distance between $(Z^N(t))$ and $(X^N(t))$ converges to $0$.
\subsection{Diffusion approximation of some epidemic models }\label{sec:approxdiif}
\subsubsection{The diffusion approximation applied to the SIR epidemic model}
\label{SIR_appli}
We apply first the generic method leading successively to $b(\cdot)$, $\Sigma (\cdot)$ and $ (X^N)$ described in \ref{generic} to the $SIR$ model introduced i
 in Part I,  Chapter \ref{TB-EP_chap_StochMood} of these notes  through the 2-dimensional continuous-time Markov jump process $\mathcal{Z}^N(t)=(S(t),I(t))$ to  build the associated $SIR$ diffusion process. Along to its initial state
$\mathcal{Z}^N(0)= (S(0), I(0))$, the Markov jump process is characterized by two transitions,
$(S,I)\stackrel{\frac{\lambda}{N}SI}{\longrightarrow}(S-1,I+1)$ and
$(S,I)\stackrel{\gamma I}{\longrightarrow}(S,I-1)$.
Parameters $\lambda$ and $\gamma=1/d$ represent the transmission rate and the recovery rate (or the inverse of the mean infection duration $d$), respectively. The rate $\lambda SI / N$ translates two main assumptions: the population mix homogeneously (same $\lambda$ for each pair between one $S$ and one $I$) and the transmission is proportional to the fraction of infectious individuals in the population, $I/N$ (frequency-dependent formulation of the transmission term).\\

\noindent
The diffusion approximation of the process$( \mathcal{Z}^N(t))$ describing the epidemic dynamics can be summarized by three steps.
The original $SIR$ jump process in a closed population has state space $\{0,\dots, N\}^2$, the  jumps $j$ are $(-1,1)$ and
$(0,-1)$ with transition rates,
$$q_{(S,I),(S-1,I+1)} = \lambda S\;\frac{I}{N}=\beta_{(-1,1)}(S,I);\quad q_{(S,I),(S,I-1)} = \gamma I =\beta_{(0,-1)}(S,I).$$
Normalizing $(\mathcal{Z}^N(t))$ by the population size $N$, we obtain, setting  $z=(s,i) \in [0,1]^2$, as $N \rightarrow \infty$,
$$\frac{1}{N}\beta_{(-1,1)}([Nz]) \rightarrow \beta_{(-1,1)}(s,i)= \lambda s i;\quad \frac{1}{N}\beta_{(0,-1)} ([Nz]) \rightarrow \beta_{(0,-1)}(s,i)= \gamma i.$$
These two limiting functions clearly satisfy (H1)--(H2).
Finally, the two functions given  in (\ref{defb}), (\ref{def:Sigma}) are well defined and now depend on $(\lambda,\gamma)$. \\
Set $\theta= (\lambda,\gamma)$ and denote by $b(\theta,z)$ and $\Sigma(\theta,z)$  the associated functions.
 We get
\begin{equation}\label{diffSIR}
b(\theta,(s,i))= \begin{pmatrix}
-\lambda si \\ \lambda si -\gamma i \end{pmatrix};\quad
\Sigma(\theta,(s,i))
=  \begin{pmatrix} \lambda si & -\lambda si \\ - \lambda si & \lambda si
+ \gamma i \end{pmatrix}.\end{equation}

Assume that $ \mathcal{Z}^N(0)$ satisfies $(N^{-1}S(0),N^{-1}I(0)) \rightarrow x= (s_0,i_0)$  with $s_0>0$, $i_0>0$, $s_0+i_0 \leq 1$ as $N\rightarrow \infty$.
Then the associated ODE is, using \eqref{ODEPARTIV},
\begin{equation} \label{ODESIRIV}
\frac{ds}{dt}= - \lambda si \;; \quad \frac{di}{dt}= \lambda si -\gamma i \; ; \quad  (s(0), i(0))=(s_0,i_0).
\end{equation}
The diffusion approximation of the $SIR$ epidemics obtained in (\ref{XN}) is the solution of the SDE
\smallskip

\resizebox{0.99\linewidth}{!}{
  \begin{minipage}{\linewidth}
\begin{align*}
dS^N (t)  &= -\lambda S^N (t) I^N (t) dt+\frac{1}{\sqrt{N}}\sqrt{\lambda S^N (t) I^N (t) } \; dB_1(t),  \quad  S^N(0) = s_0,\\
dI^N (t)  &=  (\lambda S^N (t) I^N (t)-\gamma I^N (t)) dt -\frac{1}{\sqrt{N}}\left(\sqrt{\lambda S^N (t) I^N (t)}\;dB_1(t)-\sqrt{\gamma\; I^N (t) }\;  dB_2(t)\right),\\
&\qquad I^N(0) = i_0,
\end{align*}
\end{minipage}
}
\smallskip

\noindent where $(B(t))$  is  standard two-dimensional Brownian motion and  $\sigma(\theta,z)$ corresponds to the Choleski decomposition of $\Sigma (\theta,z)= \sigma(\theta,z)\sigma^{\star}(\theta,z)$, \\
$$\sigma(\theta,(s,i))= \begin{pmatrix} \sqrt{\lambda si} & 0 \\ -\sqrt{\lambda si} & \sqrt{\gamma i} \end{pmatrix}.$$

\noindent
In order to visualize the influence of the population size $N$ on the sample paths of the normalized jump process $Z_N(t)= \mathcal{Z}^N(t)/N$, several trajectories
have been simulated using an $SIR$ model with parameters $(\lambda,\gamma)=(0.5, 1/3)$,  so that $R_0= \lambda/\gamma=1.5$. Results are displayed in Figure \ref{fig:Nvar}. We observe that, as the population size increases, the stochasticity of sample paths decreases. However, it   still keeps a non-negligible stochasticity for a large population size ($N=10000$). Since the peak of $I^N(t)$ is quite small
(about $0.08$ here), this can be explained by a  moderate size of the ratio  ``signal over noise''  even for large $N$ (here of order $0.08/0.01$).

\begin{figure}[ht]
\centering
\includegraphics[width=0.9\textwidth]{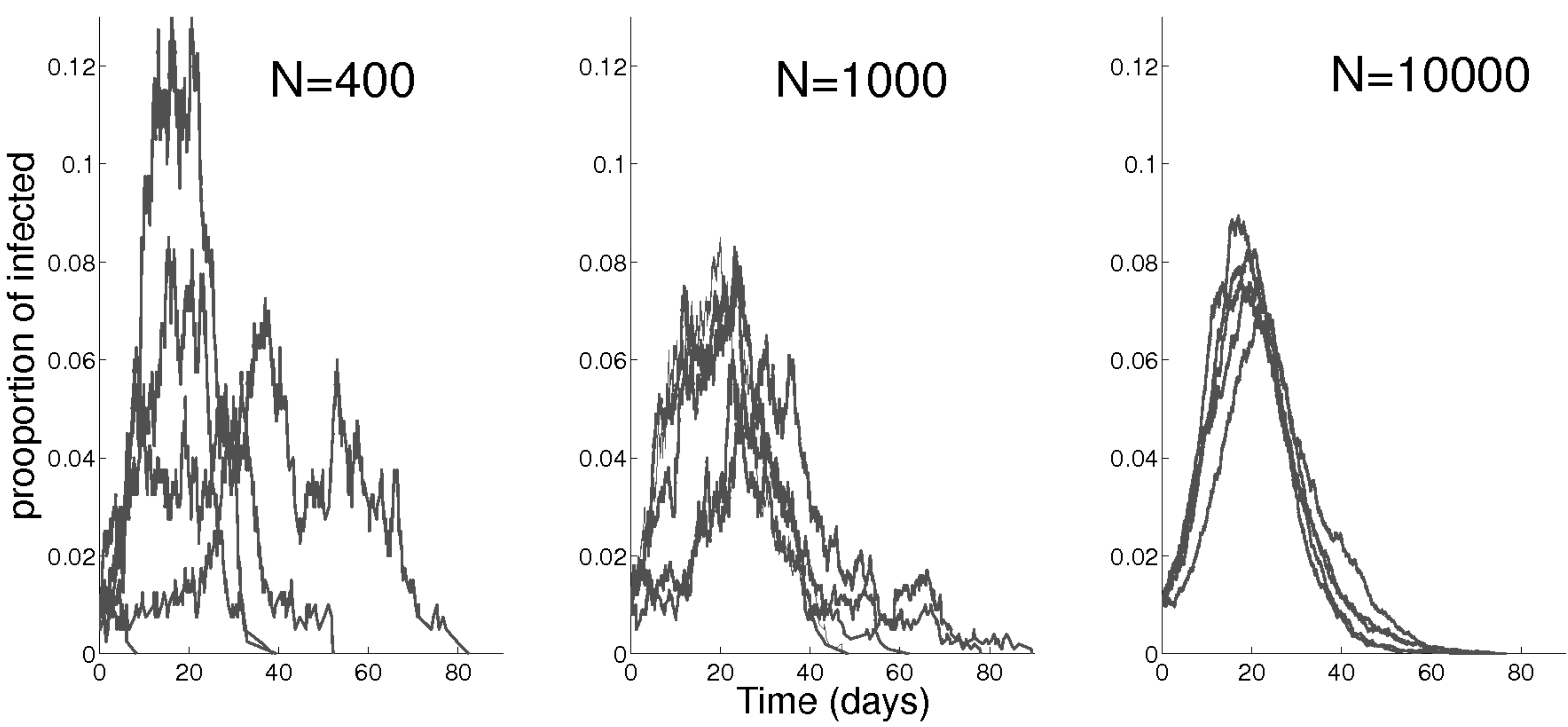}\\
\caption{Five simulated trajectories of the proportion of infectious individuals over time using the $SIR$ Markov jump process for $(s_0,i_0)=(0.99,0.01)$ $(\lambda,\gamma)=(0.5,1/3)$ and for each $N=\{400,1000,10000\}$ (from left to right).}
\label{fig:Nvar}
\end{figure}

\subsubsection{The diffusion approximation applied to the $SIRS$  epidemic model with seasonal forcing}
\label{SIRS_appli}

Another important class of epidemics models
 is the $SIRS$ model, which allows possible reinsertion of removed individuals into $S$ class. The additional transition reads as $(S,I)\stackrel{\delta (N-S-I) }{\longrightarrow}(S+1,I)$, where $\delta$ is the average rate of immunity waning. To mimic recurrent epidemics, additional mechanisms need to be considered.
 Indeed, to avoid that successive epidemics cycles die out, one way is to introduce an external immigration flow.
 Hence, one possible model to describe recurrent epidemics is the $SIRS$ model with seasonal transmission (at rate $\lambda(t)$), external immigration flow in the $I$ class (at rate $\eta$) and, when the time-scale of study is large, demography (with birth and death rates equal to $\mu$ for a stable population of size $N$). Seasonality in transmission is captured using a time non-homogeneous transmission rate, expressed under a periodic form \begin{equation}\label{lambdat}\lambda(t):=\lambda_0(1+\lambda_1
\sin(2\pi t/T_{per}))\end{equation}
where $\lambda_0$ is the baseline transition rate, $\lambda_1$ the intensity of the seasonal effect on transmission and $T_{per}$ is introduced to model an annual or  t seasonal trend (see \cite{kee11IV}, Chapter 5). Typically for modeling Influenza epidemics, we fixed it at $T=365$.\\

Assuming again a constant population size, we obtain a new two-dimensional system with four transitions for the corresponding Markov jump process:
\begin{eqnarray*}
(S,I)\stackrel{\frac{\lambda(t)}{N} S(I+N\eta)}{\longrightarrow}(S-1,I+1) &; & \quad (S,I)\stackrel{\mu S}{\longrightarrow}(S-1,I) ;\\
(S,I)\stackrel{(\gamma+\mu)I}{\longrightarrow}(S,I-1) &;& \quad (S,I)\stackrel{\mu N+\delta(N-S-I)}{\longrightarrow}(S+1,I).
\end{eqnarray*}

\noindent
Figure \ref{fig:SIRSbif} illustrates the dynamics of the $SIRS$ model (in ODE formalism) which is forced using sinusoidal terms. In particular, given the parameter values we have chosen, we can notice two distinct regimes: one with annual cycles (top graph) and the other with biennial dynamics (middle graph). The qualitative changes in model dynamics are explored by modifying a control parameter or \textit{bifurcation parameter} (here $\lambda_1$) and constructing a \textit{bifurcation diagram}.
\begin{figure}[ht]
\centering
	\includegraphics[width=0.8\textwidth]{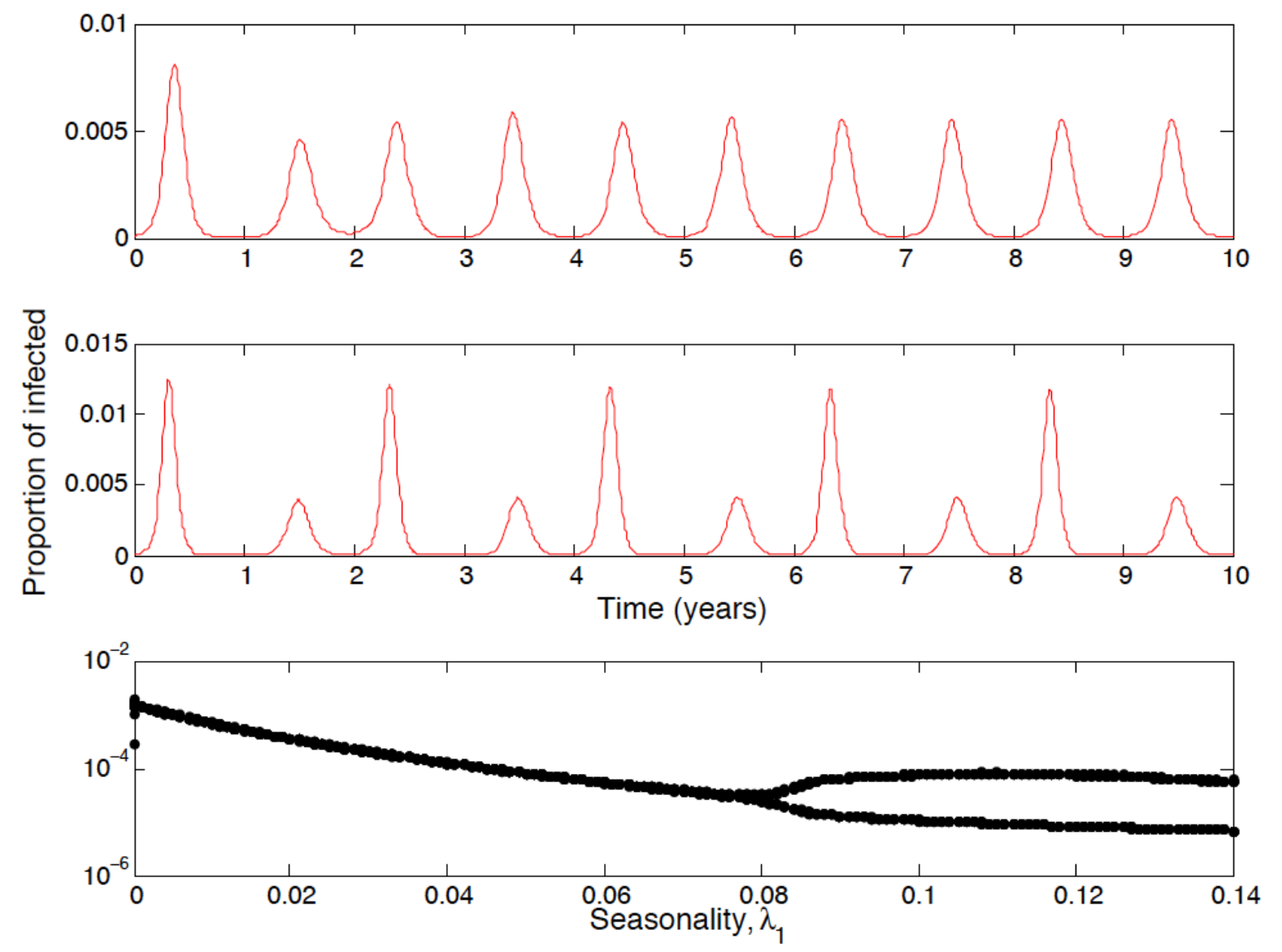}
\caption{Proportion of infected individuals, $I(t)$, over time (top and middle panels) simulated using the ODE variant of the $SIRS$ model with $N=10^7$, $T_{per}=365$, $\mu=1/(50 \times T_{per})$, $\eta=10^{-6}$, $(s_0,i_0)=(0.7,10^{-4})$ and $(\lambda_0,\gamma,\delta)=(0.5,1/3,1/(2\times 365))$. The top panel corresponds to $\lambda_1=0.05$, the middle panel to $\lambda_1=0.1$. The bottom panel represents the bifurcation diagram with respect to $\lambda_1$.}
\label{fig:SIRSbif}
\end{figure}

The diffusion approximation is built following the same generic scheme of Section \ref{generic} as for the $SIR$ model in Section \ref{SIR_appli}.
The four jumps $j$ corresponding to  functions $\beta_j$ are $j^*=(-1,1); (-1,0); (0,-1); (1,0)$ leading to
\begin{align*}
\beta_{(-1,1)}(t, S,I)= \frac{\lambda(t)}{N}S(I+N\eta),& \quad  \beta_{(0,-1)}(t, S,I)= (\gamma+\mu )S,\\
\beta{(0,-1)}(t, S,I)= (\gamma+\mu )S,& \quad\;\; \beta_{(1,0)}(t, S,I)= \mu N+\delta(N-S-I) S.
\end{align*}
The jump process is time-dependent and so we have to check (H1b)--(H2).
Straightforward computations yield that they are satisfied since, for $(s,i)
\in [0,1]^2$,
\begin{align*}
\beta_{(-1,1)}(t,(s,i))&=\lambda(t)s(i+\eta);\quad \beta_{(-1,0)}(t,(s,i))=\mu s;\\
\beta_{(0,-1)}(t,(s,i))&=(\gamma +\mu)i;\quad\quad \beta_{(1,0)}(t,(s,i))= \mu+\delta(1-s-i).
\end{align*}
Finally, setting $\theta=(\lambda_0,\lambda_1,\gamma,\delta,\eta,\mu)$, the associated drift function
$b(\theta,t,(s,i))$ and diffusion matrix $\Sigma(\theta,t, (s,i))$ are
\begin{equation}\label{bSIRStempIV}
	\begin{array}{ll}b(\theta,t, (s,i))
&=\begin{pmatrix}-\lambda(t) s(i+\eta) +\delta(1-s-i)+\mu(1-s)\\ \lambda(t) s(i+\eta)-(\gamma+\mu) i \end{pmatrix}
\end{array},
\end{equation}
\begin{equation}
	\begin{array}{lll}\Sigma(\theta,t, (s,i))&=
\begin{pmatrix}\lambda(t) s(i+\eta)+\delta(1-s-i)+\mu(1+s)&-\lambda(t) s(i+\eta)\\-\lambda(t) s(i+\eta)&\lambda(t) s(i+\eta)+(\gamma+\mu) i\end{pmatrix}\end{array}.
\end{equation}
Therefore, the  associated ODE  is, using \eqref{bSIRStempIV},
\begin{eqnarray*}
\frac{ds}{dt}&=& -\lambda(t) s(i+\eta) +\delta(1-s-i)+\mu(1-s),\quad  s(0)=s_0;\\
 \frac{di}{dt}&=& \lambda(t) s(i+\eta)-(\gamma+\mu) i,\quad  i(0)=i_0.
 \end{eqnarray*}
\noindent
Choosing $\sigma(\theta, t, (s,i))$ such that $\sigma (\theta, t, (s,i))  \sigma(\theta, t, (s,i))^{\star}= \Sigma (\theta,t ,(s,i))$, we obtain that the approximating diffusion $X_N(t)$ satisfies
\begin{equation}\label{Xsirs}
	dX^N(t)= b(\theta,t, (S_N,I_N))dt+ {\frac{1}{\sqrt N}}\sigma(\theta,t (S^N,I^N));\;\; X^N(0)= x.
	\end{equation}

\subsubsection{A Minimal model for Ebola Transmission with temporal transition rate}
According to \cite{cam15IV},
a  basic  model for Ebola dynamics  consists in a $SEIR$ model with  temporal transmission rate.
In a rough approximation, assuming homogeneous mixing in a size $N$ community yields, setting  $ \mathcal{Z}^N(t)= (S,E,I) $,
\begin{align*}
&(S,E,I)\overset{\lambda (t)\frac{SI}{N}}{\longrightarrow }\; (S-1,E+1,I);\\
&(S,E,I)\;\overset{\nu E}{\longrightarrow} (S,E-1, I+1);\\
&(S,E,I)\overset{\gamma I}{\longrightarrow} (S,E, I-1).
\end{align*}
The diffusion approximation has drift and diffusion matrix given by, for $z=(s,e,i)$,
$$b(\theta,t,z)= \begin{pmatrix} -\lambda(t) si\\ \lambda(t) s i-\nu e\\ \nu e -\gamma i \end{pmatrix} ;\quad
\Sigma(\theta,t,z)=  \begin{pmatrix} \lambda(t) s i&-\lambda(t) s i&0 \\-\lambda(t) s i&\lambda(t) s i+\nu e& -\nu e\\
0& -\nu e& \nu e+\gamma i \end{pmatrix}.$$

Two questions concerning the inference arise in this model:  the non-parametric estimation of $\lambda(\cdot)$ and the presence of random effects since the dynamics are observed in different locations.

 \subsubsection{Two variants of the $SEIRS$ model with demography}
 In  Part I, Chapter \ref{chap_MarkovMod}  of these notes, an example of $ SEIRS$ model with demography is proposed (see Example \ref{ex:SEIRS}).
 Removed individuals loose their immunity at rate $\delta$; there is an influx of susceptible at rate $\mu N$ and individuals, whichever type, die at rate $\mu$.
 Hence, 9 jumps are present in this model, for $(s,e,i,r)$, which yields for $Z=(S,E,I)$,
 $$(S,E,I)\overset{\lambda \frac{SI}{N}}{\longrightarrow } (S-1,E+1,I),\quad (S,E,I)\overset{\nu E}{\longrightarrow} (S,E-1, I+1),$$
$$  (S,E,I)\overset{ \mu N +\delta(N-S-E-I)} {\longrightarrow} (S+1,E, I), \quad (S,E,I)\overset{ \mu I +\gamma I} {\longrightarrow} (S,E, I-1).$$
 $$ (S,E,I)\overset{ \mu S} {\longrightarrow} (S-1,E, I),\quad
  (S,E,I)\overset{ \mu E} {\longrightarrow} (S,E-1, I),\quad
(S,E,I) \overset{ \mu(N-S-E-I)}{\longrightarrow} (S,E, I).$$
 This yields, setting  $z=(s,e,i)$ and $\theta=(\lambda, \nu,\gamma,\delta,\mu )$
\begin{eqnarray*}
b(\theta,z)&=& \begin{pmatrix}-\lambda s i +\mu (1-s) + \delta  (1- s- i- e) \\  \lambda s i-(\mu+\nu) e \\ \nu e -(\gamma+\nu) i \end{pmatrix}; \\
\Sigma(\theta,z)&=&\begin{pmatrix}\lambda s i +\mu (1+s)+ \delta (1- s- i- e) & - \lambda s i &0\\
- \lambda s i &  \lambda s i +(\mu+\nu)e& -\nu e\\
 0& -\nu e&  \nu e+(\gamma+\nu)i \end{pmatrix}.
\end{eqnarray*}
%

\section{Inference for discrete observations of diffusions on [0,T]}
Our concern  here is parametric inference for these models.
Statistical inference for discretely observed diffusion processes present some specific properties (see Section \ref{Recapdiff} in the Appendix)  that lead us to consider distinct parameters in the drift coefficient (here $\alpha$) and in the diffusion coefficient ($\beta$).
The state space of the diffusion is $\R^p$,
and the  parameter set $\Theta$ is a subset of $\R^a \times \R^b$, with $\alpha \in \R^a, \beta \in \R^b$.
For instance, the  $SIR$ diffusion approximation corresponds  to $p=2$ and $\alpha= \beta= (\lambda,\gamma)$.

In order to deal with general epidemics, we consider time-dependent diffusion processes on $\R^p$ with small diffusion coefficient $\epsilon=1/\sqrt{N}$ satisfying the
 stochastic differential equation (SDE):
\begin{equation}\label{SDEGen}
dX(t)= b(\alpha,t,X(t)) dt +\epsilon \sigma(\beta,t,X(t))\; dB(t) \;; \quad X(0)=x,
\end{equation}
where  $(B(t)_{t\geq 0})$ is  a $p$-dimensional  Brownian motion defined on a probability space $(\Omega,(\mathcal{ F}_{t})_{t \geq 0}, {\mathbb P})$,
$b(\alpha,t,\cdot):\R^p \rightarrow \R^p$ and $\sigma(\beta,t,\cdot):  \R^p \rightarrow \R^p\times \R^p $ and $x$ is non-random fixed.\\

\noindent
Since epidemic dynamics are usually observed at discrete times,
 our aim is to study the estimation of $\theta= (\alpha,\beta)$ based on the observations
\begin{equation}\label{observations}
 (X(t_k), k=1 \dots n) \mbox{ with } t_k=k\Delta;\ T=n\Delta  \quad (\mbox{sampling interval }\Delta).
\end{equation}

For observations on a fixed time interval, $[0,T]$,  there are distinct asymptotic results according to $\Delta$.
\begin{enumerate}[(2)]
\item[\textbf{ (1)}]  \underline{High frequency sampling
$ \Delta=\Delta_n \rightarrow 0$}: The number of observations $n=T/\Delta_n$ goes to $\infty$  while $T= n\Delta_n$ is fixed. There is a double asymptotic framework: $\epsilon \rightarrow 0$ and $\Delta \rightarrow 0$ (or $n= T/\Delta \rightarrow \infty $) simultaneously.  Let us stress that we shall use both notations for this second asymptotics $n\rightarrow \infty$ or $\Delta\rightarrow 0$.  Although it might be confusing, it is sometimes better to state  results according to the number of observations and sometimes according to the sampling interval $\Delta$.
\item[\textbf{ (2)}] \underline{Low frequency sampling  $ \Delta$ is fixed}:
 It leads to a finite number of observations $n=T/\Delta$. Results are obtained in the asymptotic framework $\epsilon\rightarrow 0$.
\end{enumerate}

At first glance, the low frequency sampling  seems a priori a suitable framework  for epidemic data. However, both high and low frequency observations could be appropriate in practice because the choice of the statistical framework depends more  on the relative magnitudes between $T$, $\Delta$ and the population size $N$ ($= \epsilon ^{-2}$) than on their accurate values.\\
%

From a statistical point of view,  the sequence $(X(t_k))$ is  a time-dependent Markov chain and therefore the likelihood  depends on its transition probabilities.
However, the link between the
parameters  present in the SDE and the transition probabilities of $(X(t_k))$ is generally not explicit, which leads to intractable likelihoods. This is a well known problem for discrete observations of diffusion processes.  Alternative approaches based on M-estimators or contrast functions (see e.g.\ \cite{vaa00IV} for independent random variables, \cite{kes12IV} for stochastic processes) have to be investigated (see also  the recap  presented in Section \ref{Recapdiff} in the  Appendix of this part).\\

After the statement of some preliminary results, we present successively the statistical inference for high frequency sampling, where the asymptotics is $ \epsilon=1/\sqrt{N} \rightarrow 0$, $\Delta_n=T/n \rightarrow 0$ (Section \ref{HFO}), and for the low frequency sampling, $ \epsilon=1/\sqrt{N} \rightarrow 0$, $\Delta$ fixed (Section \ref{LFO}).
\subsection{Assumptions, notations and first results}
Let $\theta_0$ be the true value of the parameter and $\Theta $ the parameter set.
Denote by $\mathcal{ M}_p(\R)$ the set of $p \times p$ matrices.
We first assume that $b(\alpha,t,z)$ and $\sigma(\beta,t,z)$ are measurable in $(t,z)$, Lipschitz continuous with respect to the second variable and satisfy a linear growth condition:
for all $t\geq 0, z,z' \in \R^p$, there exists a global constant $K$ such that
\begin{enumerate}[(S3):]
\item[\textbf{(S1)}:]  $\forall  \theta \in \Theta$, $\parallel b(\alpha, t,z)-b(\alpha,t,z')\parallel + \parallel \sigma(\beta, t,z)-\sigma(\beta,t,z') \parallel \leq K \parallel z-z' \parallel$.
\item[\textbf{(S2)}:] $\forall (\alpha,\beta) \in \Theta, \parallel b(\alpha; t,z)\parallel^2+\parallel \sigma(\beta;t,z)\parallel^2
\leq K(1+\parallel z\parallel^2).$
\item[\textbf{(S3)}:] $\forall (\beta,t, z), \;\Sigma(\beta;t,z)= \sigma(\beta;t,z) \sigma^{\star}(\beta;t,z)$ is non-singular.
\end{enumerate}
Assumptions (S1)--(S3) are classical assumptions that  ensure that, for all $\theta$, (\ref{SDEGen}) admits a unique strong  solution (see e.g.\ \cite[Chapter 5.2.B]{kar00IV}).\\

Another set of assumptions is required for the inference:
\begin{enumerate}[(S6):]
\item[\textbf{(S4)}:] $\Theta= K_a \times K_b$ is a compact set of $\R^{a+b}$, $\theta_0 \in  \mbox{Int}(\Theta).$
\item[\textbf{(S5)}:] For all $t \geq 0$,  $b(\alpha;t,z) \in C^3(K_a \times  \R^+ \times \R^p,\R^p)$ and $\sigma(\beta;t,z)\in C^2(K_b \times  \R^+ \times \mathcal{ M}_p(\R)) $.
\item[\textbf{(S6)}:]  $ \alpha \neq \alpha' \Rightarrow b(t;\alpha,z(\alpha,t)) \not \equiv b(t;\alpha',z(\alpha',t))$.
\item[\textbf{(S7)}:] $  \beta \neq \beta' \Rightarrow \Sigma(t;\beta,z(\alpha_0,t)) \not \equiv \Sigma(t;\beta',z(\alpha_0,t))$.
\end{enumerate}

\noindent
Assumptions \textbf{(S4)--(S5)} are classical for the inference for diffusion processes.
Usually, it is sufficient in  \textbf{(S5)} to deal with $C^2$ functions. The additional
 differentiability condition comes from  regularity conditions required on $\alpha \rightarrow
 \Phi(\alpha, t,s)$. Indeed, \textbf{(S5)} on $b(\alpha,t,z)$ ensures  that the function
$\Phi(\alpha,t,t_0)$ belongs to $ C^2(K_a\times [0,T]^2, \mathcal{ M}_p)$.
Assumption \textbf{(S6)} is the usual identifiability assumption
for a diffusion continuously observed on
$[0,T]$ and \textbf{(S7)} is an identifiability assumption for parameters in the diffusion coefficient.\\

Since  $(X(t))$ is a diffusion process on $(\Omega,(\mathcal{ F}_{t})_{t \geq 0}, {\mathbb P})$, the space of observations is
 $ (C_T= C([0,T],\R^p),\mathcal{ C}_T)$ where $\mathcal{ C}_T$ is  the Borel $\sigma$-algebra on  $C([0,T],\R^p)$.
 Let $\P_{\theta}= \P_{\alpha,\beta}$  the probability distribution on  $( C_T,\mathcal{ C}_T)$ of   $(X(t)), 0\leq t \leq T)$ satisfying (\ref{SDEGen}).
  Let $\mathcal{ G}_k^n$ denote the $\sigma$-algebra $ \sigma(X(s), s \leq \frac{kT}{n})$.\\

For $g(\theta,t,z): \Theta \times [0, T] \times \R^p \rightarrow \R^p$,
$\nabla_z g (\cdot)$ is  the $\mathcal{ M}_{p}$ matrix
\begin{equation}\label{gradf}
 \nabla_{z} g(\cdot) = (\frac{\partial g_i}{\partial z_j}(\theta,t, z))_{ 1\leq i,j\leq p} \quad \mbox{and } \quad  \nabla_{\theta} g(\cdot) = (\frac{\partial g_i}{\partial \theta_j}(\theta,t, z))
 \end{equation}
If $z=z(\theta,t)$, then
\begin{equation} \label{gradcompos}
\nabla_{\theta}\left(g(\theta,t,z(\theta,t))\right)= \nabla_{\theta}g(\cdot) + \nabla_z g(\cdot)\nabla_{\theta}z(\cdot).
\end{equation}
 Quantities are indexed by $\theta$ (resp.\ $\alpha$ or $\beta$)   when they  depend on both $\alpha,\beta $ (resp.$\alpha$ or $\beta$).
 Introducing  the dependence with respect to $t,\theta$ in \eqref{ODEPARTIV}, \eqref{def:phi}, \eqref{eq:gtstoch} yields,
\begin{equation}\label{TSparamxg}
	\begin{array}{rl}
\frac{ \partial z}{\partial t}(\alpha,t) =&  b(\alpha,t, z(\alpha,t) );\quad  z(\alpha,0)= x_0 ,\\
g(\alpha,\beta, t)= &\int_0^t\Phi(\alpha,t,u)\sigma(\beta,u,z(\alpha,u)) dB(u),\quad \mbox {with } \Phi(\alpha,\cdot) \mbox{  such that}\\
\frac{\partial \Phi}{\partial t}  (\alpha,t,u)=&\nabla_z b(\alpha,t, z(\alpha,t))\Phi(\alpha,t,u) \; , \; \Phi(\alpha,u,u)=I_p.
\end{array}
\end{equation}
The expansion (\ref{TS}) holds for time-dependent diffusion processes.
\begin{proposition}\label{TStheta}
Assume (S1)--(S5). Then, under $\P_{\theta}$,  $(X(t), 0\leq t\leq T)$ satisfies that,  uniformly with respect to $\theta$,
\begin{equation}\label{TSparam}
	\begin{array}{rl}
X(t)=& z(\alpha,t)+\epsilon g(\theta,t)+\epsilon^2 R^{\epsilon}( \theta, t),\quad \mbox{with}\\
 lim_{\epsilon\rightarrow 0,r \rightarrow \infty}&  \P_{\theta}( \sup_{t\leq T}\parallel  R^{\epsilon}(\theta,t)\parallel >r)=0\\
 \sup_{t\leq T}\parallel  R^{\epsilon}(\theta,t)\parallel &\;\mbox{has uniformly bounded moments}.
\end{array}
\end{equation}
\end{proposition}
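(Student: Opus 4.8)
The plan is to define the remainder directly by $\epsilon^2 R^\epsilon(\theta,t) := X(t)-z(\alpha,t)-\epsilon g(\theta,t)$, to show that $R^\epsilon$ solves a \emph{linear} SDE whose coefficient is bounded and whose source term has uniformly bounded moments, and then to conclude by the usual Burkholder--Davis--Gundy and Gronwall estimates. The autonomous parameter-free case is Theorem~\ref{th_Taystoch}, and the preceding remark grants the time-dependent expansion, so the only genuinely new feature is the \emph{uniformity in $\theta$}; this will be controlled throughout by the global Lipschitz and linear-growth constant $K$ of (S1)--(S2), the compactness of $\Theta$ in (S4), and the smoothness (S5).

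First I would establish the a priori estimate on $U^\epsilon(t):=X(t)-z(\alpha,t)$. From
\[dU^\epsilon=\bigl(b(\alpha,t,X)-b(\alpha,t,z)\bigr)\,dt+\epsilon\,\sigma(\beta,t,X)\,dB,\]
the Lipschitz bound $\norm{b(\alpha,t,X)-b(\alpha,t,z)}\le K\norm{U^\epsilon}$, the linear growth of $\sigma$, Burkholder--Davis--Gundy and Gronwall yield, for every integer $k$, a constant $C_k$ \emph{independent of $\theta$ and $\epsilon$} with $\E_\theta\bigl[\sup_{t\le T}\norm{U^\epsilon(t)}^{2k}\bigr]\le C_k\,\epsilon^{2k}$. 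In parallel, the explicit representation $g(\theta,t)=\int_0^t\Phi(\alpha,t,u)\sigma(\beta,u,z(\alpha,u))\,dB(u)$, together with the boundedness of $\Phi(\alpha,\cdot,\cdot)$ and of $\sigma(\beta,\cdot,z(\alpha,\cdot))$ on $[0,T]$ uniformly over the compact $\Theta$, gives $\E_\theta[\sup_{t\le T}\norm{g(\theta,t)}^{2k}]\le C_k$.

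Next I would derive the equation for $R^\epsilon$: subtracting the ODE for $z$ and $\epsilon$ times the linear SDE for $g$ from the SDE for $X$, and Taylor expanding the drift as
\[b(\alpha,t,X)-b(\alpha,t,z)=\nabla_z b(\alpha,t,z)\,U^\epsilon+\rho_b(\theta,t),\qquad \norm{\rho_b(\theta,t)}\le C\norm{U^\epsilon(t)}^2,\]
one obtains the linear equation
\[dR^\epsilon=\nabla_z b(\alpha,t,z(\alpha,t))\,R^\epsilon\,dt+\frac1{\epsilon^2}\rho_b(\theta,t)\,dt+\frac1{\epsilon}\bigl(\sigma(\beta,t,X)-\sigma(\beta,t,z)\bigr)\,dB,\]
with $R^\epsilon(\theta,0)=0$. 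By the first estimate, $\epsilon^{-2}\rho_b$ is bounded in every $L^{2k}$ uniformly in $\theta$, and the Lipschitz bound $\norm{\sigma(\beta,t,X)-\sigma(\beta,t,z)}\le K\norm{U^\epsilon}$ makes $\epsilon^{-1}(\sigma(\beta,t,X)-\sigma(\beta,t,z))$ bounded in every $L^{2k}$ uniformly in $\theta$. Solving by variation of constants through the (deterministic) resolvent $\Phi(\alpha,t,s)$ and applying once more Burkholder--Davis--Gundy and Gronwall gives $\E_\theta[\sup_{t\le T}\norm{R^\epsilon(\theta,t)}^{2k}]\le C_k$ uniformly in $\theta$ and in small $\epsilon$; the tightness assertion $\lim_{\epsilon\to0,\,r\to\infty}\P_\theta(\sup_{t\le T}\norm{R^\epsilon}>r)=0$ then follows from Markov's inequality.

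The main obstacle is the control of the source term $\epsilon^{-2}\rho_b$: one must turn the a priori $O(\epsilon^2)$ bound on $\norm{U^\epsilon}^2$ into an $O(1)$ bound on $\epsilon^{-2}\rho_b$ \emph{with a constant not depending on $\theta$}, which requires bounding the second derivatives of $b$ on a tube around the curve $z(\alpha,\cdot)$ uniformly in $\alpha\in K_a$. This is precisely where the reinforced regularity (S5) ($C^3$ rather than $C^2$) and the compactness (S4) enter; the remaining estimates are routine and become uniform only because the constant $K$ in (S1)--(S2) is global.
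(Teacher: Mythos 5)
Your proposal is correct in substance, and it is worth noting that it supplies an argument the paper itself never writes out: for Proposition \ref{TStheta} (and the companion Propositions \ref{gtheta} and \ref{Rteps}) the text simply refers to \cite{aze82IV}, \cite{fre84IV} and \cite{glo09IV} for fixed $\theta$, and adds the one-line remark that Assumption (S4) extends the results to $\theta\in\Theta$. Your route --- define $\epsilon^2R^\epsilon:=X-z-\epsilon g$, show it solves a linear SDE driven by $\epsilon^{-2}\rho_b\,dt+\epsilon^{-1}(\sigma(\beta,t,X)-\sigma(\beta,t,z))\,dB$, and close with Burkholder--Davis--Gundy and Gronwall --- is precisely the classical small-noise expansion in those references, and your point that uniformity in $\theta$ costs nothing because all constants come from the global $K$ of (S1)--(S2) and the compactness (S4) is exactly the content of the paper's one-line remark; so what you buy is a self-contained proof, at the price of redoing estimates the paper delegates. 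Two details in your sketch deserve care. First, the bound $\norm{\rho_b(\theta,t)}\le C\norm{U^\epsilon(t)}^2$ with a uniform $C$ is only valid while $X$ stays in a compact tube around $z(\alpha,\cdot)$, since (S1)--(S5) do not bound $\nabla^2_z b$ globally; to convert this into moment bounds for $\sup_t\norm{R^\epsilon}$ you must either combine the tube bound with the crude global estimate $\norm{\rho_b}\le 2K\norm{U^\epsilon}$ off the tube, using that $\P_{\theta}(\sup_{t\le T}\norm{U^\epsilon(t)}>1)$ decays faster than any power of $\epsilon$ (by your own $L^{2m}$ estimate for arbitrary $m$), or first prove the result for bounded coefficients and then relax --- which is how the paper argues in the proof of Lemma \ref{BXa0} and how \cite{glo09IV} proceed; note in this respect that the paper's Proposition \ref{Rteps} explicitly adds the hypothesis of uniformly bounded derivatives for exactly this kind of moment estimate. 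Second, a small misattribution: the $C^3$ regularity of $b$ in (S5) is not what controls the Taylor remainder (local $C^2$ bounds on the tube suffice for that); in the paper the extra derivative is required for the smoothness of $\alpha\mapsto\Phi(\alpha,t,s)$ used later in the inference. Neither point invalidates your argument; they only need to be made explicit.
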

\begin{proposition}\label{gtheta}
Under (S1)--(S5), the process $(g(\theta,t))$ satisfies  using (\ref{TSparamxg}) \\
$$\forall s < t,\quad   g(\theta,t)= \Phi(\alpha,t,s) g(\theta,s)+ \int_s ^t \Phi(\alpha, t,u) \sigma(\beta,u,z(\alpha,u)) dB(u),$$
where the two terms of the r.h.s.\ above are independent random variables.
\end{proposition}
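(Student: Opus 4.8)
The plan is to split the Wiener integral that defines $g(\theta,t)$ in \eqref{TSparamxg} at the intermediate time $s$ and to exploit the flow property of the resolvent $\Phi$. First I would write, for $s<t$,
\[
g(\theta,t)= \int_0^t \Phi(\alpha,t,u)\,\sigma(\beta,u,z(\alpha,u))\,dB(u)
= \int_0^s \Phi(\alpha,t,u)\,\sigma(\beta,u,z(\alpha,u))\,dB(u) + \int_s^t \Phi(\alpha,t,u)\,\sigma(\beta,u,z(\alpha,u))\,dB(u).
\]
The second term is already the stochastic integral appearing in the statement, so it remains only to recognize the first term as $\Phi(\alpha,t,s)\,g(\theta,s)$.

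The key algebraic fact is the semigroup (cocycle) identity $\Phi(\alpha,t,u)=\Phi(\alpha,t,s)\,\Phi(\alpha,s,u)$, valid for $u\le s\le t$. I would prove it from the linear matrix ODE \eqref{def:phi}: for fixed $u$ and $s$, both $t\mapsto \Phi(\alpha,t,u)$ and $t\mapsto \Phi(\alpha,t,s)\Phi(\alpha,s,u)$ solve $\frac{d}{dt}M(t)=\nabla_z b(\alpha,t,z(\alpha,t))\,M(t)$ and coincide at $t=s$ (both equal $\Phi(\alpha,s,u)$, since $\Phi(\alpha,s,s)=I_p$), so uniqueness for linear ODEs forces them to agree for all $t\ge s$. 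Crucially, $\Phi(\alpha,t,s)$ is \emph{deterministic}, because $z(\alpha,\cdot)$ is the non-random solution of \eqref{ODEPARTIV}; hence it is constant with respect to the integration variable $u$ and may be pulled out of the first stochastic integral by linearity:
\[
\int_0^s \Phi(\alpha,t,u)\,\sigma(\beta,u,z(\alpha,u))\,dB(u)
= \Phi(\alpha,t,s)\int_0^s \Phi(\alpha,s,u)\,\sigma(\beta,u,z(\alpha,u))\,dB(u)
= \Phi(\alpha,t,s)\,g(\theta,s),
\]
where the last equality is again \eqref{TSparamxg} read at time $s$. This yields the stated decomposition.

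For the independence claim, I would observe that the two summands are Wiener integrals of \emph{deterministic} integrands (deterministic because $z(\alpha,\cdot)$ and $\Phi(\alpha,\cdot,\cdot)$ are non-random) over the disjoint intervals $[0,s]$ and $]s,t]$. Setting $F(u)=\Phi(\alpha,t,u)\sigma(\beta,u,z(\alpha,u))\mathbf{1}_{[0,s]}(u)$ and $G(u)=\Phi(\alpha,t,u)\sigma(\beta,u,z(\alpha,u))\mathbf{1}_{]s,t]}(u)$, both $\Phi(\alpha,t,s)g(\theta,s)$ and $\int_s^t(\cdots)\,dB$ are centered, and they are jointly Gaussian. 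By the It\^o isometry their cross-covariance is
\[
\E\Big[\Big(\int_0^t F(u)\,dB(u)\Big)\Big(\int_0^t G(u)\,dB(u)\Big)^{\star}\Big]
= \int_0^t F(u)\,G(u)^{\star}\,du = 0,
\]
since $F$ and $G$ have disjoint supports. Zero covariance for jointly Gaussian vectors gives independence. Equivalently, $\Phi(\alpha,t,s)g(\theta,s)$ is measurable with respect to $\sigma(B_r,\,r\le s)$ while $\int_s^t(\cdots)\,dB$ depends only on the increments of $B$ after $s$, so independence also follows directly from the independent-increments property of Brownian motion. The only care needed is the matrix bookkeeping — tracking the transposes in the covariance and checking the cocycle identity — together with the routine justification that the deterministic matrix $\Phi(\alpha,t,s)$ can be extracted from the It\^o integral; I do not anticipate any genuine obstacle, since every integrand here is deterministic and the argument reduces to the elementary theory of Gaussian Wiener integrals once the flow property of $\Phi$ is in hand.
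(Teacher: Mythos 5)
Your proof is correct. Note that the paper itself gives no argument for Proposition \ref{gtheta}: it only states, after Proposition \ref{Rteps}, that the proofs ``for $\theta$ fixed'' can be found in \cite{aze82IV}, \cite{fre84IV}, \cite{glo09IV}, with (S4) used to make the statements uniform in $\theta$. What you wrote is exactly the standard argument underlying those references: split the Wiener integral defining $g(\theta,t)$ at the intermediate time $s$, identify $\int_0^s$ with $\Phi(\alpha,t,s)g(\theta,s)$ via the cocycle identity $\Phi(\alpha,t,u)=\Phi(\alpha,t,s)\Phi(\alpha,s,u)$ (which you correctly justify by uniqueness for the linear ODE \eqref{def:phi}, using that $\Phi$ is deterministic because $z(\alpha,\cdot)$ is), and then obtain independence either from zero cross-covariance of jointly Gaussian Wiener integrals with disjointly supported deterministic integrands, or---more directly---from the independent-increments property of Brownian motion, since the first term is $\sigma(B_r,\,r\leq s)$-measurable and the second depends only on increments after $s$. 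There is no gap; your write-up in fact supplies a self-contained proof where the paper relies on citation.
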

\begin{proposition}\label{Rteps}
Assume (S1)--(S2). If moreover $b(\alpha,\cdot)$
and $\sigma(\beta,\cdot)$ have
uniformly bounded derivatives, there exist constants
only depending on $T$ and $\theta $ such that
\begin{enumerate}
\item[(i)] $\forall t \in[0,T]$,
$\E_{\theta}( \norm{R^{\epsilon}(\theta,t)}^{2} < C_1 $,
\item[(ii)] $ \forall t \in[0,T]$, as $h \rightarrow 0$,
$\E_{\theta}({ \norm{R^{\epsilon}(\theta, t+h)-R^{\epsilon}(\theta,t)}^{2}})
 < C_2 h$.
 \end{enumerate}
\end{proposition}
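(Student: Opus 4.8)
The plan is to derive an explicit stochastic equation for $R^{\epsilon}(\theta,t)$ and then run standard moment estimates on it. Writing $U^{\epsilon}(t)=X(t)-z(\alpha,t)=\epsilon g(\theta,t)+\epsilon^{2}R^{\epsilon}(\theta,t)$, I would subtract the ODE \eqref{TSparamxg} for $z(\alpha,\cdot)$ and $\epsilon$ times the linear SDE $dg=\nabla_z b(\alpha,t,z)g\,dt+\sigma(\beta,t,z)\,dB$ (from Theorem~\ref{th_Taystoch} and Proposition~\ref{gtheta}) from the SDE \eqref{SDEGen} for $X$. Taylor-expanding $b(\alpha,t,\cdot)$ and $\sigma(\beta,t,\cdot)$ around $z(\alpha,t)$ to first order with integral-form remainders, the first-order terms cancel against $\epsilon\,dg$, leaving, after dividing by $\epsilon^{2}$,
\begin{align*}
dR^{\epsilon}(\theta,t)=&\big[\nabla_z b(\alpha,t,z(\alpha,t))R^{\epsilon}(\theta,t)+\epsilon^{-2}\rho_b(t)\big]\,dt\\
&+\big[\nabla_z\sigma(\beta,t,z(\alpha,t))\,g(\theta,t)+\epsilon\,\nabla_z\sigma(\beta,t,z)R^{\epsilon}+\epsilon^{-1}\rho_\sigma(t)\big]\,dB(t),
\end{align*}
with $R^{\epsilon}(\theta,0)=0$, where $\rho_b,\rho_\sigma$ are the Taylor remainders, bounded by $\norm{U^{\epsilon}}^{2}$ times the suprema of the second derivatives. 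Since $\norm{U^{\epsilon}}^{2}\le 2\epsilon^{2}\norm{g}^{2}+2\epsilon^{4}\norm{R^{\epsilon}}^{2}$, the terms $\epsilon^{-2}\rho_b$ and $\epsilon^{-1}\rho_\sigma$ are of size $O(\norm{g}^{2})+O(\epsilon^{2}\norm{R^{\epsilon}}^{2})$.

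For (i) I would pass to the integral form, take $\E_\theta\norm{\cdot}^{2}$, bound the drift integral by Cauchy--Schwarz and the stochastic integral by the It\^o isometry. Using the uniform bounds on the derivatives of $b$ and $\sigma$ (so that $\nabla_z b$, $\nabla_z\sigma$ are bounded and $\rho_b,\rho_\sigma$ controlled), together with the fact that $g(\theta,\cdot)$ is a centered Gaussian process with covariance \eqref{CovG} and hence has all moments bounded uniformly on $[0,T]$, I obtain an inequality of the form
$$\E_\theta\norm{R^{\epsilon}(\theta,t)}^{2}\le C\int_0^t\E_\theta\norm{R^{\epsilon}(\theta,s)}^{2}\,ds+C'+C''\epsilon^{2}\int_0^t\E_\theta\norm{R^{\epsilon}(\theta,s)}^{4}\,ds,$$
and Gronwall's lemma then delivers the uniform bound $C_1$. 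For (ii) I would again use the integral representation on $[t,t+h]$: the drift contribution, bounded by Cauchy--Schwarz together with part (i), is $O(h^{2})$, while the stochastic part, controlled by the It\^o isometry and the moment bounds on $g$ and on $R^{\epsilon}$, is $O(h)$; adding them yields $\E_\theta\norm{R^{\epsilon}(\theta,t+h)-R^{\epsilon}(\theta,t)}^{2}\le C_2 h$.

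The main obstacle is the quadratic-in-$R^{\epsilon}$ term $\epsilon^{2}\int_0^t\E_\theta\norm{R^{\epsilon}}^{4}$ in the Gronwall inequality, which makes a naive $L^{2}$ estimate circular. I would resolve this by first establishing, from (S1)--(S2) and the standard moment estimates for SDEs, that $X$ and hence $U^{\epsilon}$ have all moments bounded uniformly on $[0,T]$ and for $\epsilon\le\epsilon_0$; localizing by the stopping times $\tau_M=\inf\{t:\norm{R^{\epsilon}(\theta,t)}>M\}$ furnishes an a priori finite bound, after which the $\epsilon^{2}$ prefactor lets the quartic term be absorbed into the left-hand side for $\epsilon$ small, and the localization is removed by monotone convergence. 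The higher-moment control needed to make the It\^o-isometry step in (ii) rigorous follows from the same scheme applied to $\norm{R^{\epsilon}}^{2k}$ via the Burkholder--Davis--Gundy inequality.
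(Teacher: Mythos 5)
Your overall strategy --- derive an SDE for $R^{\epsilon}$, then run Gronwall and It\^o-isometry moment estimates --- is the right one; it is essentially what the references the paper points to for this proposition (\cite{aze82IV}, \cite{fre84IV}, \cite{glo09IV}) do, the paper itself giving no proof. Your derivation of the equation for $R^{\epsilon}$ is correct, and your treatment of (ii) is fine once (i) is available. The genuine gap is in your resolution of the quartic term. Re-expanding $\norm{U^{\epsilon}}^{2}\le 2\epsilon^{2}\norm{g}^{2}+2\epsilon^{4}\norm{R^{\epsilon}}^{2}$ inside the Taylor remainders is precisely what creates the term $\epsilon^{2}\int_0^t\E_{\theta}\norm{R^{\epsilon}}^{4}ds$, and the localization you propose does not remove it: on $\{t\le\tau_M\}$ one can only bound $\E_{\theta}\norm{R^{\epsilon}}^{4}\le M^{2}\E_{\theta}\norm{R^{\epsilon}}^{2}$, so Gronwall yields $\E_{\theta}\norm{R^{\epsilon}(t\wedge\tau_M)}^{2}\le C'\exp\bigl((C+C''\epsilon^{2}M^{2})T\bigr)$, while absorption into the left-hand side requires $C''\epsilon^{2}M^{2}T\le 1/2$, i.e.\ $\epsilon\lesssim 1/M$. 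Either way the bound is not uniform in $M$ at fixed $\epsilon$, so letting $M\rightarrow\infty$ (Fatou or monotone convergence) gives nothing; and running the same scheme on $\E_{\theta}\norm{R^{\epsilon}}^{4}$ only produces $\E_{\theta}\norm{R^{\epsilon}}^{8}$, an infinite hierarchy. Note also that the a priori ingredient you invoke --- ``$X$ and hence $U^{\epsilon}$ have all moments bounded uniformly'' --- is too weak: mere boundedness of $\E_{\theta}\norm{U^{\epsilon}}^{4}$ gives $\E_{\theta}\norm{\epsilon^{-2}\rho_b}^{2}\le C\epsilon^{-4}$, which blows up.

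The missing idea is to prove first the sharp small-noise estimate \emph{with the $\epsilon$-scaling},
$$\E_{\theta}\Bigl(\sup_{t\le T}\norm{X(t)-z(\alpha,t)}^{2k}\Bigr)\le C_k\,\epsilon^{2k},$$
which is a standard Gronwall plus Burkholder--Davis--Gundy computation using only (S1)--(S2) and never mentioning $R^{\epsilon}$. With this in hand, do \emph{not} re-expand the remainders: bound them directly by $\norm{\rho_b},\norm{\rho_\sigma}\le C\norm{U^{\epsilon}}^{2}$, so that $\E_{\theta}\norm{\epsilon^{-2}\rho_b}^{2}\le C\epsilon^{-4}\E_{\theta}\norm{U^{\epsilon}}^{4}\le C'$ and $\E_{\theta}\norm{\epsilon^{-1}\rho_\sigma}^{2}\le C\epsilon^{2}$. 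The inequality for $\phi(t)=\E_{\theta}\norm{R^{\epsilon}(\theta,t)}^{2}$ then reads $\phi(t)\le C'+C\int_0^t\phi(s)\,ds$ with constants independent of $\epsilon$; since $\phi$ is finite a priori for each fixed $\epsilon$ (crudely, $\norm{R^{\epsilon}}\le\epsilon^{-2}\norm{U^{\epsilon}}+\epsilon^{-1}\norm{g}$, both sides having finite second moments), plain Gronwall gives (i) with no stopping times at all. Your argument for (ii) then goes through verbatim, using (i) and these same remainder bounds to control the integrands on $[t,t+h]$; no higher moments of $R^{\epsilon}$ are needed there either.
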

We refer  to \cite{aze82IV}, \cite{fre84IV}, and \cite{glo09IV} for the proofs of these propositions for $\theta$ fixed. Assumption (S4) allows us to extend these results  to $\theta \in \Theta$.

 \subsection{Preliminary results}
 Let us define using  (\ref{TSparamxg}) the random variables,
\begin{equation}\label{BkTS}
 B_k(\alpha, X)=X(t_k)-z(\alpha, t_{k})-\Phi(\alpha,t_k,t_{k-1})
\left[X(t_{k-1})-z(\alpha,t_{k-1})\right].
\end{equation}
 Then the following holds.

\begin{lemma}\label{BkX}
Assume (S1)--(S4). Then, under $\P_{\theta}$, 
as $\epsilon \rightarrow 0$,
\begin{eqnarray*}
B_k(\alpha, X)&=& \epsilon \sqrt{\Delta} \;T_k(\theta)+ \epsilon ^2  D^{\epsilon} _k( \theta,\Delta) \quad \mbox{with  }
 \sup_{k}  \E_{\theta}||D^{\epsilon} _k( \theta,\Delta)||^2\leq C\Delta,\; \mbox{ and} \\
 T_k(\theta)&=& \frac{1}{\sqrt{\Delta}}\int_{t_{k-1} }^{t_k} \Phi(\alpha, t_k,u)\sigma(\beta,u ,z(\alpha,u)) dB(u),
 \end{eqnarray*}
 where $C$ a constant independent of $\theta, \epsilon,\Delta$.
  \end{lemma}
 Therefore, the random variables
   $T_k(\theta)$ are  $p$-dimensional  $\mathcal{ G}_k^n $-measurable independent Gaussian random variables with covariance matrix
 \begin{equation}\label{SkTS}
S_k(\alpha,\beta) = S_k(\theta)= \frac{1}{\Delta}\int_{t_{k-1}}^{t_k} \Phi(\alpha, t_k,s)\Sigma (\beta,s, z(\alpha,s))\Phi^{\star}(\alpha, t_k,s)ds.
\end{equation}

\begin{proof}  Using  Propositions \ref{TStheta} and \ref{gtheta} yields that
\begin{eqnarray*}
D^{\epsilon} _k( \theta,\Delta) &=& R^{\epsilon}(\theta, t_k)- \Phi(\alpha,t_k,t_{k-1}) R^{\epsilon}(\theta,t_{k-1})\\
&=&  R^{\epsilon}(\theta, t_k)- R^{\epsilon}(\theta,t_{k-1} ) -(\Phi(\alpha,t_k,t_{k-1})-I_p) R^{\epsilon}(\theta,t_{k-1})\\
&=& R^{\epsilon}(\theta, t_k)- R^{\epsilon}(\theta,t_{k-1} ) - \Delta \nabla_z b(\alpha,t_{k-1},z(\alpha,t_{k-1}) )R^{\epsilon}(\theta,t_{k-1})\\
& & + \Delta ^2 O_P(1).
\end{eqnarray*}
 An application of Proposition \ref{Rteps} together with (S4) yields the result.
 \end{proof}

Define  the two random matrices
\begin{equation}\label{Sigkbeta}
\Sigma_{k}(\beta)= \Sigma(\beta,t_k,X(t_k)), \quad \sigma_k(\beta)= \sigma(\beta,t_k,X(t_k)).
\end{equation}
 Then, for small $\Delta $, we have using \eqref{Sigkbeta}
\begin{lemma}\label{SigmaS}
Assume (S1)--(S5). Then, under $\P_{\theta}$, 
as $\epsilon, \Delta \rightarrow 0$,
$$ ||S_k(\theta)-\Sigma_{k-1}(\beta) || \leq K \epsilon \sup_{\theta,t\leq T}||g(\theta,t)||+ \Delta \sup_{\theta, z} ||\nabla_z\Sigma(\beta,s,z)||\leq \epsilon  C_1 O_P(1)+C_2 \Delta.$$
\end{lemma}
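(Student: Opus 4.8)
The plan is to control $\norm{S_k(\theta)-\Sigma_{k-1}(\beta)}$ by inserting intermediate matrices and applying the triangle inequality. Using the definition \eqref{SkTS} of $S_k(\theta)$ and \eqref{Sigkbeta}, I would write the difference as the average over $[t_{k-1},t_k]$ of
$$\Phi(\alpha,t_k,s)\,\Sigma(\beta,s,z(\alpha,s))\,\Phi^{\star}(\alpha,t_k,s)-\Sigma(\beta,t_{k-1},X(t_{k-1})),$$
and interpolate through the intermediate values $\Sigma(\beta,s,z(\alpha,s))$, then $\Sigma(\beta,t_{k-1},z(\alpha,t_{k-1}))$, then $\Sigma(\beta,t_{k-1},X(t_{k-1}))=\Sigma_{k-1}(\beta)$. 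This separates three effects: the resolvent being close to the identity, the frozen-coefficient error in time and along the deterministic curve, and the replacement of the ODE solution by the true path.

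For the resolvent term I would use \eqref{TSparamxg}: since $\Phi(\alpha,t_k,t_k)=I_p$ and $\partial_t\Phi=\nabla_zb\,\Phi$, integration gives $\norm{\Phi(\alpha,t_k,s)-I_p}\le C\Delta$ for $s\in[t_{k-1},t_k]$, because under (S5) the map $\nabla_zb(\alpha,t,z(\alpha,t))$ is bounded on the compact tube swept out by $z(\alpha,\cdot)$ on $[0,T]$ and $\Phi$ is bounded there by Gronwall. Writing $\Phi\Sigma\Phi^{\star}-\Sigma=(\Phi-I_p)\Sigma\Phi^{\star}+\Sigma(\Phi^{\star}-I_p)$ and using boundedness of $\Sigma$ on the same set yields an $O(\Delta)$ contribution. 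For the frozen-coefficient term, the $C^2$ regularity of $\Sigma$ from (S5), together with $\abs{s-t_{k-1}}\le\Delta$ and $\norm{z(\alpha,s)-z(\alpha,t_{k-1})}\le C\Delta$ (the solution of \eqref{ODEPARTIV} is Lipschitz in time since $b$ is bounded on the compact set), gives a further $O(\Delta)$ bound; this is where the $\Delta\,\sup_{\theta,z}\norm{\nabla_z\Sigma(\beta,s,z)}$ term arises. All such $\Delta$-contributions are collected into the constant $C_2$.

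The remaining, genuinely stochastic, term is $\norm{\Sigma(\beta,t_{k-1},z(\alpha,t_{k-1}))-\Sigma(\beta,t_{k-1},X(t_{k-1}))}$. Here I would invoke the expansion \eqref{TSparam} of Proposition \ref{TStheta}, namely $X(t_{k-1})-z(\alpha,t_{k-1})=\epsilon\,g(\theta,t_{k-1})+\epsilon^2R^{\epsilon}(\theta,t_{k-1})$, together with the mean value inequality for $\Sigma$ in its last argument, to obtain the bound $\sup_{\theta,z}\norm{\nabla_z\Sigma(\beta,s,z)}\bigl(\epsilon\norm{g(\theta,t_{k-1})}+\epsilon^2\norm{R^{\epsilon}(\theta,t_{k-1})}\bigr)$. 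The leading part gives $K\epsilon\sup_{\theta,t\le T}\norm{g(\theta,t)}$, producing the first displayed inequality; the $\epsilon^2R^{\epsilon}$ remainder is of smaller order and, by Proposition \ref{Rteps}(i), is $\epsilon^2O_P(1)$, hence absorbed. For the second inequality the deterministic factors and the sup of $\nabla_z\Sigma$ are finite constants on the compact set by (S4)--(S5), giving $C_2\Delta$, while $\sup_{\theta,t\le T}\norm{g(\theta,t)}$ is $O_P(1)$ since, by Proposition \ref{gtheta}, $g(\theta,\cdot)$ is for each $\theta$ a centered Gaussian process with uniformly (in $\theta$) bounded moments, so its supremum over the compact $[0,T]\times\Theta$ is tight; this yields $\epsilon C_1O_P(1)$.

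The main obstacle is the uniform-in-$\theta$ control: every boundedness statement (for $\nabla_zb$, $\Phi$, $\Sigma$, and $\nabla_z\Sigma$) must hold uniformly over $\theta\in\Theta$ and over the compact tube traced by $z(\alpha,\cdot)$, which is precisely what (S4)--(S5) guarantee, and the passage to $O_P(1)$ for $\sup_{\theta,t}\norm{g(\theta,t)}$ needs the uniform moment bounds on the Gaussian process $g$. Once these are secured, the rest is the routine triangle-inequality bookkeeping sketched above.
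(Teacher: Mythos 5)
Your proof is correct and follows essentially the same route as the paper, whose proof is only the one-line remark that the result is ``straightforward using (S1), (S5) and Proposition \ref{TStheta}'': your triangle-inequality decomposition (resolvent $\Phi\approx I_p$, freezing of $\Sigma$ along the deterministic path, and replacement of $z(\alpha,t_{k-1})$ by $X(t_{k-1})$ via the expansion \eqref{TSparam}) is precisely the bookkeeping the paper leaves implicit. Your handling of the $\epsilon^2 R^{\epsilon}$ remainder and of the tightness of $\sup_{\theta,t\leq T}\Vert g(\theta,t)\Vert$ supplies the uniformity the statement requires.
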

The proof is straightforward  using (S1), (S5) and Proposition (\ref{TSparam}).
%

Let us now state some preliminary results on the random variables $B_k(\alpha, X)$  defined in (\ref{BkTS}) useful for the inference.\\

Under $\P_{\theta_0} $,  Proposition \ref{TStheta}  yields that $B_k(\alpha, X)$ converges to $B_k(\alpha, z(\alpha_0,\cdot))$ and  that $B_k(\alpha_0, X)$  converges to $B_k(\alpha_0, z(\alpha_0,\cdot))= 0$ a.s.
 Let us define on $[0,T]$
\begin{equation} \label{def:Gamma}
      	\Gamma(\alpha_0,\alpha, t)=b(\alpha_0,t, z(\alpha_0,t))-b(\alpha,t, z(\alpha,t))
-\nabla_z b(\alpha, t, z(\alpha,t))(z(\alpha_0,t)-z(\alpha,t)).
      \end{equation}
The sequence $B_k(\alpha, z(\alpha_0,\cdot))$ satisfies:
\begin{lemma}\label{BGamma}
Assume (S1), (S2), (S4). Then, as $\Delta \rightarrow 0$, there exists  a constant $C$  such that
\[\frac{1}{\Delta}B_k(\alpha,z(\alpha_0,\cdot))= \Gamma( \alpha_0,\alpha, t_{k-1})+ \Delta\norm{\alpha-\alpha_0} r_k(\alpha_0,\alpha)\]
with $ \displaystyle{\sup_{k,\alpha \in K_a }  \norm{r_k(\alpha_0,\alpha)}} \leq C$.
\end{lemma}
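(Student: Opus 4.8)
The plan is to substitute the deterministic path $z(\alpha_0,\cdot)$ for $X$ in the definition \eqref{BkTS} and expand the resulting expression to second order in the sampling step $\Delta=t_k-t_{k-1}$, using the ODEs \eqref{TSparamxg} governing $z(\alpha,\cdot)$ and the resolvent $\Phi(\alpha,\cdot,\cdot)$. Writing $\delta(\alpha,t)=z(\alpha_0,t)-z(\alpha,t)$, the object to expand is
\begin{equation*}
B_k(\alpha,z(\alpha_0,\cdot))=\delta(\alpha,t_k)-\Phi(\alpha,t_k,t_{k-1})\,\delta(\alpha,t_{k-1}).
\end{equation*}
The structural fact that drives everything is that $\delta(\alpha_0,\cdot)\equiv 0$, so both $B_k(\alpha,z(\alpha_0,\cdot))$ and $\Gamma(\alpha_0,\alpha,\cdot)$ vanish identically at $\alpha=\alpha_0$; this is precisely what will produce the factor $\norm{\alpha-\alpha_0}$ in the remainder.

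First I would Taylor-expand $\delta(\alpha,t_k)$ about $t_{k-1}$. By \eqref{TSparamxg}, $\partial_t\delta(\alpha,t)=b(\alpha_0,t,z(\alpha_0,t))-b(\alpha,t,z(\alpha,t))$, so
\begin{equation*}
\delta(\alpha,t_k)=\delta(\alpha,t_{k-1})+\Delta\bigl(b(\alpha_0,t_{k-1},z(\alpha_0,t_{k-1}))-b(\alpha,t_{k-1},z(\alpha,t_{k-1}))\bigr)+\tfrac{\Delta^2}{2}\,\partial_t^2\delta(\alpha,\xi_k),
\end{equation*}
with $\xi_k\in[t_{k-1},t_k]$. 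Simultaneously the ODE \eqref{TSparamxg} for $\Phi$, together with $\Phi(\alpha,t_{k-1},t_{k-1})=I_p$, gives $\Phi(\alpha,t_k,t_{k-1})=I_p+\Delta\,\nabla_z b(\alpha,t_{k-1},z(\alpha,t_{k-1}))+\tfrac{\Delta^2}{2}\partial_s^2\Phi(\alpha,\eta_k,t_{k-1})$ for some $\eta_k\in[t_{k-1},t_k]$. Subtracting the two expansions, the zeroth-order terms $\delta(\alpha,t_{k-1})$ cancel and the coefficient of $\Delta$ is exactly
\begin{equation*}
b(\alpha_0,t_{k-1},z(\alpha_0,t_{k-1}))-b(\alpha,t_{k-1},z(\alpha,t_{k-1}))-\nabla_z b(\alpha,t_{k-1},z(\alpha,t_{k-1}))\,\delta(\alpha,t_{k-1})=\Gamma(\alpha_0,\alpha,t_{k-1})
\end{equation*}
by the definition \eqref{def:Gamma}. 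Dividing by $\Delta$ thus yields $\Gamma(\alpha_0,\alpha,t_{k-1})$ plus a remainder equal to $\tfrac{\Delta}{2}\bigl(\partial_t^2\delta(\alpha,\xi_k)-\partial_s^2\Phi(\alpha,\eta_k,t_{k-1})\,\delta(\alpha,t_{k-1})\bigr)$.

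The main obstacle is extracting the factor $\norm{\alpha-\alpha_0}$ from this remainder rather than merely bounding it by $C\Delta$: a crude Lipschitz estimate would give $O(\Delta)$ but would miss the $\norm{\alpha-\alpha_0}$. Here I would use the differentiability of $b$ in $(t,z,\alpha)$ (the regularity already needed to define $\Phi$ and $\Gamma$, furnished by (S5)): this makes $\alpha\mapsto z(\alpha,\cdot)$ and $\alpha\mapsto\Phi(\alpha,\cdot,\cdot)$ of class $C^2$, so both $\partial_t^2\delta(\alpha,t)$ and $\delta(\alpha,t)$ are smooth in $\alpha$ and \emph{vanish at} $\alpha=\alpha_0$; by the mean value inequality over the compact set $K_a$ each is bounded by $C\norm{\alpha-\alpha_0}$ uniformly in $t\in[0,T]$. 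Since $\partial_s^2\Phi$ is uniformly bounded on $K_a\times[0,T]^2$ by continuity on a compact, the bracketed term is $O(\norm{\alpha-\alpha_0})$ uniformly in $k$ and $\alpha$, whence the remainder is $\Delta\norm{\alpha-\alpha_0}\,r_k(\alpha_0,\alpha)$ with $\sup_{k,\alpha\in K_a}\norm{r_k(\alpha_0,\alpha)}\le C$. Uniformity in $k$ is automatic because all the Taylor coefficients are continuous functions on the compact set $K_a\times[0,T]$, so their suprema are finite and independent of $k$. Everything apart from this bookkeeping of the $\norm{\alpha-\alpha_0}$ factor is a routine two-term Taylor expansion.
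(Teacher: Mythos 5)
Your proposal is correct and follows essentially the same route as the paper: substitute the deterministic path $z(\alpha_0,\cdot)$ into \eqref{BkTS}, expand $\Phi(\alpha,t_k,t_{k-1})=I_p+\Delta\nabla_z b(\alpha,t_{k-1},z(\alpha,t_{k-1}))+\Delta^2 O(1)$, identify the coefficient of $\Delta$ with $\Gamma(\alpha_0,\alpha,t_{k-1})$ from \eqref{def:Gamma}, and control the remainder uniformly over $K_a\times[0,T]$ (the paper writes the increment of $z(\alpha_0,\cdot)-z(\alpha,\cdot)$ as an exact integral rather than a Lagrange-remainder Taylor expansion, an immaterial difference). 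You are in fact more explicit than the paper on the one delicate point — extracting the factor $\norm{\alpha-\alpha_0}$ from the vanishing of $z(\alpha_0,\cdot)-z(\alpha,\cdot)$ at $\alpha=\alpha_0$ via the mean value inequality — and your appeal to the regularity in \textbf{(S5)} for this step matches what the paper's own argument implicitly uses.
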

\begin{proof}  Using (\ref{BkTS}), (\ref{def:Gamma}) and  that $\Phi(\alpha,t_k,t_{k-1})= I_p + \Delta \nabla_z b(\alpha,t_{k-1} ,z(\alpha,t_{k-1}) ) +\Delta^2 O(1)$, yields

\begin{eqnarray*}
B_k(\alpha, z(\alpha_0,\cdot))&=&
\int_{t_{k-1}} ^{t_k} (b(\alpha_0,s,z(\alpha_0,s))-b(\alpha,s,z(\alpha,s))) ds\\
& &\phantom{B_k(\alpha, z(\alpha_0,\cdot))=}+ \left(I_p - \Phi(\alpha,t_k,t_{k-1} )\right) (z(\alpha_0,t_{k-1})-z(\alpha,t_{k-1}))\\
&=& \Delta  \Gamma(\alpha_0,\alpha,t_{k-1})+ \Delta^2 \norm{\alpha-\alpha_0}r_k(\alpha_0,\alpha).
\end{eqnarray*}
Assumptions (S1), (S2) and (S4) ensure  that the remainder term has order $\Delta ^2$ uniformly in $k, \alpha $.
\end{proof}

Consider now the random variables $B_k(\alpha,X)$
\begin{lemma}\label{BXa}
Assume (S1)--(S5).  Then, under $\P_{\theta_0}$, as $ \epsilon,\Delta \rightarrow 0$, the following holds for all $ k \leq n$,
\begin{eqnarray*}
\frac{1}{\Delta}( B_k(\alpha,X) - B_k(\alpha_0,X))& =&\frac{1}{\Delta} B_k(\alpha, z(\alpha_0,\cdot))+ \epsilon ||\alpha-\alpha_0|| \eta_k(\alpha_0,\alpha,\epsilon, \Delta)\\
&=& \Gamma(\alpha_0,\alpha,t_{k-1}) + ||\alpha-\alpha_0|| ( \Delta O(1) + \epsilon O_{P}(1)),
\end{eqnarray*}
where $\eta_k =\eta_k(\alpha_0,\alpha,\epsilon, \Delta)$ is $\mathcal{ G}_{k-1}^n$-measurable and uniformly bounded in  probability .
\end{lemma}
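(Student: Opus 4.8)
The plan is to expand the difference $B_k(\alpha,X)-B_k(\alpha_0,X)$ straight from the definition \eqref{BkTS} and then feed in the stochastic Taylor expansion of Proposition \ref{TStheta}, keeping careful track of the orders in $\Delta$ and $\epsilon$. First I would write the difference out: since both $B_k(\alpha,X)$ and $B_k(\alpha_0,X)$ contain the same term $X(t_k)$, that contribution cancels, and after adding and subtracting $\Phi(\alpha,t_k,t_{k-1})\bigl(X(t_{k-1})-z(\alpha_0,t_{k-1})\bigr)$ one separates a purely deterministic part from a stochastic correction:
\begin{align*}
B_k(\alpha,X)-B_k(\alpha_0,X)= &\bigl[z(\alpha_0,t_k)-z(\alpha,t_k)\bigr]-\Phi(\alpha,t_k,t_{k-1})\bigl[z(\alpha_0,t_{k-1})-z(\alpha,t_{k-1})\bigr]\\
&-\bigl[\Phi(\alpha,t_k,t_{k-1})-\Phi(\alpha_0,t_k,t_{k-1})\bigr]\bigl[X(t_{k-1})-z(\alpha_0,t_{k-1})\bigr].
\end{align*}
By \eqref{BkTS} applied to the deterministic path $z(\alpha_0,\cdot)$, the first line is exactly $B_k(\alpha,z(\alpha_0,\cdot))$.

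Next I would control the correction term on the second line. Under $\P_{\theta_0}$, Proposition \ref{TStheta} gives $X(t_{k-1})-z(\alpha_0,t_{k-1})=\epsilon g(\theta_0,t_{k-1})+\epsilon^2 R^{\epsilon}(\theta_0,t_{k-1})$, which is $\mathcal{ G}_{k-1}^n$-measurable and of order $\epsilon$ in probability; the uniform moment bounds of Propositions \ref{TStheta} and \ref{Rteps} ensure that $\sup_{t\le T}\norm{g(\theta_0,t)}$ and $\sup_{t\le T}\norm{R^{\epsilon}(\theta_0,t)}$ are $O_P(1)$, hence the factor is $\epsilon\,O_P(1)$ uniformly in $k$. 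For the matrix factor I would use $\Phi(\alpha,t_k,t_{k-1})=I_p+\Delta\,\nabla_z b(\alpha,t_{k-1},z(\alpha,t_{k-1}))+\Delta^2O(1)$; because $\Phi(\alpha,t_{k-1},t_{k-1})=I_p$ for every $\alpha$, the identity terms cancel in the difference, and the $C^2$-regularity of $\alpha\mapsto\Phi(\alpha,\cdot)$ provided by (S5) yields $\Phi(\alpha,t_k,t_{k-1})-\Phi(\alpha_0,t_k,t_{k-1})=\Delta\,\norm{\alpha-\alpha_0}\,O(1)$ uniformly in $k$. Multiplying the two estimates, the correction equals $\Delta\,\epsilon\,\norm{\alpha-\alpha_0}\,\eta_k$ with $\eta_k$ being $\mathcal{ G}_{k-1}^n$-measurable and uniformly bounded in probability; dividing by $\Delta$ produces the term $\epsilon\norm{\alpha-\alpha_0}\eta_k$ of the first displayed equality.

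Finally, the second equality follows by inserting Lemma \ref{BGamma}, which gives $\tfrac1\Delta B_k(\alpha,z(\alpha_0,\cdot))=\Gamma(\alpha_0,\alpha,t_{k-1})+\Delta\norm{\alpha-\alpha_0}r_k(\alpha_0,\alpha)$ with $\sup_{k,\alpha\in K_a}\norm{r_k}\le C$; collecting the $\Delta\,O(1)$ remainder from this lemma and the $\epsilon\,O_P(1)$ remainder from the correction against the common factor $\norm{\alpha-\alpha_0}$ yields the stated form $\Gamma(\alpha_0,\alpha,t_{k-1})+\norm{\alpha-\alpha_0}(\Delta O(1)+\epsilon O_P(1))$. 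The main obstacle is the bookkeeping in the correction term: one must verify simultaneously that the $\Phi$-difference carries a genuine factor $\Delta$ (so that division by $\Delta$ leaves only $\epsilon$, and not the unacceptable $\epsilon/\Delta$) and that the stochastic factor is both $\mathcal{ G}_{k-1}^n$-measurable and $O_P(1)$ uniformly in $k$, the latter resting essentially on the uniform-in-$(t,\theta)$ moment control of $R^{\epsilon}$ furnished by Propositions \ref{TStheta} and \ref{Rteps}.
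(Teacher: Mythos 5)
Your proof is correct and follows essentially the same route as the paper: the identical decomposition isolating $B_k(\alpha,z(\alpha_0,\cdot))$ plus the term $\bigl(\Phi(\alpha_0,t_k,t_{k-1})-\Phi(\alpha,t_k,t_{k-1})\bigr)\bigl(X(t_{k-1})-z(\alpha_0,t_{k-1})\bigr)$, the same use of Proposition \ref{TStheta} to identify the stochastic factor as an $\epsilon\,O_P(1)$, $\mathcal{G}_{k-1}^n$-measurable quantity, the same bound $\norm{\Phi(\alpha_0,t_k,t_{k-1})-\Phi(\alpha,t_k,t_{k-1})}\leq K\Delta\norm{\alpha-\alpha_0}$ via the regularity of $\alpha\mapsto\nabla_z b(\alpha,t,z(\alpha,t))$ guaranteed by (S5), and the same final appeal to Lemma \ref{BGamma} for the second displayed equality.
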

\begin{proof}  Using (\ref{TSparam}) and (\ref{BkTS}), we get that
\begin{align*}
&B_k(\alpha,X) -  B_k(\alpha_0,X) =  \\
&B_k(\alpha, z(\alpha_0,\cdot))+\epsilon  (\Phi(\alpha_0,t_k,t_{k_1})- \Phi(\alpha, t_k,t_{k_1})) (g(\theta_0,t_{k-1})+\epsilon  R^{\epsilon}(\theta_0, t_{k-1})).
\end{align*}
By (S1)--(S5),
\begin{align*}
&\norm{\Phi(\alpha_0,t_k,t_{k_1})- \Phi(\alpha, t_k,t_{k_1})}\\
&\quad\quad\leq 2 \Delta \norm{\nabla_z b(\alpha_0,,t_{k-1},z( \alpha_0,t_{k-1}))-
\nabla_z b(\alpha,,t_{k-1}, z (\alpha,t_{k-1}))}.
\end{align*}
Now, this term is  bounded by
$K \Delta ||\alpha-\alpha_0||$ since
 $(t,\alpha) \rightarrow \nabla_z b(\alpha,z(\alpha,t))$ is uniformly continuous on $[0,T] \times K_a$. Using now Proposition \ref{TStheta} yields that  $(g(\theta_0,t_{k-1})
+\epsilon  R^{\epsilon}(\theta_0, t_{k-1}))$ is bounded in $\P_{\theta_0}$-probability and  $\mathcal{ G}_{k-1}^n$-measurable.
\end{proof}

The next lemma concerns the properties of $B_k(\alpha_0,X)$.
\begin{lemma}\label{BXa0}
Assume (S1)--(S5). Then, using \eqref{Sigkbeta}, as $ \epsilon,\Delta \rightarrow 0$, under $\P_{\theta_0}$,
\begin{enumerate}
\item[\textbf{(i)}] $B_k(\alpha_0,X)= \epsilon \sigma_{k-1}(\beta_0) (B(t_k)-B(t_{k-1}))+ E_{k} (\theta_0,\epsilon, \Delta),$
where $ E_k=  E_k(\theta_0,\epsilon ,\Delta)$ satisfies that, for $m \geq 2$, $\E_{\theta_0}( ||E_k||^m | \mathcal{ G}_{k-1}^n) \leq C \epsilon^m \Delta^m $.
\item[\textbf{ (ii)}] If $(V_k )$ is a sequence of $\mathcal{ G}_{k-1}^n$-measurable random variables  in $\R^p$ uniformly bounded in  probability, then
$\displaystyle{\sum_{k=1}^n V_k^{*} B_k(\alpha_0,X) \rightarrow 0}  $ in probability.
\end{enumerate}
\end{lemma}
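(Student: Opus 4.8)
The plan is to read part (i) off the representation already obtained in Lemma~\ref{BkX}, isolate the leading stochastic contribution, and then deduce part (ii) from the martingale structure of that leading term. I would start from Lemma~\ref{BkX} at $\alpha=\alpha_0$, $\theta=\theta_0$, noting that $\epsilon\sqrt{\Delta}\,T_k(\theta_0)$ is exactly the Itô integral $\epsilon\int_{t_{k-1}}^{t_k}\Phi(\alpha_0,t_k,u)\sigma(\beta_0,u,z(\alpha_0,u))\,dB(u)$, so that
\[
B_k(\alpha_0,X)=\epsilon\int_{t_{k-1}}^{t_k}\Phi(\alpha_0,t_k,u)\,\sigma(\beta_0,u,z(\alpha_0,u))\,dB(u)+\epsilon^2 D_k^{\epsilon}(\theta_0,\Delta).
\]
I would then simply add and subtract the frozen coefficient $\sigma_{k-1}(\beta_0)$, defining
\[
E_k=\epsilon\int_{t_{k-1}}^{t_k}\bigl[\Phi(\alpha_0,t_k,u)\,\sigma(\beta_0,u,z(\alpha_0,u))-\sigma_{k-1}(\beta_0)\bigr]\,dB(u)+\epsilon^2 D_k^{\epsilon}(\theta_0,\Delta),
\]
so that by linearity of the stochastic integral $B_k(\alpha_0,X)=\epsilon\,\sigma_{k-1}(\beta_0)\bigl(B(t_k)-B(t_{k-1})\bigr)+E_k$, which is the claimed decomposition.

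To bound $E_k$, call $H(u)$ the bracketed integrand. For $u\in[t_{k-1},t_k]$ I would estimate it in three stages: $\Phi(\alpha_0,t_k,u)=I_p+O(\Delta)$ from \eqref{TSparamxg}; $\sigma(\beta_0,u,z(\alpha_0,u))-\sigma(\beta_0,t_{k-1},z(\alpha_0,t_{k-1}))=O(\Delta)$ by (S5) and the smoothness of $z(\alpha_0,\cdot)$; and $\sigma(\beta_0,t_{k-1},z(\alpha_0,t_{k-1}))-\sigma_{k-1}(\beta_0)=O(\epsilon)$ by (S5) together with Proposition~\ref{TStheta}, which gives $X(t_{k-1})-z(\alpha_0,t_{k-1})=\epsilon g(\theta_0,t_{k-1})+\epsilon^2 R^{\epsilon}(\theta_0,t_{k-1})$ with uniformly bounded moments. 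Hence $\norm{H(u)}\le C(\Delta+\epsilon\,\rho_{k-1})$ with $\rho_{k-1}=\norm{g(\theta_0,t_{k-1})}+\epsilon\norm{R^{\epsilon}(\theta_0,t_{k-1})}$ a $\mathcal{G}_{k-1}^n$-measurable factor bounded in every $L^m$. Since $z(\alpha_0,\cdot)$ and $\Phi$ are deterministic and $\sigma_{k-1}(\beta_0)$ is $\mathcal{G}_{k-1}^n$-measurable, $H$ is $\mathcal{G}_{k-1}^n$-measurable, so conditionally on $\mathcal{G}_{k-1}^n$ the integral is a centered Gaussian vector with covariance $\epsilon^2\int_{t_{k-1}}^{t_k}H(u)H(u)^{\ast}\,du$, whence its conditional $m$-th moment is at most $C_m\epsilon^m\bigl(\int_{t_{k-1}}^{t_k}\norm{H(u)}^2du\bigr)^{m/2}$. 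Combining this with the bound $\E_{\theta_0}\norm{D_k^{\epsilon}}^2\le C\Delta$ of Lemma~\ref{BkX} (itself a consequence of Proposition~\ref{Rteps}) gives the conditional moment estimate for $E_k$.

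For part (ii) I would use (i) to write
\[
\sum_{k=1}^n V_k^{\ast}B_k(\alpha_0,X)=\epsilon\sum_{k=1}^n V_k^{\ast}\sigma_{k-1}(\beta_0)\bigl(B(t_k)-B(t_{k-1})\bigr)+\sum_{k=1}^n V_k^{\ast}E_k.
\]
The first sum is a centered $\mathcal{G}_k^n$-martingale, because $V_k^{\ast}\sigma_{k-1}(\beta_0)$ is $\mathcal{G}_{k-1}^n$-measurable while $B(t_k)-B(t_{k-1})$ is independent of $\mathcal{G}_{k-1}^n$ with covariance $\Delta I_p$; its variance equals $\epsilon^2\Delta\sum_k\E_{\theta_0}\norm{V_k^{\ast}\sigma_{k-1}(\beta_0)}^2=O(\epsilon^2 n\Delta)=O(\epsilon^2 T)\to0$ (after a standard localization making $V_k$ and $\sigma_{k-1}$ square-integrable), so this sum tends to $0$ in $L^2$. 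For the remainder $\sum_k V_k^{\ast}E_k$ I would appeal to the moment control of $E_k$ from (i) together with $n=T/\Delta$.

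The hard part is precisely this remainder sum, and it is where the announced order $\epsilon^m\Delta^m$ really must be earned. A crude term-by-term $L^1$ bound accumulates $n=T/\Delta$ copies of a quantity of size roughly $\epsilon\Delta$, which is borderline, and for the component of $E_k$ coming from $\epsilon^2 D_k^{\epsilon}=\epsilon^2\bigl(R^{\epsilon}(t_k)-\Phi(\alpha_0,t_k,t_{k-1})R^{\epsilon}(t_{k-1})\bigr)$ I expect that one cannot bound increment by increment but must instead exploit the near-telescoping structure of $R^{\epsilon}$, via Abel summation combined with the increment estimate of Proposition~\ref{Rteps}, so that the accumulated error stays $O_P(\epsilon^2)$ rather than growing with $n$. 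Equivalently, matching the stated conditional bound $\epsilon^m\Delta^m$ amounts to absorbing the $\mathcal{G}_{k-1}^n$-measurable factor $\rho_{k-1}$ and tracking the exact powers of $\epsilon$ and $\Delta$ in the high-frequency regime; pinning down this interplay is the genuine obstacle, whereas everything else is routine.
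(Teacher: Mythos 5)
Your route is genuinely different from the paper's --- you re-center the leading term of the stochastic Taylor expansion of Lemma \ref{BkX} --- and that choice is exactly where the argument breaks down. The error term you create splits into (a) the re-centering integral $\epsilon\int_{t_{k-1}}^{t_k}H(u)\,dB(u)$ and (b) the Taylor remainder $\epsilon^2D_k^{\epsilon}(\theta_0,\Delta)$, and neither piece is of the claimed size $\epsilon\Delta$ in general. In (a), the dominant part of $H$ is $\sigma(\beta_0,t_{k-1},z(\alpha_0,t_{k-1}))-\sigma_{k-1}(\beta_0)$, which is genuinely of order $\epsilon$, since $X(t_{k-1})-z(\alpha_0,t_{k-1})=\epsilon g(\theta_0,t_{k-1})+O_P(\epsilon^2)$ with $g$ nondegenerate; your own estimate $\norm{H(u)}\le C(\Delta+\epsilon\rho_{k-1})$ then yields a conditional $m$-th moment of order $\epsilon^{2m}\Delta^{m/2}\rho_{k-1}^m$, and $\epsilon^{2m}\Delta^{m/2}\le C\epsilon^m\Delta^m$ forces $\epsilon\le C\sqrt{\Delta}$, i.e.\ $n\epsilon^2$ bounded. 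Piece (b) has the same defect: the bound $\sup_k\E_{\theta_0}\norm{D_k^{\epsilon}}^2\le C\Delta$ of Lemma \ref{BkX} (itself from Proposition \ref{Rteps}(ii)) makes $\epsilon^2D_k^{\epsilon}$ of size $\epsilon^2\sqrt{\Delta}$ in $L^2$, not $\epsilon\Delta$, and that bound is unconditional rather than conditional on $\mathcal{G}_{k-1}^n$. So your argument proves the lemma only under a condition linking $\epsilon$ and $\Delta$ --- precisely what the comment after Theorem \ref{theo:normalite} emphasizes the paper avoids, in contrast with \cite{glo09IV}. The deeper point is that the two offending pieces cancel each other to leading order: the martingale part of $\epsilon^2\bigl(R^{\epsilon}(\theta_0,t_k)-R^{\epsilon}(\theta_0,t_{k-1})\bigr)$ equals $\epsilon\int_{t_{k-1}}^{t_k}\bigl(\sigma(\beta_0,s,X(s))-\sigma(\beta_0,s,z(\alpha_0,s))\bigr)dB(s)$, which kills the order-$\epsilon^2\sqrt{\Delta}$ part of (a). Bounding (a) and (b) separately, as your plan does and must, irretrievably loses this cancellation, and no summation by parts over $k$ can recover it (the cancellation is within each $k$, not across $k$; besides, the $V_k$ in (ii) carry no increment regularity, so Abel summation is not available anyway).

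The paper's proof of (i) never introduces the coefficient frozen along the deterministic path: it writes $B_k(\alpha_0,X)$ directly from the SDE \eqref{SDEGen}, so the leading term is already $\epsilon\int_{t_{k-1}}^{t_k}\sigma(\beta_0,s,X(t_{k-1}))\,dB(s)$, and the errors are $E_k^1$, a drift error bounded pathwise by $C\epsilon\Delta\sup_t\norm{R_1^{\epsilon}(\theta_0,t)}$ (Lipschitz bound on $b$ times $\sup_t\norm{X(t)-z(\alpha_0,t)}=O_P(\epsilon)$, times the factor $\Delta$ coming from time integration), and $E_k^2=\epsilon\int_{t_{k-1}}^{t_k}\bigl(\sigma(\beta_0,s,X(s))-\sigma(\beta_0,s,X(t_{k-1}))\bigr)dB(s)$, handled by Burkh\"older--Davis--Gundy and Jensen via the increment bound $\E_{\theta_0}\norm{X(s)-X(t_{k-1})}^m\le K|s-t_{k-1}|^{m/2}$. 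The distance $X-z$ thus never enters a stochastic integral, and every term is $O(\epsilon^m\Delta^m)$ with no relation between $\epsilon$ and $\Delta$. Two further remarks: your worry that $n$ terms of size $\epsilon\Delta$ is ``borderline'' in (ii) is unfounded, since $n\epsilon\Delta=\epsilon T\rightarrow0$ as $\epsilon\rightarrow 0$; and once (i) is available, (ii) follows at once from Lemma \ref{VGCJJ} (conditional means sum to $O(\epsilon T)$, conditional variances to $O(\epsilon^2T)$) --- your $L^2$ treatment of the leading martingale part is correct and equivalent, so the genuine gap is the one in (i) described above.
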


\begin{proof}  Let us first prove (i) and study the term $E_k$. We have $E_k=E_k^1+E_k^2$ with
\begin{align*}
E_k^1=&\int_{t_{k-1}}^{t_k}\left(b(\alpha_0,t,X(t))-b(\alpha_0,t,z(\alpha_0,t)) \right)dt\\
&\quad +\left(I_p- \Phi(\alpha_0,t_k,t_{k-1}) \right) \left( X(t_{k-1})- z(\alpha_0,t_{k-1})\right)\mbox{ and}\\
E_k^2=&\epsilon\int_{t_{k-1}}^{t_k}\left(\sigma(\beta_0,s, X(s))-\sigma(\beta_0,s, X(t_{k-1}))\right)dB(s).
\end{align*}
 Set in  (\ref{TSparam}), $R_1^{\epsilon}(\theta, t)= g(\theta,t)+\epsilon R^{\epsilon}(\theta,t)$.
Using that $(t,x) \rightarrow b(\alpha,t, x)$ is uniformly Lipschitz, we obtain,
\begin{align*}
\norm{E_k^1}\leq& \Delta C \sup_{t \in [t_{k-1};t_k]}\norm{X(t)-z(\alpha_0,t)}\\
&\quad +\Delta\epsilon\norm{(\int_{0}^{1} \nabla_z b(\alpha_0,z(\alpha_0,t)))\Phi(\alpha_0,t, t_{k-1})dt)
R_1^{\epsilon}(\theta_0, t_{k-1})}\\
\leq& C' \epsilon\Delta \sup_{t \in [t_{k-1};t_k]}\norm{R_1^{\epsilon}(\theta_0,t)}
\end{align*}
The proof for $E_k^2$ follows the sketch given in \cite[Lemma 1]{glo09IV}.
We first prove this result based on the stronger condition $\Sigma$ and $b$ bounded.
Then, similarly to \cite[Proposition 1]{glo09IV}, this  assumption  can be relaxed.
We use sequentially the Burkh\"{o}lder--Davis--Gundy (see e.g.\ \cite{kar00IV}) and Jensen inequalities to obtain
\begin{align}
&\E_{\theta_0}(\norm{E_k^2}^m | \mathcal{ G}_{k-1}^n)\nonumber\\
& \leq C \epsilon^m \E_{\theta_0}\left( (\int_{t_{k-1}}^{t_k} \norm{\sigma(\beta_0,s,X(s))-
\sigma(\beta_0,t_{k-1} ,X(t_{k-1})}^2 ds)^{m/2}|\mathcal{ G}_{k-1}^n\right)
\end{align}
  \begin{equation}\
  \leq C \epsilon^m \Delta^{m/2-1} \int_{t_{k-1}}^{t_k} \E_{\theta_0}\left(\norm{\sigma(\beta_0,s,X(s))-\sigma(\beta_0,t_{k-1}, X(t_{k-1}))}^m |\mathcal{ G}_{k-1}^n \right) |ds.
   \end{equation}
Then,  using that  $(t,x)  \rightarrow \sigma(\beta,t, x)$ is Lipschitz, we obtain:
\begin{align*}
&\E_{\theta_0}^{\epsilon} (\norm{E_k^2}^m |\mathcal{ G}_{k-1}^n)  \leq C'\epsilon^m\Delta^{m/2-1}\int_{t_{k-1}}^{t_k}\E_{\theta_0}^{\epsilon}(\norm{X(s)- X(t_{k-1})}^m)ds\\
&\leq C' \epsilon^m\Delta^{m/2-1}\int_{t_{k-1}}^{t_k}\E_{\theta_0}^{\epsilon} \left[\norm{\int_{t_{k-1}}^{s} (b(\alpha_0,u,X(u)) du+\epsilon \sigma(\beta_0,u, X(u))dB(u))} \right]^m ds.
\end{align*}
Since $b$ is  bounded on $\mathcal{U}$, $\norm{ \int_{t_{k-1}}^{s} b(\alpha_0,u, X(u)) du} \leq K  |s-t_{k-1}| $ and It\^{o}'s isometry yields
\begin{align*}
\E_{\theta_0} \left(\norm{\int_{t_{k-1}}^{s} \sigma(\beta_0,u, X(u))  dB(u)} ^m\right) &\leq \E_{\theta_0} \left(\norm{\int_{t_{k-1}}^{s}\Sigma(\beta_0,u, X(u))du }\right)^{m/2}\\
 &\leq K |s-t_{k-1}|^{m/2}.
\end{align*}
Thus, $\E_{\theta_0} (\norm{E_k^2(\theta_0)}^m |\mathcal{ G}_{k-1}^n)
\leq C^{''}\epsilon^m\Delta^{m/2-1}\int_{t_{k-1}}^{t_k} |s-t_{k-1}|^{m/2}ds \leq C^{'''}\epsilon^m\Delta^{m}
$.\\

The proof of  (ii) relies on the  Lemma \ref{VGCJJ} for triangular arrays stated in Section \ref{LimitTheo}.
Set $\zeta_k^n = V_k^{*} B_k(\alpha_0,X) $. Using  (i), we have 
\[\E_{\theta_0}(\zeta_k^n |\mathcal{ G}_{k-1}^n) =  V_k^{*}  \E_{\theta_0}^{\epsilon}(E_k (\theta_0,\epsilon,\Delta)|\mathcal{ G}_{k-1}^n)\] with
 $\E_{\theta_0}^{\epsilon}(\norm{E_k (\theta_0,\epsilon,\Delta)} |\mathcal{ G}_{k-1}^n) \leq C \epsilon \Delta$.\\
Since  $\sup_{k\leq n}\norm{V_k} $ is bounded in probability,
$\sum_{k=1} ^n \E_{\theta_0}(\zeta_k^n |\mathcal{ G}_{k-1}^n) \leq  C\epsilon T
\rightarrow 0$.\\
Therefore   condition (i) of Lemma \ref{VGCJJ} is  satisfied with $U=0$.\\
Now, $\displaystyle{\E_{\theta_0}[(\zeta_k^n)^2 |\mathcal{ G}_{k-1}^n)]= V_k^{*} \E_{\theta_0}(B_k (\alpha_0,X)B_k^{*}(\alpha_0,X)|\mathcal{ G}_{k-1}^n) V_k}$.\\
Using (i) of Lemma \ref{BXa0}   yields that
\begin{align*}
\E_{\theta_0}(B_k (\alpha_0,X)B_k^{*}(\alpha_0,X) |\mathcal{ G}_{k-1}^n)&= \epsilon ^2 \Delta \Sigma_{k-1}(\beta_0)+  \E_{\theta_0}(E_k E_k ^{*}|\mathcal{ G}_{k-1}^n)\\
&\leq  K_1 \epsilon^2 \Delta + C_2 \epsilon^2 \Delta ^2.
\end{align*}
Hence, $\sum_{k=1} ^n \E_{\theta_0}((\zeta_k^n)^2 |\mathcal{ G}_{k-1}^n) \rightarrow 0$.
Therefore, applying  Lemma \ref{VGCJJ} achieves the proof.
\end{proof}
A last lemma concerns the terms  $(\nabla_{\alpha_i} B_k)$ for $ i=1, \dots, a$.
\begin{lemma}\label{dNkdb}
Assume (S1)--(S6). Then, under $\P_{\theta_0}$, for all $i,j \leq a$, for all $\alpha \in  K_a$, as $\epsilon,\Delta \rightarrow 0$,
\begin{enumerate}
\item[(i)] $\frac{1}{\Delta}\nabla_{\alpha_i}B_k(\alpha,X)= -\nabla_{\alpha_i } b(\alpha,t_{k-1}, z(\alpha, t_{k-1}))
+M^i_k(\alpha)[ (z(\alpha_0,t_{k-1})- z(\alpha, t_{k-1})) + \epsilon Z^{\epsilon}_{k-1} (\theta_0)] +\Delta O_P(1)$,
 where  $M^i_k(\alpha)$ are uniformly bounded matrices and   $Z^{\epsilon}_{k-1} (\theta_0)$ are $\mathcal{ G}_{k-1}^n$-measurable r.v.s uniformly bounded in probability.\\
\item[(ii)]  For all  $k\leq n$,    $\frac{1}{\Delta}\norm{\nabla^2_{\alpha_i ,\alpha_j }B_k(\alpha,X)}$ is bounded  uniformly in probability.
\end{enumerate}
 \end{lemma}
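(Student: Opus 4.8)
The plan is to differentiate the explicit expression \eqref{BkTS} for $B_k(\alpha,X)$ directly in $\alpha$ and to exploit two structural facts, both holding uniformly in $k\le n$ and $\alpha\in K_a$ under (S4)--(S5). The first is the one-step expansion of the resolvent already used in Lemma \ref{BGamma},
\begin{equation*}
\Phi(\alpha,t_k,t_{k-1})= I_p + \Delta\,\nabla_z b(\alpha,t_{k-1},z(\alpha,t_{k-1})) + \Delta^2 O(1),
\end{equation*}
together with its $\alpha$-differentiated versions $\nabla_{\alpha_i}\Phi=\Delta\,\nabla_{\alpha_i}[\nabla_z b(\alpha,t_{k-1},z(\alpha,t_{k-1}))]+\Delta^2 O(1)$ and $\nabla^2_{\alpha_i,\alpha_j}\Phi=\Delta\,O(1)$; these come from Taylor-expanding the linear ODE \eqref{def:phi}, and the assumption $b\in C^3$ in (S5) guarantees $\Phi(\alpha,\cdot)\in C^2$ with the required bounded derivatives. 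The second fact is the set of variational equations for the sensitivities $\zeta_i=\nabla_{\alpha_i}z$ and $\xi_{ij}=\nabla^2_{\alpha_i,\alpha_j}z$, obtained by differentiating $\partial_t z=b(\alpha,t,z)$ through the chain rule \eqref{gradcompos}, namely $\partial_t\zeta_i=\nabla_{\alpha_i}b+\nabla_z b\,\zeta_i$ and an analogous second-order equation for $\xi_{ij}$.

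For (i), I would differentiate \eqref{BkTS} to get
\begin{equation*}
\nabla_{\alpha_i}B_k(\alpha,X)= -\zeta_i(\alpha,t_k) + \Phi(\alpha,t_k,t_{k-1})\,\zeta_i(\alpha,t_{k-1}) - \bigl[\nabla_{\alpha_i}\Phi(\alpha,t_k,t_{k-1})\bigr]\bigl[X(t_{k-1})-z(\alpha,t_{k-1})\bigr].
\end{equation*}
The first two terms produce the main term: writing $\zeta_i(\alpha,t_k)=\zeta_i(\alpha,t_{k-1})+\Delta(\nabla_{\alpha_i}b+\nabla_z b\,\zeta_i)+\Delta^2 O(1)$ from the variational equation and applying the resolvent expansion to $\Phi\,\zeta_i(\alpha,t_{k-1})$, the $\zeta_i(\alpha,t_{k-1})$ contributions and the $\Delta\,\nabla_z b\,\zeta_i$ contributions cancel, leaving $-\Delta\,\nabla_{\alpha_i}b(\alpha,t_{k-1},z(\alpha,t_{k-1}))+\Delta^2 O(1)$. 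For the last term, I set $M^i_k(\alpha)=-\nabla_{\alpha_i}[\nabla_z b(\alpha,t_{k-1},z(\alpha,t_{k-1}))]$, which is a continuous and hence uniformly bounded function on $[0,T]\times K_a$ by (S5), and I insert the expansion of Proposition \ref{TStheta} under $\P_{\theta_0}$, $X(t_{k-1})-z(\alpha,t_{k-1})=(z(\alpha_0,t_{k-1})-z(\alpha,t_{k-1}))+\epsilon Z^{\epsilon}_{k-1}(\theta_0)$ with $Z^{\epsilon}_{k-1}(\theta_0)=g(\theta_0,t_{k-1})+\epsilon R^{\epsilon}(\theta_0,t_{k-1})$. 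Dividing by $\Delta$ then yields exactly the $M^i_k(\alpha)[\cdots]$ term, $Z^{\epsilon}_{k-1}(\theta_0)$ being $\mathcal{G}_{k-1}^n$-measurable and bounded in probability by Proposition \ref{TStheta}, while the $\Delta^2$ resolvent remainder multiplied by the bounded vector $X(t_{k-1})-z(\alpha,t_{k-1})$ contributes the final $\Delta\,O_P(1)$.

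For (ii), I would differentiate once more in $\alpha_j$ and observe that every surviving term is of order $\Delta$. The combination $-\xi_{ij}(\alpha,t_k)+\Phi\,\xi_{ij}(\alpha,t_{k-1})$ cancels at leading order by the same variational argument as in (i), now applied to the second sensitivity; the mixed terms $\nabla_{\alpha_i}\Phi\,\zeta_j$ and $\nabla_{\alpha_j}\Phi\,\zeta_i$ are each $\Delta\,O(1)$ since $\nabla_\alpha\Phi=\Delta\,O(1)$ and the first sensitivities are bounded; and $\nabla^2_{\alpha_i,\alpha_j}\Phi\,[X(t_{k-1})-z(\alpha,t_{k-1})]$ is $\Delta\,O_P(1)$ because $\nabla^2_{\alpha_i,\alpha_j}\Phi=\Delta\,O(1)$ multiplied by a vector bounded in probability. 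Dividing by $\Delta$ therefore leaves $\tfrac{1}{\Delta}\norm{\nabla^2_{\alpha_i,\alpha_j}B_k(\alpha,X)}$ bounded uniformly in $k$ and $\alpha$ in probability.

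The main obstacle is the uniform bookkeeping of the remainders: every $O(1)$ and $\Delta^2 O(1)$ term must be controlled uniformly over $k\le n$ and $\alpha\in K_a$, and the stochastic remainders uniformly in $\P_{\theta_0}$-probability. This is precisely where the compactness of $K_a$ in (S4) and the strengthened regularity $b\in C^3$ in (S5) are used, and where Propositions \ref{TStheta} and \ref{Rteps} supply the uniform-in-time control of $g(\theta_0,\cdot)$ and $R^{\epsilon}(\theta_0,\cdot)$. The leading-order cancellation driven by the variational equations — without which these derivatives would only be $O(1)$ rather than $O(\Delta)$ — is the structural heart of the argument and is the step that must be verified with the most care.
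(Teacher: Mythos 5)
Your proposal is correct and follows essentially the same route as the paper: differentiate $B_k(\alpha,X)$ in $\alpha$ directly, use the first-order expansion of $\Phi(\alpha,t_k,t_{k-1})$ and the chain-rule identity $\nabla_{\alpha_i}\bigl(b(\alpha,t,z(\alpha,t))\bigr)=\nabla_{\alpha_i}b+\nabla_z b\,\nabla_{\alpha_i}z$ (your ``variational equation'' for $\zeta_i=\nabla_{\alpha_i}z$ is exactly this fact) to produce the cancellation leaving $-\Delta\,\nabla_{\alpha_i}b$, define $M^i_k$ from $\nabla_{\alpha_i}\Phi$ up to a $\Delta\,O(1)$ term, and substitute the expansion of Proposition \ref{TStheta} for $X(t_{k-1})-z(\alpha,t_{k-1})$, with part (ii) handled by the same order-$\Delta$ bookkeeping on the second derivatives of $\Phi$ and $z$. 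The only cosmetic difference is that the paper defines $M^i_k(\alpha)=-\frac{1}{\Delta}\nabla_{\alpha_i}\Phi(\alpha,t_k,t_{k-1})$ exactly rather than as its leading term, which changes nothing in the statement or the estimates.
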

%
\begin{proof} We have, using (\ref{TSparam}) and (\ref{BkTS}),
\begin{align*}
B_k(\alpha,X)=&(X(t_k)-X(t_{k-1})) -(z(\alpha,t_k)-z(\alpha, t_{k-1} ))\\
&\quad +(I_p-\Phi(\alpha, t_k, t_{k-1}))(X(t_{k-1})-z(\alpha,t_{k-1})).
\end{align*}
Therefore,
\[\nabla_{\alpha_i}B_k(\alpha,X)=E_{k,i}+ \epsilon \Delta  M^i_k(\alpha)Z_{k-1}(\theta_0)\]
with $M^i_k(\alpha)= -\frac{1}{\Delta}\nabla_{\alpha_i} \Phi(\alpha, t_k, t_{k-1})$,
$Z_{k-1}(\theta_0)= g(\theta_0,t_{k-1})+ \epsilon  R^{\epsilon}(\theta_0, t_{k-1})$
and
\begin{align*}
E_{k,i}=& - \nabla_{\alpha_i}(z(\alpha,t_k)-z(\alpha, t_{k-1})) +(\Phi(\alpha, t_k, t_{k-1})-I_p) \nabla_{\alpha_i}z(\alpha,t_{k-1})\\
&-\nabla_{\alpha_i} \Phi(\alpha, t_k, t_{k-1})(z(\alpha_0,t_{k-1})-z(\alpha,t_{k-1})).
\end{align*}
Proposition \ref{TStheta} yields the result for $Z_k(\theta_0)$.

Now, $ \Phi(\alpha,t_k,t_{k-1})= \exp \{\int_{t_{k-1} }^{t_k} \nabla_z b(\alpha, s,z(\alpha,s)) ds\}$ so that
\[M^i_k(\alpha)=-\nabla_{\alpha_i} \nabla_z b(\alpha, t_{k-1},z(\alpha ,t_{k-1}))+\Delta \mbox{O}(1).\]
To study $E_{k,i}$, we use that $\displaystyle{\Phi(\alpha, t_k, t_{k-1})-I_p= \Delta  \nabla_z b(\alpha ,t_{k-1},z(\alpha, t_{k-1})) +  \Delta^2 O(1)}$ and
  $\displaystyle{\nabla_{\alpha_i} \left(b(\alpha,t, z(\alpha,t))\right)
= \nabla_{\alpha_i}b(\alpha,t,z(\alpha,t)) + \nabla_z b(\alpha,t,z(\alpha,t)) \nabla_{\alpha_i} z(\alpha,t)}$.\\
Therefore
$E_{k,i}=-  \nabla_{\alpha_i}b(\alpha, t_{k-1},z(\alpha,t_{k-1}))+ M^i_k(\alpha)(z(\alpha_0,t_{k-1})-z(\alpha,t_{k-1})) + \Delta \mbox{O}(1).$\\
This achieves the proof of (i).

Let us prove (ii). We have
$\nabla^2_{\alpha_i\alpha_j}B_k(\alpha,X)= f_k^{ij}(\alpha_0,\alpha,\Delta)+\xi^{ij}_k(\theta_0,\alpha,\epsilon,\Delta)$ with\\
$ \xi^{ij}_k=  (\nabla_{\alpha_i\alpha_j}^2 \Phi(\alpha,t_k, t_{k-1}))[X(t_{k-1})-z(\alpha_0,t_{k-1})]$ and
\begin{align*}
f_k^{ij}(\alpha_0,\alpha,\Delta)=&-\left(\nabla^2_{\alpha_i\alpha_j}z(\alpha,t_k )-\Phi(\alpha,t_k,t_{k-1}) \nabla^2_{\alpha_i\alpha_j}z(\alpha,t_{k-1}) \right)\\
&+ \nabla_{\alpha_i}\Phi(\alpha,t_k,t_{k-1}) \nabla_{\alpha_j}z(\alpha,t_{k-1}) \\
&+ \nabla_{\alpha_j}\Phi(\alpha, t_k,t_{k-1}) \nabla_{\alpha_i}z(\alpha,t_{k-1})\\
&- \nabla_{\alpha_i \alpha_j}^2 \Phi(\alpha,t_k, t_{k-1})(z(\alpha,t_{k-1})-z(\alpha_0,t_{k-1})).
\end{align*}
The result is obtained using  Proposition \ref{TStheta} and the property that $\frac{1}{\Delta} \norm{\nabla_{\alpha_i} \Phi (\cdot)}$ and
$\frac{1}{\Delta} \norm{\nabla^2_{\alpha_i,\alpha_j} \Phi (\cdot)}$ are uniformly bounded.
\end{proof}

 \section{Inference based on high frequency observations on $[0,T]$} \label{HFO}
We assume that both $\epsilon$ and $\Delta= \Delta_n$ go to $0$.
The number of observations $n$ goes to infinity. We study the estimation of $\theta=(\alpha,\beta)$ based on $(X(t_k), k=1,\dots,n)$. \\

Lemmas \ref{BkX} and  \ref{SigmaS} yield that   the random variables $ \frac{1}{\epsilon \sqrt{\Delta} }B_k(\alpha_0,X)$ are approximately  conditionally independent  centered Gaussian random variables in  $\R ^p$  with covariance approximated by $\Sigma_{k-1}(\beta_0)$.
 Analogously to \cite{kes97IV} or \cite{glo09IV},
we  introduce  a contrast function, using definitions \eqref{BkTS}, \eqref{Sigkbeta}, 
\begin{equation}
	\label{Ucheck}
\check{U}_{\epsilon,\Delta} (\alpha,\beta)=\sum_{k=1}^{n}\log \det \Sigma_{k-1}(\beta)+
\frac{1}{\epsilon^2 \Delta}\sum_{k=1}^{n} B_k(\alpha,X)^{*}\; \Sigma_{k-1}^{-1}
(\beta)\; B_k(\alpha,X).
\end{equation}
The minimum contrast estimators are defined as any solution of
\begin{equation}\label{def:estimateurs_chech}
(\check{\alpha}_{\epsilon,\Delta},\check{\beta}_{\epsilon,\Delta})
=\underset{(\alpha,\beta)\in \Theta }{\mathrm{argmin}} \ \check{U}_{\epsilon,\Delta}(\alpha,\beta).
\end{equation}
%

\subsection{Properties of the estimators}
In what follows, we use to describe the asymptotics with respect to $\Delta= \Delta_n$  either $\Delta \rightarrow 0$ or $n\rightarrow \infty $.
 Indeed, it is more explicit to state results according to the number of observations  $n$ rather than in terms of the size of the sampling interval  $\Delta=\Delta_n$.
Results are obtained when $\epsilon\rightarrow 0 $ and  $\Delta \rightarrow 0$ (or $n\rightarrow \infty$) simultaneously.

Define, for $\theta=(\alpha,\beta) \in \Theta$ with $\Theta$ a compact subset of $\R^a\times \R^b$, the matrices $I_b(\theta)=(I_b(\theta)_{ij}, 1\leq i,j \leq a)$,
 $I_{\sigma}(\theta)=(I_{\sigma} (\theta)_{ i,j}, 1\leq i,j \leq b)$ and $I(
 \theta)$ by
 \begin{equation}\label{Ibtheta}
 (I_b(\theta))_{ij}= \int_{0}^T( \nabla _{\alpha_i}b(\alpha, t, z(\alpha,t)))^{*} \Sigma^{-1} (\beta,t,z(\alpha,t)) \nabla_{\alpha_j} b(\alpha, t, z(\alpha,t)) dt,
 \end{equation}

\begin{align}
&(I_{\sigma} (\theta))_{i,j}= \label{Isigma}\\
&\resizebox{0.93\linewidth}{!}{
\begin{minipage}{\linewidth}
$\displaystyle\frac{1}{2T}\int_0 ^T  \mathrm{Tr} \left( \nabla_{\beta_i} \Sigma(\beta,t, z(\alpha,t)) \Sigma^{-1}(\beta,t, z(\alpha,t))  \nabla_{\beta_j}\Sigma(\beta,t, z(\alpha,t))\Sigma^{-1}(\beta,t, z(\alpha,t))  \right) dt.$
\end{minipage}}\nonumber
\end{align}

\smallskip

\begin{equation}  \label{Itheta}
I(\theta)= \begin{pmatrix} I_b(\theta))&0 \\
0 &I_{\sigma} (\theta)
\end{pmatrix}.
\end{equation}

\noindent
Recall that $A^*$ denotes the  transpose  of  a matrix  $A$ and $ \mathrm{Tr}(A)$ its  trace.

\begin{theorem}\label{theo:normalite}
Assume that $(X(t))$ satisfying \eqref{SDEGen} is observed at times $t_k=k\Delta_n$ with $T=n\Delta_n$ fixed.
Assume  \textbf{(S1)--(S7)} and that $I_b(\theta_0)$ is non-singular. Then,   as $\epsilon \rightarrow 0, \Delta= \Delta_n \rightarrow 0$,
\begin{enumerate}
\item[(i)] $(\check{\alpha}_{\epsilon,\Delta},\check{\beta}_{\epsilon,\Delta}) \rightarrow (\alpha_0,\beta_0)$ in $\P_{\theta_0}$-probability.\\
\item[(ii)] If moreover $I_{\sigma}(\theta_0)$ is non-singular,
 \[ \begin{pmatrix}
    \epsilon^{-1}\left(\check{\alpha}_{\epsilon,\Delta}-\alpha_0\right)\\
\sqrt{n}\left(\check{\beta}_{\epsilon,\Delta}-\beta_0\right)
   \end{pmatrix}
\rightarrow  \mathcal {N}_{a+b} \left(0,\begin{pmatrix}
                	I_b^{-1}(\alpha_0,\beta_0) & 0 \\
0 & I_\sigma^{-1}(\alpha_0,\beta_0)
                \end{pmatrix}
\right)
\]
in distribution under $\P_{\theta_0}$.
\end{enumerate}
\end{theorem}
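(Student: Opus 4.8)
The plan is to treat $(\check\alpha_{\epsilon,\Delta},\check\beta_{\epsilon,\Delta})$ as a minimum–contrast (M-)estimator and follow the classical two–step scheme: first establish consistency by showing that suitably normalized versions of $\check U_{\epsilon,\Delta}$ converge, uniformly on $\Theta$, to limit contrast functions admitting $\alpha_0$ (resp.\ $\beta_0$) as unique minimizer; then obtain the asymptotic normality by a Taylor expansion of the score $\nabla_\theta\check U_{\epsilon,\Delta}$ at $\theta_0$, normalized by the diagonal matrix $\mathrm{diag}(\epsilon^{-1}I_a,\sqrt n\,I_b)$. The whole argument rests on the preparatory Lemmas \ref{BkX}--\ref{dNkdb}, which reduce every quantity to tractable functionals of the Brownian increments and of the deterministic flow $z(\alpha,\cdot)$.

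For consistency I would separate the two parameters by their natural scalings. Since the $\log\det$ term does not involve $\alpha$, Lemmas \ref{BXa} and \ref{BGamma} give, uniformly in $(\alpha,\beta)$,
\[\epsilon^2\big(\check U_{\epsilon,\Delta}(\alpha,\beta)-\check U_{\epsilon,\Delta}(\alpha_0,\beta)\big)\;\longrightarrow\; \int_0^T \Gamma(\alpha_0,\alpha,t)^{*}\,\Sigma^{-1}(\beta,t,z(\alpha_0,t))\,\Gamma(\alpha_0,\alpha,t)\,dt,\]
the cross term $\sum_k(\Sigma_{k-1}^{-1}\Gamma_k)^{*}B_k(\alpha_0,X)$ vanishing by Lemma \ref{BXa0}(ii). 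The limit is nonnegative and equals $0$ only if $\Gamma(\alpha_0,\alpha,\cdot)\equiv0$; writing $w(t)=z(\alpha_0,t)-z(\alpha,t)$, definition \eqref{def:Gamma} shows $\Gamma\equiv0$ forces $w'(t)=\nabla_z b(\alpha,t,z(\alpha,t))w(t)$ with $w(0)=0$, hence $w\equiv0$ and $b(\alpha_0,t,z(\alpha_0,t))\equiv b(\alpha,t,z(\alpha,t))$, so $\alpha=\alpha_0$ by (S6). For $\beta$, once $\check\alpha\to\alpha_0$, I would use $B_k(\alpha_0,X)=\epsilon\sqrt\Delta\,T_k(\theta_0)+\dots$ (Lemma \ref{BkX}) with $S_k(\theta_0)\approx\Sigma_{k-1}(\beta_0)$ (Lemma \ref{SigmaS}) to get $n^{-1}\check U_{\epsilon,\Delta}(\alpha_0,\beta)\to \frac1T\int_0^T\big(\log\det\Sigma(\beta,t,z(\alpha_0,t))+\mathrm{Tr}(\Sigma^{-1}(\beta,t,z(\alpha_0,t))\Sigma(\beta_0,t,z(\alpha_0,t)))\big)dt$; the elementary inequality $\log\det A+\mathrm{Tr}(A^{-1}B)\ge\log\det B+p$ for positive definite $A,B$ (equality iff $A=B$) together with (S7) identifies $\beta_0$ as the unique minimizer.

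For the asymptotic normality I would expand the score around $\theta_0$ and insert the expansions of $B_k$, $\nabla_{\alpha}B_k$ (Lemmas \ref{BXa0}, \ref{dNkdb}). The $\alpha$-score is dominated by its linear part in the Brownian increments, $\epsilon\,\nabla_{\alpha_i}\check U_{\epsilon,\Delta}(\theta_0)\approx-2\sum_k(\nabla_{\alpha_i}b)^{*}\Sigma_{k-1}^{-1}\sigma_{k-1}\,(B(t_k)-B(t_{k-1}))$, a martingale triangular array whose bracket converges, by a Riemann-sum argument using $\sigma\sigma^{*}=\Sigma$, to $4 I_b(\theta_0)$; the $\beta$-score is dominated by the centered quadratic part, $n^{-1/2}\nabla_{\beta_l}\check U_{\epsilon,\Delta}(\theta_0)\approx n^{-1/2}\sum_k\big(\mathrm{Tr}(\Sigma_{k-1}^{-1}\nabla_{\beta_l}\Sigma_{k-1})-T_k^{*}\Sigma_{k-1}^{-1}\nabla_{\beta_l}\Sigma_{k-1}\Sigma_{k-1}^{-1}T_k\big)$, whose conditional variance tends to $4I_\sigma(\theta_0)$. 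Applying the triangular-array central limit theorem (Lemma \ref{VGCJJ}) gives $\epsilon\,\nabla_\alpha\check U\to\mathcal N(0,4I_b)$ and $n^{-1/2}\nabla_\beta\check U\to\mathcal N(0,4I_\sigma)$. Simultaneously $\epsilon^2\nabla^2_{\alpha\alpha}\check U\to 2I_b$ and $n^{-1}\nabla^2_{\beta\beta}\check U\to 2I_\sigma$ (at $\theta_0$ and, by continuity and uniform control, at any intermediate point), while the mixed Hessian is only $O_P(\epsilon^{-1})$, so that $\epsilon\,\nabla^2_{\alpha\beta}\check U\,(\check\beta-\beta_0)=O_P(n^{-1/2})\to0$; solving the two normalized score equations then yields $\mathcal N(0,I_b^{-1})$ and $\mathcal N(0,I_\sigma^{-1})$.

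The main obstacle is the joint treatment of the two rates: I must prove that the $\alpha$- and $\beta$-scores are asymptotically independent, so that the limit covariance is block diagonal. This reduces to the fact that, for a Gaussian increment, the linear functional $B(t_k)-B(t_{k-1})$ and the centered quadratic functional $T_kT_k^{*}-\Sigma_{k-1}$ are uncorrelated (odd Gaussian moments vanish), whence the covariance between the two score components vanishes after normalization. The remaining difficulty is the careful verification of the Lindeberg/negligibility conditions of Lemma \ref{VGCJJ} for the triangular arrays in the coupled regime $\epsilon\to0$, $\Delta=\Delta_n\to0$, which is precisely where the moment bounds on $D_k^\epsilon$, $E_k$ and $R^\epsilon$ from Lemmas \ref{BkX}, \ref{BXa0} and Proposition \ref{TStheta} are indispensable.
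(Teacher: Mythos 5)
Your overall scheme---uniform convergence of $\epsilon^2\bigl(\check{U}_{\epsilon,\Delta}(\alpha,\beta)-\check{U}_{\epsilon,\Delta}(\alpha_0,\beta)\bigr)$ to $K_1$, of the normalized $\beta$-contrast to $K_2$, then a Taylor expansion of the normalized score with the triangular-array CLT and a block-diagonal limit $2I(\theta_0)$ for the normalized Hessian---is the same as the paper's, and several of your local arguments are correct and even make explicit what the paper leaves implicit: the linear-ODE/uniqueness argument showing $\Gamma(\alpha_0,\alpha,\cdot)\equiv 0$ forces $\alpha=\alpha_0$ under (S6), the inequality $\log\det A+p\leq \mathrm{Tr}(A)$ identifying $\beta_0$ under (S7), and the vanishing of the $\alpha$--$\beta$ score cross-covariance via odd Gaussian moments.

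There is, however, a genuine gap at the transition from consistency of $\check{\alpha}_{\epsilon,\Delta}$ to consistency of $\check{\beta}_{\epsilon,\Delta}$. You write ``once $\check{\alpha}\rightarrow\alpha_0$'' and then analyse $n^{-1}\check{U}_{\epsilon,\Delta}(\alpha_0,\beta)$; but $\check{\beta}_{\epsilon,\Delta}$ minimizes $\beta\mapsto \check{U}_{\epsilon,\Delta}(\check{\alpha}_{\epsilon,\Delta},\beta)$, and replacing $\check{\alpha}_{\epsilon,\Delta}$ by $\alpha_0$ costs terms that consistency alone cannot control. By Lemma \ref{BXa}, $B_k(\check{\alpha},X)-B_k(\alpha_0,X)$ is of order $\Delta\norm{\check{\alpha}-\alpha_0}$, so in $\frac{1}{n}\bigl(\check{U}_{\epsilon,\Delta}(\check{\alpha},\beta)-\check{U}_{\epsilon,\Delta}(\alpha_0,\beta)\bigr)$ the cross and quadratic terms (the terms $\Lambda_3,\Lambda_4$ in the proof of Proposition \ref{CV-beta}) are of order $\Delta\norm{\check{\alpha}-\alpha_0}/\epsilon$ and $\Delta\norm{\check{\alpha}-\alpha_0}^2/\epsilon^2$: the factor $1/(\epsilon^2\Delta)$ in the contrast amplifies the plug-in error by $\epsilon^{-2}$. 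Since the theorem assumes no relation between $\epsilon$ and $\Delta$ (this absence is precisely what the paper emphasizes, and in the epidemic regime $\Delta/\epsilon^2=N\Delta$ is large), knowing only $\norm{\check{\alpha}-\alpha_0}=o_P(1)$ yields bounds of the form $o_P(1)\,\Delta/\epsilon^2$, which need not vanish. This is why the paper inserts, between your two consistency arguments, a dedicated step (Proposition \ref{borneproba}, proved via Lemma \ref{Convcki}): tightness of $\epsilon^{-1}(\check{\alpha}_{\epsilon,\Delta}-\alpha_0)$ \emph{uniformly over} $\beta\in K_b$---uniformity being indispensable because $\check{\beta}$ is not yet known to converge at that stage, which is also why the score limit there involves the matrix $J(\theta_0,\beta)$ built on $\Xi(\theta_0,\beta,t)$ rather than $I_b(\theta_0)$. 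With this tightness all the offending terms become $O_P(\Delta)$ and the rest of your plan (consistency of $\check{\beta}$, then the joint score/Hessian expansion) goes through; without it, both your passage to $\beta$-consistency and your control of the analogous remainders in the normality step fail.
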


Note that results are obtained without any condition linking $\epsilon$ and $\Delta$ (or $n$). Indeed,  the
 previous results obtained in \cite{glo09IV} require conditions linking $\epsilon $ and $\Delta $ that do not fit epidemic data, where generally the  orders of magnitude for $N$ and $n$ satisfy  $N>>n$  so that  $\Delta$ is comparatively large with respect to $\epsilon= 1/\sqrt{N}$.
 We proposed  in \cite{guy14IV} another method based on Theorem \ref{th_Taystoch} which extends results obtained in \cite{gen90IV}, where the inference in the case $\sigma(\beta, x) \equiv 1$ was  investigated for one-dimensional diffusions using expansion (\ref{TS}).\\

Since estimators of parameters in the drift (here $\alpha$) and in the diffusion coefficient (here $\beta$) converge at distinct rates, $\epsilon^{-1}$ and $\sqrt{n}= \Delta^{-1/2}$ respectively, the study of asymptotic properties has to be  performed according to the successive steps:
\begin{enumerate}[Step (4):]
\item[Step (1):]  Consistency of  $\check{\alpha}_{\epsilon,\Delta}$ (Proposition \ref{CV-contraste}).
\item[Step (2):] Tightness of the sequence $\epsilon^{-1} ( \check{\alpha}_{\epsilon,\Delta}-\alpha_0)$ with respect to $\beta$ (Proposition \ref{borneproba}).
\item[Step (3):] Consistency of $\check{\beta}_{\epsilon,\Delta}$ (Proposition \ref{CV-beta}).
\item[Step (4):]  Asymptotic normality for both estimators (Theorem \ref{theo:normalite},(ii)).
\end{enumerate}
The proof is  technical and detailed  in a separate section. Before  this proof, let us state some comments.
\subsubsection{Comments} \label{comments}
\textbf{(1)} \emph{Efficiency of estimators}: Note that the matrix $I_b(\theta)$ is equal to the Fisher information matrix associated to the estimation of $\alpha$ when $(X(t))$ is  continuously observed on $[0,T]$ (see e.g.\ \cite{kut84IV} and  Section \ref{Recapdiff} in the Appendix).
Therefore $\check{\alpha}_{\epsilon,\Delta}$ is efficient for this statistical model. \\

\noindent\textbf{ (2)} \emph{Comparison with estimation based  on  complete observation of the jump process $(\mathcal{Z}^N(t))$}:
 Coming back to epidemics, we can compare the estimation of the parameters of the pure jump process $(\mathcal{Z}^N(t))$ and $(Z^N(t)= \frac{1}{N}\mathcal{Z}^N(t))$ describing the epidemic dynamics in a finite population of size $N$ and
 the estimators  built from its diffusion approximation.
 Let us stress that there is a main difference between these two approaches. Statistical inference for  $(\mathcal{Z}^N(t))$ is based on the observations of all the jumps, which
 implies here the observation of all the times of infection and recovery for each individual in the population, while for the diffusion $(X(t))$, we consider discrete observations $(X(t_k), k=1,\dots,n)$.\\

\noindent\textbf{ (3)}  \emph{Comparison of estimators for  the SIR epidemic dynamics}:
 Assume that the jump process $(\mathcal{Z}^N(t))$ is continuously observed on $[0,T]$.
 Its dynamics is described by the two parameters $(\lambda,\gamma)$. Set $Z^N(t)= (S^N((),I^N(t))^*$, and assume that $Z^N(0) \rightarrow  x_0=(s_0,i_0)^*$, with
 $s_0>0, i_0>0$.
 Let  $s(t)= s( \lambda_0, \gamma_0,t); i(t)= i(\lambda_0, \gamma_0,t)$ the solution of the corresponding ODE.

The  Maximum Likelihood Estimator $ (\hat{\lambda},\hat{\gamma})$ is  explicit (see  \cite{and00IV} or  Chapter \ref{chappointproc} of this part).
Indeed, let  $(T_i)$ denote   the successive jump times  and set   $J_i=0$ if we have an infection and $J_i=1$ if we have a recovery.
 Let $K_N(T)= \sum_{i\geq 0}1_{T_i\leq T}$. Then
 \begin{align*}
\hat{\lambda}_N&= \frac{1}{N} \frac{ \sum_{i=1}^{K_N(T)} (1-J_i)}{\int_0^T S^N(t) I^N(t) dt}=\frac{1}{N} \frac{\mbox{ \# Infections} }{\int_0^T S^N(t) I^N(t) dt},\\
\hat{\gamma}_N&= \frac{1}{N} \frac{ \sum_{i=1}^{K_N(T)} J_i}{\int_0^T I^N(t) dt} = \frac{\mbox{\# Recoveries}}{\mbox{``Mean infectious period''}}.
 \end{align*}
As the population size $N$ goes to infinity, $(\hat{\lambda}_N,\hat{\gamma}_N)$  is consistent and
  $$\sqrt{N} \begin{pmatrix}\hat{ \lambda}_N-\lambda \\ \hat{\gamma}_N-\gamma \end{pmatrix} \rightarrow \mathcal{ N}_2\left(0, I^{-1}(\lambda,\gamma)\right), \mbox{ where } I(\lambda,\gamma)= \begin{pmatrix} \frac{\int_0^T s(t)i(t) dt}{\lambda} &0\\0& \frac{\int_0^Ti(t) dt} {\gamma}
\end{pmatrix}  \;.
$$
The matrix $I(\lambda,\gamma) $ is the Fisher information matrix of this statistical model.\\

 Consider now the $ SIR$ diffusion approximation  $X(t)$  described in Section \ref{SIR_appli}.
 We have
 \begin{equation}
b(\theta,(s,i))= \begin{pmatrix}
-\lambda si \\ \lambda si -\gamma i \end{pmatrix};\quad
\Sigma(\theta,(s,i))
=  \begin{pmatrix} \lambda si & -\lambda si \\ - \lambda si & \lambda si
+ \gamma i \end{pmatrix}.\end{equation}

Therefore,
$$ \nabla_{\theta} b(\theta, (s,i))= \begin{pmatrix} -si& 0\\si&-i\end{pmatrix};
\Sigma^{-1}(\theta, (s,i))=\frac{ 1}{\lambda \gamma s i} \begin{pmatrix}\lambda s+\gamma& \lambda s\\
\lambda s& \lambda s \end{pmatrix}.
$$

The matrix $I_b(\theta)$ defined in \eqref{Ibtheta} is
$$I_b(\lambda,\gamma)= \begin{pmatrix} \frac{1}{\lambda} \int_0 ^T s(t) i(t) dt& 0\\
0& \frac{1}{\gamma}  \int_0 ^T i(t) dt\end{pmatrix}.$$

Therefore, we obtain the same information matrix  in both cases.\\

Consider the $SIRS$  model with immunity  waning $\delta$.  We  have $ \theta=(\lambda,\gamma,\delta)$
The diffusion approximation satisfies
$$b(\theta,(s,i))= \begin{pmatrix}
-\lambda si +\delta(1-s-i) \\ \lambda si -\gamma i \end{pmatrix};\quad
\Sigma(\theta,(s,i))
=  \begin{pmatrix} \lambda si + \delta(1-s-i) & -\lambda si \\ - \lambda si & \lambda si
+ \gamma i \end{pmatrix}.$$
Hence,
\[ \nabla_{\theta} b(\theta,s,i)= \begin{pmatrix} -si& 0& (1-s-i)\\si&-i &0\end{pmatrix},\;\]
\[I_b(\theta)= \int_0^T \nabla_{\theta}^* b(\theta,s(t),i(t))\Sigma^{-1} (\theta,s(t),i(t)) \nabla_{\theta} b(\theta,s(t),i(t)) dt.\]
Then $I_b(\theta) $ can   be computed  and compare to the Fisher information matrix derived from the statistical model corresponding to complete observation of the $SIRS$ jump process.\\

\subsection{Proof of Theorem \ref{theo:normalite}}
Recall the notations: for a matrix  $A$, $A^{*}$  the transposition of $A$, $\mbox{det}(A)$ the determinant of $A$ and $\mathrm{Tr}(A)$ the trace of $A$.
\subsubsection{Step (1):  Consistency of  $\check{\alpha}_{\epsilon,\Delta}$}

 Let us define, using (\ref{def:Gamma}),
 \begin{equation}\label{def:K1}
K_1(\alpha_0,\alpha,\beta)=\int_{0}^{T} \Gamma(\alpha_0,\alpha, t)^{*}
\Sigma^{-1}(\beta, t, z(\alpha_0,t))\Gamma(\alpha_0,\alpha,t)dt.
\end{equation}
By  Assumption (S4),  if $\alpha \neq \alpha_0 $, $b(\alpha,t,z(\alpha,t)) \not \equiv b(\alpha_0,t,z(\alpha_0))$, therefore  the function
$\Gamma(\alpha_0,\alpha,\cdot) \not \equiv 0$, which implies that
$K_1(\alpha_0,\alpha,\beta)$ is non-negative and equal to 0 if and only
if  $\alpha=\alpha_0$.

The contrast function  $\check{U}_{\epsilon,\Delta} (\alpha,\beta)$ defined in (\ref{Ucheck}) satisfies
\begin{proposition}\label{CV-contraste}
Assume (S1)--(S6). Then, as $\epsilon,\Delta \rightarrow 0$, the following
convergences hold.
\begin{enumerate}
\item[(i)] $ \sup_{\theta\in \Theta} | \epsilon^2 \left(\check{U}_{\epsilon,\Delta}(\alpha,\beta)-\check{U}_{\epsilon,\Delta}(\alpha_0,\beta)\right)-K_1(\alpha_0,\alpha,\beta)|\rightarrow 0$ in $\P_{\theta_0}$-probability.
\item[(ii)]  $\check{\alpha}_{\epsilon,\Delta} \rightarrow \alpha_0 $ in probability under $\P_{\theta_0}$.
\end{enumerate}
\end{proposition}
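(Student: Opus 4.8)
The plan is to reduce everything to the three lemmas just established by exploiting the quadratic structure of the contrast. Since the terms $\log\det\Sigma_{k-1}(\beta)$ in \eqref{Ucheck} do not depend on $\alpha$, they cancel in the difference; multiplying by $\epsilon^2$ gives
\begin{equation*}
\epsilon^2\left(\check{U}_{\epsilon,\Delta}(\alpha,\beta)-\check{U}_{\epsilon,\Delta}(\alpha_0,\beta)\right)=\frac{1}{\Delta}\sum_{k=1}^n\left(B_k(\alpha,X)^{*}\Sigma_{k-1}^{-1}(\beta)B_k(\alpha,X)-B_k(\alpha_0,X)^{*}\Sigma_{k-1}^{-1}(\beta)B_k(\alpha_0,X)\right).
\end{equation*}
Writing $a=B_k(\alpha,X)$, $b=B_k(\alpha_0,X)$ and using that $\Sigma_{k-1}^{-1}(\beta)$ is symmetric, I decompose each summand via the identity
\begin{equation*}
a^{*}\Sigma_{k-1}^{-1}(\beta)a-b^{*}\Sigma_{k-1}^{-1}(\beta)b=(a-b)^{*}\Sigma_{k-1}^{-1}(\beta)(a-b)+2\,b^{*}\Sigma_{k-1}^{-1}(\beta)(a-b),
\end{equation*}
so the right-hand side splits into a \emph{quadratic} part $Q_{\epsilon,\Delta}(\alpha,\beta)$ and a \emph{cross} part $C_{\epsilon,\Delta}(\alpha,\beta)$.

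For the quadratic part, Lemma \ref{BXa} yields $\frac{1}{\Delta}(B_k(\alpha,X)-B_k(\alpha_0,X))=\Gamma(\alpha_0,\alpha,t_{k-1})+\|\alpha-\alpha_0\|(\Delta O(1)+\epsilon O_P(1))$, hence
\begin{equation*}
Q_{\epsilon,\Delta}(\alpha,\beta)=\Delta\sum_{k=1}^n\Gamma(\alpha_0,\alpha,t_{k-1})^{*}\,\Sigma_{k-1}^{-1}(\beta)\,\Gamma(\alpha_0,\alpha,t_{k-1})+\text{(remainder)}.
\end{equation*}
Proposition \ref{TStheta} gives $X(t_{k-1})\to z(\alpha_0,t_{k-1})$ uniformly, so $\Sigma_{k-1}^{-1}(\beta)\to\Sigma^{-1}(\beta,t_{k-1},z(\alpha_0,t_{k-1}))$ by (S5), and the displayed sum is a Riemann sum converging to $K_1(\alpha_0,\alpha,\beta)$ as defined in \eqref{def:K1}; the remainder is of order $\Delta O(1)+\epsilon O_P(1)$ uniformly in $k$ and $\alpha$. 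For the cross part I apply Lemma \ref{BXa0}(ii) with $V_k=2\,\Sigma_{k-1}^{-1}(\beta)\frac{1}{\Delta}(B_k(\alpha,X)-B_k(\alpha_0,X))$: by (S3), (S5) and the $\mathcal{G}_{k-1}^n$-measurability and uniform boundedness in Lemma \ref{BXa}, this $V_k$ is $\mathcal{G}_{k-1}^n$-measurable and uniformly bounded in probability, whence $C_{\epsilon,\Delta}(\alpha,\beta)=\sum_{k=1}^n V_k^{*}B_k(\alpha_0,X)\to0$ in $\P_{\theta_0}$-probability. This establishes the pointwise convergence in (i); the upgrade to uniform convergence over the compact set $\Theta$ follows from the equicontinuity provided by (S4)--(S5) together with the fact that all the $O(1)$ and $O_P(1)$ controls in Lemmas \ref{BXa} and \ref{BXa0} are uniform in $k$ and in $(\alpha,\beta)$.

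For (ii) I run the standard minimum-contrast argument. By the definition \eqref{def:estimateurs_chech} of the estimators, $\check{U}_{\epsilon,\Delta}(\check{\alpha}_{\epsilon,\Delta},\check{\beta}_{\epsilon,\Delta})\le\check{U}_{\epsilon,\Delta}(\alpha_0,\check{\beta}_{\epsilon,\Delta})$, so $\epsilon^2(\check{U}_{\epsilon,\Delta}(\check{\alpha}_{\epsilon,\Delta},\check{\beta}_{\epsilon,\Delta})-\check{U}_{\epsilon,\Delta}(\alpha_0,\check{\beta}_{\epsilon,\Delta}))\le0$. Evaluating the uniform convergence of (i) at $(\check{\alpha}_{\epsilon,\Delta},\check{\beta}_{\epsilon,\Delta})$ then gives $K_1(\alpha_0,\check{\alpha}_{\epsilon,\Delta},\check{\beta}_{\epsilon,\Delta})\to0$ in $\P_{\theta_0}$-probability. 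Since (S3) and (S5) make $\Sigma^{-1}(\beta,t,z)$ uniformly positive definite over the compact $\Theta$, one has $K_1(\alpha_0,\alpha,\beta)\ge c\int_0^T\|\Gamma(\alpha_0,\alpha,t)\|^2\,dt$ for some $c>0$, and by the identifiability assumption (S6) this integral vanishes only at $\alpha=\alpha_0$ and is bounded away from $0$ on $\{\|\alpha-\alpha_0\|\ge\delta\}$ uniformly in $\beta$. Hence $K_1\to0$ forces $\check{\alpha}_{\epsilon,\Delta}\to\alpha_0$ in probability.

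I expect the main obstacle to be the uniformity over $\Theta$ in (i) and the vanishing of the cross term; the latter is exactly what Lemma \ref{BXa0}(ii) is designed for, so the genuine work lies in organizing the uniform remainder estimates and verifying that the prefactors $V_k$ are uniformly bounded in probability and $\mathcal{G}_{k-1}^n$-measurable before invoking that lemma.
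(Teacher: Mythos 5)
Your proof is correct and follows essentially the same route as the paper's: your identity $a^{*}Ma-b^{*}Mb=(a-b)^{*}M(a-b)+2\,b^{*}M(a-b)$ is exactly the paper's split into its term $T_2$ (the quadratic part, treated via Lemma \ref{BXa}, Proposition \ref{TStheta} and a Riemann-sum limit to $K_1$) and its term $T_1$ (the cross part, killed by Lemma \ref{BXa0}(ii) with the same $\mathcal{G}_{k-1}^n$-measurable, uniformly bounded prefactors), and your minimum-contrast argument for (ii) reproduces the paper's chain of inequalities followed by the identifiability assumption (S6). The only places you gloss (uniformity in $\theta$ of the cross-term convergence, and why $\Gamma(\alpha_0,\alpha,\cdot)\not\equiv 0$ under (S6)) are treated at the same level of detail in the paper itself, so there is no genuine gap relative to it.
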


\begin{proof}
Let us prove (i). We have, by (\ref{Ucheck}) and \eqref{Sigkbeta},
\[\epsilon^2 (\check{U}_{\epsilon,\Delta}(\alpha,\beta)-\check{U}_{\epsilon,\Delta} (\alpha_0,\beta))= T_1+T_2\]
 with
\begin{eqnarray*}
T_1&=&2\sum_{k=1}^{n} \frac{\left(B_k(\alpha,X) - B_k(\alpha_0,X)\right)^{*}}{\Delta}\; \Sigma^{-1}_{k-1}(\beta)\;B_k(\alpha_0,X),\\
T_2&=&\Delta \sum_{k=1}^{n} \frac{\left(B_k(\alpha,X) - B_k(\alpha_0,X)\right)^{*} }{\Delta} \; \Sigma^{-1}_{k-1}(\beta)\;\frac{\left(B_k(\alpha,X) - B_k(\alpha_0,X)\right)}{\Delta}.
\end{eqnarray*}
 By Lemma \ref{BXa}, $\displaystyle{ \frac{\left(B_k(\alpha,X) - B_k(\alpha_0,X)\right)}{\Delta} }$ is bounded, and  (ii) of Lemma  \ref{BXa0} yields that  $T_1$ goes to 0 in $\P_{\theta_0}$-probability.
Using now  (\ref{def:Gamma}), we have by Lemma \ref{BXa}, setting $\zeta_{k}=\Delta r_{k} + \epsilon \norm{\alpha-\alpha_0}\eta_k$,
\begin{eqnarray*}
T_2&=&  \Delta \sum_{k=1}^{n} (\Gamma(\alpha_0,\alpha,t_{k-1})+\zeta_{k-1})^{*}  \Sigma^{-1}_{k-1}(\beta)\;(\Gamma(\alpha_0,\alpha,t_{k-1}) +\zeta_{k-1})\\
&=&  \Delta \sum_{k=1}^{n} \left(\Gamma(\alpha_0,\alpha,t_{k-1})^{*} \Sigma_{k-1}^{-1}(\beta) \Gamma(\alpha_0,\alpha,t_{k-1}) + R_k(\theta_0,\theta,\epsilon,\Delta)\right).
\end{eqnarray*}
The first term of the above formula as a Riemann sum converges by Lemma \ref{BGamma}
to the function $K_1(\alpha_0,\alpha,\beta)$ defined in \eqref{def:K1} as $\Delta \rightarrow 0$. This convergence is uniform with respect to the parameters.
The remainder term  is
\begin{align*}
R_k(\theta_0,\theta,\epsilon,\Delta)=& \Gamma(\alpha_0,\alpha,t_{k-1})^*( \Sigma^{-1}_{k-1}(\beta) -\Sigma^{-1}(\beta,t_{k-1}, z(\alpha_0,t_{k-1})))\Gamma(\alpha_0,\alpha,t_{k-1})\\
&+  \Delta R^1_k(\theta_0,\theta,\epsilon,\Delta) + \epsilon R_k^2 (\theta_0,\theta,\epsilon,\Delta).
\end{align*}
Using Proposition  \ref{TStheta}  and Lemma \ref{BXa},   it is straightforward to get that  $\sup_{k}\norm{R_k(\theta_0,\theta,\epsilon,\Delta)} \rightarrow 0$ in
$\P_{\theta_0}$-probability uniformly with respect to $\theta$.
Hence, $T_2$ converges to $ K_1(\alpha_0,\alpha,\beta)$ in $\P_{\theta_0}$-probability
 uniformly with respect to $\theta$.\\
Let us prove (ii). Noting that, for all $\beta$, $ K_1( \alpha_0,\alpha_0,\beta) =0$, we have

\begin{align*}
0 \leq &\,  K_1(\alpha_0,\check{\alpha}_{\epsilon,\Delta}, \check{\beta}_{\epsilon,\Delta})- K_1(\alpha_0,\alpha_0, \check{\beta}_{\epsilon,\Delta})\\
 \leq &\, [\epsilon^2 (\check{U}_{\epsilon,\Delta}(\alpha, \check{\beta}_{\epsilon,\Delta}) - \check{U}_{\epsilon,\Delta}(\alpha_0, \check{\beta}_{\epsilon,\Delta}) )
 - K_1(\alpha_0,\alpha, \check{\beta}_{\epsilon,\Delta}) ]\\
  & +  [K_1(\alpha_0,\check{\alpha}_{\epsilon,\Delta}, \check{\beta}_{\epsilon,\Delta})-
  \epsilon^2 (\check{U}_{\epsilon,\Delta}(\check{\alpha}_{\epsilon,\Delta}, \check{\beta}_{\epsilon,\Delta})-\check{U}_{\epsilon,\Delta}(\alpha_0 ,\check{\beta}_{\epsilon,\Delta}))]\\
 & + \epsilon^2 [\check{U}_{\epsilon,\Delta}(\check{\alpha}_{\epsilon,\Delta}, \check{\beta}_{\epsilon,\Delta}) -\check{U}_{\epsilon,\Delta}(\alpha, \check{\beta}_{\epsilon,\Delta})]\\
  \leq &\, 2 \sup_{\beta \in K_b} |\epsilon^2 [\check{U}_{\epsilon,\Delta}(\alpha, \beta) -\check{U}_{\epsilon,\Delta}(\alpha_0, \beta)] -K_1(\alpha_0,\alpha,\beta)|,
  \end{align*}
where the last inequality is obtained using that the minimum of $\check{U}_{\epsilon,\Delta}(\alpha, \beta)$ is  attained at $( \check{\alpha}_{\epsilon,\Delta}, \check{\beta}_{\epsilon,\Delta})$.
By Proposition \ref{CV-contraste} (i), we finally get that
\[|K_1(\alpha_0,\check{\alpha}_{\epsilon,\Delta}, \check{\beta}_{\epsilon,\Delta})- K_1(\alpha_0,\alpha_0, \check{\beta}_{\epsilon,\Delta})|  \rightarrow 0,\]
which yields by Assumption \textbf{ (S6)} that $\check{\alpha}_{\epsilon,\Delta} \rightarrow \alpha_0$
in $\P_{\theta_0}$-probability as $\epsilon,\Delta \rightarrow 0.$
\end{proof}

\subsubsection{Step (2): Tightness of the sequence $\epsilon^{-1} ( \check{\alpha}_{\epsilon,\Delta}-\alpha_0)$ }
This step is crucial in the presence of different rates of convergence for $\alpha$ and $\beta$ and concerns  results that hold for all $\beta \in K_b$.

\begin{proposition}\label{borneproba}
 Assume \textbf{(S1)--(S4)} and that  $I_b(\alpha_0,\beta_0)$ defined in (\ref{Ibtheta})  is non-singular. Then,  as $\epsilon,\Delta \rightarrow 0$,  $\sup_{\beta \in K_b}\norm{\epsilon^{-1}\left( \check{\alpha}_{\epsilon,\Delta}-\alpha_0\right)}$ is bounded in $\mathbb{P}_{\theta_0}$-proba\-bility.
\end{proposition}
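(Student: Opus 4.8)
The plan is to fix $\beta\in K_b$, to work with the profiled minimiser $\check\alpha_{\epsilon,\Delta}(\beta)=\mathrm{argmin}_{\alpha}\check{U}_{\epsilon,\Delta}(\alpha,\beta)$ (the joint estimator of Theorem \ref{theo:normalite} being the value at $\beta=\check\beta_{\epsilon,\Delta}$), and to control $\epsilon^{-1}(\check\alpha_{\epsilon,\Delta}(\beta)-\alpha_0)$ uniformly in $\beta$ by the classical score/Hessian route. First I would note that Proposition \ref{CV-contraste}(i) holds uniformly in $\theta=(\alpha,\beta)$ and that $K_1(\alpha_0,\cdot,\beta)$ vanishes only at $\alpha_0$ for every $\beta$; by compactness of $K_b$ this upgrades to $\sup_{\beta\in K_b}\norm{\check\alpha_{\epsilon,\Delta}(\beta)-\alpha_0}\to 0$ in $\P_{\theta_0}$-probability, so that eventually $\check\alpha_{\epsilon,\Delta}(\beta)\in\mathrm{Int}(K_a)$ and solves $\nabla_\alpha\check{U}_{\epsilon,\Delta}(\check\alpha_{\epsilon,\Delta}(\beta),\beta)=0$. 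A Taylor expansion of the score about $\alpha_0$ then gives, for some $\bar\alpha=\bar\alpha(\beta)$ on the segment $[\alpha_0,\check\alpha_{\epsilon,\Delta}(\beta)]$,
\[
\epsilon^{-1}\bigl(\check\alpha_{\epsilon,\Delta}(\beta)-\alpha_0\bigr)
= -\bigl[\epsilon^2\nabla^2_\alpha\check{U}_{\epsilon,\Delta}(\bar\alpha,\beta)\bigr]^{-1}\,
\epsilon\,\nabla_\alpha\check{U}_{\epsilon,\Delta}(\alpha_0,\beta),
\]
so it suffices to show that the rescaled score is bounded in probability uniformly in $\beta$ and that the rescaled Hessian stays uniformly invertible.

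For the rescaled score, differentiating \eqref{Ucheck} gives $\nabla_\alpha\check{U}_{\epsilon,\Delta}(\alpha_0,\beta)=\frac{2}{\epsilon^2\Delta}\sum_{k}(\nabla_\alpha B_k(\alpha_0,X))^{*}\Sigma_{k-1}^{-1}(\beta)B_k(\alpha_0,X)$. I would insert $B_k(\alpha_0,X)=\epsilon\sqrt\Delta\,T_k(\theta_0)+\epsilon^2 D^\epsilon_k$ from Lemma \ref{BkX} and $\nabla_{\alpha_i}B_k(\alpha_0,X)=-\Delta\nabla_{\alpha_i}b(\alpha_0,t_{k-1},z(\alpha_0,t_{k-1}))+\Delta\,O_P(\epsilon+\Delta)$ from Lemma \ref{dNkdb}(i) (the drift-difference term dropping out at $\alpha_0$). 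After multiplying by $\epsilon$, the leading contribution is $-2\sqrt\Delta\sum_{k}(\nabla_\alpha b(\alpha_0,t_{k-1},\cdot))^{*}\Sigma_{k-1}^{-1}(\beta)T_k(\theta_0)$, a normalised sum of conditionally centred terms whose $T_k(\theta_0)$ are, by Lemma \ref{BkX}, $\mathcal{G}_k^n$-measurable, conditionally independent and Gaussian with covariance $S_k(\theta_0)\approx\Sigma_{k-1}(\beta_0)$ (Lemma \ref{SigmaS}); its conditional variance is a Riemann sum converging to a finite matrix, so the triangular-array statement Lemma \ref{VGCJJ} yields tightness. Uniformity in $\beta$ follows from the uniform continuity and, by (S3) and compactness, the uniform boundedness of $\Sigma^{-1}(\beta,\cdot)$ along the trajectory. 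The remainder terms carry extra factors of $\epsilon$ or $\Delta$ and are controlled through Lemma \ref{BXa0} and Proposition \ref{TStheta}.

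For the rescaled Hessian I would split $\epsilon^2\nabla^2_\alpha\check{U}_{\epsilon,\Delta}(\alpha,\beta)$ into the term in $(\nabla_\alpha B_k)^{*}\Sigma_{k-1}^{-1}(\nabla_\alpha B_k)$ and the term in $(\nabla^2_\alpha B_k)^{*}\Sigma_{k-1}^{-1}B_k$. Using Lemma \ref{dNkdb}, the Riemann-sum convergence $\Delta\sum_k\to\int_0^T$, and $X(t_{k-1})\to z(\alpha_0,t_{k-1})$ from Proposition \ref{TStheta}, the first term converges to $2I_b(\alpha,\beta)$ as in \eqref{Ibtheta}, while the second is $O_P(\norm{\Gamma(\alpha_0,\alpha,\cdot)}+\epsilon)$ and hence negligible near $\alpha_0$; evaluated at $\bar\alpha(\beta)\to\alpha_0$ the Hessian converges to $2I_b(\alpha_0,\beta)$. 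Finally, non-singularity of $I_b(\alpha_0,\beta_0)$ forces the functions $t\mapsto\nabla_{\alpha_i}b(\alpha_0,t,z(\alpha_0,t))$ to be linearly independent, so $I_b(\alpha_0,\beta)$, being their Gram matrix for the positive-definite weight $\Sigma^{-1}(\beta,\cdot)$ (by (S3)), is non-singular for \emph{every} $\beta\in K_b$, with smallest eigenvalue bounded below uniformly on the compact $K_b$; inverting the Hessian in the displayed identity then gives the claim. \emph{The hard part} is the localisation: since $B_k(\alpha,X)$ is only $O(\Delta)$ rather than $O(\epsilon\sqrt\Delta)$ away from $\alpha_0$, the second Hessian term is not negligible off $\alpha_0$, so one must genuinely exploit the uniform-in-$\beta$ consistency of Step (1) to confine $\bar\alpha(\beta)$ to a shrinking neighbourhood where $\Gamma(\alpha_0,\bar\alpha,\cdot)$ is small, while keeping every convergence above uniform over $\beta\in K_b$.
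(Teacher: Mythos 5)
Your overall architecture coincides with the paper's own proof: a Taylor expansion of $\nabla_{\alpha}\check{U}_{\epsilon,\Delta}$ around $\alpha_0$ (the paper does this directly at $(\alpha_0,\check{\beta}_{\epsilon,\Delta})$ in \eqref{Expansionalpha} rather than profiling in $\beta$, but that difference is cosmetic), tightness of the rescaled score $\epsilon\nabla_{\alpha}\check{U}_{\epsilon,\Delta}(\alpha_0,\beta)$, convergence of $\epsilon^2\nabla^2_{\alpha}\check{U}_{\epsilon,\Delta}$ to $2I_b(\alpha_0,\beta)$ with localisation through consistency (the paper's bound \eqref{borneD2U} plays exactly the role of your ``hard part''), and uniform invertibility of the limit obtained from (S3) and compactness of $K_b$ (the paper's determinant bound is precisely your Gram-matrix/eigenvalue argument). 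Your derivation of uniform-in-$\beta$ consistency of the profiled minimiser from Proposition \ref{CV-contraste}(i) plus compactness is sound and substitutes correctly for the paper's use of joint consistency.

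There is, however, one step that fails as written: you justify tightness of the leading score term $-2\sqrt{\Delta}\sum_{k}(\nabla_{\alpha} b(\alpha_0,t_{k-1},\cdot))^{*}\Sigma_{k-1}^{-1}(\beta)T_k(\theta_0)$ by Lemma \ref{VGCJJ}. That lemma requires $\sum_{k}\E\bigl[(\zeta_k^n)^2\,|\,\mathcal{ G}_{k-1}^n\bigr]\rightarrow 0$ and concludes convergence in probability to the limit of the conditional means, here $0$. But for the score, the sum of conditional second moments converges to the positive-definite matrix $4J(\theta_0,\beta)$ of \eqref{Jbeta} --- this is the whole point, since it is the asymptotic variance --- so the hypotheses of Lemma \ref{VGCJJ} are violated; and were they satisfied, the lemma would prove the false statement that the score tends to $0$ in probability. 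The correct tool is either the CLT for triangular arrays (Theorem \ref{CLTTT}), which is what the paper invokes in Lemma \ref{Convcki} to get the limit $\mathcal{ N}(0,4J(\theta_0,\beta))$ and hence tightness, or --- since you only need tightness, not a distributional limit --- a direct Chebyshev bound exploiting the conditional centering and the independence of $T_k(\theta_0)$ from $\mathcal{ G}_{k-1}^n$: the cross terms vanish, so $\E\norm{\sum_k \zeta_k^n}^2=\sum_k\E\norm{\zeta_k^n}^2\leq C$ uniformly in $\beta$, $\epsilon$, $\Delta$ by the uniform bound on $\Sigma^{-1}(\beta,\cdot)$ from (S3). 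With that repair your argument goes through and is, in substance, the paper's proof.
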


\begin{proof}
Recall the notation: for $f$ a twice differentiable real function,
$\nabla^2_\alpha f = (\frac{\partial ^2 f}{\partial  \alpha_i \partial \alpha_j})_{1\leq i,j\leq a}$.\\
Under (S5), $\check{U}_{\epsilon,\Delta}(\alpha,\beta)$ is $C^2$ and a Taylor expansion  of $\nabla_{\alpha}\check{U}_{\epsilon,\Delta}$ at point $(\alpha_0,\check{\beta}_{\epsilon,\Delta})$  w.r.t.\ $\alpha$ yields,
\begin{equation}\label{Expansionalpha}
0= \epsilon  \nabla_{\alpha}\check{U}_{\epsilon,\Delta}(\check{\alpha}_{\epsilon,\Delta}, \check{\beta}_{\epsilon,\Delta}) =  \epsilon \nabla_{\alpha}\check{U}_{\epsilon,\Delta}(\alpha_0, \check{\beta}_{\epsilon,\Delta})+ \epsilon^2 N_{\epsilon,\Delta}(\check{\alpha}_{\epsilon,\Delta}, \check{\beta}_{\epsilon,\Delta})
\frac{(\check{\alpha}_{\epsilon,\Delta}-\alpha_0)}{\epsilon},
\end{equation}
\begin{equation}\label{Neps}
\mbox{with} \quad N_{\epsilon,\Delta}(\alpha,\beta) =\int _0^1 \nabla^2_{\alpha} \check{U}_{\epsilon,\Delta}
(\alpha_0+ t (\alpha-\alpha_0),\beta) dt.
\end{equation}
The proof relies on two properties:  under $\P_{\theta_0}$, as $\epsilon,\Delta \rightarrow 0$, for all $\beta \in K_b$,  $(\epsilon \nabla_{\alpha} \check{U}_{\epsilon,\Delta}(\alpha_0,\beta))$ converges in distribution to a Gaussian law and the sequence
$\epsilon^2  \nabla^2_{\alpha} \check{U}_{\epsilon,\Delta}(\alpha_0,\beta) $ converges almost surely.

Let us study  $- \epsilon \nabla_{\alpha}\check{U}_{\epsilon,\Delta}(\alpha_0, \beta) $.
Define the $ a\times a$ matrix
\begin{equation}\label{Jbeta}
J( \theta_0, \beta)= \int_0^T (\nabla_{\alpha} b(\alpha_0, t,z(\alpha_0,t)))^*\Xi( \theta_0,\beta, t)\nabla_{\alpha} b(\alpha_0, t,z(\alpha_0,t)) dt, \; \mbox {with}
 \end{equation}
 \begin{equation} \label{defXi}
\Xi( \theta_0,\beta, t)=\Sigma^{-1} (\beta, t, z(\alpha_0,t))  \Sigma (\beta_0, t, z(\alpha_0,t))   \Sigma^{-1} (\beta, t, z(\alpha_0,t)).
\end{equation}
 The following holds.
\begin{lemma}\label{Convcki}
Assume (S1)--(S5). Then, as $\epsilon,\Delta \rightarrow 0$,
$$ - \epsilon \nabla_{\alpha}\check{U}_{\epsilon,\Delta}(\alpha_0, \beta)  \rightarrow \mathcal{ N}(0, 4  J(\theta_0,\beta)) \mbox{ in distribution under } \P_{\theta_0}.$$
\end{lemma}
\begin{proof}
We have, using the notations of Lemma \ref{dNkdb}
and setting
\begin{equation}\label{Hik}
 H^i_k(\alpha_0,\beta)= \Sigma^{-1} (\beta,t_{k-1},z(\alpha_0,t_{k-1}) )\nabla_{\alpha_i} b(\alpha_0, t_{k-1},z(\alpha_0,t_{k-1}))
\end{equation}
\begin{equation*}
-\epsilon \nabla_{\alpha_i}\check{U}_{\epsilon,\Delta}(\alpha_0, \beta)= -\frac{2}{\epsilon \Delta}  \sum_{k=1}^n   (\nabla_{\alpha_i} B_k(\alpha_0,X)) ^{*}  \Sigma^{-1}_{k-1}(\beta) B_k(\alpha_0,X)= A_n^i+A^{\prime,i}_n
+ A^{\prime\prime,i}_n,
\end{equation*}
with
 \begin{eqnarray*}
A_n^i&=&  \frac{2}{\epsilon}  \sum_{k=1}^n  H^i_k(\alpha_0,\beta)^* B_k(\alpha_0,X),\\
 A^{\prime,i}_n&=&-2 \sum_{k=1}^n (M_k^i (\alpha_0) Z_{k-1}(\theta_0))^*\Sigma^{-1}_{k-1}(\beta) B_k(\alpha_0,X),\\
 A^{\prime\prime,i}_n&=&2  \sum_{k=1}^n \frac{\nabla_{\alpha_i} B_k(\alpha_0,X)}{\Delta} ^{*} \frac{ \Sigma^{-1}_{k-1}(\beta)-
 \Sigma^{-1} (\beta, t_{k-1}, z(\alpha_0,t_{k-1}))} {\epsilon} B_k(\alpha_0,X).
\end{eqnarray*}
By Lemma \ref{BXa0} (ii), Lemma \ref{dNkdb} and Theorem \ref{TStheta},  $A^{\prime,i}_n$  and $ A^{\prime\prime,i}_n$ tend to 0 in $\P_{\theta_0}$-probability
as $\epsilon,\Delta \rightarrow 0$.\\
To study $A_n^i$, we write, using the notations of Lemma \ref{BkX},
\begin{align}
B_k(\alpha_0,X) = \epsilon\sqrt{\Delta} T_k(\theta_0)&+\epsilon^2 (R(\theta_0,t_k)- R(\theta_0,t_{k-1}))\label{BkRk}\\
& +\epsilon^2(I_p- \Phi(\alpha_0, t_k,t_{k-1})) R(\theta_0,t_{k-1}) .\nonumber
\end{align}
Hence, $A_n^i= D^i_{n}+ C^{i}_{n}+C^{\prime,i} _{n}$ where, using \eqref{Hik},
\begin{equation} \label{Dni}
D_n ^i=2 \sqrt{\Delta} \sum_{k=1}^n  (H^i_k(\alpha_0,\beta))^*  T_k(\theta_0),
\end{equation}
\[C^{i}_{n}= 2 \epsilon \sum_{k=1}^n  (H^i_k(\alpha_0,\beta))^*(R(\theta_0,t_k)- R(\theta_0,t_{k-1}))\mbox{ and}\]
\[C^{\prime,i} _{n}=2 \epsilon \Delta \sum_{k=1}^n  (H^i_k(\alpha_0,\beta))^* \frac{1}{\Delta} (I_p- \Phi(\alpha_0,t_k,t_{k-1})) R(\theta_0,t_{k-1}).\]
Let us first study $C^{\prime,i} _{n}$.
Noting that
\[\frac{1}{\Delta} (I_p- \Phi(\alpha_0,t_k,t_{k-1}))= \nabla_z b( \alpha_0, z(\alpha_0,t_{k-1})) + \Delta O(1),\]
we have
$|C^{\prime,i} _{n}| \leq \epsilon n C(\theta_0) $, with $C(\theta_0) $ bounded in probability.

To study  $C^{i}_{n}$, we first apply an Abel transform to the sequence and get
\[C^{i}_{n}=  2\epsilon \sum_{k=1}^n  (H^i_{k-1}(\alpha_0,\beta)- H^i_{k}(\alpha_0,\beta)) ^* R(\theta_0,t_{k-1})+ \epsilon H^n_{k}(\alpha_0,\beta)^* R(\theta_0,t_{n}).\]
The continuity assumptions ensure that $\sup_{k \leq n} \frac{1}{\Delta} \norm {H^i_{k-1}(\alpha_0,\beta)- H^i_{k}(\alpha_0,\beta))} $ is bounded. Hence $C^i_n \rightarrow 0$ since $\norm{R (\theta_0,t_k)}$ is uniformly bounded.

It remains to study the main term  $D_{n}= (D_{n}^i)_{1\leq i \leq a}$ defined in \eqref{Dni}. Let
$$H_k(\alpha_0,\beta)= \Sigma_{k-1}^{-1}(\beta)\nabla_{\alpha} b(\alpha_0,t_{k-1},z(\alpha_0, t_{k-1})).$$
Then $(D_n)$ is a multidimensional triangular array which reads as
$D_n=  \sum_{k=1}^n \zeta_k^n $ with
$\zeta_k^n =   \sqrt{\Delta} H_k(\alpha_0,\beta)^*  T_k(\theta_0) \in \R^a$.\\
Note that $D_n$ does not depend on $\epsilon$  and convergence results are obtained for $\Delta_n\rightarrow 0.$
To apply   to $(D_n)$ a theorem of convergence in law for triangular arrays (Theorem \ref{CLTTT} in the Appendix  or \cite{jac12IV} Theorem 2.2.14), we have to prove  that,
\begin{enumerate}
\item[(i)] $\sum_{k=1}^n  \E_{\theta_0}(\zeta_k^{n}|\mathcal{ G}_{k-1}^n)=0$,

\item[(ii)] $\sum_{k=1}^n  \E_{\theta_0}(\zeta_k^{n,i}\zeta_k^{n,j} |\mathcal{ G}_{k-1}^n) \rightarrow  J_{ij}( \theta_0, \beta) $ (see Definition  \ref{Jbeta} below),
\item[(iii)] $\sum_{k=1}^n  \E_{\theta_0}( (\zeta_k^{n,i})^4 |\mathcal{ G}_{k-1}^n)  \rightarrow 0$.
\end{enumerate}
Since $T_k(\alpha_0)$ is centered,  (i) is clearly satisfied.
For (ii), consider for $ 1 \leq i, j \leq a$,
\begin{align*}
\E_{\theta_0}( \zeta_k^{n,i}  \zeta_k^{n,j}|\mathcal{ G}_{k-1}^n)&= \Delta H_k^i(\alpha_0,\beta)^* \E_{\theta_0}(T_k(\theta_0)T_k^* (\theta_0)) H_k^j(\alpha_0,\beta)\\
&= \Delta H_k^i(\alpha_0,\beta)^* S_k(\alpha_0,\beta_0) H_k^j(\alpha_0,\beta).
\end{align*}
Noting that   $\norm{S_k(\theta_0)-\Sigma( \beta_0,t_{k-1},z(\alpha_0,t_{k-1} ) } \leq   C \Delta $ yields, using Definition \ref{defXi},
\begin{align*}
&\E_{\theta_0}(\zeta_k^{n,i}\zeta_k^{n,j} |\mathcal{ G}_{k-1}^n)\\
& =\Delta (\nabla_{\alpha_i} b(\alpha_0, t_{k-1},z(\alpha_0,t_{k-1})))^*
\Xi( \theta_0,\beta, t_{k-1})\nabla_{\alpha_j} b(\alpha_0, t_{k-1},z(\alpha_0,t_{k-1}))\\
&\quad +\Delta^2 O(1).
\end{align*}
Therefore,  as a Riemann sum,
\begin{align*}
&\sum_{k=1}^n  \E_{\theta_0}(\zeta_k^{n,i}\zeta_k^{n,j} |\mathcal{ G}_{k-1}^n)\\
&\quad \rightarrow  \int_0^T (\nabla_{\alpha_i} b(\alpha_0, t, z(\alpha_0,t)))^*\Xi( \theta_0,\beta, t)\nabla_{\alpha_j} b(\alpha_0, t,z(\alpha_0,t)) dt.
\end{align*}
Checking (iii) is  easily obtained  since
$\E_{\theta_0}( (\zeta_k^{n,i})^4 |\mathcal{ G}_{k-1}^n) \leq \Delta^2 \sup _{k,\beta} \norm{ H_k(\alpha_0,\beta)}$.\\
Joining these results achieves the proof of Lemma \ref{Convcki}.
\end{proof}


Using  (\ref{Expansionalpha}) and \ref{Neps}, it remains to study
the term
\[\epsilon^2 \nabla^2_{\alpha} \check{U}_{\epsilon,\Delta}
(\alpha_0+ t (\check{\alpha}_{\epsilon,\Delta}-\alpha_0),\check{\beta}_{\epsilon,\Delta})\]
We have
$\epsilon^2 \nabla^2_{\alpha_i \alpha_j}  \check{U}_{\epsilon,\Delta}(\alpha,\beta) =\sum_{l=1}^4 A^{ij}_l  \quad \mbox{with}$
 \begin{eqnarray*}
A^{ij}_1&=& \frac{2}{\Delta} \sum_{k=1}^n (\nabla_{\alpha_i} B_k(\alpha_0))^{*} \Sigma^{-1}_{k-1}(\beta) \nabla_{\alpha_j}B_k(\alpha_0),\\
A^{ij}_2&=& \frac{2}{\Delta} \sum_{k=1}^n (\nabla_{\alpha_i} B_k(\alpha)-\nabla_{\alpha_i} B_k(\alpha_0))^{*} \Sigma^{-1}_{k-1}(\beta)
(\nabla_{\alpha_j}B_k(\alpha)+\nabla_{\alpha_j}B_k(\alpha_0)),\\
A^{ij}_3&=& 2   \sum_{k=1}^n
\frac{1}{\Delta} (\nabla^2_{\alpha_i \alpha_j} B_k(\alpha))^* \Sigma^{-1}_{k-1} (\beta) B_k(\alpha_0),\\
A^{ij}_4&=& 2 \Delta \sum_{k=1}^n
\frac{1}{\Delta} (\nabla^2_{\alpha_i \alpha_j} B_k(\alpha,X))^* \Sigma^{-1}_{k-1}(\beta) \frac{1}{\Delta} (B_k(\alpha,X) - B_k(\alpha_0,X)).
\end{eqnarray*}
 By Lemmas \ref{BGamma},   \ref{dNkdb} and \ref{BXa},  $A^{ij}_2$  and $A^{ij}_4$ satisfy   $\norm{A^{ij}_l} \leq CT \norm{\alpha-\alpha_0}$.
 Lemma  \ref{dNkdb} (ii)  and Lemma \ref{BXa0} (ii) yield that  $A_3^{ij} \rightarrow 0.$

The main term  $A^{ij}_1$ satisfies, by Lemma \ref{dNkdb} (i),
\begin{align*}
A^{ij}_1 =&\, 2 \Delta \sum_{k=1}^n ( \nabla_{\alpha_i} b(\alpha_0,t_{k-1},z(\alpha_0,t_{k-1})))^*\Sigma_{k-1}^{-1}(\beta) \nabla_{\alpha_j} b(\alpha_0,t_{k-1}, z(\alpha_0,t_{k-1}))\\
& +  \epsilon O_P(1).
\end{align*}
Theorem \ref{TStheta} yields that, under $\P_{\theta_0}$, $\Sigma_{k-1}^{-1}(\beta)= \Sigma^{-1}(\beta, t, z(\alpha_0,t)) + \epsilon O_P(1)$.  Therefore, as a Riemann sum, we get, using \eqref{Ibtheta}, that
$A^{ij}_1\rightarrow (I_b(\alpha_0,\beta))_{ij} $ in $\P_{\theta_0}$-probability as $\epsilon,\Delta \rightarrow  0$.
Joining these results, we get  that, under $\P_{\theta_0} $, as $\epsilon,\Delta \rightarrow 0$, for all $\beta$,
$\epsilon^2 \nabla^2_{\alpha} \check{U}_{\epsilon,\Delta} (\alpha_0,\beta) \rightarrow 2 I_b(\alpha_0,\beta)$.
Using now the consistency of $\check{\alpha}_{\epsilon,\Delta}$ yields that
 \begin{equation} \label{borneD2U}
 \sup _{\beta \in K_b} \norm{\epsilon^2 \nabla^2_{\alpha} \check{U}_{\epsilon,\Delta} (\alpha_0+ t (\check{\alpha}_{\epsilon,\Delta} -\alpha_0),\beta)-\epsilon^2 \nabla^2_{\alpha} \check{U}_{\epsilon,\Delta} (\alpha_0,\beta)} \leq K \norm{\check{\alpha}_{\epsilon,\Delta} -\alpha_0)}.
 \end{equation}
Coming back to (\ref{Expansionalpha}),  it remains to prove that $N_{\epsilon,\Delta}(\check{\alpha}_{\epsilon,\Delta},\beta)$ is non-singular. Under (S3), $\Sigma(\beta,t,z)$ is  non-singular. Hence,
\smallskip

\resizebox{0.97\linewidth}{!}{
  \begin{minipage}{\linewidth}
\begin{align*}
\inf_{\beta \in K_b} &\det& \left(\left [\int_0^T \nabla_{\alpha_i} b(\alpha_0,t, z(\alpha_0,t))^* \Sigma^{-1}(\beta, t, z(\alpha_0,t)) \nabla_{\alpha_j} b(\alpha_0,t, z(\alpha_0,t) dt \right]_{1\leq i,j\leq a}\right)\\
& \geq & c \det \left(\left[\int_0^T \nabla_{\alpha_i} b(\alpha_0,t, z(\alpha_0,t))^* \nabla_{\alpha_j} b(\alpha_0,t, z(\alpha_0,t) dt \right]_{1\leq i,j\leq a}\right) >0.
\end{align*}
\end{minipage}
}
\smallskip

\noindent Now, the consistency of $\check{\alpha}_{\epsilon,\Delta}$  implies that, using (\ref{Neps}), $\P_{\theta_0}^{\epsilon}( \det( \epsilon^2 N_{\epsilon,\Delta}(\check{\alpha},\beta) )>0)$ tends to 1. Therefore (\ref{Expansionalpha}) yields
\[\epsilon^{-1}(\check{\alpha}_{\epsilon,\Delta}-\alpha_0)= -(\epsilon^2 N^{-1}_{\epsilon,\Delta}(\check{\alpha}_{\epsilon,\Delta},\check{\beta}_{\epsilon,\Delta}) (\epsilon \nabla_{\alpha} \check{U}_{\epsilon,\Delta} (\alpha_0, \check{\beta}_{\epsilon,\Delta}))\] is tight.
\end{proof}

\subsubsection{Step (3): consistency of  $\check{\beta}_{\epsilon,\Delta}$}
 Let us now study the estimation for  the diffusion parameter. Set
\begin{equation}\label{def:K2}
\begin{array}{lll}
K_2(\alpha_0,\beta_0,\beta)&=&\frac{1}{T} \int_{0}^{T} \mathrm{Tr} \left( \Sigma^{-1}(\beta,t,z(\alpha_0, t))\Sigma (\beta_0,t, z(\alpha_0,t))\right) dt \\
&-& \frac{1}{T} \int_{0}^{T} \log \det \left(\Sigma^{-1}(\beta, t, z(\alpha_0,t)) \Sigma(\beta_0,t, z(\alpha_0,t))\right) \; dt   \;- p \\
\end{array}
\end{equation}
Using  the following inequality for invertible symmetric $p\times p$ matrices $A$,
$\log \det A+ p\leq \mathrm{Tr}(A)$,
$ K_2(\alpha_0,\beta_0,\beta) \geq 0$  and  $ K_2(\alpha_0,\beta_0,\beta) =0 $ if and only if
\[\{ \forall t \in[0,T], \Sigma(\beta_0,t, z(\alpha_0,t))=\Sigma (\beta,t,z(\alpha_0,t)),\]
which implies $\beta=\beta_0$ by (S7).

\begin{proposition}\label{CV-beta}
 Assume \textbf{(S1)--(S7)}. Then, if $I_b(\alpha_0,\beta_0)$ defined in \eqref{Ibtheta} is non-singular, the following holds in $\P_{\theta_0}$-probability, using \eqref{Ucheck},
  \eqref{def:estimateurs_chech} and \eqref{def:K2}
 \begin{enumerate}
\item[(i)]  $\sup_{\beta\in K_b}|\frac{1}{n}\left(\check{U}_{\Delta,\epsilon}(\check{\alpha}_{\epsilon,\Delta},\beta) -\check{U}_{\Delta,\epsilon}(\check{\alpha}_{\epsilon,\Delta},\beta_0)\right) -K_2(\alpha_0,\beta_0,\beta)| \rightarrow 0$ as $\epsilon,\Delta \rightarrow 0.$\\
\item[(ii)] $\check{\beta}_{\epsilon,\Delta} \rightarrow \beta_0 $ as $\epsilon,\Delta \rightarrow 0.$
\end{enumerate}
\end{proposition}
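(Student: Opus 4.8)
The plan is to follow exactly the pattern of the consistency proof for $\check\alpha_{\epsilon,\Delta}$ in Proposition \ref{CV-contraste}: first establish the uniform convergence in (i) of the normalized contrast increment towards $K_2(\alpha_0,\beta_0,\cdot)$, and then deduce (ii) from the identifiability built into $K_2$ by Assumption (S7). Two preliminary simplifications organize the work. On one hand, by \eqref{Sigkbeta} the determinant term $\sum_k\log\det\Sigma_{k-1}(\beta)$ of $\check U_{\epsilon,\Delta}$ depends on the data only through the $X(t_{k-1})$ and not on $\alpha$, so it is unchanged when $\check\alpha_{\epsilon,\Delta}$ is replaced by $\alpha_0$; the whole $\alpha$-dependence of the contrast increment is confined to the quadratic term through $B_k(\alpha,X)$. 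On the other hand, I intend to replace $\check\alpha_{\epsilon,\Delta}$ by $\alpha_0$ there using the tightness of $\epsilon^{-1}(\check\alpha_{\epsilon,\Delta}-\alpha_0)$ obtained in Step (2) (Proposition \ref{borneproba}), which is precisely where the non-singularity of $I_b(\alpha_0,\beta_0)$ enters.

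For the $\alpha$-replacement I would write $B_k(\check\alpha_{\epsilon,\Delta},X)=B_k(\alpha_0,X)+d_k$ and expand the quadratic form, producing a cross term $2\,d_k^{*}A_{k-1}B_k(\alpha_0,X)$ and a pure term $d_k^{*}A_{k-1}d_k$, with $A_{k-1}=\Sigma_{k-1}^{-1}(\beta)-\Sigma_{k-1}^{-1}(\beta_0)$ uniformly bounded by (S3), (S5). Lemma \ref{BXa} gives $\Delta^{-1}d_k=\Gamma(\alpha_0,\check\alpha_{\epsilon,\Delta},t_{k-1})+\|\check\alpha_{\epsilon,\Delta}-\alpha_0\|(\Delta\,O(1)+\epsilon\,O_P(1))$; since $\Gamma(\alpha_0,\cdot,t)$ vanishes to second order at $\alpha_0$ (cf.\ \eqref{def:Gamma}), the tightness $\check\alpha_{\epsilon,\Delta}-\alpha_0=O_P(\epsilon)$ from Step (2) yields $\sup_k\|d_k\|=\Delta\epsilon(\epsilon+\Delta)O_P(1)$. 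Combining this with the moment bound $\E_{\theta_0}\|B_k(\alpha_0,X)\|^2=O(\epsilon^2\Delta)$ (Lemma \ref{BkX}) and Cauchy--Schwarz shows that, after the factor $1/(n\epsilon^2\Delta)$, both terms are $o_P(1)$ uniformly in $\beta\in K_b$. It therefore suffices to analyse $\tfrac1n(\check U_{\epsilon,\Delta}(\alpha_0,\beta)-\check U_{\epsilon,\Delta}(\alpha_0,\beta_0))$.

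The determinant part is a Riemann sum: by Proposition \ref{TStheta}, $X(t_{k-1})\to z(\alpha_0,t_{k-1})$, so continuity of $\Sigma$ gives $\tfrac1n\sum_k\log\frac{\det\Sigma_{k-1}(\beta)}{\det\Sigma_{k-1}(\beta_0)}\to-\tfrac1T\int_0^T\log\det\big(\Sigma^{-1}(\beta,t,z(\alpha_0,t))\Sigma(\beta_0,t,z(\alpha_0,t))\big)\,dt$. For the quadratic part, Lemma \ref{BkX} reduces it, up to remainders controlled by $\sup_k\E_{\theta_0}\|D^\epsilon_k\|^2\le C\Delta$ via Cauchy--Schwarz, to $\tfrac1n\sum_k T_k(\theta_0)^{*}A_{k-1}T_k(\theta_0)$. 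Here $A_{k-1}$ is $\mathcal{G}_{k-1}^n$-measurable and the $T_k(\theta_0)$ are independent of $\mathcal{G}_{k-1}^n$ with covariance $S_k(\theta_0)$ in \eqref{SkTS}, so the conditional mean is $\mathrm{Tr}(A_{k-1}S_k(\theta_0))$ while the conditional variance of the Gaussian quadratic form equals $2\,\mathrm{Tr}(A_{k-1}S_kA_{k-1}S_k)=O(1)$; hence the sum of squared centred increments is $O(1/n)$ and the fluctuation is negligible by the triangular-array law of large numbers (Lemma \ref{VGCJJ}). Replacing $S_k(\theta_0)$ by $\Sigma_{k-1}(\beta_0)$ (Lemma \ref{SigmaS}) and then $\Sigma_{k-1}$ by $\Sigma(\cdot,t_{k-1},z(\alpha_0,t_{k-1}))$ turns the remaining sum into the Riemann sum converging to $\tfrac1T\int_0^T\big(\mathrm{Tr}(\Sigma^{-1}(\beta)\Sigma(\beta_0))-p\big)\,dt$. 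Adding the two contributions gives exactly $K_2(\alpha_0,\beta_0,\beta)$ of \eqref{def:K2}, the convergence being uniform in $\beta$ by compactness of $K_b$ and (S5). I expect this last triangular-array step for the quadratic form, together with making the successive matrix replacements uniform in $\beta$, to be the main obstacle.

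Finally, (ii) follows from (i) by the standard contrast argument. Since $\check\beta_{\epsilon,\Delta}$ minimizes $\check U_{\epsilon,\Delta}(\check\alpha_{\epsilon,\Delta},\cdot)$ and $K_2(\alpha_0,\beta_0,\beta_0)=0$, writing
\[
0\le K_2(\alpha_0,\beta_0,\check\beta_{\epsilon,\Delta})=\Big[K_2(\alpha_0,\beta_0,\check\beta_{\epsilon,\Delta})-\tfrac1n\big(\check U_{\epsilon,\Delta}(\check\alpha_{\epsilon,\Delta},\check\beta_{\epsilon,\Delta})-\check U_{\epsilon,\Delta}(\check\alpha_{\epsilon,\Delta},\beta_0)\big)\Big]+\tfrac1n\big(\check U_{\epsilon,\Delta}(\check\alpha_{\epsilon,\Delta},\check\beta_{\epsilon,\Delta})-\check U_{\epsilon,\Delta}(\check\alpha_{\epsilon,\Delta},\beta_0)\big),
\]
and bounding the bracket by $\sup_{\beta\in K_b}|\cdots|\to0$ from (i) while the second summand is $\le0$ by minimality, gives $K_2(\alpha_0,\beta_0,\check\beta_{\epsilon,\Delta})\to0$ in $\P_{\theta_0}$-probability. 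As $K_2(\alpha_0,\beta_0,\cdot)$ is continuous, non-negative and, by (S7), vanishes only at $\beta_0$ on the compact set $K_b$, this forces $\check\beta_{\epsilon,\Delta}\to\beta_0$.
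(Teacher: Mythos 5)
Your proposal follows essentially the same route as the paper's proof: the same splitting of $\tfrac1n(\check U_{\epsilon,\Delta}(\check\alpha_{\epsilon,\Delta},\beta)-\check U_{\epsilon,\Delta}(\check\alpha_{\epsilon,\Delta},\beta_0))$ into a $\log\det$ Riemann sum plus a quadratic part; the same use of the tightness of $\epsilon^{-1}(\check\alpha_{\epsilon,\Delta}-\alpha_0)$ from Proposition \ref{borneproba} to kill the cross and pure $\alpha$-replacement terms (the paper's $\Lambda_3$ and $\Lambda_4$); the same conditional Gaussian quadratic-form computation giving $\mathrm{Tr}\bigl(\Sigma_{k-1}^{-1}(\beta)\Sigma_{k-1}(\beta_0)\bigr)-p$ in the mean plus a negligible fluctuation via Lemma \ref{VGCJJ}; and the identical contrast argument for (ii) (where, incidentally, your reading --- the minimality term is non-positive --- is the correct one; the paper's ``non-negative'' is a slip). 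The only substantive variants are cosmetic: you represent $B_k(\alpha_0,X)$ via $T_k(\theta_0)$ (Lemma \ref{BkX}, then Lemma \ref{SigmaS}) where the paper uses the representation $\epsilon\sigma_{k-1}(\beta_0)(B(t_k)-B(t_{k-1}))+E_k$ of Lemma \ref{BXa0}, and you control the remainders by pathwise sup bounds and Cauchy--Schwarz where the paper uses conditional-moment bounds; both are sound.

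One claim in your argument is wrong, although harmlessly so: $\Gamma(\alpha_0,\cdot,t)$ does \emph{not} vanish to second order at $\alpha_0$. Differentiating \eqref{def:Gamma} in $\alpha$ and evaluating at $\alpha_0$, the terms involving $\nabla_z b$ cancel and one is left with $\nabla_\alpha\Gamma(\alpha_0,\alpha,t)\big|_{\alpha=\alpha_0}=-\nabla_\alpha b(\alpha_0,t,z(\alpha_0,t))$, which cannot vanish identically --- otherwise $I_b(\alpha_0,\beta_0)$ in \eqref{Ibtheta} would be singular, contradicting the very hypothesis of the proposition. Hence $\Gamma(\alpha_0,\check\alpha_{\epsilon,\Delta},t)=O_P(\epsilon)$ only, and your bound should read $\sup_k\Vert d_k\Vert=\epsilon\Delta\, O_P(1)$, not $\epsilon\Delta(\epsilon+\Delta)O_P(1)$. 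The conclusion survives with this weaker bound: after the factor $1/(n\epsilon^2\Delta)$, the cross term is $O_P(\sqrt{\Delta})$ and the pure term is $O_P(\Delta)$, both of which tend to $0$ uniformly over $\beta\in K_b$, so your reduction to $\alpha_0$ and the rest of the proof stand.
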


\begin{proof}
Let us first prove (i). Using \eqref{Ucheck} and \eqref{Sigkbeta},we get \\
$\frac{1}{n}\left(\check{U}_{\Delta,\epsilon}(\alpha,\beta) -\check{U}_{\Delta,\epsilon}(\alpha,\beta_0)\right)=A_1(\beta_0,\beta)+A_2(\alpha,\beta_0,\beta)$ with\\
\begin{equation}\label{A1}
A_1(\beta_0,\beta)=\frac{1}{n}\sum_{k=1}^{n}log \det(\Sigma_{k-1}(\beta) \Sigma_ {k-1}^{-1}(\beta_0)),
\end{equation}
\begin{equation}\label{A2}
A_2(\alpha,\beta_0,\beta)=\frac{1}{n \Delta \epsilon^2}\sum_{k=1}^{n}B_k(\alpha,X)^{*}
(\Sigma_{k-1}^{-1}(\beta)-\Sigma_{k-1}^{-1}(\beta_0)) B_k(\alpha,X).
\end{equation}
Using that, under \textbf{(S5)}, $z \rightarrow \log\left(\det\left[\Sigma(\beta,t,z )\Sigma^{-1}(\beta_0,t,z)\right]\right)$ is differentiable,
an application of Proposition \ref{TStheta}  yields that, under  $\P_{\theta_0}$,
\smallskip

\resizebox{0.97\linewidth}{!}{
  \begin{minipage}{\linewidth}
\begin{align*}
&A_1(\beta_0,\beta)\\
&=\frac{\Delta}{T} \left( \sum_{k=1}^{n}\log\left(\det\left[\Sigma(\beta, t_{k-1}, z(\alpha_0, t_{k-1}))\Sigma^{-1}(\beta_0,t_{k-1},z(\alpha_0, t_{k-1}))\right]\right)+\epsilon R^{1,\epsilon}_{\theta_0,\beta}(t_{k-1})\right),
\end{align*}
\end{minipage}
}
\smallskip

\noindent with $\norm{R^{1,\epsilon}_{\alpha_0,\beta,\beta_0}}$ uniformly bounded in  probability.
Hence, $A_1(\beta_0,\beta)$, as a Riemann sum, converges to $ \frac{1}{T} \int_{0}^{T}\log\left(\det\left[\Sigma(\beta,t, z(\alpha_0,t))\Sigma^{-1}(\beta_0,t,z(\alpha_0, t))\right]\right)dt\;$ as $\epsilon,\Delta \rightarrow 0$.\\
Applying Lemma \ref{BXa0} to $B_k(\alpha_0,X)$  and the notations therein yields
\begin{equation}\label{A2b}
 A_2(\theta_0,\theta) = \frac{\Delta }{T} \sum_{k=1}^{n} Z_k^{*} M_k Z_k+\sum_{i=1}^4\Lambda^i (\theta_0,\theta),
 \end{equation}
 with
\[Z_k=\frac{1}{\sqrt{\Delta}}\left(B(t_k)-B(t_{k-1})\right),\,   T_k= \Sigma_{k-1}^{-1}(\beta) -\Sigma_{k-1}^{-1}(\beta_0),\,
 M_k= \sigma_{k-1}^{*}(\beta_0)T_k\; \sigma_{k-1}(\beta_0),\]
 and
\begin{align*}
\Lambda_1(\alpha, \theta_0) &= \frac{2  \sqrt{\Delta}}{\epsilon} \sum_{k=1}^{n} E_k^{*} T_k\; Z_k,\\
\Lambda_2(\alpha, \theta_0) &= \frac{1}{T\epsilon^2} \sum_{k=1}^{n} E_k^{*} E_k,\\
\Lambda_3(\alpha, \theta_0) &= \frac{2}{T\epsilon^2}  \sum_{k=1}^{n} (B_k^*(\alpha,X) -B_k^*(\alpha_0,X))T_k \; B_k(\alpha_0,X),\mbox{ and} \\
\Lambda_4(\alpha, \theta_0) &= \frac{1}{T\epsilon^2} \sum_{k=1}^{n} (B_k^*(\alpha,X) -B_k^*(\alpha_0,X)) T_k\;  (B_k(\alpha,X)- B_k(\alpha_0,X)).
\end{align*}
 The random vectors $Z_k$ are $\mathcal{N}\left(0,I_p\right)$ independent of $\mathcal{ G}_{k-1}^n$ and $M_k$ is $\mathcal{ G}_{k-1}^n$-measurable. Using that for $Z \sim \mathcal{N}\left(0,I_p\right)$,
$E(Z^{*}M Z)=\mathrm{Tr}(M)$, we get
$$\E_{\theta_0}^{\epsilon}( Z_k^{*} \;M_k\;  Z_k| \mathcal{ G}_{k-1}^n)= \mathrm{Tr}(M_k)=\mathrm{Tr}\left(\Sigma_{k-1}^{-1}(\beta)\Sigma_{k-1}(\beta_0)-I_p\right).$$
Hence, the first term of $A_2(\alpha_0,\beta_0,\beta)$ converges to
\[\frac{1}{T} \int_{0}^{T}\mathrm{Tr}\left(\Sigma^{-1}(\beta,t,z(\alpha_0, t))\Sigma(\beta_0,t,z(\alpha_0, t))\right)dt -p.\]

It remains to study the other terms of $A_2(\alpha_0,\beta_0,\beta)$.
 To study $\Lambda_1$, let  $\zeta_{k}^{n}= \frac{\sqrt{\Delta}}{\epsilon} E_k^{*} T_k \;Z_k$.

We have, by Lemma   \ref{BXa0} that, in $\P_{\theta_0} $-probability,
\begin{align*}
\E(\zeta_{k}^{n}|\mathcal{ G}_{k-1}^n)&\leq  \frac{\sqrt{\Delta}}{\epsilon} \sup\norm{T_k }(\E(\norm{E_k}^2|\mathcal{ G}_{k-1}^n))^{1/2}\leq   C \Delta^{3/2},\; \mbox{ and}\\
\E((\zeta_{k}^{n})^2 |\mathcal{ G}_{k-1}^n) &\leq   \frac{\Delta}{\epsilon^2} \sup \norm{T_k }^2 \Delta^2 \epsilon^2\leq C \Delta ^3.
\end{align*}
 Therefore, by Lemma \ref{VGCJJ},  $\Lambda_1(\alpha, \theta_0) \rightarrow 0 $ in $\P_{\theta_0}$-probability as $\epsilon,\Delta \rightarrow 0$.
 Similar arguments yield that  $\Lambda_2(\alpha, \theta_0) \rightarrow 0 $ in $\P_{\theta_0}$-probability.\\
For $\Lambda_3(\alpha, \theta_0)$,  set $\displaystyle{\zeta_{k}^{n}= \frac{1}{\epsilon^2} (B_k^*(\alpha,X) -B_k^*(\alpha_0,X))T_k \; B_k(\alpha_0,X)}$
Using Lemma \ref{BXa} yields that
\begin{align*}
E(\zeta_{k}^{n}|\mathcal{ G}_{k-1}^n) &\leq  \frac{\norm{\alpha-\alpha_0)}}{\epsilon} \Delta^2  O_P(1),\;\mbox{ and}\\
\E((\zeta_k^n)^2|\mathcal{ G}_{k-1}^n) &\leq  \frac{\norm{\alpha-\alpha_0}^2}{\epsilon^2} \Delta^3 O_P(1),
\end{align*}
so that
$\displaystyle{\sum \E(\zeta_k^n|\mathcal{ G}_{k-1}^n) \leq  \Delta \frac{\norm{\check{\alpha}_{\epsilon,\Delta}-\alpha_0)}}{\epsilon}}$.
By Proposition \ref{borneproba}, the sequence  $ (\epsilon^{-1}\norm{\check{\alpha}_{\epsilon,\Delta}-\alpha_0})$ is uniformly bounded in probability, so that
$\displaystyle{\sum \E(\zeta_k^n|\mathcal{ G}_{k-1}^n)  \rightarrow 0}$ and $\displaystyle{\sum \E((\zeta_k^n)^2|\mathcal{ G}_{k-1}^n) \rightarrow 0}$.\\

 Another application of Lemma \ref{VGCJJ} yields that $ \displaystyle{\Lambda_3(\check{\alpha}_{\epsilon,\Delta}, \theta_0)  \rightarrow 0}$. For $\Lambda_4$, the result is straightforward  since
 $\displaystyle{|\Lambda_4| \leq n\Delta^2 (\frac{\norm{\check{\alpha}_{\epsilon,\Delta}-\alpha_0}}{\epsilon}) ^2}$. This achieves the proof of (i).\\
Let us study (ii). We have,  using \eqref{def:K2},
\begin{eqnarray*}
0&\leq&K_2(\alpha_0,\beta_0,\check{\beta}_{\epsilon,\Delta}) \leq [K_2 (\alpha_0,\beta_0,\check{\beta}_{\epsilon,\Delta})
-\frac{1}{n}(\check{U}_{\Delta,\epsilon}(\check{\alpha}_{\epsilon,\Delta} ,\check{\beta}_{\epsilon,\Delta}) -\check{U}_{\Delta,\epsilon}(\check{\alpha}_{\epsilon,\Delta},\beta_0))] \\
&+& \frac{1}{n}(\check{U}_{\Delta,\epsilon}(\check{\alpha}_{\epsilon,\Delta} ,\check{\beta}_{\epsilon,\Delta}) -\check{U}_{\Delta,\epsilon}(\check{\alpha}_{\epsilon,\Delta},\beta_0)).
\end{eqnarray*}
Noting that the last term of the above  inequality is non-negative, (i) yields that $K_2(\alpha_0,\beta_0,\check{\beta}_{\epsilon,\Delta})\rightarrow 0$,
which ensures, by Assumption (S5), that $\check{\beta}_{\epsilon,\Delta} \rightarrow \beta_0$ in $\P_{\theta_0}$-probability.
\end{proof}

\subsubsection{Step (4): Asymptotic normality}
 Let us now study the asymptotic properties of these estimators and achieve the proof of Theorem \ref{theo:normalite}.
Let us define for $\theta=(\alpha,\beta)$,
\begin{equation} \label{Lepsn}
\Lambda_{\epsilon,n}(\theta)= \begin{pmatrix}\epsilon \nabla_{\alpha} \check{U}_{\epsilon,\Delta}(\alpha,\beta)\\
\frac{1}{\sqrt{n}} \nabla_{\beta} \check{U}_{\epsilon,\Delta}(\alpha,\beta) \end{pmatrix} \quad \mbox{and}
\end{equation}
\begin{equation}\label{Depsn}
D_{\epsilon,n}(\theta)= \begin{pmatrix}\epsilon^2 \left(\nabla^2_{\alpha_i,\alpha_j} \check{U}_{\epsilon,\Delta}(\theta)\right)_{1\leq i,j\leq a} &
\frac{\epsilon}{\sqrt{n}}\left( \nabla^2_{\alpha_i\beta_j}\check{U}_{\epsilon,\Delta} (\theta)\right)_{1\leq i\leq a,1\leq j\leq b}\\
\frac{\epsilon}{\sqrt{n}}\left( \nabla^2_{\alpha_i\beta_j}\check{U}_{\epsilon,\Delta} (\theta)\right)_{1\leq i\leq a,1\leq j\leq b}&
\frac{1}{n}\left( \nabla^2_{\beta_i\beta_j}\check{U}_{\epsilon,\Delta} (\theta)\right)_{1\leq i,j\leq b}
\end{pmatrix}.
\end{equation}
A Taylor expansion at point $\theta_0$ yields,
\begin{align}
\begin{pmatrix} 0\\0
\end{pmatrix} &= \Lambda_{\epsilon,n} (\check{\alpha}_{\epsilon,\Delta}, \check{\beta}_{\epsilon,\Delta})\label{dUadUb}\\
&=    \Lambda_{\epsilon,n} (\theta_0)+ \int_0^1 D_{\epsilon,n}(\theta_0+t(\check{\theta}_{\epsilon,\Delta}- \theta_0)) dt \; \begin{pmatrix}
\epsilon^{-1}(\check{\alpha}_{\epsilon,\Delta}-\alpha_0) \\
\sqrt{n}(\check{\beta}_{\epsilon,\Delta}-\beta_0)
\end{pmatrix}.\nonumber
\end{align}
Therefore, we have to prove that,  under $\P_{\theta_0}$, as $\epsilon,\Delta \rightarrow 0$ (or  $n=  \Delta^{-1/2} \rightarrow \infty $),
\begin{enumerate}
\item[(i)] $-  \Lambda_{\epsilon,n} (\theta_0) \rightarrow  \mathcal{ N}( 0, 4 I(\theta_0))$ in distribution,
\item[(ii)] $ \sup_{t \in[0,1]} \norm{D_{\epsilon,n}(\theta_0+t(\check{\theta}_{\epsilon,\Delta}- \theta_0))- 2 I(\theta_0)} \rightarrow 0 $ in probability.
\end{enumerate}

\begin{proof}
Let us prove (i).
We have that, for $1\leq i \leq a$,
\begin{equation}\label{dUa}
 - \epsilon \nabla_{\alpha_i} \check{U}_{\epsilon,\Delta}(\alpha_0,\beta_0)= \sum_{k=1}^n \xi_k^i(\theta_0)\; \mbox{ with }\;
\xi_k^i(\theta_0)= -\frac{2}{\epsilon \Delta}B_k^{*}(\alpha) \Sigma ^{-1}_{k-1}(\beta_0) \nabla_{\alpha_i} B_k(\alpha_0).
 \end{equation}
Using that, for a positive symmetric matrix $M(x)$,
\[\frac{d}{dx} (\log \det M(x))= \mathrm{Tr}\left(M^{-1}(x) \frac{d}{dx} M(x)\right)\]
 and \eqref{Sigkbeta},  set
 \begin{equation}\label{Mkj}
 M_k^j(\beta)=\Sigma_k^{-1}(\beta) \nabla_{\beta_j}\Sigma_k (\beta).
\end{equation}
Then  $\frac{1}{\sqrt{n}}\nabla_{\beta_{j} }\check{U}_{\epsilon,\Delta}(\alpha_0,\beta_0)= \sum_{k=1}^n \eta_k^i(\theta_0)$ with
\begin{equation}\label{ekj}
 \eta_k^j(\theta_0)= \frac{1}{\sqrt{n}}[\mathrm{Tr}(M_{k-1}^j(\beta_0)) -\frac{1}{\epsilon^2\Delta} B_k^{*}(\alpha_0) M_{k-1}^j(\beta_0) \Sigma ^{-1}_{k-1}(\beta_0) B_k(\alpha_0,X)].
 \end{equation}

The proof that  $\displaystyle{-\epsilon\nabla_{\alpha} \check{U}_{\epsilon,\Delta}(\alpha_0,\beta_0)} $  converges to
the Gaussian distribution  $\mathcal{ N} (0,I_b(\theta_0))$  is obtained by substituting  $ \beta$ with $\beta_0$ in the proof of Proposition \ref{borneproba}.

Let us study
$\displaystyle{-\frac{1}{\sqrt{n}} \nabla_{\beta}\check{U}_{\epsilon,\Delta}(\alpha_0,\beta_0)}$. Let us first prove
\begin{lemma}\label{traceM}
If $M$ is a $\mathcal{ G}_{k-1}^n$-measurable random matrix, then
\begin{equation}\label{BkBk}
\frac{1}{\epsilon^2 \Delta}\E\left(B_k^*(\alpha_0)M \Sigma ^{-1}_{k-1}(\beta_0)  B_k(\alpha_0,X)|\mathcal{ G}_{k-1}^n\right)= \mathrm{Tr} (M)+\Delta R_k(\epsilon,\Delta)
 \end{equation}
 with $\sup_k |R_k(\epsilon,\Delta)| $ uniformly bounded in $\P_{\theta_0} $-probability.
 \end{lemma}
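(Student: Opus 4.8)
The plan is to reduce the claim to a single sharp estimate of the conditional second moment of $B_k(\alpha_0,X)$ and then to exploit the cyclic invariance of the trace. First I would set $A=M\,\Sigma_{k-1}^{-1}(\beta_0)$, which is $\mathcal{G}_{k-1}^n$-measurable by hypothesis on $M$ and by \eqref{Sigkbeta}. Because a $\mathcal{G}_{k-1}^n$-measurable matrix may be pulled out of the conditional expectation, writing the scalar as a trace, $B_k^*(\alpha_0)\,A\,B_k(\alpha_0,X)=\mathrm{Tr}\!\left(A\,B_k(\alpha_0,X)B_k^*(\alpha_0,X)\right)$, yields
\[
\E_{\theta_0}\!\left(B_k^*(\alpha_0)M\Sigma_{k-1}^{-1}(\beta_0)B_k(\alpha_0,X)\,\big|\,\mathcal{G}_{k-1}^n\right)
=\mathrm{Tr}\!\left(M\,\Sigma_{k-1}^{-1}(\beta_0)\,\E_{\theta_0}\!\left(B_k(\alpha_0,X)B_k^*(\alpha_0,X)\,\big|\,\mathcal{G}_{k-1}^n\right)\right).
\]
Thus the whole statement rests on a precise expansion of the conditional covariance matrix $\E_{\theta_0}(B_k(\alpha_0,X)B_k^*(\alpha_0,X)\mid\mathcal{G}_{k-1}^n)$.

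The second step is to establish
\[
\E_{\theta_0}\!\left(B_k(\alpha_0,X)B_k^*(\alpha_0,X)\,\big|\,\mathcal{G}_{k-1}^n\right)=\epsilon^2\Delta\,\Sigma_{k-1}(\beta_0)+\mathcal{R}_k,\qquad \|\mathcal{R}_k\|\le C\,\epsilon^2\Delta^2 .
\]
I would insert the decomposition of Lemma \ref{BXa0}(i), $B_k(\alpha_0,X)=\epsilon\,\sigma_{k-1}(\beta_0)\,(B(t_k)-B(t_{k-1}))+E_k$. Since $B(t_k)-B(t_{k-1})$ is $\mathcal N(0,\Delta I_p)$ and independent of $\mathcal{G}_{k-1}^n$, the leading term contributes $\epsilon^2\Delta\,\sigma_{k-1}(\beta_0)\sigma_{k-1}^*(\beta_0)=\epsilon^2\Delta\,\Sigma_{k-1}(\beta_0)$; the term $\E_{\theta_0}(E_kE_k^*\mid\mathcal{G}_{k-1}^n)$ is $O(\epsilon^2\Delta^2)$ directly from the moment bound in Lemma \ref{BXa0}(i). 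The delicate part is the two cross terms: a crude conditional Cauchy--Schwarz only gives $O(\epsilon^2\Delta^{3/2})$, which is not enough. Instead I would use that, conditionally on $\mathcal{G}_{k-1}^n$, the increment $B(t_k)-B(t_{k-1})$ is centered and, by Itô isometry, correlates with $E_k$ only through the conditional means of the drift and diffusion increments over $[t_{k-1},t_k]$, each of which is $O(\Delta)$ after averaging out the martingale part (as in the proof of Lemma \ref{BXa0}). This gives $\|\E_{\theta_0}(E_k\,(B(t_k)-B(t_{k-1}))^*\mid\mathcal{G}_{k-1}^n)\|=O(\epsilon\Delta^2)$, so that both cross terms are $O(\epsilon^2\Delta^2)$ and are absorbed into $\mathcal{R}_k$.

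Finally I would substitute this expansion into the trace identity. The principal part gives $\mathrm{Tr}(M\,\Sigma_{k-1}^{-1}(\beta_0)\,\epsilon^2\Delta\,\Sigma_{k-1}(\beta_0))=\epsilon^2\Delta\,\mathrm{Tr}(M)$, while the remainder satisfies $|\mathrm{Tr}(M\,\Sigma_{k-1}^{-1}(\beta_0)\,\mathcal{R}_k)|\le p\,\|M\Sigma_{k-1}^{-1}(\beta_0)\|\,\|\mathcal{R}_k\|\le C\,\epsilon^2\Delta^2$. Dividing by $\epsilon^2\Delta$ yields $\mathrm{Tr}(M)+\Delta R_k$ with $R_k=(\epsilon^2\Delta^2)^{-1}\mathrm{Tr}(M\,\Sigma_{k-1}^{-1}(\beta_0)\,\mathcal{R}_k)$, and $\sup_k|R_k|$ is bounded in $\P_{\theta_0}$-probability: Assumption (S3) with (S5) keeps $\|\Sigma_{k-1}^{-1}(\beta_0)\|$ controlled, and Proposition \ref{TStheta} keeps $X(t_{k-1})$ uniformly close to $z(\alpha_0,t_{k-1})$, so that $\sup_k\|\Sigma_{k-1}^{-1}(\beta_0)\|$ and $\sup_k\|M\|$ are bounded in probability (in the intended application $M=M_{k-1}^j(\beta_0)$ of \eqref{Mkj}). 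The main obstacle is the cross-term estimate in the second step, which is exactly where the martingale/Itô-isometry cancellation must be used rather than a plain norm bound.
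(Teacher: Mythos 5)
Your proposal follows the same route as the paper's own proof: write the quadratic form as a trace, pull the $\mathcal{G}_{k-1}^n$-measurable factor $M\Sigma_{k-1}^{-1}(\beta_0)$ out of the conditional expectation, insert the decomposition $B_k(\alpha_0,X)=\epsilon\,\sigma_{k-1}(\beta_0)\,(B(t_k)-B(t_{k-1}))+E_k$ of Lemma \ref{BXa0}(i), and read off $\epsilon^2\Delta\,\mathrm{Tr}(M)$ from the Brownian part, with the $E_kE_k^*$ contribution bounded by $C\epsilon^2\Delta^2$ via the moment estimate $\E_{\theta_0}(\|E_k\|^2\mid\mathcal{G}_{k-1}^n)\leq C\epsilon^2\Delta^2$; the paper performs exactly this computation in coordinates. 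Where you genuinely add something is in the mixed terms $\epsilon\,\E_{\theta_0}\bigl(\sigma_{k-1}(\beta_0)(B(t_k)-B(t_{k-1}))E_k^*\mid\mathcal{G}_{k-1}^n\bigr)$: the paper's displayed computation passes directly from $\E(B_k^{l}B_k^{l'}\mid\mathcal{G}_{k-1}^n)$ to the sum of the Brownian and $E_k^lE_k^{l'}$ contributions, i.e.\ the cross terms are dropped without comment, whereas you correctly observe that a plain conditional Cauchy--Schwarz bound on them only yields $O(\epsilon^2\Delta^{3/2})$, which after division by $\epsilon^2\Delta$ is $O(\sqrt{\Delta})$ and is not of the stated form $\Delta R_k$ with $\sup_k|R_k(\epsilon,\Delta)|$ bounded in probability. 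Your remedy --- the $\mathcal{G}_{k-1}^n$-measurable part of $E_k^1$ is orthogonal to the centered increment, and the remaining stochastic parts of $E_k^1$ and $E_k^2$ correlate with $B(t_k)-B(t_{k-1})$ only through It\^o-isometry/conditional-mean terms of order $\Delta$ integrated over an interval of length $\Delta$, hence contribute $O(\epsilon^2\Delta^2)$ --- is precisely the extra argument needed to make the claimed remainder rate honest. So your proof matches the paper's strategy and in fact repairs a step the paper leaves implicit; the one caution is that this cross-term estimate, which you rightly identify as the main obstacle, must indeed be carried out via the Taylor-plus-centering argument you sketch (tracking that the sub-remainders it produces stay of order $\epsilon^2\Delta^2$), since nothing in the paper's two-line computation covers it.
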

\begin{proof}
Using Lemma \ref{BXa0},
\begin{align*}
&\E(B_k^*(\alpha_0)M \Sigma ^{-1}_{k-1}(\beta_0) B_k(\alpha_0)|\mathcal{ G}_{k-1}^n)
= \sum_{l,l' =1}^p \left(M \Sigma ^{-1}_{k-1}(\beta_0)\right)_{ll'} \E(B_k^{l}(\alpha_0) B_k^{l'}(\alpha_0)|\mathcal{ G}_{k-1}^n)\\
&=\epsilon^2 \Delta \sum_{l,l' =1}^p \left(M \Sigma ^{-1}_{k-1}(\beta_0)\right)_{ll'} (\Sigma_{k-1}(\beta_0))_{l'l} +
\sum_{l,l' =1}^p \left(M \Sigma ^{-1}_{k-1}(\beta_0)\right)_{ll'} \E(E_k^l E_k^{l'} |\mathcal{ G}_{k-1}^n)\\
&= \epsilon^2 \Delta  \mathrm{Tr}(M) +R_k(\epsilon,\Delta)
\end{align*}
with $|R_k(\epsilon,\Delta)| \leq C \epsilon^2 \Delta^2 $ in probability.
\end{proof}

Let us  study the convergence of the triangular array $\sum_{k=1}^n \E( \xi_k^i(\theta_0))$. By  Lemma \ref{traceM}, we have
for  $j \leq b$,
\[\sum_{k=1}^n \E( \eta_k^j(\theta_0)|\mathcal{ G}_{k-1}^n) = \frac{1}{\epsilon^2\Delta \sqrt{n}} \sum _{k=1}^n R_k(\epsilon,\Delta) \leq\frac{ CT}{\sqrt{n}} \rightarrow 0.\]
Consider now, for $ j_1,j_2 \leq b$, $ \sum_{k=1}^n  \E( \eta_k^{j_1}(\theta_0) \eta_k^{j_2}(\theta_0)|\mathcal{ G}_{k-1}^n)$.

We have
\begin{align*}
&\E( \eta_k^{j_1}(\theta_0) \eta_k^{j_2}(\theta_0)|\mathcal{ G}_{k-1}^n)\\
&= \frac{1}{n} [\mathrm{Tr} (M_{k-1}^{j_1}(\beta_0) M_{k-1}^{j_2} (\beta_0))- 2 \mathrm{Tr} M_{k-1}^{j_1}(\beta_0)) \mathrm{Tr} M_{k-1}^{j_2}(\beta_0)+ C_{k}^{j_1,j_2}(\epsilon,\Delta)+ \Delta O_P(1)],
\end{align*}
with
\smallskip

\resizebox{\linewidth}{!}{
\begin{minipage}{\linewidth}
\begin{align*}
&C_k^{j_1j_2}(\epsilon,\Delta)\\
&= \frac{1}{\epsilon^4 \Delta^2}\E\left(B_k^*(\alpha_0)M_{k-1}^{j_1}(\beta_0)\Sigma ^{-1}_{k-1} (\beta_0) B_k(\alpha_0)
B_k^*(\alpha_0)M_{k-1}^{j_2}(\beta_0) \Sigma ^{-1}_{k-1}(\beta_0)  B_k(\alpha_0)|\mathcal{ G}_{k-1}^n\right).
\end{align*}
\end{minipage}
}
\smallskip

Therefore, omitting the parameters when there is no ambiguity,
\begin{align*}
&C_k^{j_1j_2}(\epsilon,\Delta)\\
&=\sum_{l_1,l_2,l_3,l_4}(M_{k-1}^{j_1}\Sigma ^{-1}_{k-1})_{l_1l_2}(M_{k-1}^{j_2} \Sigma ^{-1}_{k-1})_{l_3l_4}\E\left(B_k^{l_1}(\alpha_0)B_k^{l_2}(\alpha_0)B_{k}^{l_3}(\alpha_0)B_{k}^{l_4}(\alpha_0) |\mathcal{ G}_{k-1}^n\right).
\end{align*}

Based on the  property that, if $Z$ is a $p$-dimensional Gaussian random variable $\mathcal{ N}(0,\Sigma)$,
$\displaystyle{E(Z_{l_1}Z_{l_2}Z_{l_3} Z_{l_4})= \Sigma_{l_1l_2}\Sigma_{l_3l_4}+ \Sigma_{l_1l_3}\Sigma_{l_2l_4}+\Sigma_{l_1l_4}\Sigma_{l_2l_3}}$,
we get  that
\[C_k^{j_1j_2}(\epsilon,\Delta)= \left( \mathrm{Tr}(M^{j_1}_{k-1} M^{j_2}_{k-1})+ 2 \mathrm{Tr} M^{j_1}_{k-1} \mathrm{Tr} M^{j_2}_{k-1}+  \Delta O_P(1)\right).\]
Therefore  $\sum_{k=1}^n \E(\eta_k^{j_1}(\theta_0) \eta_k^{j_2}(\theta_0)|\mathcal{ G}_{k-1}^n)=  \frac{2 }{n} \sum_{k=1}^n \mathrm{Tr}(M^{j_1}_{k-1} M^{j_2}_{k-1})
+\Delta O_P(1)$.\\
Now, under $\P_{\theta_0}$,
$M_k^j( \beta_0) = \Sigma^{-1}( \beta_0, t_k, z(\alpha_0,t_k) )\nabla_{\beta_j} \Sigma( \beta_0, t_k, z(\alpha_0,t_k))+ \epsilon O_P(1)$ so that,
using \eqref{Isigma}, as $\epsilon,\Delta \rightarrow 0$,
$$\ \sum_{k=1}^n  \E( \eta_k^{j_1}(\theta_0) \eta_k^{j_2}(\theta_0)|\mathcal{ G}_{k-1}^n) \rightarrow 4 (I_{\sigma}(\theta_0))_{j_1 j_2}.$$
The proofs that  $\sum_{k=1}^n \E( \norm{\eta_k^i(\theta_0)}^4|\mathcal{ G}_{k-1}^n) \rightarrow 0$,
$\sum_{k=1}^n \E( \xi_k^i(\theta_0) \eta_k^j(\theta_0) |\mathcal{ G}_{k-1}^n) \rightarrow 0$ are similar and omitted.
Finally, applying the theorem of convergence in law for triangular arrays recalled in Section \ref{LimitTheo} yields that
  $\displaystyle{\sum_{k=1}^n  \eta_k^i(\theta_0) \rightarrow \mathcal{ N} (0,4 I_{\sigma}(\theta_0))}$.
  Joining these results achieves the proof of  (i).
  \end{proof}

It remains to study $D_{\epsilon,n}(\theta)$ defined in \eqref{Depsn}.
\begin{proof}
We have already proved  that
\[\sup_{t\in[0,1]} \norm{\epsilon^2 (\nabla^2 _{\alpha_i,\alpha_j}  \check{U}_{\epsilon,\Delta}(\theta_0+t (\check{\theta}_{\epsilon,\Delta}- \theta_0))- 2(I_b(\theta_0))_{ij}} \rightarrow 0\]
in probability.
Consider now the term $ \frac{1}{n}\nabla^2_{\beta_i,\beta_j} \check{ U}_{\epsilon,\Delta}(\alpha,\beta)$. It reads as
\begin{align*}
&\nabla^2_{\beta_i\beta_j} \check{ U}_{\epsilon,\Delta}(\alpha,\beta)\\
 &\quad = \sum_{k=1}^n  \left(\mathrm{Tr}\left(\nabla_{\beta_i} M_{k-1}^j(\beta) \right)
-\frac{1}{\epsilon^2 \Delta}  B_k(\alpha)^*  ( \nabla_{\beta_i} M_{k-1}^j(\beta) )\Sigma_{k-1}^{-1} (\beta) B_k(\alpha)\right)\\
&\quad\quad +  \frac{1}{\epsilon^2 \Delta} \sum_{k=1}^n B_k(\alpha)^*  M_{k-1}^j(\beta) M_{k-1}^i(\beta)\Sigma_{k-1}^{-1}  (\beta)B_k(\alpha).
\end{align*}
Let us define the matrices, for $1\leq i,j\leq b$,
\begin{equation} \label{Mithetat}
M^i (\alpha,\beta,t)= \Sigma ^{-1}(\beta, t,z(\alpha,t))\nabla_{\beta_i}\Sigma (\beta, t,z(\alpha,t)),  \quad \mbox{and}
\end {equation}
\begin{equation}\label{Tijbeta}
T^{ij}_k(\beta)=[ M_{k}^j(\beta) M_{k}^i(\beta)-  \nabla_{\beta_i} M_{k}^j(\beta) ] \Sigma_{k-1}^{-1} (\beta).
\end{equation}
Using (\ref{BkBk})  yields that  the first term of  $\nabla^2_{\beta_i\beta_j} \check{ U}_{\epsilon,\Delta}(\alpha_0,\beta_0)$ is uniformly bounded in probability and that
the second term satisfies  $\sum_{k=1}^n (\mathrm{Tr}\left(M_{k-1}^j(\beta_0) M_{k-1}^i(\beta_0)\right) + \Delta  O_P(1))$. Hence,
$$\frac{1}{n}\nabla^2_{\beta_i\beta_j} \check{ U}_{\epsilon,\Delta}(\alpha_0,\beta_0) \rightarrow  -\frac{1}{T}\int_0^T \mathrm{Tr}(M^j(\alpha_0,\beta_0,t)M^i(\alpha_0,\beta_0,t))dt.$$

It remains to prove that, under  $\P_{\theta_0}$,
\[\sup_{t\in[0,1]} \frac{1}{n}\norm{\nabla^2_{\beta_i \beta_j} \check{ U}_{\epsilon,\Delta}(\theta_t)- \nabla^2_{\beta_i\beta_j} \check{ U}_{\epsilon,\Delta}(\theta_0)} \rightarrow 0\]  with  $\theta_t = \theta_0 + t( \check{\theta}_{\epsilon,\Delta} -\theta_0)$
 and that the terms \[\frac{\epsilon}{\sqrt{n}}(\nabla^2_{\alpha_i\beta j} \check{ U}_{\epsilon,\Delta}(\alpha,\beta)-
   \nabla^2_{\alpha_i\beta_j} \check{ U}_{\epsilon,\Delta}(\alpha_0,\beta_0))\rightarrow 0.\]
These two proofs rely on similar tools and are omitted.
\end{proof}

\section{Inference based  on low frequency observations} \label{LFO}
Consider now the case where the sampling interval $\Delta$ is fixed  and the time interval for observations is fixed.
It follows that  the number of observation points
$n=T/\Delta$ is finite.  
We prove  that only parameters in the drift  function can be consistently estimated. This agrees with the previous results where the rate of estimation of parameter $\beta$ in the diffusion coefficient is $\sqrt{n}$ in the high frequency set-up. Sometimes,  when modeling  epidemic dynamics, a parameter is
added in the $SIR$ model  to take account of larger fluctuations, substituting  the term $\sqrt{SI}$  by $(S(t)I(t))^{a}$  in the diffusion term. While in the ``High frequency''  set-up, this parameter $a$  can be consistently estimated, this is no longer true for a fixed sampling interval.

In order to illustrate that $\beta$ cannot be consistently estimated in this set-up,  we study the inference on a simple example, the one-dimensional Brownian motion with drift on $[0,T]$.
\subsection{Preliminary result on a simple example }
Let us consider the estimation of $(\alpha,\beta)$ as $\epsilon\rightarrow 0$ and $n=T/\Delta$ finite,  for the process
 \begin{equation}\label{BMD}
 d X(t)= \alpha dt + \epsilon \beta dB(t) ; \quad X(0)= 0.
 \end{equation}
The observations are $(X(t_k), k=1,\dots,n)$.
 The $n$ random variables $(X(t_k)-X(t_{k-1}))$ are independent Gaussian with distribution  $\mathcal{ N}( \alpha \Delta, \epsilon ^2 \beta ^2 \Delta)$.
The likelihood is explicit and the maximum likelihood estimators are
\begin{equation}\label{MLEBMD}
\hat{\alpha}_{\epsilon}= \frac{X(T)}{T};\quad  \hat{\beta}^2_{\epsilon}= \frac{1}{n\Delta \epsilon^2} \sum_{k=1}^n ( X(t_k)-X(t_{k-1})- \Delta \hat{\alpha}_{\epsilon,\Delta})^2.
\end{equation}
Under $\P_{\theta_0}$, $\displaystyle{\hat{\alpha}_{\epsilon}= \alpha_0+\epsilon \beta_0 \frac{B(T)} {T}}$. Therefore, as $\epsilon \rightarrow 0$,
$\displaystyle{\hat{\alpha}_{\epsilon} \rightarrow \alpha_0}$  and $\epsilon^{-1} (\hat{\alpha}_{\epsilon}-\alpha_0)=  \beta_0 \frac{B(T)}{T}$  is  a Gaussian random variable $\displaystyle{\mathcal{ N} (0, \frac{ \beta_0^2 }{T})}$.

The MLE  of $\beta_0^2$
is $\displaystyle{\hat{\beta}^2_{\epsilon}=\beta_0^2 (\frac{1}{n} \sum_{k=1}^n Z_k^2- \frac{1}{n} \frac{B(T)^2}{T})}$, where $(Z_k, k=1,\dots,n)$ are i.i.d.\ $\mathcal{ N}(0,1)$.\\ Hence,  since $n$ is  fixed,
$\hat{\beta}^2_{\epsilon}$ is a fixed random variable which does not depend on $\epsilon$ with expectation $\displaystyle{\beta_0^2 (1-\frac{1}{n} )\neq  \beta_0^2}$, implying that it is  a biased estimator of $\beta_0^2 $.

This simple example  shows that parameters in the diffusion coefficient cannot be estimated as $\epsilon \rightarrow 0$.

\subsection{Inference for diffusion approximations of epidemics}
Considering equation \eqref{SDEGen}, three cases might occur: $\beta $ unknown; $\beta$ known
or $\Sigma(\beta,x)= \phi(\beta) \Sigma(x)$ (with $\phi(\beta)$ a known real function on $\R^+$);  $\beta$ present in the drift coefficient (e.g.\ $\beta= \varphi(\alpha)$ with $\varphi$ a known function).
 This last case systematically occurs for the diffusion approximation of epidemic dynamics:  the parameters ruling the jump process modeling the epidemic dynamics are both present in the drift and in the diffusion coefficients, i.e.\  $ \beta \equiv \alpha$.
 For example, the diffusion approximation of the $SIR$, we have, setting $\alpha = (\lambda,\gamma)$, that  the drift term is $ b(\alpha,z)$ and the diffusion term is
 $\Sigma(\alpha,z)$

%
Having in mind epidemics, we study here this  case and  assume that, under $\P_{\alpha}$,
\begin{equation} \label{SDEalpha}
dX(t)= b(\alpha,t,X(t)) dt + \epsilon \sigma(\alpha,t,X(t)) dB(t), \quad X(0) =x.
\end{equation}
 The time interval is $[0,T]$, the sampling interval is $\Delta$ with $T=n\Delta$, and both $T, \Delta, n$ are fixed.

 The observations consist of the $n$ random variables
 $(X(t_k), k=1,\dots,n)$ with $t_k=k\Delta$.
 As in the previous section,  the inference is based  on the  random variables $B_k(\alpha,X)$ defined in \eqref{BkTS}, which satisfy using Lemma \ref{BkX}
 \begin{equation}\label{BkD}
 B_k(\alpha,X)= \epsilon\sqrt{\Delta} T_k(\alpha) + \epsilon^2 D_k^{\epsilon}(\alpha),\mbox{ with }
 D_k^{\epsilon}= R^{\epsilon}(\alpha,t_k)- \Phi(\alpha, t_k,t_{k-1}) R^{\epsilon}(\alpha,t_{k-1}).
 \end{equation}
 \begin{equation} \label{Tka}
  T_k(\alpha)=\frac{1}{\sqrt{\Delta}} \int_{t_{k-1}}^{t_k}  \Phi(\alpha,t_k,u) \sigma(\alpha,u,z(\alpha,u)) dB(u),
  \end{equation}
  \begin{equation}\label{Ska}
  S_k(\alpha)= \frac{1}{\Delta}  \int_{t_{k-1}}^{t_k}  \Phi(\alpha,t_k,u) \Sigma(\alpha,u,z(\alpha,u))\Phi^*(\alpha,t_k,u)du.
  \end{equation}

This leads   to define the contrast function depending now on $ (X(t_1),\dots,X(t_n))$,
\begin{equation}\label{Ueps}
  \bar{U}_{\epsilon}\left(\alpha,(X_{t_k})\right) =  \bar{U}_{\epsilon}(\alpha)= \sum_{k=1}^n \log \det S_k(\alpha) + \frac{1}{\epsilon^2 \Delta}\sum_{k=1}^n  B_k^*(\alpha,X)S_k^{-1}(\alpha) B_k(\alpha,X).
 \end{equation}
Denote by $\alpha_0$ the true value of the parameter and $\Theta$ the parameter set.
 We assume
 \begin{enumerate}[(S6b):]
 \item[\textbf{ (S4b}):]  $\Theta$ a compact set of $\R^a$ ; $\alpha \in \mathrm{Int} (\Theta)$.
 \item[\textbf{ (S5b}):] Assumption (S5) on  $b(\alpha,t,z)$ and $\sigma(\alpha,t,z)$.
 \item[\textbf{ (S6b}):] $\alpha \neq \alpha_0 \Rightarrow  \{ \exists  k,\;1 \leq k \leq n, \;   z(\alpha, t_k) \neq z(\alpha_0,t_k)\}$.
 \end{enumerate}
 The estimator is defined as any solution of
\begin{equation}\label{baralpha}
\bar{\alpha}_{\epsilon}=\underset{\alpha\in K_a }{\mathrm{argmin}}\; \bar{U}_{\epsilon}\left(\alpha,(X_{t_k})\right).
\end{equation}
Let us  study the  properties of  $\bar{\alpha}_{\epsilon} $. For this, define, using \eqref{Ska}, the  $p\times a $ matrix $G_k(\alpha)= (G_k^1,\dots,G_k^a)$ and the $a\times a$ matrix $M(\alpha)$,
\begin{equation}\label{Ma}
M(\alpha)= \Delta \sum_{k=1}^n G_k(\alpha)^* S_k(\alpha)^{-1} G_k(\alpha),\mbox{ with}
\end{equation}
\begin{equation}\label{Gk}
G_k^i (\alpha) = \frac{1}{\Delta} ( - \nabla_{\alpha_i } z(\alpha,t_k) + \Phi(\alpha,t_k,t_{k-1})  \nabla_{\alpha_i} z(\alpha,t_{k-1})).
\end{equation}

Then, the following holds
\begin{theorem}  \label{LimLFO}
Assume (S1)--(S3), (S4b)--(S6b).  Then, as $\epsilon\rightarrow 0$, under $\P_{\alpha_0}$,
\begin{enumerate}
\item[(i)] $ \bar{\alpha}_{\epsilon} \rightarrow \alpha_0$ in probability.
\item[(ii)] If moreover $M(\alpha_0 )$ defined in \eqref{Ma} is non-singular, then
\[\epsilon^{-1}(\bar{\alpha}_{\epsilon} -\alpha_0) \rightarrow \mathcal{ N}_a(0,M^{-1}(\alpha_0))\] in distribution.
\end{enumerate}
\end{theorem}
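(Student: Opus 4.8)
The plan is to follow the standard M-estimation route — consistency first, then a Taylor expansion of the score — but the situation is considerably simpler than in the high-frequency case because $n$ is \emph{fixed}: all sums $\sum_{k=1}^n$ are finite, so there is a single asymptotic parameter $\epsilon\to 0$ and no martingale/triangular-array central limit theorem is needed. The driving observation I would use is that, by Proposition \ref{TStheta} and Lemma \ref{BkX}, as $\epsilon\to 0$ (with $\Delta$ fixed) one has the exact-order expansions
$$
B_k(\alpha_0,X)=\epsilon\sqrt{\Delta}\,T_k(\alpha_0)+\epsilon^2 D_k^\epsilon(\alpha_0),\qquad
\nabla_{\alpha_i}B_k(\alpha_0,X)=\Delta\,G_k^i(\alpha_0)+\epsilon\,O_P(1),
$$
where the $T_k(\alpha_0)$ are \emph{independent} centred Gaussians with covariance $S_k(\alpha_0)$, the $G_k^i$ are the deterministic vectors of \eqref{Gk}, and the second identity is read off from the definition \eqref{BkTS} of $B_k$ using that $X(t_{k-1})-z(\alpha_0,t_{k-1})=O_P(\epsilon)$.

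For (i), I would multiply $\bar U_\epsilon$ by $\epsilon^2$. The $\log\det$ term is deterministic and $n$ is finite, so $\epsilon^2\sum_k\log\det S_k(\alpha)\to 0$ uniformly on $\Theta$, while by Proposition \ref{TStheta} the quadratic term converges uniformly in $\alpha$ to
$$
K(\alpha_0,\alpha)=\frac{1}{\Delta}\sum_{k=1}^n B_k(\alpha,z(\alpha_0,\cdot))^*\,S_k^{-1}(\alpha)\,B_k(\alpha,z(\alpha_0,\cdot)).
$$
Since each $S_k^{-1}(\alpha)$ is positive definite under \textbf{(S3)}, this limit is non-negative and vanishes iff $B_k(\alpha,z(\alpha_0,\cdot))=0$ for every $k$. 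Because $z(\alpha,0)=z(\alpha_0,0)=x$, the recursion \eqref{BkTS} then forces $z(\alpha,t_k)=z(\alpha_0,t_k)$ for all $k$ by induction on $k$, whence $\alpha=\alpha_0$ by the identifiability Assumption \textbf{(S6b)}. The usual argmin argument — comparing the values of the uniformly convergent contrast at $\bar\alpha_\epsilon$ and at $\alpha_0$ — then delivers $\bar\alpha_\epsilon\to\alpha_0$ in $\P_{\alpha_0}$-probability.

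For (ii), I would expand $0=\nabla_\alpha\bar U_\epsilon(\bar\alpha_\epsilon)$ around $\alpha_0$, giving
$$
\epsilon^{-1}(\bar\alpha_\epsilon-\alpha_0)=-\Big[\epsilon^2\!\int_0^1\!\nabla^2_\alpha\bar U_\epsilon(\alpha_0+t(\bar\alpha_\epsilon-\alpha_0))\,dt\Big]^{-1}\big(\epsilon\,\nabla_\alpha\bar U_\epsilon(\alpha_0)\big).
$$
Inserting the two expansions above into $\nabla_{\alpha_i}\bar U_\epsilon$, the $\log\det$ and $\nabla_{\alpha_i}S_k^{-1}$ contributions are $O_P(\epsilon)$, and the surviving term of $\epsilon\,\nabla_{\alpha_i}\bar U_\epsilon(\alpha_0)$ is $2\sqrt{\Delta}\sum_k G_k^i(\alpha_0)^*S_k^{-1}(\alpha_0)T_k(\alpha_0)+o_P(1)$. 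Since the $T_k(\alpha_0)$ are independent Gaussians with covariance $S_k(\alpha_0)$, this is \emph{exactly} a centred Gaussian vector with covariance $4\Delta\sum_k G_k^{i*}S_k^{-1}G_k^j=4M_{ij}(\alpha_0)$ by \eqref{Ma}, so $\epsilon\,\nabla_\alpha\bar U_\epsilon(\alpha_0)\to\mathcal N_a(0,4M(\alpha_0))$. A parallel computation, in which only the term $\tfrac{2}{\Delta}\sum_k(\nabla_{\alpha_i}B_k)^*S_k^{-1}(\nabla_{\alpha_j}B_k)$ survives, gives $\epsilon^2\nabla^2_\alpha\bar U_\epsilon(\alpha_0)\to 2M(\alpha_0)$ in probability; the consistency of $\bar\alpha_\epsilon$ with the $C^2$-regularity of \textbf{(S5b)} transfers this to the integrated Hessian at the intermediate point. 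When $M(\alpha_0)$ is non-singular, Slutsky's lemma then yields
$$
\epsilon^{-1}(\bar\alpha_\epsilon-\alpha_0)\to-\,[2M(\alpha_0)]^{-1}\,\mathcal N_a(0,4M(\alpha_0))=\mathcal N_a\big(0,M^{-1}(\alpha_0)\big).
$$
The main obstacle — indeed the only genuinely delicate point, since the finiteness of $n$ removes all limit-theorem machinery — will be controlling the two expansions \emph{uniformly} in $\alpha$ on $\Theta$ (for consistency) and establishing that the intermediate-point Hessian converges to $2M(\alpha_0)$; both hinge on the uniform moment bounds for $R^\epsilon(\theta,\cdot)$ in Proposition \ref{TStheta} and on the uniform continuity of $\alpha\mapsto(z(\alpha,\cdot),\Phi(\alpha,\cdot),\nabla_\alpha z,\nabla_\alpha\Phi)$ guaranteed by \textbf{(S5b)}.
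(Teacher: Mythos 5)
Your proposal is correct and follows essentially the same route as the paper's proof: the same limiting contrast $\frac{1}{\Delta}\sum_k B_k^*(\alpha,z(\alpha_0,\cdot))S_k^{-1}(\alpha)B_k(\alpha,z(\alpha_0,\cdot))$ with the identifiability argument via \textbf{(S6b)} for consistency, and the same Taylor expansion of the score in which the $\log\det$ and $\nabla_{\alpha_i}S_k^{-1}$ terms are $O_P(\epsilon)$, the surviving term $2\sqrt{\Delta}\sum_k G_k^{i*}S_k^{-1}T_k$ is exactly Gaussian with covariance $4M(\alpha_0)$ (no limit theorem needed since $n$ is fixed), and the Hessian converges to $2M(\alpha_0)$, yielding the result by Slutsky. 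Your final remark about the genuinely delicate points (uniformity in $\alpha$ and control of the intermediate-point Hessian) matches exactly what the paper handles via its Lipschitz-in-$\alpha$ bounds on $B_k$, $\nabla_{\alpha_i}B_k$ and $S_k^{-1}$.
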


\begin{proof}
Let us first prove (i). Define, using \eqref{BkD}, \eqref{Ska},
\begin{equation}\label{Kdelta}
\bar{K}_{\Delta}(\alpha_0,\alpha) = \frac{1}{\Delta}\sum_{k=1}^{n}B_k^*(\alpha, z(\alpha_0,\cdot))S_k^{-1}(\alpha)  B_k(\alpha, z(\alpha_0,\cdot)).
\end{equation}
Since $B_k(\alpha_0, z(\alpha_0,\cdot)) = 0$, $\bar{K}_\Delta(\alpha_0,\alpha)\geq 0$ and $\bar{K}_\Delta(\alpha_0,\alpha_0)=0$. Assume now that
  $\bar{K}_\Delta(\alpha_0,\alpha)=0 $. Then, for   all $ k  \in \{1,..n\}$,
\[z(\alpha,t_k)- z(\alpha_0,t_k)= \Phi( \alpha, t_k, t_{k-1})(z(\alpha, t_{k-1})- z(\alpha_0, t_{k-1})).\]
The matrix $\Phi(\alpha, t_k, t_{k-1})$ being non-singular,  the identifiability Assumption \textbf{ (S6b)} implies that $\alpha=\alpha_0$. \\
Since the  sum in \eqref{Ueps} is finite, we get,
using (\ref{BkTS}) and Proposition \ref{TStheta}, that
 $ \underset{\alpha \in K_a }{ \sup}  |\epsilon^2 \bar{U}_{\epsilon}(\alpha) - \bar{K}_{\Delta}(\alpha_0,\alpha)| \rightarrow 0 $ in $\P_{\theta_0}$-probability as $\epsilon \rightarrow 0$.
 Therefore, we have
 \begin{eqnarray*}
 0 &\leq& \bar{K}_{\Delta}(\alpha_0,\bar{\alpha}_{\epsilon})- \bar{K}_{\Delta}(\alpha_0,\alpha_0)\\
 & \leq&
  2 \underset{\alpha\in K_a }{ \sup}  |\epsilon^2 U_{\epsilon}(\alpha) -\bar{K}_{\Delta}(\alpha_0,\alpha)| +  \epsilon^2 |U_{\epsilon}(\bar{\alpha})- U_{\epsilon}(\alpha_0)|\\
  &\leq&   2 \underset{\alpha\in K_a }{ \sup}  |\epsilon^2 U_{\epsilon}(\alpha) -\bar{K}_{\Delta}(\alpha_0,\alpha)| .
 \end{eqnarray*}
Then  the proof of (i) is achieved  by means of the identifiability Assumption \textbf{ (S6b)}.

Let us now prove (ii).
To study the asymptotic properties of  $\bar{\alpha}_{\epsilon} $ as $\epsilon \rightarrow 0$, we write, for $i,j\leq a$,
\begin{align*}
0&= \epsilon \nabla_{\alpha_i}\bar{U}_{\epsilon} (\bar{\alpha}_{\epsilon} )\\
&= \epsilon \nabla_{\alpha_i} \bar{U}_{\epsilon} (\alpha_{0})+\epsilon^2
\sum_{j=1} ^a (\int_0 ^1 (\nabla^2_{\alpha_j \alpha_i} \bar{U}_{\epsilon} (\alpha_0 + t (\bar{\alpha}_{\epsilon} -\alpha_0))dt)\; (\frac{\bar{\alpha}^j_{\epsilon} -\alpha_0^j}{\epsilon}).
\end{align*}
Consider first $\epsilon \nabla_{\alpha} \bar{U}_{\epsilon} (\alpha_{0})$. Using \eqref{BkTS} and \eqref{Ska}, for $i=1,\dots,a$, it reads as
\begin{eqnarray*}\epsilon \nabla_{\alpha_i} \bar{U}_{\epsilon} (\alpha_{0})&=&\epsilon \sum_{k=1}^n \nabla_{\alpha _i}\log \det S_k(\alpha_0)
+ \frac{1}{\epsilon \Delta} \sum _{k=1}^n B^*_k(\alpha_0)\; \nabla_{\alpha_i} S_k^{-1} (\alpha_0) B_k(\alpha_0) \\
&+&  \frac{2}{\epsilon \Delta}   \sum_{k=1}^n (\nabla_{\alpha_i} B_k^*(\alpha_0)) \;  S_k^{-1} (\alpha_0) B_k(\alpha_0)
= A_1^i (\alpha_0) + A_2^i (\alpha_0) + A_3^i (\alpha_0).
\end{eqnarray*}
Since  $\nabla_{\alpha_i} \log (\det S_k(\alpha_0))= \mathrm{Tr}(S_k^{-1} (\alpha_0) \nabla_{\alpha_i} S_k(\alpha_0))$,  $A_1^i (\alpha_0) $ is well defined and,  under the regularity assumptions, $A_1^i (\alpha_0) = n\epsilon O(1) $, which goes to 0 as $\epsilon \rightarrow 0$, $n$ being fixed.

Applying Lemma \ref{BkX} for the variables $T_k(\alpha_0)$, $D_k^{\epsilon}(\alpha_0) $  yields that
\begin{align*}
A_ 2 ^i  (\alpha_0)&= \epsilon \sum_{k=1}^n T^*_k(\alpha_0) \nabla_{\alpha_i} S_k^{-1} (\alpha_0)  T_k(\alpha_0)\\
&\quad + 2 \frac{\epsilon}{\sqrt{\Delta}}
\sum_{k=1}^n T_k^*(\alpha_0) \nabla_{\alpha_i} S_k^{-1} (\alpha_0)  ( \epsilon D_k^{\epsilon}(\alpha_0))\\
&\quad + \frac{\epsilon}{\Delta}  \sum_{k=1}^n ( \epsilon D_k^{\epsilon}(\alpha_0))^* \nabla_{\alpha_i} S_k^{-1}(\alpha_0) (\epsilon D_k^{\epsilon}(\alpha_0)).
\end{align*}
It follows from  Lemma \ref{BkX}, that $ \sup _k \norm{\epsilon D_k^{\epsilon}(\alpha_0)}$ is bounded.
Therefore, 
$A_ 2^i (\alpha_0) \rightarrow 0$ in $\P_{\alpha_0}$-probability.

Let us study the main  term  $(A_3 ^i(\alpha)$ of $\epsilon \nabla_{\alpha_i} \bar{U}_{\epsilon} (\alpha_{0})$.\\
 Using Proposition \ref{TStheta}  and \eqref{BkTS}, \eqref{Gk} yields that,
under $\P_{\alpha_0}$,
 \begin{equation}\label{dGk}
  \nabla_{\alpha_i} B_k(\alpha_0)= \Delta G_k^i (\alpha_0 ) -  \epsilon (\nabla_{\alpha_i}  \Phi(\alpha_0,t_k,t_{k-1})( g(\alpha_0,t_{k-1} ) + \epsilon R^{\epsilon}(\alpha_0, t_{k-1}))),
  \end{equation}
 where  $ \sup_k \norm{\epsilon R( \alpha, t_k)}$ is uniformly  bounded in probability.
 Therefore,
\[A_3^i (\alpha_0)= 2 \sqrt{\Delta}  \sum_{k=1}^n ((G_k^i  (\alpha_0))^*S_k^{-1} (\alpha_0) T_k(\alpha_0) + \epsilon R' _k(\alpha_0)),\]
 with $ R^{\prime}_k(\alpha_0)$ uniformly bounded in probability.
 By Lemma \ref{BkX}, $(T_k (\alpha_0)), k=1,\dots,n)$ are independent centered Gaussian random variables with covariance matrix  $S_k(\alpha_0)$.
We  that   $A_3(\alpha_0) = (A_3^1(\alpha_0), \dots,A_3(\alpha_0))^* $ converges to  the Gaussian random variable $ \mathcal{ N}_a (0,4M(\alpha_0)) $. Joining all these results yields that
 $$- \epsilon \nabla_{\alpha} \bar{U}_{\epsilon} (\alpha_{0}) \rightarrow \mathcal{ N}_a(0, 4 M (\alpha_0)) \;   \mbox {with  }$$
 $$M(\alpha_0)= (M(\alpha_0))_{ij}=
  \Delta  \sum_{k=1}^n (G_k^i(\alpha_0))^*S_{k}^{-1}(\alpha _0)  G_k^j(\alpha_0).$$


Consider $\epsilon^2 \nabla^2_{\alpha_j\alpha_i}\bar{U}_{\epsilon} (\alpha)$. Similar computations yield that
\[\epsilon^2 \nabla^2_{\alpha_j\alpha_i} \bar{U}_{\epsilon}(\alpha_0)=  2 \Delta \sum_{k=1}^n (G_k^i(\alpha_0))^*  S_{k}^{-1} (\alpha_0) G_k^j(\alpha_0)+n \epsilon O_P(1).\]
 Therefore, for all $1\leq i,j\leq a$,
 $$ \epsilon^2 \nabla^2_{\alpha_i\alpha_j}\bar{U}_{\epsilon} (\alpha_0) \rightarrow  2M_{ij}(\alpha_0) \quad \P_{\alpha_0} \mbox{a.s.  as } \epsilon \rightarrow 0.$$
It remains to study $\sup _{t \in [0,1]} |\epsilon^2  \nabla^2_{\alpha_j\alpha_i}\bar{U}_{\epsilon} (\alpha_0 + t( \bar{\alpha}_{\epsilon} -\alpha_0)) -
 \epsilon^2 \nabla^2_{\alpha_j\alpha_j}\bar{U}_{\epsilon} (\alpha_0 )|$.\\
We have
$\epsilon^2  \nabla^2_{\alpha_j\alpha_i}\bar{U}_{\epsilon} (\alpha)= \frac{1}{\Delta}(A_1^{ij} (\alpha)+ A_2^{ij} (\alpha))$,
where
\[A_1^{ij}(\alpha)= 2\sum _{k=1}^n \nabla_{ \alpha_i}  B_k^*(\alpha)S_k^{-1} (\alpha)\nabla_{ \alpha_j}  B_k(\alpha),\quad
 A_2^{ij}(\alpha)=  \sum _{k=1}^n Z_k^*(\alpha) B_k(\alpha)\] with
 \begin{align*}
Z_k^*(\alpha)= 2 \nabla_{ \alpha_j}  B_k^*(\alpha)\nabla_{\alpha_i}S_k^{-1} (\alpha)&+ B_k^*(\alpha)\nabla^2_{\alpha_i\alpha_j} S_k^{-1}(\alpha)+
 2 \nabla_{ \alpha_i}  B_k^*(\alpha)\nabla_{\alpha_j}S_k^{-1} (\alpha)\\
 &+ 2 \nabla^2_{\alpha_i\alpha_j}B_k^*(\alpha) S_k^{-1} (\alpha).
\end{align*}
Similarly to the previous section, we need that, under $\P_{\alpha_0}$, the properties stated below hold.
\begin{equation}\label{P1}
\norm{B_k(\alpha)-B_k(\alpha_0)} \leq \norm{\alpha-\alpha_0} (C_1+ C_2 O_P(1))  \; \mbox{uniformly with respect to } k,\alpha;
\end{equation}
\begin{equation}\label{P2}
\norm{\frac{1}{\epsilon} B_k(\alpha_0)}   \mbox{  are uniformly bounded random variables};
\end{equation}
\begin{equation}\label{P3}
 \underset{ k\leq n,\alpha \in\Theta} \sup \norm{\nabla_{\alpha_i}B_k(\alpha)} = O_P(1); \quad \mbox{and }  \norm{\nabla_{\alpha_i}B_k(\alpha)-\nabla_{\alpha_i}B_k(\alpha_0} \leq C_1\norm{\alpha-\alpha_0} .
 \end{equation}
 The proofs of these properties  are similar to the previous section and  omitted.

 Therefore,
\[A_2^{ij}(\alpha)- A_2^{ij}(\alpha_0)=\sum_{k=1}^n  (Z_k^*(\alpha)- Z_k^*(\alpha_0)) B_k(\alpha_0)+ \sum_{k=1}^n  Z_k^*(\alpha)(B_k(\alpha)-B_k(\alpha_0) ).\]
Using  \eqref{P1}, \eqref{P2} we get
\[|A_2^{ij}(\alpha)- A_2^{ij}(\alpha_0)| \leq   \sup \norm{Z_k(\alpha)}(  2 n \epsilon \sup \norm{\frac{B_k(\alpha_0)}{\epsilon}}+  \norm{\alpha-\alpha_0} (C_1+ C_2 O_P(1)).\]

Consider now $A_1^{ij}(\alpha)- A_1^{ij}(\alpha_0) $. It  reads as
\begin{align*}
A_1^{ij}(\alpha)- A_1^{ij}(\alpha_0) =&\, 2\sum_{k=1}^n[ \nabla_{\alpha_i}B_k^*(\alpha) S_k^{-1}(\alpha)  (\nabla_{\alpha_j}B_k(\alpha)
-\nabla_{\alpha_j}B_k(\alpha_0))]\\
&+[ \nabla_{\alpha_j}B_k^*(\alpha) S_k^{-1}(\alpha)  (\nabla_{\alpha_i}B_k(\alpha)-\nabla_{\alpha_i}B_k(\alpha_0))]\\
&+[
 \nabla_{\alpha_i}B_k^*(\alpha) (S_k^{-1}(\alpha)- S_k^{-1}(\alpha_0))  \nabla_{\alpha_j}B_k(\alpha_0)].
\end{align*}
 Hence,  $\norm{A_1{ij}(\alpha)- A_1^{ij}(\alpha_0)} \leq  2n C\norm{\alpha-\alpha_0}$.

Using the consistency  $\bar{\alpha}_{\epsilon}$, we get that
\[\sup _{t \in [0,1]} |\epsilon^2  \nabla^2_{\alpha_j\alpha_i)}\bar{U}_{\epsilon} (\alpha_0 + t( \bar{\alpha}_{\epsilon} -\alpha_0)) -
 \epsilon^2 \nabla^2_{\alpha_j\alpha_j}\bar{U}_{\epsilon} (\alpha_0 )|\rightarrow 0.\]

This achieves the proof of (ii) and of Theorem \ref{LimLFO}.
\end{proof}

\subsubsection{Comments}
\textbf{ (1)}  The term $\sum_{k=1}^n \log \det S_k(\alpha) $ could have been omitted in the definition of $\bar{U}_{\epsilon} (\alpha)$. It has no influence on the asymptotic properties of $\bar{\alpha}_{\epsilon}$. However, we have observed in the simulation results that it yields better estimators (less biased).
An explanation lies in the fact that in practice $\epsilon$ is small, but probably not enough to compensate this first term.
 the observations of  less biased estimators non-asymptotically.\\

\noindent \textbf{ (2)} In \cite{guy14IV}, we considered  the case of an unknown parameter $\beta$ in the diffusion coefficient and therefore used  a Conditional Least Square
  estimator  based on $ U_{\epsilon} (\alpha)= \sum_{k=1}^nB_k^*(\alpha) B_k(\alpha)$. The CLS estimator obtained is consistent. It converges at the same rate, but with a larger covariance matrix  $J^{-1}_{\Delta}(\alpha) I_{\Delta}(\alpha)   J^{-1}_{\Delta}(\alpha) $ with
 $J^{ij}_{\Delta}= \sum_{k=1}^n  (G_k^i(\alpha))^*G_k^j(\alpha)$ and
 $I_{\Delta} (\alpha)= \sum_{k=1}^n(G_k^i(\alpha))^* S_k(\alpha) G_k^j(\alpha)$.\\

\noindent \textbf{ (3)} We can compare  the result of Theorem \ref{LimLFO} to the inference  of an unknown parameter in the drift coefficient for a continuously observed  diffusion on $[0,T]$ in the asymptotics $\epsilon\rightarrow 0$.  According to \cite{kut84IV}, assuming a known  diffusion coefficient  $\epsilon \sigma(x)$, the Maximum Likelihood Estimator is consistent  and the Fisher information matrix is
 \begin{equation}\label{Fisherkuto}
 (I_b(\alpha_0,\beta_0))_{ij}=\int_{0}^T (\nabla_{\alpha_i}  b(\alpha_0, z(\alpha_0,s)))^* \Sigma^{-1}(z(\alpha_0,s))
 \nabla_{\alpha_j}b(\alpha_0,z( \alpha_0,s)) ds.
 \end{equation}
To compare  the estimator $\bar{\alpha}_{\epsilon,\Delta}$ with the CLS estimator, we can study the limits of the two Information matrices when $\Delta$ goes to zero. Using that $z(\alpha,\cdot)$ satisfies the ODE \eqref{ODEPARTIV},  we have,
\begin{equation}\label{rel:controle_Dk}
G_k(\alpha_0)=-\nabla_{\alpha}b (\alpha_0,z(\alpha_0,t_{k-1}))+o_{\Delta}(1), \mbox{ as $\Delta$ goes to zero}.
  \end{equation}
This result together with  Lemma \ref{SigmaS}  implies that $I_\Delta(\alpha_0,\beta_0) \rightarrow I_b(\alpha_0,\beta_0) $ as $\Delta \rightarrow 0$. Since $I_b(\alpha_0,\beta_0)$ is the optimal information matrix for continuous time observation, this convergence provides some kind of optimality result for fixed $\Delta$.

Consider now the covariance matrix of the CLS estimator. We have, $\epsilon\rightarrow 0$,
\begin{align*}
(J_{\Delta}(\alpha))_{ij}& \rightarrow \int_0^T \nabla_{\alpha_i}b (\alpha_0,z(\alpha_0,t))^* \nabla_{\alpha_j}b (\alpha_0,z(\alpha_0,t))dt,\mbox{ and} \\
(I_{\Delta}(\alpha))_{ij}& \rightarrow \int_0^T \nabla_{\alpha_i}b (\alpha_0,z(\alpha_0,t))^* \Sigma(\beta_0, z(\alpha_0,t))\nabla_{\alpha_j}b (\alpha_0,z(\alpha_0,t))dt.
\end{align*}

This clearly differs from the optimal asymptotic variance and confirms that the  CLS estimator is not efficient. However,
it  might be easier to minimize the CLS function $\sum_{k=1}^n  G_k(\alpha)^*G_k(\alpha)$ than the actual contrast function
$\sum_{k=1}^n  G_k(\alpha)^*S_{k-1}^{-1}(\alpha)G_k(\alpha)$. Therefore this CLS estimator  can be useful to serve as an initialization for other computations or algorithms.


%
\section{Assessment of estimators on simulated data sets}
\label{sec:simul}

We consider two examples of epidemic dynamics, the $SIR$
 and the $SIRS$ presented in the first part of these notes and recalled in Section  \ref{sec:approxdiif} for the diffusion approximation. We used the Gillespie algorithm (see Part I of these notes) to simulate  the $SIR$ epidemic dynamics $(\mathcal{Z}^N(t), 0 \leq t \leq T)$
 and, for the $SIRS$ model, the $\tau$-leaping method (\cite{cao05IV}), which is more efficient for large populations.\\

As pointed in the introduction, diffusion approximations are relevant in case of a major outbreak in a large community.
Therefore, we  keep only in the analysis what we called ``non-extinct trajectories'', chosen  according to a frequently used empirical criterion: we keep epidemic trajectories such that  the final epidemic size is larger than  the observed empirical size minus the standard empirical  error of the final epidemic size.

The inference  is based only on non-extinct trajectories .
Since we possess, for each simulation, the whole sample path of the epidemic process, we can compute the maximum likelihood estimator  (see Chapter \ref{chappointproc} of this part)
which depends on the whole  path of the jump process.
For instance, for the $SIR$ case, the MLE is
 \begin{equation}\label{MLEPJP}
\hat{\lambda}_N=\frac{1}{N} \frac{\mbox{ \# Infections}}{\int_0^T S^N(t) I^N(t) dt}; \quad
\hat{\gamma}_N= \frac{1}{N} \frac{\mbox{\# Recoveries}}{\int_0^T  I^N(t) dt}.
\end{equation}

We call this MLE  based on complete epidemic data  \textbf{ the reference estimator}.
This is the best result that can be achieved from these epidemic data. \\

 In order to investigate the influence of various parameters, we consider various scenarios. Each   scenario corresponds  to the choice of the model, the parameters $\theta$, the population size $N$,  the time interval of observation $[0,T]$ and the sampling interval $\Delta$. We proceeded to 1000 repetitions  for  each scenario. \\
Hence, we varied the total size of the population $N$,  the parameters ruling the $SIR$, $SIRS$ epidemics, the time interval for observations $[0,T]$.
 Then, we sampled with sampling $\Delta$  each path of the Markov jump process.  This sampling interval also varies.
Therefore the observations coming from the simulations are
$$  \frac{\mathcal{Z}^N(k\Delta)}{N}= Z^N(k\Delta) \;  k=1,\dots, n  \; \mbox{with  }T=n\Delta.$$

Each scenario corresponds  to the choice of the model, the parameters $\theta$, the population size $N$,  the time interval of observation $[0,T]$ and the sampling interval $\Delta$.

We compare the estimators obtained with the method described in the two previous sections with the MLE \eqref{MLEPJP}.
The properties of our minimum contrast estimators are assessed and compared to reference estimators.

\noindent
 For parameters with dimension greater than two, confidence ellipsoids are projected on planes, by considering all pairs of parameters.
Theoretical confidence ellipsoids are built as follows. Let $V(\theta_0)$  denote the covariance matrix of the asymptotic normal distribution of parameters
estimation in drift term (i.e.\  $I_b^{-1}(\theta_0)$  defined in (\ref{Ibtheta}) and $M^{-1}(\theta_0)$ defined in (\ref{Ma}).
Since  $\epsilon^{-1}V(\theta_0)^{-1/2}(\hat{\theta}_{\epsilon,\Delta}-\theta_0)\rightarrow_{\mathcal{L}}\mathcal{N}(0,I_{k})$ (where $\hat{\theta}_{\epsilon,\Delta}$ represents $\check{\alpha}_{\epsilon,\Delta}$ obtained minimizing (\ref {Ucheck}) 
 or $\bar{\eta}_{\epsilon,\Delta}$ in (\ref{baralpha})
 Then, for $k=a$ (dimension of $\alpha$),
we have,
 \begin{equation}\label{khi2}
\frac{1}{\epsilon^{2}}\;(\hat{\theta}_{\epsilon,\Delta}-\theta_0)^{*} V(\theta_0)^{-1}(\hat{\theta}_{\epsilon,\Delta}-\theta_0)\rightarrow_{\mathcal{L}}\chi_2(k).
\end{equation}
The matrix $V(\theta_0)^{-1}$ being positive,  the quantity $(\hat{\theta}_{\epsilon,\Delta}-\theta_0)^{*} V(\theta_0)^{-1}(\hat{\theta}_{\epsilon,\Delta}-\theta_0)$ is the squared norm of vector $\hat{\theta}_{\epsilon,\Delta}-\theta_0$ for the scalar product associated to $V(\theta_0)^{-1}$. If we denote by $\chi^2_k(0.95)$
the $95\%$ quantile of the $\chi^2_k$ distribution, the relation \eqref{khi2} could be rewritten as $||(\hat{\theta}_{\epsilon,\Delta}-\theta_0)^2 M(\theta_0)^{-1}||\leq \epsilon^2\chi^2_k(0.95)$  and define an ellipsoid in $\mathbb{R}^{k}.$ \\
\noindent
Empirical confidence ellipsoids are based on the variance-covariance matrix of centered estimators (based on $1000$ independent estimations), whose eigenvalues define the axes of ellipsoids.


\noindent
In the two epidemic models detailed below, we assume both components of $Z^N( t) =S^N(t), I^N(t)$ are observed  with sampling interval $\Delta$, $((S^N(k\Delta), I^N(k\Delta)), k =1,\dots n)$ with $T=n\Delta$.\\
\noindent
%
%
%

\subsection{The SIR model}
The parameters of interest for epidemics are considered following a reparameterization: the basic reproduction number, $R_0=\frac{\lambda}{\gamma}$, which represents the average number of secondary cases generated by one infectious in a completely susceptible population, and the average infectious duration, $d=\frac{1}{\gamma}$. Two values were tested for $R_0=\{1.5,5\}$ and $d$ was set to $3$ (in days, an average value consistent with influenza infection). Three values for the population size $N=\{400,1000,10000\}$ and of the number of observations $n=\{5,10,40\}$ were considered, along with two values for the final time of observation, $T=\{20,40\}$ (in days). For each scenario defined by a combination of parameters, the analytical maximum likelihood estimator ($MLE$), calculated from the observation of all the jumps of the Markov process (see \ref{chappointproc}), was taken as reference.\\

\noindent
\textit{Effect of the parameter values $\{R_0,d\}$ and of the number of observations $n$}\\
\noindent
The accuracy of the two estimators $ \check{\alpha}_{\epsilon,\Delta}$ and $\bar{\alpha}_{\epsilon}$, for $N=1000$ and from trajectories with weak ($R_0=5$) and strong ($R_0=1.5$) stochasticity is illustrated in Figure \ref{fig:1000_1}.  We observe that $R_0$ and $d$ are moderately correlated (ellipsoids are deviated with respect to the $x$-axis  and $y$- axis). The shape of confidence ellipsoids depends on parameter values: for $R_0=5$, the $95\%$ confidence interval  is larger for $R_0$ than for $d$, whereas the opposite occurs for $R_0=1.5$. For $R_0=5$, all these confidence intervals  are almost superimposed, which suggests that the estimation accuracy is not altered by the fact that  all the jumps are not observed. However, for $R_0=1.5$ the shape of ellipsoids varies with $n$. Point estimates for $MLE$
derived for complete observation of $ (\mathcal{Z}^N(t)$ of the original jump process and the estimators  $ \check{\alpha}_{\epsilon,\Delta}$, $\bar{\alpha}_{\epsilon}$ are very similar for different values of $n$, which confirms the interest of using   these estimators when small number of observations is available.\\

\begin{figure}[!ht]
\includegraphics[width=0.9\textwidth]{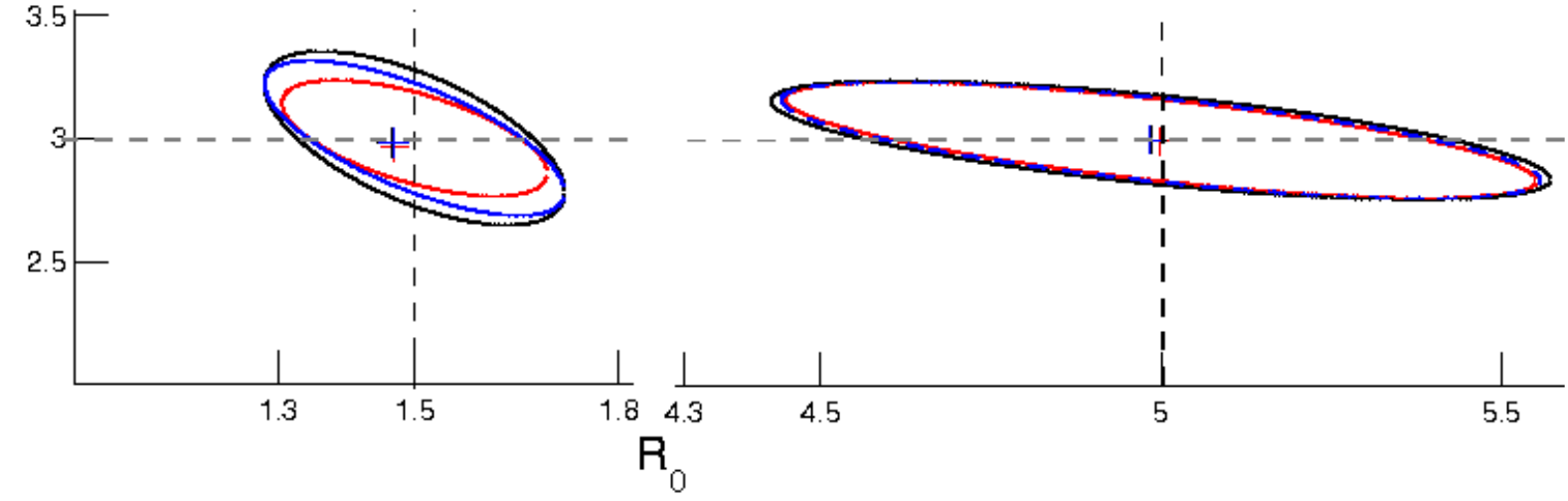}
\caption{Point estimators (+) are computed by averaging over 1000 independent simulated trajectories of the $SIR$ stochastic model (completely observed)  together  with their associated theoretical confidence ellipses centered on the true value: $MLE$ with complete observations (red), $CE$ for one observation/day, $n=40$ (blue) and $CE$ for $n=10$ (black). Two scenarios are illustrated: $(R_0,d,T)=\{(1.5,3,40);(5,3,20)\}$, with $N=1000$. For both scenarios $(S(0),I(0))=(0.99,0.01)$.The value of $d$ is reported on the y-axis. Horizontal and vertical dotted lines cross at the true value}.
\label{fig:1000_1}
\end{figure}

\begin{figure}[ht]
\includegraphics[width=0.9\textwidth]{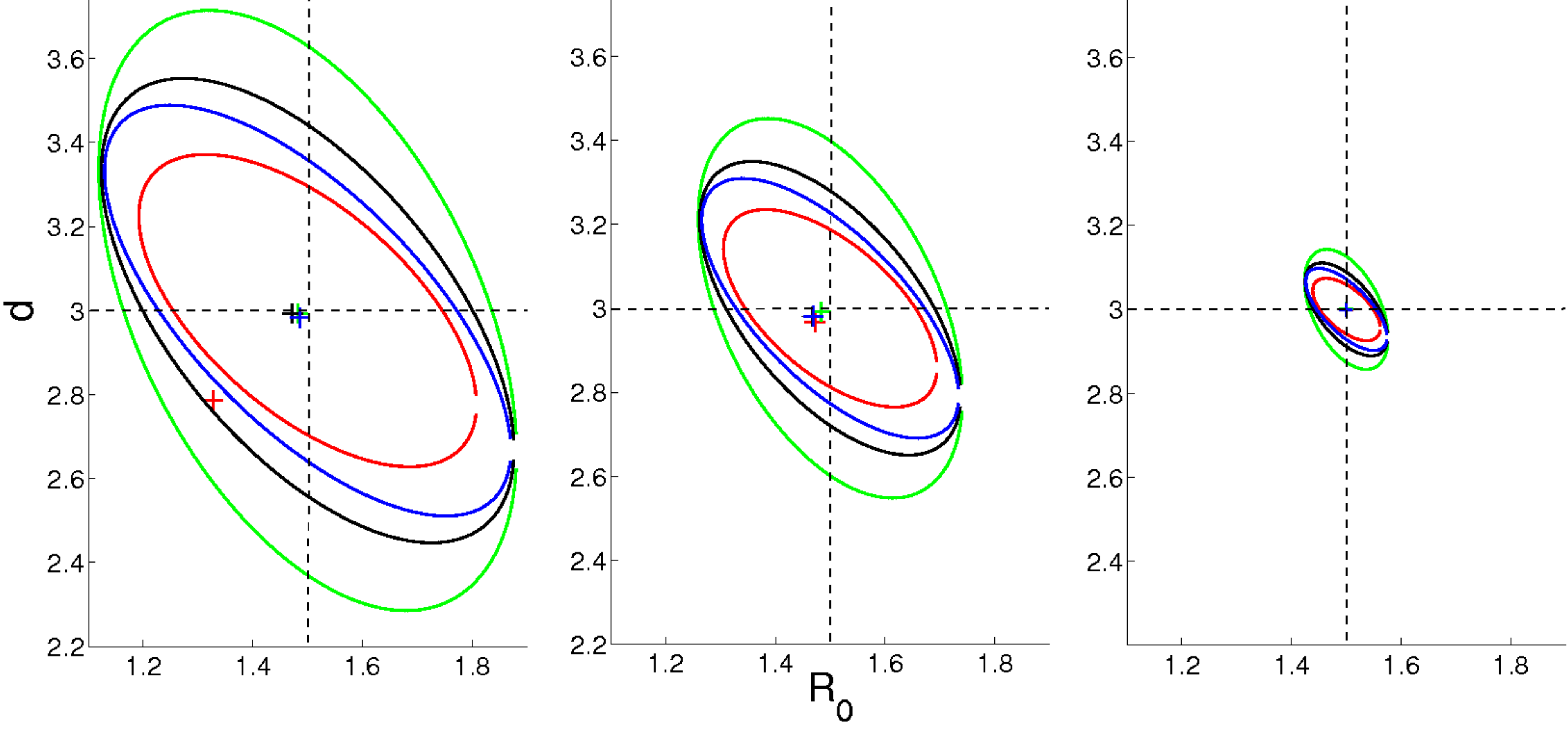}
\caption{Point estimators (+) computed by averaging over 1000 independent simulated trajectories of the $SIR$ stochastic model completely observed and their associated theoretical confidence ellipses centered on the true value: $MLE$ with complete observations (red), $CE$ for one observation/day, $n=40$ (blue), $CE$ for $n=10$ (black) and $CE$ for $n=5$ (green) for $(S(0),I(0))=(0.99,0.01)$, $(R_0,d)=(1.5,3)$ and $N=\{400,1000,10000\}$ (from left to right). Horizontal and vertical dotted lines cross at the true value.}
\label{fig:400-10000}
\end{figure}
\vspace{0.1cm}
\noindent
\textit{Effect of the parameter values $\{R_0,d\}$ and of the population size $N$}\
\noindent
From Figure \ref{fig:400-10000}, we can notice that $\sqrt{N}$ has an impact on estimation accuracy (the width of the confidence intervals decreases with $\sqrt{N}$). The case of very few observations ($n=5$) leads to the largest confidence intervals. The $MLE$ appears biased for $N=400$. This could be due to the fact that the $MLE$ is optimal when data represent a `typical' realization (i.e.\  a trajectory that emerges leading to a non-negligible number of infected individuals) of the Markov process, but could yield a bias when observations are far from the average behaviour. Our $CE$s seem robust to the departure from the `typical' behaviour (i.e.\  for noisy trajectories obtained either for small $N$ or small $R_0$).\\

\subsection{The SIRS model}
\noindent
For the $SIRS$ model introduced in Section \ref{SIRS_appli}, four parameters were estimated: $R$, $d$, $\lambda_1$ and $\delta$.
Concerning the remaining parameters,$\mu$ was set to $1/50$ $\mbox{years}^{-1}$ (a value usually considered in epidemic models), $T_{per}$ was set to $365$ days (corresponding to annual epidemics) and $\eta$ was taken equal to $10^{-6}$ (which corresponds to $10$ individuals in a population size of $N=10^7$). We should notice that instead of estimating the real $R_0$ (more complicated to calculate for periodical dynamics), we prefer to estimate a parameter combination similar to the $R_0$ for $SIR$ model, $\lambda_0 / \gamma$, which was called here $R$.The performances of $CE$s were assessed on parameter combinations: $(R,d,\lambda_1,\delta)=\{(1.5,3,0.05,2)$ and $(1.5,3,0.15,2)\}$ and $T=20$ years, with $\lambda_1=0.05$ leading to annual cycles and $\lambda_1=0.15$ to biennial dynamics (Figure \ref{fig:SIRSbif}). Numerically, the scenarios considered are consistent with influenza seasonal outbreaks. The accuracy of estimation is relatively high, as illustrated in Figure \ref{fig:SIRSafterbif}, regardless of the parameter. For one observation per day (which can be assimilated to a limit of data availability), the accuracy is very similar to the one based on a complete observation of the epidemic process (blue and red ellipsoids respectively). Estimations based on one observation per week are less but still reasonably accurate.

\begin{figure}[ht]
\includegraphics[width=0.99\textwidth]{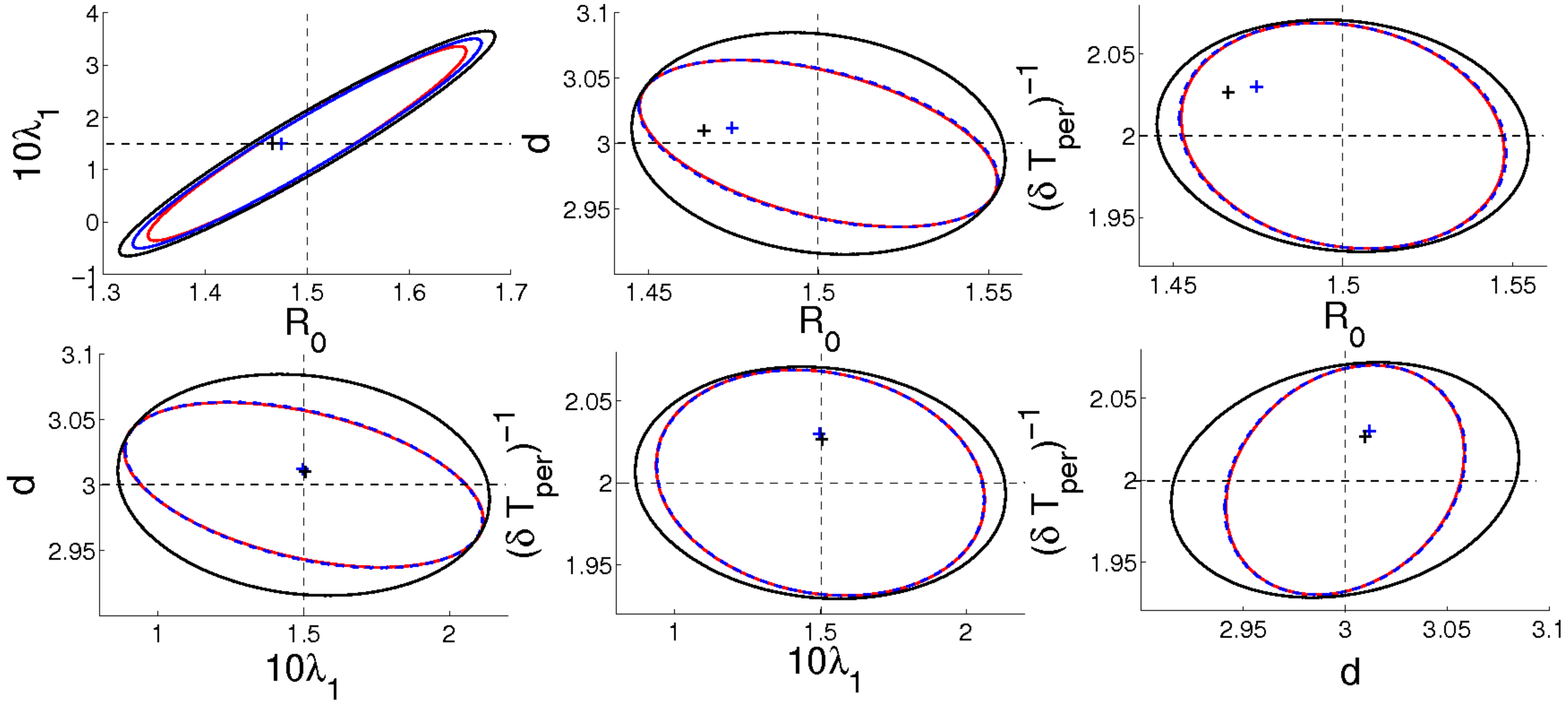}\\
\caption{Point estimators (+) computed by averaging over 1000 independent simulated trajectories of the $SIRS$ stochastic model with demography and seasonal forcing in transmission, completely observed (red), and their associated planar projections of theoretical confidence ellipsoids centered on the true value: $CE$ for one observation/day (blue) and for one observation/week (black) for $(R,d,\lambda_1,\delta)=(1.5,3,0.15,2)$, $T=20$ years and $N=10^7$. Asymptotic confidence ellipsoids ($n \rightarrow \infty$) are also represented (red,blue,black). Horizontal and vertical dotted lines cross at the true value.}
\label{fig:SIRSafterbif}
\end{figure}


\section{Inference for partially observed epidemic dynamics}
\label{sec:partial}
In the case of epidemics, numbers of susceptible and infected individuals over time are generally not observed. In practice, (sometimes noisy) observations are often assumed to correspond to cumulated numbers, over the sampling interval $\Delta$, of newly infected individuals (i.e.\  $\int^{t_k}_{t_{k-1}} \lambda S(s)I(s)ds$). In the $SIR$ diffusion model, this corresponds to the recovered individuals $\{ (R(t_k)-R(t_{k-1})), k=1,\dots n\}$ for diseases  with short duration of the infected period. Hence, this situation can be  assimilated, as a first attempt, to the case where only one coordinate can be observed.\\

\noindent
In this section, we consider the case  of a two-dimensional diffusion process $X(t)=(X_1(t),X_2(t))^*$
 \begin{equation}\label{SDE2D}
dX(t)= b(\alpha,X(t)) dt +\epsilon \sigma( \beta, X(t) dB(t)) dt ; \quad X(0)=x ,
\end{equation}
where $B(t)$ is a Brownian motion on $\R^2$ and $x$ non-random.

 We assume that  only the first coordinate $X_1(t)$ is observed on a fixed time interval $[0,T]$ with sampling $\Delta$.
We consider the diffusion on $\R^2$  satisfying the stochastic differential equation
Therefore, the observations are now
\begin{equation}\label{Yk}
X_1(t_k),\; k= 1,\dots n,\quad  \mbox{with  }\;t_k=k\Delta,\quad T=n\Delta.
\end{equation}
For continuous observations of $(X_1(t))$ on a finite time interval $[0,T]$, two studies \cite{jam95IV}, \cite{kut94IV} are concerned with parametric inference in this statistical framework. Both  studied the maximum likelihood estimator of parameters in the drift function  for a diffusion matrix equal to $ \epsilon ^2 I_p$. This likelihood is difficult to compute since it relies on integration on the unobserved coordinate. \cite{jam95IV}, \cite{kut94IV} proposed filtering approaches to compute this likelihood, as it is done  for general Hidden Markov Models (see e.g.\ {\cite{cap05IV} , \cite{dou01IV}).
Here, we can take advantage of the  presence of $\epsilon$ and extend to partial observations the  method by  contrast processes and M- estimators that had been developed for complete observations  (\cite{gen90IV}, \cite{glo09IV}, \cite{guy14IV}), \cite{sor03IV}).\\

\noindent
We study the case of small (or high frequency) sampling interval, $\Delta=\Delta_n \rightarrow 0$, on a fixed time interval $[0,T]$ with $T=n\Delta$, which yields explicit results. This allows us to disentangle problems coming from discrete observations and those coming from the missing observation of one coordinate and hence provides a better understanding of the problems rising in this context.
The case of $\Delta$ fixed could be studied similarly, with more cumbersome notations and no such insights .

First, the notations required are introduced, results are then stated, and finally, to illustrate this approach, the example of a two-dimensional Ornstein--Uhlenbeck process, where all the computations are explicit is developed. The consequences on diffusion approximations of Epidemic models where computations are no longer explicit are detailed later.

\subsection{Inference for  high frequency sampling of partial observations}
 Some specific notations need to be introduced.\\
  For $x \in \R^2$, $X^{\epsilon}(t)$, the diffusion process, $B(t)$ the Brownian motion, and $M $ a $2\times 2$ matrix,  we write
 \begin{equation}\label{yz}
 x=  \begin{pmatrix}x_1\\x_2\end{pmatrix};\;
X(t)=\begin{pmatrix}
X_1(t)\\
X_2 (t)
\end{pmatrix};\; B(t)=\begin{pmatrix}
B_1(t)\\B_2(t)
\end{pmatrix}; \; M=\begin{pmatrix}
M_{11} & M_{12}\\M_{21} & M_{22}
\end{pmatrix}.
\end{equation}
\noindent
For functions $f (\theta, x)$ defined for $x \in \R^2$, we use  \eqref{gradf} for differentiating with respect to $x$ and   \eqref{gradf}, \eqref{gradcompos}
for differentiation with respect to $\theta$.

\noindent
The observations are $(X_1(k\Delta),k=0,\dots n)$.
 Since $ x_2$ is not observed and unknown, we add it to the parameters. Therefore, setting  $x_2=\xi$, define using (S4),
\begin{equation}\label{theta}
 \eta=(\alpha, \xi) \in \R^{a+1};\quad \theta= (\alpha,\xi,\beta)= (\eta,\beta)\in \R^{a+b+1}.
\end{equation}
The quantities introduced in (\ref{TS}) depend on $\alpha$, $ \eta$ or  $ \theta$ and can be written, using \eqref{yz},
\noindent
The  expansion of $X(t) $  stated in  (\ref{TS})  yields that $X_1(t)$ satisfies, using notations (\ref{yz}),
  \begin{equation}\label{TaylorY}
 X_1(t)=z_1(\eta,t)+\epsilon g_1(\theta,t)+\epsilon^2 R_1^{\epsilon}(\theta,t) \mbox { with }
 \end{equation}
\begin{align}
g_1(\theta,t)&= \int_0 ^t \left( \Phi(\eta,t,u)\sigma(\beta,z(\eta,u)) \right)_{11} \;dB_1(u)\nonumber\\
&\quad + \left(\Phi(\eta,t,u)\sigma(\beta,z(\eta,u))\right)_{12} \;dB_2(u).\label{g1}
\end{align}
Using that $\Phi(t,u)=\Phi(t,s)\Phi(s,u) $ yields another expression for $g_1(\theta,t_k)$,

\begin{align}
g_1 (\theta,t_k) &=  \left( \Phi (\eta,t_k,t_{k-1}) g (\theta,t_{k-1})\right )_{1}\nonumber\\
 &\quad +  \int_{t_{k-1}}^{t_k}\left( \Phi(\eta,t,u)\sigma(\beta,z(\eta,u)) \right)_{11} \;dB_1(u)\nonumber\\
 &\quad +
\left(\Phi(\eta,t,u)\sigma(\beta,z(\eta,u))\right)_{12} \;dB_2(u).\label{g1bis}
\end{align}
\noindent
For estimating the unknown parameters, we use, instead of  a filtering approach, the stochastic expansion of $X(t)$, where the  unobserved component
$X_2(t) $ is substituted by its deterministic counterpart $z_2(\eta,t)$.
For building a tractable  estimation function, we also simplify the expression of $B_k(\alpha,X)$ (see (\ref{BkTS})) by replacing  $\Phi(\eta; t_k, t_{k-1})$ by its first-order approximation $I_2+ \Delta  \nabla_x  b(\alpha, z(\eta,t_{k-1}))$,
so that $\Phi_{11}(\eta, t_k,t_{k-1}) \simeq 1+\Delta \nabla_{x_1}  b_1(\alpha,z(\eta,t_{k-1}))$.

The path used in (\ref{BkTS}) is  $\begin{pmatrix}X_1(t)\\z_2(\eta,t)\end{pmatrix}$
leading, instead of $B_k( \alpha,X)$  to  $\displaystyle{\begin{pmatrix} A_k(\eta, X_1) \\ 0
\end{pmatrix} }$, with
\begin{equation}\label{Ak}
	A_k(\eta,X_1) = X_1({t_k})-z_1(\eta,t_k)-\left(1+\Delta \nabla_{x_1} b_1 (\alpha, z(\eta,t_{k-1}))\right)
	(X_1(t_{k-1})-z_1(\eta,t_{k-1})).
	\end{equation}
For a first approach, we  consider an estimation method  based on the Conditional Least Squares built on the $A_k(\eta,X_1)$'s defined in (\ref{Ak}).
\begin{equation}\label{BU}
{\bar U} _{\epsilon,\Delta}(\eta,X_1)= \frac{1}{\epsilon ^2 \Delta} \sum_{k=1}^n A_k(\eta, X_1)^2.
\end{equation}
This CLS functional does not depend on $\beta$, and therefore $\beta$  cannot be estimated using  ${\bar U} _{\epsilon,\Delta}$.
estimated.
\noindent
The associated  estimators are then defined as
\begin{equation}\label{def:estimateurs_bar}
\bar{\eta}_{\epsilon,\Delta}=\underset{
\eta\in K_a\times K_z}{\mathrm{argmin}} \ \bar{U}_{\epsilon,\Delta}(\eta,X_1).
\end{equation}
Note that this process could also be used  for estimating $\eta$ for fixed $\Delta$ and low frequency data, using  $\Phi_{11}(t_k,t_{k-1})$ instead of its approximation.\\

Assume that $\eta= (\alpha,\xi) \in \Theta$, with $\Theta$ compact set of $\R^a \times \R$.
Denote by $\eta_0= (\alpha_0,\xi_0)$ the true parameter value and consider the estimation of $\eta$.
The distribution of $(X(t))$ satisfying \eqref{SDE2D} depends on $\theta=(\eta, \beta)$. Set $\theta_0= (\eta_0,\beta_0)$ and $\P_{\theta_0}$ the distribution of
$(X(t))$ on $(C([0,T],\R^2),\mathcal{ C}_T)$.

Let us first study $\bar{U}_{\epsilon,\Delta} (\eta,X_1)$.
\begin{lemma}\label{lemmeBU}
Assume (S1)--(S5). Then, the process $\bar{U}_{\epsilon,\Delta}(\eta,X_1)$ defined in (\ref{BU}) satisfies that, under ${\mathbb P}_{\theta_0}$,  as $\epsilon,\Delta \rightarrow 0$,
\begin{equation}\label{Kgamma}
	\epsilon ^2 \bar{U}_{\epsilon,\Delta}(\eta ,X_1) \rightarrow  J_T (\eta_0,\eta) =
\int_0^T (\Gamma_1(\eta_0,\eta;t))^2 dt \quad \mbox{a.s.  where}
\end{equation}
\begin{align}
 \Gamma_1(\eta_0,\eta; t)= b_1(\alpha_0,z(\eta_0, t))&-b_1(\alpha,z(\eta,t))\label{gamma1}\\
&-\nabla_{x_1}  b_1(\alpha, z(\eta,t))(z_1(\eta_0,t)-z_1(\eta,t)).\nonumber
\end{align}
\end{lemma}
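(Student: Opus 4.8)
The plan is to follow the pattern of Lemma \ref{BGamma} together with the consistency step of Proposition \ref{CV-contraste}, by splitting $A_k(\eta,X_1)$ into a deterministic skeleton evaluated along the limiting curve $z_1(\eta_0,\cdot)$ and a stochastic fluctuation governed by the expansion (\ref{TaylorY}). Concretely I would write $A_k(\eta,X_1)=A_k(\eta,z_1(\eta_0,\cdot))+\delta_k$, where $A_k(\eta,z_1(\eta_0,\cdot))$ is obtained by substituting the observed path $X_1$ by the deterministic curve $z_1(\eta_0,\cdot)$ in (\ref{Ak}), and $\delta_k$ gathers the terms involving $X_1-z_1(\eta_0,\cdot)=\epsilon g_1(\theta_0,\cdot)+\epsilon^2R_1^\epsilon(\theta_0,\cdot)$. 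Since $A_k$ is affine in its path argument this split is exact, with $\delta_k=(X_1(t_k)-z_1(\eta_0,t_k))-(1+\Delta\nabla_{x_1}b_1(\alpha,z(\eta,t_{k-1})))(X_1(t_{k-1})-z_1(\eta_0,t_{k-1}))$. Expanding the square in (\ref{BU}) then yields
\[
\epsilon^2\bar U_{\epsilon,\Delta}(\eta,X_1)=\frac1\Delta\sum_{k=1}^nA_k(\eta,z_1(\eta_0,\cdot))^2+\frac2\Delta\sum_{k=1}^nA_k(\eta,z_1(\eta_0,\cdot))\,\delta_k+\frac1\Delta\sum_{k=1}^n\delta_k^2,
\]
and I would treat the three sums in turn.

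For the first sum I would establish the deterministic analogue of Lemma \ref{BGamma}: using $z_1(\eta_0,t_k)-z_1(\eta_0,t_{k-1})=\Delta\,b_1(\alpha_0,z(\eta_0,t_{k-1}))+O(\Delta^2)$, the same expansion for $z_1(\eta,\cdot)$, and the first-order approximation $\Phi_{11}(\eta,t_k,t_{k-1})=1+\Delta\nabla_{x_1}b_1(\alpha,z(\eta,t_{k-1}))+O(\Delta^2)$ already built into (\ref{Ak}), a Taylor expansion gives $\frac1\Delta A_k(\eta,z_1(\eta_0,\cdot))=\Gamma_1(\eta_0,\eta;t_{k-1})+\Delta\,r_k(\eta_0,\eta)$ with $\sup_{k,\eta}\|r_k(\eta_0,\eta)\|\le C$. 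Consequently $\frac1\Delta\sum_kA_k(\eta,z_1(\eta_0,\cdot))^2=\Delta\sum_k\Gamma_1(\eta_0,\eta;t_{k-1})^2+O(\Delta)$, which converges, as a Riemann sum and uniformly in $\eta$ over the compact parameter set, to $\int_0^T\Gamma_1(\eta_0,\eta;t)^2\,dt=J_T(\eta_0,\eta)$.

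It then remains to prove that the two stochastic sums are negligible. Using Proposition \ref{gtheta} I would expand the increment of $g_1$ over $[t_{k-1},t_k]$; its deterministic drift part of order $\Delta$ cancels against the $\Delta\nabla_{x_1}b_1$ term in $\delta_k$ (as in the $\eta=\eta_0$ case), leaving $\delta_k=\epsilon\sqrt\Delta\,\widetilde T_k+\epsilon\Delta\,O_P(1)+\epsilon^2\,O_P(1)$, where $\widetilde T_k=\Delta^{-1/2}\bigl(\int_{t_{k-1}}^{t_k}\Phi(\eta_0,t_k,u)\sigma(\beta_0,z(\eta_0,u))\,dB(u)\bigr)_1$ are $\mathcal{G}_k^n$-measurable, centred, conditionally Gaussian with $\E_{\theta_0}(\widetilde T_k^2\mid\mathcal{G}_{k-1}^n)$ of order one (Lemma \ref{BkX}), the remainders being controlled by Propositions \ref{TStheta} and \ref{Rteps}. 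The cross sum is $2\epsilon\sqrt\Delta\sum_k\Gamma_1(\eta_0,\eta;t_{k-1})\widetilde T_k+O(\epsilon T)$; its leading part is a sum of martingale differences with conditional variance $O(\epsilon^2\Delta\,n)=O(\epsilon^2T)$, hence tends to $0$. The delicate term is the pure-noise sum $\frac1\Delta\sum_k\delta_k^2$, whose dominant contribution is $\epsilon^2\sum_k\widetilde T_k^2$; with $n=T/\Delta$ summands each of order one this is $O(\epsilon^2/\Delta)$, and it vanishes precisely in the operating regime $n\epsilon^2=\epsilon^2T/\Delta\to0$ (the natural one for epidemic data, where $N=\epsilon^{-2}\gg n$). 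Turning this into the claimed almost-sure statement requires a strong law of large numbers for the martingale $\sum_k\bigl(\widetilde T_k^2-\E_{\theta_0}(\widetilde T_k^2\mid\mathcal{G}_{k-1}^n)\bigr)$, and this is the main obstacle; the other pieces are routine. Collecting the three sums gives $\epsilon^2\bar U_{\epsilon,\Delta}(\eta,X_1)\to J_T(\eta_0,\eta)$ almost surely, uniformly in $\eta$, which is the asserted convergence.
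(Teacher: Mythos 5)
Your deterministic skeleton is exactly the paper's: the paper also evaluates $A_k$ along the limiting curve and establishes $\frac{1}{\Delta}A_k(\eta,z_1(\eta_0,\cdot))=\Gamma_1(\eta_0,\eta,t_{k-1})+\Delta\norm{\eta-\eta_0}\,r_k(\eta_0,\eta)$ with $r_k$ uniformly bounded (the analogue of Lemma \ref{BGamma}), followed by the Riemann-sum limit giving \eqref{Kgamma}. Where you part ways is in the handling of the double limit. The paper's proof is \emph{sequential}: it first lets $\epsilon\rightarrow 0$, invoking the stochastic Taylor expansion (Proposition \ref{TStheta}) to replace the observed path $X_1$ by $z_1(\eta_0,\cdot)$ uniformly on $[0,T]$, and only then lets $\Delta\rightarrow 0$. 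In that reading your $\delta_k$, the cross sum and the pure-noise sum simply never appear — once the path has been replaced by its deterministic limit there is no stochastic fluctuation left — and the almost-sure mode is inherited from the uniform convergence of the path, so no strong law of large numbers for martingales is needed.

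Your joint-limit route is genuinely different and finer, and it surfaces a real issue: the pure-noise sum $\frac{1}{\Delta}\sum_k\delta_k^2\approx\epsilon^2\sum_k\widetilde T_k^2$ is of order $n\epsilon^2=\epsilon^2 T/\Delta$ and vanishes only when $n\epsilon^2\rightarrow 0$. That condition is \emph{not} among the hypotheses of Lemma \ref{lemmeBU}; in the paper it enters only at Theorem \ref{limiteta}(ii). So, as written, your argument proves the lemma in the regime $n\epsilon^2\rightarrow 0$ (and even there the almost-sure statement is left open, as you acknowledge), not the unconditional statement claimed. This is the one genuine gap relative to the statement as posed — but it is an instructive one: it shows that the unconditional joint-limit claim is delicate, and that the paper obtains it only because its proof takes the two limits one after the other, a regime in which the noise term you isolate is identically absent. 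To close your proof, either adopt the paper's sequential reading (which makes your cross and noise sums vanish trivially before $\Delta$ moves, and settles the a.s. mode), or keep your decomposition and record the conclusion under the explicit condition $n\epsilon^2\rightarrow 0$.
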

\noindent

So, to get that $\bar{U}_{\epsilon,\Delta}(\eta,Y)$ is a contrast function for estimating $\eta=(\alpha,\xi)$,  we need an assumption that ensures that $\{\eta \neq \eta_0 \Rightarrow J_T(\eta_0,\eta)>0\}$. This leads to the additional identifiability assumption,\\

\noindent
\textbf{ (S8)}:  $\eta \neq \eta_0  \Rightarrow  \{t \rightarrow \Gamma_1(\eta_0,\eta; t) \not\equiv 0 \}.$\\

\noindent
For deterministic systems, the notion of observability is used in the case of partial observations  (see e.g.\ \cite{poh78IV}, \cite{sed02IV}), which sums up to $\{\eta \neq \eta_0 \Rightarrow z(\eta,\cdot) \not\equiv z(\eta_0,\cdot)\}$. If the underlying deterministic system is not observable,  Assumption (S8) which makes reference to the identifiability of the model with respect to the parameters is not satisfied. But the converse is not true,  Assumption  (S8) being a bit  stronger.\\
\noindent
\begin{proof}
The proof of Lemma \ref{lemmeBU} is a repetition of  the proof of Lemma \ref{BkTS}. First, an application of the stochastic Taylor expansion yields that, as  $\epsilon\rightarrow 0, (X_1(t), 0 \leq t \leq T)
\rightarrow (z_1(\eta_0,t), 0 \leq t \leq T)$ almost surely under   ${\mathbb P}_{\theta_0}$. Second, letting $\Delta \rightarrow 0$, we get that,
there exists a constant $C>0$ such that
\begin{equation}\label{lim Ak}
 \frac{1}{\Delta} A_k(\alpha, z_1(\eta_0,\cdot))= \Gamma_1(\eta_0,\eta,t_{k-1}) + \Delta \norm{ \eta-\eta_0} r_k(\eta_0,\eta),
\end{equation}
 with $\sup_k \sup_{\eta \in \Theta} \norm{r_k(\eta_0,\eta)} \leq C$.
\end{proof}

To study the asymptotic behaviour of $\bar{\eta}_{\epsilon,\Delta}$, we have to introduce additional quantities.
First, we  define the vector $D(\eta, t) \in\R^{a+1}$, using  the notations defined in \eqref{gradf},
\begin{align}
D_i(\eta,t)
= &-(\nabla_{\alpha_i} b_1)(\alpha,z(\eta,t))-\nabla _{x_2} b_1(\alpha, z(\eta,t)) \nabla_{\alpha_i} z_2( \eta,t) \quad \mbox{ for } i=1,\dots,a,\nonumber \\
D_i(t)= &-\nabla_{x_2} b_1 (\alpha, z(\eta,t))\nabla_{\xi}  z_2 (\eta,t)\quad\mbox{ if } i=a+1,    \label{Di}
\end{align}
Then, built on the $D_i$'s, define the matrix $\Lambda(\eta)= (\Lambda_{ij}(\eta))$ by
\begin{equation}\label{lambda}
 \Lambda_{ij}(\eta)= 2
\int _0 ^T D_i(\eta,t)D_j(\eta, t)\; dt.
\end{equation}
\noindent
Finally, define the three functions for $\theta= (\alpha,\xi,\beta)$,
\begin{align}
v_1(\theta;t)&= \sigma_{11}^2(\beta, z(\eta,t))+\sigma_{12}^2(\beta, z(\eta,t))\nonumber\\
&= \Sigma_{11}(\beta,z(\eta,t)),\nonumber\\	
v_2(\theta;t,s)&=  \sigma_{11}(\beta,z(\eta,s))
\left(\Phi(\eta, t,s)\sigma(\beta,z(\eta,s))\right)_{21}\nonumber\\
&\quad + \sigma_{12}(\beta, z(\eta,s))
\left(\Phi(\eta,t,s)\sigma(\beta, z(\eta,s))\right)_{22}\nonumber\\
 &= \left(\Phi(\eta;t,s)\Sigma(\beta, x(\eta,s))\right)_{21},\nonumber\\
v_3(\theta,t,s )=& \int_0^{t \wedge s} \left(\Phi(\eta,t,u)\sigma( \beta,z(\eta,u))\right)_{11}
\left(\Phi(\eta,s,u)\sigma(\beta, z(\eta,u))\right)_{11} du\nonumber \\
  &\quad + \int_0^{t \wedge s} \left(\Phi(\eta,t,u)\sigma(\beta, z(\eta,u))\right)_{22}\left(\Phi(\eta,s,u)\sigma( \beta,z(\beta,u))\right)_{22} du.\label{vi}
\end{align}

\noindent
We can now state the main result of this section.

\begin{theorem}\label{limiteta}
 Assume (S1)--(S8). Then under $\P_{\theta_0}$, as $\epsilon,\Delta \rightarrow 0$,
\begin{enumerate}
\item[(i)] $ \bar{\eta}_{\epsilon,\Delta} \rightarrow \eta_0$ in probability .\\
\item[(ii)]  If moreover $ \epsilon^2 \Delta^{-1}= n\epsilon^2\rightarrow 0$ and
$\Lambda (\eta_0)$ defined in (\ref{lambda}) is invertible, then
\begin{equation}\label{CVL}
	\epsilon ^{-1}(\bar{\eta}_{\epsilon,\Delta}-\eta_0) \rightarrow
\mathcal{ N}(0,\Lambda (\eta_0)^{-1}V(\theta_0) \Lambda (\eta_0)^{-1}) \quad
\mbox{in distribution},
\end{equation}
where $V(\theta)= V^{(1)}(\theta)+ V^{(2)}(\theta)+ V^{(3)}(\theta)$ with, using (\ref{Di}), (\ref{vi}),
\begin{align}
&V_{ij}^{(1)}(\theta)= \int _0 ^T D_i(\eta,t)D_j(\eta, t) v_1(\theta,t) \; dt,\\
&V_{ij}^{(2)}(\theta) =   \int\int _{0\leq s \leq t \leq  T } \; D_i(\eta,s) D_j(\eta,t) \nabla_{x_2} b_1 (\alpha, z(\eta,s)) v_2(\theta,t,s)) ds\; dt, \\
&V_{ij}^{(3)}(\theta)=\\
&\int _0^T  \int _{ 0}^T  D_i(\eta,s) D_j(\eta, t) \nabla_{x_2}  b_1 (\alpha, z(\eta,s)) \nabla_{x_2} b_1 (\alpha, z(\eta,t)) v_3(\theta,t,s) ds\; dt.\nonumber
\end{align}
\end{enumerate}
\end{theorem}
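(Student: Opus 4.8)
The plan is to treat $\bar\eta_{\epsilon,\Delta}$ as a minimum–contrast estimator and to run the same two–stage scheme already used for Theorems \ref{theo:normalite} and \ref{LimLFO}: first establish consistency from a uniform convergence of the renormalised contrast, and then derive the limit law from a Taylor expansion of the score $\nabla_\eta\bar U_{\epsilon,\Delta}$ at $\eta_0$, controlling separately the gradient (by a central limit theorem) and the Hessian (by a law of large numbers). Throughout I would lean on the stochastic expansion of Proposition \ref{TStheta} and on the explicit Gaussian structure of $(g_1,g_2)$ provided by Proposition \ref{gtheta}.

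For part (i), I would first upgrade the pointwise convergence of Lemma \ref{lemmeBU} to a uniform one. Using Proposition \ref{TStheta} (so that $X_1(t)\to z_1(\eta_0,t)$ a.s.\ with uniformly controlled remainders), the regularity (S5), and the compactness of $\Theta$ in (S4b), one gets $\sup_{\eta}\lvert\epsilon^2\bar U_{\epsilon,\Delta}(\eta,X_1)-J_T(\eta_0,\eta)\rvert\to 0$ in $\P_{\theta_0}$–probability, where $J_T(\eta_0,\eta)=\int_0^T\Gamma_1(\eta_0,\eta;t)^2\,dt\ge 0$. Since $J_T(\eta_0,\eta_0)=0$ and, by the identifiability assumption (S8), $J_T(\eta_0,\eta)=0$ forces $\Gamma_1(\eta_0,\eta;\cdot)\equiv 0$ and hence $\eta=\eta_0$, the map $\eta\mapsto J_T(\eta_0,\eta)$ is a genuine contrast with a unique, well–separated minimiser. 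The standard argmin inequality $0\le J_T(\eta_0,\bar\eta_{\epsilon,\Delta})\le 2\sup_\eta\lvert\epsilon^2\bar U_{\epsilon,\Delta}(\eta)-J_T(\eta_0,\eta)\rvert\to 0$ then yields $\bar\eta_{\epsilon,\Delta}\to\eta_0$ in probability, exactly as in Proposition \ref{CV-contraste}(ii).

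For part (ii), since $\bar U_{\epsilon,\Delta}$ is $C^2$ and $\eta_0\in\mathrm{Int}(\Theta)$, I would write the expansion $0=\epsilon\nabla_\eta\bar U_{\epsilon,\Delta}(\eta_0)+\epsilon^2 N_{\epsilon,\Delta}\,\epsilon^{-1}(\bar\eta_{\epsilon,\Delta}-\eta_0)$ with $N_{\epsilon,\Delta}=\int_0^1\nabla^2_\eta\bar U_{\epsilon,\Delta}(\eta_0+t(\bar\eta_{\epsilon,\Delta}-\eta_0))\,dt$. For the Hessian, differentiating \eqref{Ak} twice and checking that $\tfrac1\Delta\nabla_{\eta_i}A_k(\eta_0)\to D_i(\eta_0,t_{k-1})$ — here the observed–coordinate term $\nabla_{x_1}b_1$ cancels against the factor $1+\Delta\nabla_{x_1}b_1$, leaving precisely the quantity $D_i$ of \eqref{Di} — shows that $\epsilon^2\nabla^2_\eta\bar U_{\epsilon,\Delta}(\eta_0)$ converges to a non-singular multiple of $\Lambda(\eta_0)$ defined in \eqref{lambda}, the term $A_k\nabla^2_\eta A_k$ being negligible because $A_k(\eta_0)=O_P(\epsilon\sqrt\Delta)$ and $n\epsilon^2\to 0$; consistency then transfers this to $\epsilon^2 N_{\epsilon,\Delta}$. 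For the score, inserting the expansion \eqref{TaylorY}–\eqref{g1bis} into $A_k(\eta_0)$ gives $A_k(\eta_0)/\epsilon=\Delta\,\nabla_{x_2}b_1(\alpha_0,z(\eta_0,t_{k-1}))\,g_2(\theta_0,t_{k-1})+\int_{t_{k-1}}^{t_k}\big[(\Phi\sigma)_{11}\,dB_1+(\Phi\sigma)_{12}\,dB_2\big]+O_P(\epsilon)$, so that $-\epsilon\nabla_\eta\bar U_{\epsilon,\Delta}(\eta_0)$ is, up to a numerical factor, $\sum_k D_i(\eta_0,t_{k-1})$ times this quantity.

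The main obstacle lies exactly here: the array $A_k(\eta_0)/\epsilon$ is \emph{not} a martingale difference relative to $(\mathcal{G}_{k-1}^n)$, since the $g_2$–history term is $\mathcal{G}_{k-1}^n$–measurable and thus non-centred given the past — this is precisely the cost of observing only one coordinate, and it rules out a plain martingale CLT. I would therefore split the score into its \emph{fresh} martingale part $\sum_k D_i(\eta_0,t_{k-1})\int_{t_{k-1}}^{t_k}(\cdot)\,dB$ and its \emph{accumulated} part $\sum_k D_i(\eta_0,t_{k-1})\,\Delta\,\nabla_{x_2}b_1\,g_2(\theta_0,t_{k-1})$, the latter being a Riemann sum converging to the Gaussian integral $\int_0^T D_i(\eta_0,s)\,\nabla_{x_2}b_1(\alpha_0,z(\eta_0,s))\,g_2(\theta_0,s)\,ds$. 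Applying the triangular–array CLT (Theorem \ref{CLTTT}, resp.\ Lemma \ref{VGCJJ}) to the fresh part and computing, through Proposition \ref{gtheta} and It\^o's isometry, the three covariance contributions — the variance of the fresh increments ($v_1$, yielding $V^{(1)}$), the covariance between a fresh increment and its later incorporation into $g_2$ ($v_2$, with the causal ordering $0\le s\le t\le T$, yielding $V^{(2)}$), and the variance of the accumulated $g_2$–history ($v_3$, yielding $V^{(3)}$) — shows that $-\epsilon\nabla_\eta\bar U_{\epsilon,\Delta}(\eta_0)$ converges to a centred Gaussian vector whose covariance is built from $V=V^{(1)}+V^{(2)}+V^{(3)}$. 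Here again the condition $n\epsilon^2\to 0$ is what makes the $\epsilon^2$–order remainders in $A_k(\eta_0)$ and the mixed derivatives asymptotically negligible. Collecting the normalising constants as in the proof of Theorem \ref{theo:normalite}, the sandwich $\epsilon^{-1}(\bar\eta_{\epsilon,\Delta}-\eta_0)=-(\epsilon^2 N_{\epsilon,\Delta})^{-1}\,\epsilon\nabla_\eta\bar U_{\epsilon,\Delta}(\eta_0)$ then produces the announced limit $\mathcal{N}(0,\Lambda(\eta_0)^{-1}V(\theta_0)\Lambda(\eta_0)^{-1})$.
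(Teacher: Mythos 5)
Your proposal is correct and takes essentially the same route as the paper: consistency via uniform convergence of $\epsilon^2\bar{U}_{\epsilon,\Delta}$ to $J_T(\eta_0,\cdot)$ combined with (S8), then a Taylor expansion of the score in which $\epsilon\nabla_\eta\bar{U}_{\epsilon,\Delta}(\eta_0)$ is asymptotically Gaussian with the three covariance contributions $V^{(1)},V^{(2)},V^{(3)}$ arising precisely because the summands are no longer conditionally independent (the accumulated $g_2$-history term), while $\epsilon^2\nabla^2_\eta\bar{U}_{\epsilon,\Delta}(\eta_0)\rightarrow\Lambda(\eta_0)$, followed by the sandwich argument. This is exactly the scheme the paper records in \eqref{DU} and \eqref{nablaijU} before omitting the technical details, which your splitting into a fresh martingale part and an accumulated Riemann-sum part fills in consistently.
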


Based on \eqref{Kgamma} and Assumption (S8), the proof of the consistency of $\bar{\eta}_{\epsilon,\Delta}$ is obtained by standard tools and omitted.

\noindent
For the proof of (ii), the main difficulty
 lies in a precise study of  $\epsilon \nabla _i \bar{U}_{\epsilon,\Delta} (\eta_0,X_1)$, which is the sum of $n$ terms that are no longer conditionally independent. The three terms in the matrix $V(\theta_0)$ come from this expansion. Indeed,
\begin{equation}\label{DU}
\epsilon (\nabla _i \bar{U} (\eta_0,Y))_{i }, \rightarrow \mathcal{ N}_{a+1}\bigl(0, V(\theta_0)\bigr) \quad \mbox{ in distribution under  } \P_{\theta_0}.
\end{equation}
Then, studying $\epsilon^2\nabla_{ij} \bar{U} (\eta,Y)$ yields,
using  (\ref{BU}), (\ref{Di}), as $\epsilon,\Delta \rightarrow 0$,
\begin{equation}\label{nablaijU}
\epsilon^2	\nabla_{ij} \bar{U} (\eta_0,Y) \rightarrow  \Lambda_{ij}(\eta_0)= 2
\int _0 ^T D_i(\eta_0,t)D_j(\eta_0, t) dt \quad \mbox{a.s. under } \P_{\theta_0}.
\end{equation}
The proof is quite technical and is omitted.\\

Let us describe our method on a partially observed  two-dimensional Ornstein--Uhlenbeck  diffusion process $X(t)= (X_1(t), X_2(t))^*$ where all the  computations are explicit. Let \\
\begin{equation}\label{OU2D}
dX(t)= AX(t)dt+\epsilon \varsigma dB(t),\quad X(0)=\begin{pmatrix} x_1\\ x_2 \end{pmatrix} ,
\end{equation}	
with	$A=\begin{pmatrix}
a & b \\0 & a+h
\end{pmatrix}, \varsigma=\sigma\begin{pmatrix}
1 & 0 \\0 & 1
\end{pmatrix}.
$\\

\noindent
We assume that $h\neq 0$, $\sigma >0$. The parameter in the drift is $\alpha=(a,b,h)$. For  partial observations, we also need introducing $\eta= (a,b,h,\xi)$ and  $\theta= (a,b,h,\xi ,\sigma)$. The observations are
 $(X_1(t_k), k=1,\dots,n)$ with $t_k=k\Delta$, $T=n\Delta $ and $\Delta=\Delta_n\rightarrow 0$. \\
The solution of the ODE \eqref{ODEPARTIV} applied to the drift of diffusion process (\ref{OU2D}) is
\begin{equation}\label{OUODE}
z_1(\eta,t)=( z_1- \frac{\xi b}{h})e^{at}+\frac{\xi b}{h}e^{(a+h)t}; \quad z_2(\eta,t)=\xi e^{(a+h)t}.
\end{equation}
Let us compute  the matrix $\Phi(\alpha,t, u) = e^{(t-u)A}$, we have  $A=PDP^{-1}$, with
$$P=\begin{pmatrix}
1 & b/h \\ 0 & 1
\end{pmatrix},\; D=\begin{pmatrix}
a & 0\\0& a+h
\end{pmatrix}, \mbox{ so that }$$
$$\Phi(\alpha,t,s)=\begin{pmatrix}
e^{a(t-s)} & \frac{b}{h}\bigl(e^{(a+h)(t-s)}-e^{a(t-s)}\bigr) \\ 0 & e^{(a+h)(t-s)}
\end{pmatrix}.$$
The solution of (\ref{OU2D}) is therefore
$X(t)=  Pe^{tD}P^{-1}X(0)+\epsilon\sigma \int_{0}^{t}Pe^{(t-s)D}P^{-1}dB(s)$. Hence,
\begin{equation}\label{OUY}
X_1(t)= z_1(\eta,t)+\epsilon \sigma \left(\int_{0}^{t}\;e^{a(t-s)}dB_1(s) +\frac{b}{h} \int_{0}^{t}\;
(e^{(a+h)(t-s)}-e^{a(t-s)}) \;dB_2(s)\right).
\end{equation}
Using that $\nabla_{x_1} b_1(\alpha, z(\eta,t))= a$  and  (\ref{OUODE}) yields that
\begin{equation}\label{OU:Ak}
A_k(\eta,X_1) = X_1({t_k})-z_1(\eta, t_k)-(1+a\Delta)
\left(X_1({t_{k-1}})-z_1(\eta,t_{k-1})\right).
  \end{equation}
   $$ \Gamma_1(\eta_0,\eta,t)=  (a_0-a) z_1(\eta_0,t)+ b_0 \xi_0 e^{(a_0+h_0)t}- b\xi e ^{(a+h) t}.$$
 Assumptions (S1)--(S7) are satisfied. Looking at the analytical expression of $z_1(\eta,t)$, we have that $b\xi= \tilde{b}\tilde{\xi}$ leads to identical solutions
 $z_1( \eta,t)$. Therefore, Assumption (S8) is not satisfied and $b$, $\xi$ cannot be estimated separately when observing one coordinate only.
This is also true for the deterministic ODE and the non-identifiability is here an intrinsic problem to this  partial observation example.

Therefore, we   define a new parameter $b' = b\xi$ and  consider that the parameter  to estimate is now $ \eta=( a,b',h)$. Then,
checking  (S8) is straightforward. \\
The various quantities introduced in the previous section have a closed expression. Indeed,
the functions $D_i(\eta, t)$ defined in (\ref{Di}) write, using (\ref{nablaijU}), (\ref{OUODE}) with $ \eta=( a,b',h)$,
\begin{align*}
D_1(\eta , t)&=-(z_1- \frac{b'}{h})e^{at}-(\frac{b'}{h}+b't)e^{(a+h)t},\\
D_2(\eta,t)&=- e^{(a+h)t},\\
D_3 (\eta,t)&= -b' t e^{(a+h)t}.
\end{align*}
The matrix $\Lambda (\eta)$ is defined as  $\Lambda (\eta)= (\Lambda_{ij}(\eta)) $ with $\Lambda_{ij}(\eta)= \int_0 ^T D_i(\eta,t) D_j(\eta, t) dt$
($= \int_0 ^T D(\eta,t) D^*(\eta, t) dt$.
The functions defined in (\ref{vi}) are, with $\theta=(a,b,h,\sigma)$,
$$v_1(\theta,t)= \sigma^2;\quad v_2(\theta,t,s)= 0;\quad
v_3(\theta,t,s)=\sigma ^2\left(\frac{e^{a|t-s|}}{2a} +\frac{e ^{(a+h) |t-s|}}{2(a+h)} \right).$$
Therefore,
\begin{align*}
V_{ij}(\theta)&= \sigma^2 \int _0 ^T D_i(\eta,t)D_j(\eta, t) dt \\
&\quad +\frac{\sigma^2 b ^2 }{2} \int_0^T \int_0^T D_i(\eta,s)D_j(\eta,t) \left(\frac{e^{a|t-s|}}{a} +\frac{e ^{(a+h) |t-s|}}{(a+h)} \right) ds dt.
\end{align*}
The estimator $\bar{\eta}_{\epsilon,\Delta}$ defined by (\ref{def:estimateurs_bar}) is a consistent estimator of $\eta_0=( a_0,b'_0,h_0)  $ and satisfies (\ref{CVL}) with the matrices $\Lambda(\eta_0)$ and $V(\theta_0)$ obtained above.
The asymptotic covariance matrix is therefore
\begin{align}
&\sigma ^2 \Lambda^{-1}(\eta)+ \label{loss}\\
&\frac{\sigma ^2 b^2}{2 }\Lambda^{-1}(\eta)\left(\int_0^T\int_0^T D_i(\eta,t)D_j(\eta, s)
\left(\frac{e^{a|t-s|}}{a} +\frac{e^{(a+h )|t-s|}}{a+h} \right) ds dt \right)_{ij}\Lambda^{-1}(\eta).\nonumber
\end{align}
%

In the case of complete discrete observations, the first term  of (\ref{loss}) is the asymptotic variance obtained with conditional least squares. Therefore, the loss of information coming from partial observations is measured by the second term of  (\ref{loss}) (added to the fact that only $bz_0$ is identifiable).

\subsection{Assessment of estimators on simulated and real data sets}
We first present the results on the  $SIR$  studied in the previous section but assuming partial observations.
Then we investigate the inference on the real data set of Influenza dynamics modeled with the $SIRS$ studied in the previous section.
\subsubsection{Inference for partial observation of the $SIR$ model  with sampling interval $\Delta$}
\label{sec:SIRpart}
\noindent
In this section, we consider the case where one component of the epidemic process $ X^N(t)= (S^N(t),I^N(t))$  is observed on $[0,T]$. The observations are the successive numbers of infected individuals
$$(I^N(k\Delta), k =1,\dots n) \mbox{ with sampling  }\Delta ; T=n\Delta.$$
\noindent

According to the notations of Section \ref{sec:partial}, we have to interchange the coordinates of  $S,I$ and set
 $ X(t) = (I(t),S(t))^*$; the drift term can be written as
 $$ X(t) = \begin{pmatrix}I(t)\\S(t) \end{pmatrix} ;\quad b((\lambda,\gamma),(i,s))=\begin{pmatrix}
\lambda si -\gamma i\\-\lambda si \end{pmatrix};\quad \Sigma(i,s)= \begin{pmatrix} \lambda si +\gamma i&-\lambda si \\
-\lambda si &\lambda si \end{pmatrix}.$$
 We assume that $I(0)= i_0, S(0)=s_0$.  Setting $ \xi= s_0$, then  the parameter defined in the previous section is  $\eta =(\lambda, \gamma,\xi)$.
 Denote by $z(\eta,t)= (i(\eta,t),s(\eta,t)) $ the solution of the  ODE
 $$di/dt= \lambda s i -\gamma i;   i(0)=i_ 0,\quad ds/dt=\lambda si;  s(0)=\xi.$$
 Then, the conditional least square method now reads as
$$ \bar{U}_{\epsilon,\delta}(\eta,I)= \frac{1}{\epsilon^2 \Delta} \sum _{k=1}^n (I(t_k)- i(\eta,t_k)-\left( 1+ \lambda s(\eta,t_{k-1})-\gamma\right)
(I(t_{k-1}) -i(\eta,t_{k-1}))^2.$$
Using definition \ref{gamma1}, the function $\Gamma_1(\eta_0,\eta,t)$ reads as
$$ \Gamma_1(\eta_0,\eta,t)= i(\eta_0,t)(\lambda_0 s(\eta_0,t)- \lambda s(\eta,t) -\gamma_0+\gamma).$$
To investigate the identifiability assumption, let us  check \textbf{ (S8)}. It reads as
$\eta \neq \eta_0 \Rightarrow  \{t\rightarrow \Gamma(\eta_0,\eta,t)\} \not\equiv 0.$\\

Assume  that we have observed that  the epidemic spreads, so  that we have $\forall t\in [0,T], i(\eta_0,t)>0$ .
Therefore, we have to prove that
\begin{equation}\label{rel:GammaSIR}
\{t \rightarrow  (\lambda_0 s(\eta_0,t)- \lambda s(\eta,t) -\gamma_0+\gamma)\equiv 0 \} \Rightarrow \{\eta=\eta_0\} .
\end{equation}
Differentiating this relation with respect to $t$ yields
\begin{equation}\label{rel:lambdas}
 \forall t, \lambda_0 ^2 s(\eta_0,t)i(\eta_0,t) - \lambda^2 s(\eta,t) i(\eta,t)=0 .
 \end{equation}
Using (\ref{rel:GammaSIR}), we get the second relation
$$\forall t, \frac{s(\eta,t)}{i(\eta_0,t)}  (\lambda i(\eta,t)- \lambda_0  i(\eta_0,t)) = \frac{\lambda_0(\gamma_0-\gamma)}{\lambda}.$$
 Differentiating this relation with respect to $t$ yields that \\
$$\lambda \frac{s(\eta,t) i(\eta,t)}{i(\eta_0,t)} (\lambda_0 i(\eta_0,t)-\lambda i(\eta,t) ) \equiv 0.$$
Since at time $0$,  $i(\eta,0)=i(\eta_0,0)=i_0$, we get that $\lambda=\lambda_0$. Using now \eqref{rel:lambdas} yields that, at time $0$,
$s(\eta, 0)= s(\eta_0,0)$ so that $\xi=\xi_0$. Finally, by relation \eqref{rel:GammaSIR}, we get $\gamma=\gamma_0$.

We conclude that  the two parameters $\lambda ,\gamma$  as well as the initial state $s_0$ are identifiable when observing $(I(t_k), k=0,\dots n)$.
The same holds true for $R_0=\lambda/\gamma$, $d=1/\gamma$ and $s_0$.


Performances of estimators in the case of partially observed $SIR$ model are assessed on simulations obtained with the following parameters: $N=10000$, $R_0=1.5$, $d=3$, $s_0=0.97$, $T=40$. Observations are represented by vector $I^N(k \Delta)$. Estimations of parameters $(R_0,d,s_0)$ are performed on $1000$ simulated trajectories. Theoretical and empirical confidence ellipses are built as detailed in the introduction of Section \ref{sec:simul}. \\
\noindent
As shown in Figure \ref{fig:SIRpart_themp}, confidence ellipsoids are quite large in the case of partial data. However, they do not include unreasonable values from the epidemiological point of view.
Quantile based empirical $95\%$ confidence intervals are
still quite large.
\begin{figure}[ht]
\includegraphics[width=12cm, height=12cm, trim=1cm 6cm 1cm 6cm]{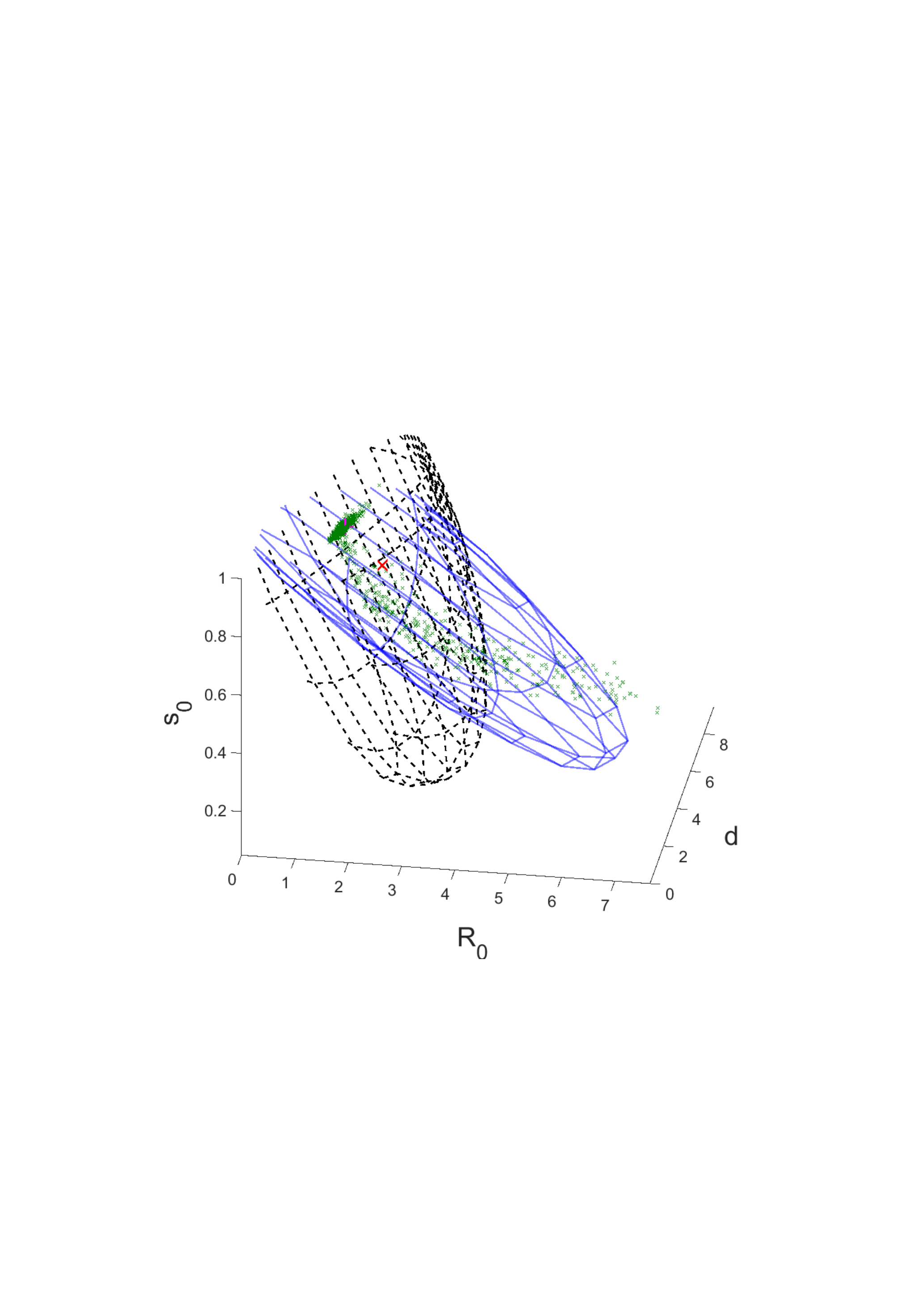}
\vspace{-1cm}
\caption{Point estimators (green) computed by averaging over 1000 independent simulated trajectories of the $SIR$ stochastic model, partially observed ($I(k\Delta)$ only)  for $(R_0,d,s_0)=(1.5,3,0.15,0.97)$, $T=40$ days and $N=10000$. Theoretical confidence ellipsoid (black), centered on the true value and empirical confidence ellipsoid (blue), centered on mean estimated value are provided. Both ellipsoids are truncated at plausible limits on each direction. Mean and median point estimator are $(R_0,d,s_0)=(1.89, 3.43, 0.88)$  (red cross) and $(1.54, 3.24, 0.99)$ (purple cross), respectively.}
\label{fig:SIRpart_themp}
\end{figure}

The relatively unexpected large volume of confidence ellipsoids, obtained despite theoretical identifiability of model parameters when observing only one component of the system (here $I_N(k \Delta)$) is probably due to the fact that the numerical variance-covariance matrix is ill-conditioned (the order of magnitude of the third eigenvalue is $100$ times smaller than that of the first two eigenvalues.  It probably corresponds to the notion of  ``Numerical Identifiability'', which does not
necessarily coincide with ``Theoretical Identifiability''.

Concerning point estimators, we successively considered the mean and the median of the estimators obtained for the $1000$  simulation experiments.
Assuming  the complete observation of both coordinates of the $SIR$ jump process yields, as expected, accurate values for $R_0,d$. Assuming  that only
$I(k\Delta), k=1,\dots n$ with $n=40$, we obtain for a true parameter value $(1.5,3,0.97)$ that the mean point estimator is $ (1.89,3.43, 0.88) $ and for the median estimator $(1.54,3.24, 0.99)$.

\subsubsection{Partial observations: $SIRS$ model, real data on influenza epidemics}
\label{sec:SIRSgrippe}
The performances of the contrast estimators for the case where only one coordinate of a diffusion process is observed are evaluated on data related to influenza outbreaks in France, collected by the French Sentinel Network (FSN), providing surveillance for several health indicators (www.sentiweb.org). These data are represented by numbers of individuals seeing a doctor during a given time interval, for symptoms related to influenza infection and are reported by a group of general practitioners (GP) voluntarily enrolled into the FSN. Several levels of errors of observation are associated to these data: (i) the state of individuals consulting a GP from the FSN is not exactly known: it can be assimilated to a new infection or to a new recovery, given that symptoms and infectiousness are not necessarily simultaneous and that a certain delay occurs between symptoms onset and consultation time (more correctly, the observed state is probably ``infected'' but not ``newly infected''); (ii) not all infected individuals go and see a GP; (iii) the GP's supplying the FSN database represent only a proportion of all French GP's; (iv) the exact dates of consultations are not known, data are aggregated over two-week time periods; (v) data are preprocessed by the FSN to produce observations with a daily time step.\\
\noindent
Here, we account partly for (i) on one hand and jointly for (ii) and (iii) on the other hand and assume that observations $Y({t_k})$ represent a proportion of daily (observation times $t_k=k\Delta$, with $\Delta=1$ day) numbers of newly recovered individuals: $Y({t_k})=\rho \gamma I(t_k)$, where $\rho$ can be interpreted as the reporting rate. Since data are available over several seasons of influenza outbreaks (data from 1990 to 2011, hence $[0,T]=[0,21.5]$ years), an appropriate model allowing to reproduce periodic dynamics is the $SIRS$ model described in Section \ref{SIRS_appli}.\\
\begin{eqnarray*}
(S,I)\stackrel{\frac{\lambda(t)}{N} S(I+N\eta)}{\longrightarrow}(S-1,I+1) &; & \quad (S,I)\stackrel{\mu S}{\longrightarrow}(S-1,I) ;\\
(S,I)\stackrel{(\gamma+\mu)I}{\longrightarrow}(S,I-1) &;& \quad (S,I)\stackrel{\mu N+\delta(N-S-I)}{\longrightarrow}(S+1,I).
\end{eqnarray*}
The seasonality in transmission is modeled via $\lambda(t)= \lambda_0(1+\lambda_1 sin(2\pi t/T_{per}))$.

The parameter is  $\theta=(\lambda_0,\lambda_1,\gamma,\delta,\eta,\mu)$, the associated drift function
$b(\theta,t,(s,i))$ and diffusion matrix $\Sigma(\theta,t, (s,i))$ are
\begin{equation}\label{bsirs}
	\begin{array}{ll}b(\theta,t, (s,i))
&=\begin{pmatrix}-\lambda(t) s(i+\eta) +\delta(1-s-i)+\mu(1-s)\\ \lambda(t) s(i+\eta)-(\gamma+\mu) i \end{pmatrix}
\end{array},
\end{equation}
\begin{equation}\label{sigmasirs}
	\begin{array}{lll}\Sigma(\theta,t, (s,i))&=
\begin{pmatrix}\lambda(t) s(i+\eta)+\delta(1-s-i)+\mu(1+s)&-\lambda(t) s(i+\eta)\\-\lambda(t) s(i+\eta)&\lambda(t) s(i+\eta)+(\gamma+\mu) i\end{pmatrix}\end{array}.
\end{equation}
In summary, the data used are assumed to be discrete high frequency observations of one coordinate of the following two-dimensional diffusion with small variance:
\[\left\{\begin{array}{rl}
dS(t)& = -\lambda(t) S(t)  (I(t) +\eta) +\delta(1-S(t)-I(t)+\mu(1-S(t))) dt \\
&\quad + \frac{1}{\sqrt{N}} (\sigma_{11}dB_1(t)+ \sigma_{12}dB_2(t)) \\
dI(t ) & = (\lambda(t) S(t)(I(t)+\eta)-(\gamma+\mu) I(t) dt +\frac{1}{\sqrt{N}} (\sigma_{21}dB_1(t) + \sigma_{22}dB_2(t)).
\end{array}\right.\]

The vector of parameters to be estimated is $\alpha=(R=\lambda_0/\gamma,10 \lambda_1,d=1/\gamma,\delta_{per}=1/\delta T_{per},10\rho)$, where parameters are defined in equation \eqref{lambdat} and more generally in the entire Section \ref{SIRS_appli}. Parameters $\eta$, $\mu$ and $T_{per}$ are fixed at plausible values: $\eta=10^{-6}$, $\mu=\frac{1}{50}$ (years$^{-1}$) and $T_{per}=365$ days. The starting point  of the ODE system is unknown, but since we are interested in the stationary behaviour of this process, we fix ($r_{-20T_{per}}=0.27,i_{-20T_{per}}=0.0001$, see \cite{cau08IV} for example) and let the system evolve until $t=0$ for the tested set of parameter $\alpha$ to obtain our initial starting point. \\

\noindent
Estimation results are summarized in Figure \ref{fig:sentinelles_fit}, which represents multi-annual dynamics of influenza cases: observed dynamics (blue curve) and simulated ones (using the ODE version of the $SIRS$ model based on estimated parameter values; red curve).
Estimators are associated to contrast process defined in \eqref{BU}. Point estimates of parameters  are: $(R,10 \lambda_1,d,\delta_{per},10\rho) = (1.47,1.94, 2.20, 5.66, 0.87)$. These values are in agreement with independent estimation based on data from the same database but using a different inference method, the maximum iterating filtering proposed by \cite{bre09IV}  (personal communication S.\ Ballesteros).
As shown in Figure \ref{fig:SIRpart_themp} for the $SIR$ model, widths of theoretical confidence intervals for each parameter should be larger than those corresponding to complete observations of the $SIRS$ model (drawn in Figure \ref{fig:SIRSafterbif}). In particular, for $\lambda_1$, the width of the confidence interval for partial observations will be larger than $0.35 * \sqrt{(10^7 / 6*10^7)} = 0.14$ (after correction for the population size, which is $N=10^7$ in Figure \ref{fig:SIRSafterbif} and $N=6*10^7$ in Figure \ref{fig:sentinelles_fit}).\\
\noindent
We can notice from Figure \ref{fig:sentinelles_fit} that predicted trajectories correspond to a regime with bi-annual cycles, composed of two different peaks (red curve). The bifurcation diagram with respect to $\lambda_1$ (similar to Figure \ref{fig:SIRSbif}), when the remaining parameters are either set to fixed values (defined in this section) or to estimated values, exhibits the bifurcation from one annual cycle to bi-annual cycle at $\lambda_1=0.035$. This value is likely to belong to the confidence interval of estimated $\lambda_1=0.19$, since the width of this interval should be greater than 0.14. Hence, this can have some influence on estimation, influence which is not well characterized in the literature for models exhibiting bifurcation profiles, especially for trajectories corresponding to parameter values close to the bifurcation point.
We also observe that the smaller peak in the bi-annual cycles is underestimated, leading to almost no epidemic burst every other year. The presence of a bifurcation in the $SIRS$ ODE model   probably requires a better approximation of the original jump point process.

\begin{center}
\begin{figure}[ht]
\includegraphics[width=12cm, height=9cm, trim=2cm 7cm 1cm 7cm]{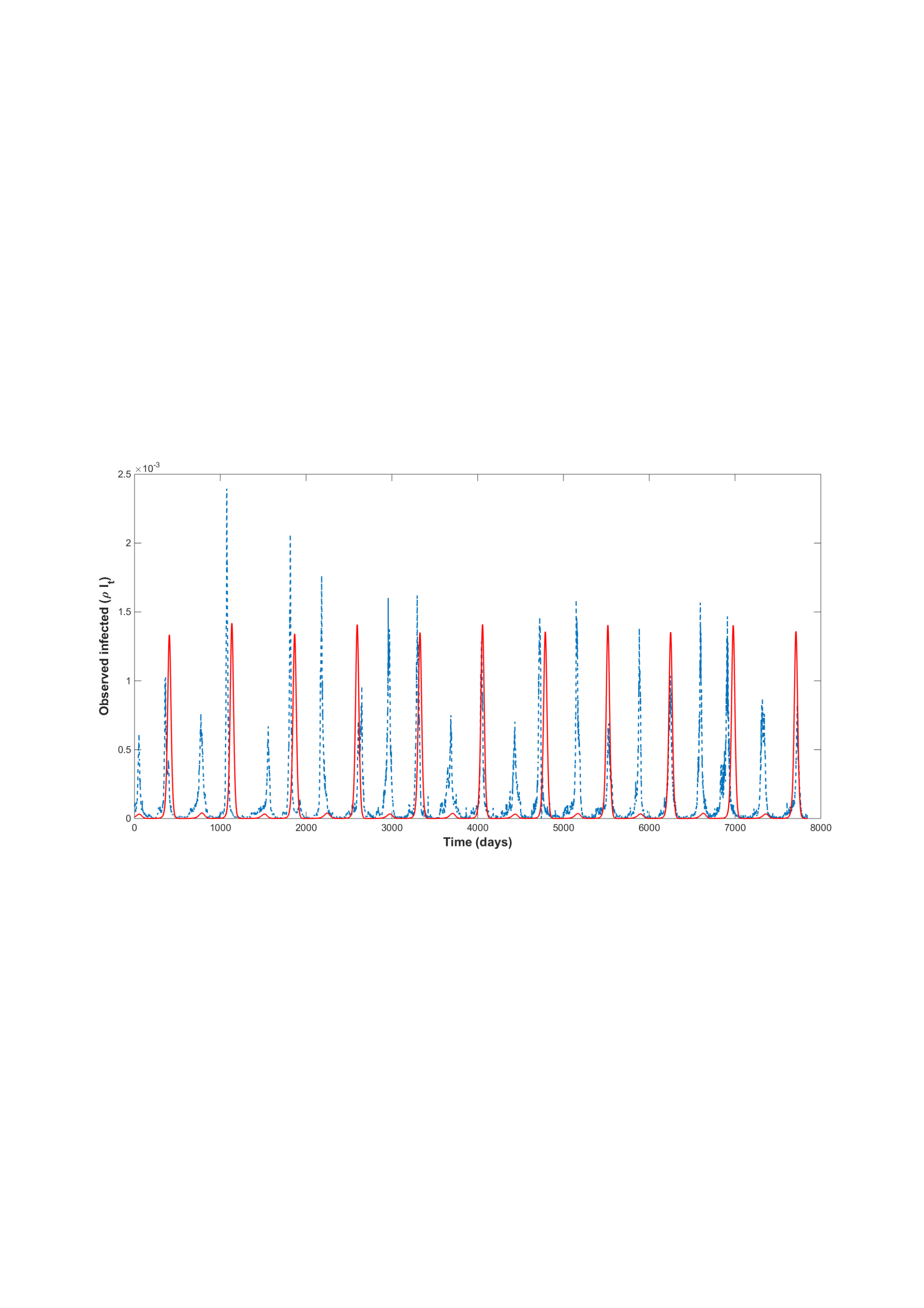}
\vspace{-1cm}
\caption{Time series of reported cases (expressed as a fraction of the total population in France) of influenza-like illness provided by the FSN (www.sentiweb.org) (blue curve) and deterministic trajectories (mean behaviour) predicted by the $SIRS$ model based on estimated parameters using contrast \eqref{BU} (red curve).}
\label{fig:sentinelles_fit}
\end{figure}
\end{center}
%


\subsubsection{Discussion and concluding remarks}
Several  extensions of this study are possible for partial observations. First, we have chosen to detail the case of high sampling interval. The study in the case of a fixed sampling interval $\Delta$ should  be obtained with similar tools, leading to similar results. Another extension concerns our choice of  a Conditional Least squares  for ${\bar U}_{\epsilon,\Delta}$. An estimation criterium similar to the one used in Section \ref{HFO} could be studied, using   $S_k(\alpha, \beta)$ (see \eqref{SkTS}) or substituting  $\Sigma (\beta, X(t_k))$ by
$\Sigma(\beta,x(\eta,t_k))$ for small sampling. This yields  the new process, using (\ref{Ak}),
\begin{equation}\label{BBU}
\bar{U}_{\epsilon,n} (\eta,(Y(t_k)))=
\sum_{k=1}^n log \;\Sigma(\beta,x(\eta,t_k))+\frac{1}{\epsilon^2 \Delta}
\Sigma(\beta,x(\eta,t_k))^{-1} (A_k(\eta, Y))^2.
\end{equation}
  The study of this process should yield estimators in the diffusion coefficient $\beta$ with probably additional assumptions linking $\epsilon $ and $\Delta$.
\noindent
Finally for fixed $\Delta$,  $S_k(\alpha,\beta)$ defined in (\ref{SkTS})  could be substituted by  $(S_k(\alpha,\beta))_{11}$ in the case of two distinct parameters in the drift and diffusion coefficient, and $ (S_k(\alpha)_{11}$ in the case corresponding to epidemics where the same parameters are present in the drift and
diffusion coefficients.
\noindent
Another extension of the method described in Section \ref{sec:partial}
is the case of a $p$-dimensional diffusion process where only the first $l$-coordinates are observed (for instance the $SEIR$ model with only Infected observed).

\chapter{Inference for Continuous Time SIR models} \label{chappointproc}

\noindent \textbf{by Catherine Lar\'edo and Viet Chi Tran}\\

\section{Introduction}\label{chap3:intro}
\label{intro}
Consider the $SIR$ epidemic model with exponential times in a finite population of size $N$ where  $S(t),I(t),R(t)$ denote the number of Susceptible, infected/infectious and Removed individuals at time $t$ with infection rate $\lambda$ and recovery rate $\gamma$ ($S(t)+I(t)+R(t)= N $ for all $t$). There are various ways of describing this
process using pure jump Markov processes. We refer to Chapter \ref{chap_MarkovMod} of Part I of these notes  and to Section \ref{LikPJP} of the Appendix for a recap on these processes.


This description now  belongs to the domain of event time data, which are conveniently studied by the use of counting processes. We refer to Section \ref{LikPJP} of the Appendix for a short introduction to counting processes in continuous time.\\

At this point, we need an asymptotic framework to study the properties of these estimators.
Two frameworks have been proposed. \\
\noindent \textbf{Case (1):} Assume that the number of initially infected $I(0)= a$ remains fixed and that the number of initial Susceptible is $S(0)= n:= N-a$. We also assume for the sake of simplicity that $R(0)=0$. This leads to a total population size $N= n+a $ that goes to infinity.\\
\noindent \textbf{Case (2):} Assume that the population size $N \rightarrow \infty$ and that both $S(0), I(0)$ tend to infinity with $N$ such that $S(0)/N \rightarrow s_0 >0; I(0)/N \rightarrow i_0>0$ as $N \rightarrow \infty$.\\

\noindent
Case (1) has been studied by Rida \cite{ridaIV}, to which we refer for a detailed presentation. We focus here mainly on Case (2).


\section{Maximum likelihood in the SIR case}\label{sec:likelihood}

To ease notation, we work here on a simplification of the SEIR process studied in Part I of these notes. We omit the state E and consider an SIR model (corresponding to the limiting case when $\nu\rightarrow +\infty$). Recall that the population size is $N$, that the infection rate is $\lambda$ and the removal rate $\gamma$. We assume that we observe the whole trajectory on a time window $[0,T]$ with $T>0$: $(S^N_t,I^N_t,R^N_t)_{t\in [0,T]}$. The successive times of events are $(T_i)_{1\leq i\leq K_N(T)}$, where $K_N(T)=\sum_{i\geq 0}\ind_{T_i\leq T}$ is the number of events. At each event, $J_i=0$ if we have an infection and $J_i=1$ if we have a recovery. Notice that we are here in the case where we have knowledge of all recovery and infection events, i.e.\  that we have \textit{complete epidemic data}. The case where some data are missing is treated in the next subsections.\\

Writing the likelihood of our data is important to calibrate the parameters of the model, $\theta=(\lambda,\gamma)\in \R_+^2$ in the case of the SIR model, but also because this is also useful for designing EM or MCMC procedures.

\begin{definition}We define the likelihood $\mathcal{L}^N_T(\theta)$ of the observations as the density, in $\D([0,T],[0,1]^3)$ of the process $(S^N_t,I^N_t,R^N_t)_{t\in [0,T]}$ with respect to the SIR process where intervals between events follow independent exponential distributions of parameter $2N$ and where each event is an infection with probability $1/2$ and a recovery with probability $1/2$. The likelihood is of course a function of $\theta\in \R_+^2$ and of the observations $(S^N_t,I^N_t,R^N_t)_{t\in [0,T]}$ which are omitted in the notation for the sake of notation.
\end{definition}

This definition has been proposed in \cite{arazozaclemencontranIV} for example. The dominating measure with respect to which the distribution of $(S^N_t,I^N_t,R^N_t)_{t\in [0,T]}$ is written is here the distribution of the process corresponding to the sequence $(J_i,T_i)$'s where the $J_i$'s are i.i.d.\ Bernoulli random variables with parameter $1/2$, and where the intervals $\Delta T_i=T_i-T_{i-1}$ are i.i.d.\ exponential random variables with expectation $1/(2N)$. With the notation above:

\begin{align}
 \mathcal{L}^N_T (\theta)&=  \mathcal{L}^N_T \big( (S^N_t,I^N_t,R^N_t)_{t\in [0,T]} ; \lambda,\gamma\big) \nonumber\\
 &=  \exp\big(NT - \int_0^T (\lambda S^N_s I^N_s - \gamma I^N_s)ds\big)\prod_{i=1}^{K_N(T)} (\lambda S_{T_i-}^N I_{T_i-}^N)^{1-J_i} (\gamma I_{T_i-}^N)^{J_i}.\label{likelihood}
\end{align}


Taking the log, and using the formulation of the processes $(S_t, I_t, R_t)$ by means of Poisson point processes $Q^1$ and $Q^2$ as in Part I, Chapter 2 of these notes,
\begin{align*}
\log \mathcal{L}^N_T(\theta)&=  NT - \int_0^T (\lambda S^N_s I^N_s - \gamma I^N_s)ds \\
&\quad + \sum_{i=1}^{K_N(T)}\Big[(1-J_i) \log\big(\lambda S^N_{s_-} I^N_{s_-} \big)+J_i\log\big(\gamma I^N_{s_-} \big)\Big]\\
&=  NT - \int_0^T (\lambda S^N_s I^N_s - \gamma I^N_s)ds + \int_0^T \log\big(\lambda S^N_{s_-} I^N_{s_-} \big)\ind_{u\leq \lambda N S^N_{s_-} I^N_{s_-}}Q^1(ds,du)\\
 &\quad +\int_0^T \log\big(\gamma I^N_{s_-} \big)\ind_{u\leq \gamma N I^N_{s_-}}Q^2(ds,du).
\end{align*}

The above function is concave in $\lambda$ and $\gamma$, for a given observations $(S^N_t, I^N_t)_{t\in [0,T]}$, and maximizing it, we obtain:
\begin{proposition}\label{prop:emv}
The maximum likelihood estimator $\widehat{\theta}_N=(\widehat{\lambda}_N,\widehat{\gamma}_N)$ of $\theta$ (MLE) is then given by:
\begin{equation}\label{MLESIR}
\widehat{\lambda}_N=\frac{1}{N}\frac{\sum_{i=1}^{K_N(T)} (1-J_i)}{\int_0^T S^N_s I^N_s ds},\qquad \widehat{\gamma}_N=\frac{1}{N} \frac{\sum_{i=1}^{K_N(T)}J_i}{\int_0^T I^N_s ds}.
\end{equation}
\end{proposition}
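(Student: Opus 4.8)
The plan is to maximize the log-likelihood derived just above the statement. We have the explicit expression
\[
\log \mathcal{L}^N_T(\theta)=  NT - \int_0^T (\lambda S^N_s I^N_s - \gamma I^N_s)\,ds + \sum_{i=1}^{K_N(T)}\Big[(1-J_i) \log\big(\lambda S^N_{T_i-} I^N_{T_i-} \big)+J_i\log\big(\gamma I^N_{T_i-} \big)\Big],
\]
viewed as a function of $(\lambda,\gamma)$ for fixed observed path $(S^N_t,I^N_t)_{t\in[0,T]}$. The key structural observation is that this function \emph{separates}: the $\lambda$-dependence and the $\gamma$-dependence occur in additive blocks with no cross term. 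So I would treat the two one-dimensional maximizations independently.

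First I would collect the $\lambda$-terms. Writing $\sum_{i=1}^{K_N(T)}(1-J_i)=\#\{\text{infections}\}$ and using that $\log(\lambda S^N_{T_i-}I^N_{T_i-}) = \log\lambda + \log(S^N_{T_i-}I^N_{T_i-})$, the part depending on $\lambda$ is
\[
-\lambda \int_0^T S^N_s I^N_s\,ds + \Big(\sum_{i=1}^{K_N(T)}(1-J_i)\Big)\log\lambda + \text{const}.
\]
Differentiating in $\lambda$ and setting the derivative to zero gives $-\int_0^T S^N_s I^N_s\,ds + \lambda^{-1}\sum_i(1-J_i)=0$, i.e. $\widehat\lambda_N = \big(\sum_i(1-J_i)\big)/\int_0^T S^N_s I^N_s\,ds$. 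The claimed formula has an extra $1/N$; this comes from the normalization built into the dominating measure (exponential intervals of parameter $2N$ and the Poisson intensities $\lambda N S^N I^N$), so I would be careful to track whether the intensity in the likelihood is $\lambda S^N I^N$ or $\lambda N S^N I^N$ — reconciling the factor $N$ against the Poisson representation $\ind_{u\le \lambda N S^N_{s_-}I^N_{s_-}}$ appearing in the displayed log-likelihood is the one bookkeeping point that needs care. The symmetric computation for $\gamma$, collecting $-(-\gamma)\int_0^T I^N_s\,ds = \gamma\int_0^T I^N_s\,ds$ wait — sign check: the drift term contributes $+\gamma\int_0^T I^N_s\,ds$ with a minus in front, so the $\gamma$-block is $-\gamma\int_0^T I^N_s\,ds + \big(\sum_i J_i\big)\log\gamma$, yielding $\widehat\gamma_N = \big(\sum_i J_i\big)/\int_0^T I^N_s\,ds$ up to the same $N$ factor.

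To conclude that these critical points are genuine maximizers, I would invoke concavity: each block is of the form $-c\,\mu + m\log\mu$ with $c=\int_0^T S^N_s I^N_s\,ds>0$ (resp. $\int_0^T I^N_s\,ds>0$) and $m\ge 0$ the count of the relevant events, and $\mu\mapsto -c\mu+m\log\mu$ is strictly concave on $(0,\infty)$ with a unique stationary point at $\mu=m/c$. This is exactly the remark already made in the text that ``the above function is concave in $\lambda$ and $\gamma$,'' so the stationary point is the global maximizer. I expect no real obstacle here — the argument is a routine separated one-dimensional optimization — the only genuine care is the normalization factor $1/N$, which I would pin down by writing the Radon–Nikodym derivative of the SIR law against the reference law explicitly (so that the infinitesimal rates $\lambda N S^N I^N$ and $\gamma N I^N$ are compared to the reference rate $2N$), confirming that the $N$ in the Poisson intensities propagates into the denominators and produces precisely the $\tfrac1N$ prefactors in \eqref{MLESIR}.
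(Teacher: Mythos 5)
Your proof is correct and follows essentially the same route as the paper: exploit the additive separation of the log-likelihood into a $\lambda$-block and a $\gamma$-block, each of the strictly concave form $-c\mu+m\log\mu$ with $c>0$, and solve the two one-dimensional stationarity equations. The two bookkeeping points you flag are in fact typos in the paper's displayed log-likelihood---the compensator term should read $-N\int_0^T(\lambda S^N_s I^N_s+\gamma I^N_s)\,ds$, with the factor $N$ coming from the intensities $\lambda N S^N_{s} I^N_{s}$ and $\gamma N I^N_{s}$ and with a minus sign on both rates---and your resolution of both issues (the $\tfrac{1}{N}$ prefactor in \eqref{MLESIR} and the sign of the $\gamma$-block) is exactly right.
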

These estimators have already been mentioned in \eqref{MLEPJP} and it had been noticed that the numerators of $\widehat{\lambda}_N$ and $\widehat{\gamma}_N$ are respectively the numbers of infections and recoveries on the period $[0,T]$.
Remark also that the estimators \eqref{MLESIR} are the same for the Cases (1) and (2) presented in Section \ref{chap3:intro}. In what follows, we concentrate on the Case (2).


Using the Law of Large Numbers  and the Central Limit Theorem stated in Part I, Section 2 of these notes
we obtain that
\begin{proposition}
The estimator $\widehat{\theta}_N$ is convergent and asymptotically Gaussian when $N\rightarrow +\infty$:
$$\sqrt{N}\big(\widehat{\theta}_N-\theta\big)=\sqrt{N}\left(\begin{array}{c}\widehat{\lambda}_N-\lambda\\
\widehat{\gamma}_N-\gamma\end{array}\right)\Rightarrow \mathcal{N}\big(0_{\R^2},I^{-1}(\lambda,\gamma)\big),$$
where the Fisher information matrix is:
$$I(\lambda,\gamma)=\left(\begin{array}{cc}
V_{11}(t) & 0\\
0 & V_{22}(t)
\end{array}\right)$$
with $(s(t), i(t))_{t\in [0,T]}$ the solution of the limiting ODE that approximates  $(S^N_t, I^N_t)_{t\in [0,T]}$ when $N\rightarrow +\infty$
(see Example 2.2.10 in Part I ) and with
\begin{equation}\label{V}
V_{11} (t) = \frac{\int_0^T s(t) i(t) dt}{\lambda} = \frac{1-s(T)}{\lambda^2} ;\;
V_{22}(t) = \frac{\int_0^T i(t) dt}{\gamma}= \frac{1+\mu-s(T)-i(T)}{\gamma^2}.
\end{equation}
\end{proposition}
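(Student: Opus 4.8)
The plan is to exploit the counting-process/martingale structure that already underlies the log-likelihood and the MLE \eqref{MLESIR}. First I would write the numbers of infections and recoveries on $[0,T]$ as terminal values of two counting processes driven by the independent Poisson measures $Q^1$ and $Q^2$ appearing in the excerpt: $N^{\mathrm{inf}}_t=\int_0^t\int_0^\infty \ind_{u\le \lambda N S^N_{s-}I^N_{s-}}Q^1(ds,du)=\sum_{i:T_i\le t}(1-J_i)$ and $N^{\mathrm{rec}}_t=\sum_{i:T_i\le t}J_i$. Each has the intensity read off from \eqref{likelihood}, so that
$$M^1_t=N^{\mathrm{inf}}_t-\lambda N\int_0^t S^N_sI^N_s\,ds,\qquad M^2_t=N^{\mathrm{rec}}_t-\gamma N\int_0^t I^N_s\,ds$$
are $(\mathcal F_t)$-local martingales. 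Substituting \eqref{MLESIR} into these identities yields the exact representations
$$\sqrt N(\widehat\lambda_N-\lambda)=\frac{N^{-1/2}M^1_T}{\int_0^T S^N_sI^N_s\,ds},\qquad \sqrt N(\widehat\gamma_N-\gamma)=\frac{N^{-1/2}M^2_T}{\int_0^T I^N_s\,ds},$$
which reduces the whole statement to the joint limit of $(N^{-1/2}M^1_T,N^{-1/2}M^2_T)$ together with the limits of the two denominators.

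Next I would dispose of the denominators and of consistency using the law of large numbers \eqref{CVU} (Part I, Section \ref{TB-EP_sec_LLN}). In Case (2), $S^N_t\to s(t)$ and $I^N_t\to i(t)$ uniformly on $[0,T]$ a.s., where $(s,i)$ solves the limiting ODE \eqref{ODESIRIV}; hence $\int_0^T S^N_sI^N_s\,ds\to\int_0^T s i\,dt$ and $\int_0^T I^N_s\,ds\to\int_0^T i\,dt$ a.s. Because $s_0,i_0>0$, these limits are strictly positive, so the denominators stay bounded away from $0$. The same convergence gives consistency directly: since $N^{-1}M^1_T\to0$ and $N^{-1}M^2_T\to0$ in probability, $\widehat\lambda_N=\big(N^{-1}N^{\mathrm{inf}}_T\big)/\int_0^T S^N_sI^N_s\,ds\to\lambda$ and likewise $\widehat\gamma_N\to\gamma$.

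For the numerators I would invoke the central limit theorem for multidimensional counting-process martingales. The predictable quadratic variations equal the compensators, so
$$\langle N^{-1/2}M^1\rangle_T=\lambda\int_0^T S^N_sI^N_s\,ds\to\lambda\int_0^T s(t)i(t)\,dt,\qquad \langle N^{-1/2}M^2\rangle_T=\gamma\int_0^T I^N_s\,ds\to\gamma\int_0^T i(t)\,dt.$$
Since $Q^1$ and $Q^2$ are independent, $M^1$ and $M^2$ have no common jumps, whence $\langle M^1,M^2\rangle\equiv0$ and the limiting covariance is diagonal. The jumps of $N^{-1/2}M^i$ have size $N^{-1/2}\to0$, so the Lindeberg/asymptotic-negligibility condition is met trivially. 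Therefore
$$(N^{-1/2}M^1_T,N^{-1/2}M^2_T)\Rightarrow\mathcal N_2\big(0,C\big),\quad C=\mathrm{diag}\Big(\lambda\textstyle\int_0^T s i\,dt,\ \gamma\int_0^T i\,dt\Big),$$
and combining with the a.s.\ limits of the denominators through Slutsky's lemma gives $\sqrt N(\widehat\theta_N-\theta)\Rightarrow\mathcal N_2\big(0,\mathrm{diag}(\lambda/\!\int_0^T s i,\ \gamma/\!\int_0^T i)\big)=\mathcal N_2\big(0,I^{-1}(\lambda,\gamma)\big)$.

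Finally the closed forms \eqref{V} follow by integrating the ODE \eqref{ODESIRIV}: from $ds/dt=-\lambda s i$ one gets $\lambda\int_0^T s i\,dt=s(0)-s(T)$, so $V_{11}=\int_0^T s i\,dt/\lambda=(s(0)-s(T))/\lambda^2$; adding $di/dt=\lambda s i-\gamma i$ gives $\gamma\int_0^T i\,dt=(s(0)+i(0))-(s(T)+i(T))$, which yields the stated expression for $V_{22}$ under the initial-condition normalisation used there. The one genuinely delicate step is the martingale CLT: the real content is the convergence of the bracket processes, which is precisely where the fluid limit \eqref{CVU} is needed, together with the uniform negligibility of the jumps on the fixed horizon $[0,T]$. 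I would stress that this is a finite-$T$ CLT powered by the large population size $N$ (not by long-time ergodicity), and that it is implicitly read on the major-outbreak behaviour that keeps $\int_0^T s i\,dt$ and $\int_0^T i\,dt$ non-degenerate.
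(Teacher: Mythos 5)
Your proposal is correct, and it shares the paper's skeleton: the identical exact decomposition of $\sqrt N(\widehat\lambda_N-\lambda)$ into a compensated-counting-process numerator divided by $\int_0^T S^N_sI^N_s\,ds$, the same use of the fluid limit for the denominators and for consistency, and Slutsky at the end. Where you genuinely diverge is in the limit theorem powering the central step. The paper writes the numerator as
$\frac{1}{\sqrt N}P_1\bigl(\lambda N\int_0^T S^N_sI^N_s\,ds\bigr)-\sqrt N\,\lambda\int_0^T S^N_sI^N_s\,ds$
and invokes the functional CLT for Poisson processes from Part I (convergence to $B_1\bigl(\lambda\int_0^T s\,i\,dt\bigr)$ along a random time change), then treats the pair $(\widehat\lambda_N,\widehat\gamma_N)$ by an appeal to ``asymptotic independence'' that is left informal. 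You instead apply the martingale CLT for counting processes: convergence of the predictable brackets $\langle N^{-1/2}M^j\rangle_T$ (exactly where the fluid limit enters, as you say), negligibility of the $N^{-1/2}$-sized jumps, and orthogonality $\langle M^1,M^2\rangle\equiv 0$ because $Q^1$ and $Q^2$ produce no common jumps. That last point is an improvement in explicitness over the paper: it delivers joint convergence and the diagonal limiting covariance in one stroke, rather than as a separate unproved claim. The trade-off is that the paper's route is shorter given that the Poisson FCLT is already established in Part I, whereas yours is more self-contained relative to the martingale and triangular-array CLTs recalled in the Appendix. One small point you were right to hedge: integrating the ODEs gives $V_{11}=(s(0)-s(T))/\lambda^2$ and $\gamma\int_0^T i\,dt=s(0)+i(0)-s(T)-i(T)$, which matches the displayed identities in the proposition only under the normalisation $s(0)=1$ (with the paper's constant $\mu$ playing the role of $i(0)$); the paper's closed forms are not literally consistent with general initial conditions $s_0+i_0\le 1$, so your general formulas are the safe statement.
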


\begin{proof}Notice that the estimator $\widehat{\lambda}$ given in Proposition \ref{prop:emv} can be rewritten, with the notations of
Example 2.2.1 of Part I of these notes, as
\[\widehat{\lambda}_N=\frac{1}{N}\frac{P_1\Big(\lambda N\int_0^T S^N_s I^N_s ds\Big)}{\int_0^T S^N_s I^N_s ds}.\]
Using the Law of Large Numbers given in Part I,  Section 2.2,
the process $(S^N_t, I^N_t)_{t\in [0,T]}$ converges uniformly when $N\rightarrow +\infty$ to the unique solution of the ODE
\begin{align*}
s'(t)&=  - \lambda s(t) i(t),\\
i'(t)&=  \lambda s(t) i(t)-\gamma i(t).
\end{align*}
Moreover,
\begin{align*}
\lim_{N\rightarrow +\infty}\widehat{\lambda}_N = \lambda \frac{\int_0^T s(t)i(t) dt}{\int_0^T s(t)i(t) dt}=\lambda.
\end{align*}
Now,
\[\sqrt{N}\big(\widehat{\lambda}_N-\lambda\big)= \frac{1}{\int_0^T S^N_s I^N_s ds}\left[\frac{1}{\sqrt{N}}P_1\Big(\lambda N\int_0^T S^N_s I^N_s ds\Big)-\sqrt{N}\lambda \int_0^T S^N_s I^N_s ds\right].\]
From  Part I, Section 2.3,
we have the following convergence in distribution
\[\frac{1}{\sqrt{N} }P_1\Big(\lambda N\int_0^T s(t) i(t) dt\Big) - \sqrt{N}  \lambda \int_0^T s(t)i(t) dt\Rightarrow B_1\Big(\lambda \int_0^T s(t)i(t) dt\Big)\]
where $B_1$ is a standard real Brownian motion. As in the proof of Proposition 2.3.1, the bracket in the right term is then shown to converge to the same limit $B_1(\lambda \int_0^T s(t)i(t) dt)$. Since the denominator of the right-hand side converges in probability to $\int_0^T s(t)i(t) dt$, we obtain the asymptotic normality of $\widehat{\lambda}_N$ with asymptotic variance
\[\frac{\lambda}{\int_0^T s(t)i(t)dt}.\]
Proceeding similarly for $\widehat{\gamma}_N$ and using the asymptotic independence between the two estimators provides the result. Notice that the Fisher information matrix can also be computed from the log-likelihood, and that all regularity assumptions of generic asymptotic normality results are satisfied (see e.g.\ Chapter 4 of \cite{linkovIV}).
\end{proof}

\begin{corollary}An estimator of  $R_0=\lambda/\gamma$ is $\widehat{R}_0^{(t)}=\frac{\hat {\lambda}_t}{\hat{\gamma}_t}.$
Applying the functional delta-theorem (e.g.\ \cite{vaa00IV}), it converges in distribution to
\begin{equation}\label{estimR0}
	\sqrt{n}(\widehat{R}_0^{(t)}- R_0)\rightarrow \mathcal{ N}(0, \sigma^2(t)) \quad  \mbox{ with  }\quad
	\sigma^2 (t)= \frac{V_{11}^{-1}(t)+ R_0^2 V_{22}^{-1}(t)}{\gamma^2}.
	\end{equation}
\end{corollary}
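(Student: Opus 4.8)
The plan is to derive this as a direct consequence of the asymptotic normality of $\widehat{\theta}_N=(\widehat{\lambda}_N,\widehat{\gamma}_N)$ established in the preceding proposition, combined with the delta method. The starting point is the convergence $\sqrt{N}(\widehat{\theta}_N-\theta)\rightarrow \mathcal{N}(0,I^{-1}(\lambda,\gamma))$, where the Fisher information matrix $I(\lambda,\gamma)=\mathrm{diag}(V_{11}(t),V_{22}(t))$ is diagonal, so that its inverse is simply $I^{-1}(\lambda,\gamma)=\mathrm{diag}(V_{11}^{-1}(t),V_{22}^{-1}(t))$. (The normalization is the same $\sqrt{N}$ as in that proposition.)

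First I would introduce the map $\phi:\R_+\times\R_+^*\rightarrow \R_+$ defined by $\phi(\lambda,\gamma)=\lambda/\gamma$, so that $\widehat{R}_0^{(t)}=\phi(\widehat{\lambda}_N,\widehat{\gamma}_N)$ and $R_0=\phi(\lambda,\gamma)$. Since $\gamma>0$, the function $\phi$ is continuously differentiable in a neighbourhood of $(\lambda,\gamma)$, with gradient
$$\nabla\phi(\lambda,\gamma)=\left(\frac{1}{\gamma},\,-\frac{\lambda}{\gamma^2}\right)^{*}.$$
The consistency of $\widehat{\theta}_N$ and the regularity needed to invoke the delta method (as in Theorem \ref{phimle}, or \cite{vaa00IV}) are all available from the previous proposition.

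Then I would apply the delta method to obtain $\sqrt{N}(\widehat{R}_0^{(t)}-R_0)\rightarrow \mathcal{N}\big(0,\nabla\phi^{*}\,I^{-1}(\lambda,\gamma)\,\nabla\phi\big)$. Using the diagonal form of $I^{-1}$, the quadratic form has no cross term and reduces to
$$\nabla\phi^{*}\,I^{-1}(\lambda,\gamma)\,\nabla\phi = \frac{1}{\gamma^2}V_{11}^{-1}(t)+\frac{\lambda^2}{\gamma^4}V_{22}^{-1}(t).$$
It then remains to rewrite $\lambda^2/\gamma^4=R_0^2/\gamma^2$, which gives $\nabla\phi^{*}I^{-1}\nabla\phi=\gamma^{-2}\big(V_{11}^{-1}(t)+R_0^2\,V_{22}^{-1}(t)\big)=\sigma^2(t)$, as claimed.

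The computation is entirely routine and there is no genuine obstacle. The only points worth making explicit are that $\phi$ is well defined and differentiable, which is guaranteed by $\gamma>0$, and that the asymptotic covariance is diagonal: this is precisely what makes $\sigma^2(t)$ a clean sum of the two marginal contributions rather than involving an additional covariance term.
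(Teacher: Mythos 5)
Your proof is correct and takes exactly the route the paper intends: the paper's ``proof'' of this corollary is just the one-line invocation of the functional delta-theorem applied to $\phi(\lambda,\gamma)=\lambda/\gamma$ together with the asymptotic normality and diagonal covariance $I^{-1}(\lambda,\gamma)=\mathrm{diag}(V_{11}^{-1}(t),V_{22}^{-1}(t))$ from the preceding proposition, and your computation of $\nabla\phi^{*}I^{-1}\nabla\phi=\gamma^{-2}\bigl(V_{11}^{-1}(t)+R_0^2V_{22}^{-1}(t)\bigr)$ fills in precisely the routine algebra the paper leaves implicit. The only cosmetic difference is that you write the rate as $\sqrt{N}$ where the corollary's display uses $\sqrt{n}$ (an inconsistency in the paper's own notation), which you handle correctly by matching the proposition's normalization.
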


\begin{remark}[Maximum likelihood estimators in the Case (1)]
Let us denote by $(N_t)_{t\in \R_+}$ the counting processes associated to the infection process:
\[N_t= P_1\Big(\int_0^t \lambda N S^N_{s} I^N_{s} ds\Big),\]
and by $\tau_N$ the extinction time, when there is no infective individual left. Because the population is finite, $\tau_N<+\infty$ almost surely and $N(\tau_N)\leq N$. Let
\[A= \{\omega; N(\tau_N,\omega)\rightarrow \infty \;\mbox{as }  N\rightarrow \infty\}\]
be the event on which a major outbreak occurs. Ball \cite{ball83IV} proved that $\P(A)= 1-\min\{1,(\gamma/\lambda)^a\}$.
Moreover if $R_0=\lambda/\gamma >1$, then $\P(A)>0$ and as $n\rightarrow \infty$,
\[  \frac{N(\tau_N)}{N}\rightarrow \pi 1_{A} \mbox{ where $\pi$ is such that }\frac{\lambda}{\gamma}= -\frac{\log (1-\pi)}{\pi}.\]
Asymptotic results for the estimators are obtained on $A$ and $A^c$.
The maximum likelihood estimator satisfies that \[\hat{\lambda}_{N} \rightarrow \lambda 1_{A} + Z 1_{A^c}\]in distribution
where $Z$ is a positive explicit random variable such that $\E(Z) < 1/\lambda $ if $\lambda/\gamma>1$.
Note that in this case, $\hat{\lambda}_{N}$ is not a consistent estimator. We refer to \cite{ridaIV} for a detailed presentation of the results.
\end{remark}

These methods can be extended to other epidemic models. We will detail later for the SEIR and SIRS
epidemic models. The main  drawback of this approach is that the epidemic process is rarely observed in such details, which prevents this kind of statistical approach.
However, this study sums up the best statistical results that can be obtained when complete observations are available.  When  incomplete observations are available, the loss of information will be measured with respect to
this general reference.

\subsection{MCMC estimation}

The preceding subsection treated the case of complete observation. In practice, parameter estimation for SIR models is usually a difficult task because of missing observations, which is a recurrent issue in epidemiology.
O'Neill Roberts \cite{oneillrobertsIV} developed a Markov chain Monte Carlo method (MCMC) to make inferences about the missing data and the unknown parameters in a Bayesian framework. \\

We consider an SIR model as in Section \ref{sec:likelihood}. Instead of observing the sequence $(J_i, T_i)_{i\in \{1\dots K^N_T\}}$ (type -- infection or recovery -- and time of occurrence of the successive events, as described in the beginning of Section \ref{sec:likelihood}), we observe only the $T_i$'s such that $J_i=1$ (recovery events, that can also be detection events in some applications) and the total number of events $K^N_T$ is unknown. In this section, we adopt the following notation. Let us assume that there are $m$ infections at times $\mathbf{\sigma}=(\sigma_1<0,\dots \sigma_m)$ that are unobserved and $n$ removals at times $\mathbf{\tau}=(\tau_1=0,\dots \tau_n)$ which constitute our observations. For later purposes, we will denote by $\mathbf{\sigma}_{-1}=(\sigma_2,\dots \sigma_m)$ the vector of infection times starting from the second infection. We observe the total size of the population $N$, the number $n$ of removal times and the vector $\tau$ of these removal times. The parameter of interest is $(\lambda,\gamma,\sigma_{1})$ and the vector $\mathbf{\sigma}_{-1}$ is the vector of nuisance parameters. \\

The MCMC algorithm proposed by O'Neill and Roberts \cite{oneillrobertsIV} take place in a Bayesian framework. Given $\lambda$, $\gamma$ and the first infection time $\sigma_1$, the likelihood of $(\mathbf{\sigma}_{-1},\mathbf{\tau})=(\sigma_2\dots \sigma_m, \tau_1,\dots \tau_m)$ is obtained from adapting \eqref{likelihood}:
\begin{align}
 \mathcal{L}^N_T (\mathbf{\sigma}_{-1},\mathbf{\tau} | \lambda, \gamma, \sigma_1)= &  \exp\big(NT - \int_{\sigma_1}^T (\lambda S^N_s I^N_s - \gamma I^N_s)ds\big)\prod_{i=1}^{n} (\lambda S_{\sigma_i-}^N I_{\sigma_i-}^N) \prod_{i=1}^m (\gamma I_{\tau_i-}^N).\label{likelihood1}
\end{align}

\subsubsection{A priori distributions}We suppose that $\lambda$ and $\gamma$ have \textit{a priori} Gamma distribution with parameters $(\alpha_\lambda, \beta_\lambda)$ and $(\alpha_\gamma,\beta_\gamma)$ respectively, where we recall that the density of a Gamma distribution with parameter $(\alpha,\beta)$ is:
\[\frac{\beta^\alpha}{\Gamma(\alpha)}x^{\alpha-1}e^{-\beta x}\ \ind_{(0,+\infty)}(x)\]
where $\Gamma(x)$ is the gamma function such that for any positive integer $k$, $\Gamma(k)=(k-1)!$. Following \cite{oneillrobertsIV}, we also chose for the \textit{a priori} distribution of $\sigma_1$ the  `exponential' distribution with density (on $\R_-$) with $\rho>0$:
\[\rho e^{\rho \sigma_1} \ind_{(-\infty,0)}(\sigma_1).\]

\subsubsection{A posteriori distributions}
The purpose is now to generate a sample from the \textit{a posteriori} distribution
$\pi(\mathbf{\sigma}, \lambda,\beta  | \mathbf{\tau})$. For this, O'Neill and Roberts propose a Metropolis--Hastings algorithm. \\

Recall the principle of the Metropolis--Hastings algorithm used to obtain a sample $\x$ in a distribution with a density $\pi(x)$ that is proportional to some $f(x)$. Consider a transition kernel with a density $q(y | x)$ from which it is easy to simulate. Starting from a first point $x_0$, construct a sequence of points $(x_k)_{k\in N}$ with $f$ and $q$ as follows. Assume that $x_{k}$ has been constructed, then:
\begin{itemize}
\item draw $y$ from $q(y |x_{k})$.
\item With probability
\[\phi(x_k,y)=\min\Big(\frac{f(y)q(x_k |y)}{f(x_k)q(y|x_k )},1\Big)\]
define $x_{k+1}=y$.\\
With probability $1-\phi(x_k,y)$, define $x_{k+1}=x_k$.
\end{itemize}This defines a reversible Markov chain whose stationary distribution is $\pi$.\\

We apply the above idea to sample $\mathbf{\sigma}, \lambda,\beta$ from the \textit{a posteriori} distribution. To choose the transition kernels, notice first that with direct computation, we obtain:
\begin{align*}
\pi\big(\sigma_1 | \mathbf{\tau}, \mathbf{\sigma}_{-1},\lambda,\gamma\big) \sim  & (\rho +\lambda N+\gamma)e^{-(\theta +\lambda N+\gamma)(\sigma_2-y)}\ind_{y<\sigma_2}\\
\pi\big(\lambda | \mathbf{\tau}, \mathbf{\sigma},\gamma \big) \sim &  \Gamma\big(\alpha_\lambda+\int_{\sigma_1}^T S^N_s I^N_s ds, m-1+\beta_\lambda\big)\\
\pi\big(\gamma | \mathbf{\tau}, \mathbf{\sigma},\lambda\big) \sim &  \Gamma\big(\alpha_\gamma+ \int_{\sigma_1}^T I^N_s ds, n+\beta_\gamma\big).
\end{align*}
Hence, it is natural to choose the above distributions for the proposals of $\sigma_1$, $\lambda$ and $\beta$.
It remains to propose a transition kernel for $\mathbf{\sigma}_{-1}$. O'Neill and Roberts propose a Hasting algorithm with the three following moves:


\begin{itemize}
\item Move an infection time chosen at random by sampling the candidate uniformly in $[0,T]$. If the infection time chosen at random was at time $s$ and the proposal time drawn uniformly in $[0,T]$ is $t$, the move is accepted with probability
\begin{align*}\phi(\mathbf{\sigma}, \mathbf{\sigma}\cup \{t\}\setminus \{s\})= & \frac{\mathcal{L}^N_T\big(\mathbf{\sigma}\cup \{t\}\setminus\{s\},\mathbf{\tau} | \lambda, \gamma, \sigma_{1}\big) \frac{1}{|\mathbf{\sigma}|-1} \frac{1}{T}}{\mathcal{L}^N_T\big(\mathbf{\sigma},\mathbf{\tau} | \lambda,\gamma,\sigma_{1}\big)\frac{1}{|\mathbf{\sigma}|-1}\frac{1}{T}}\wedge 1\\
= &  \frac{\mathcal{L}^N_T\big(\mathbf{\sigma}\cup \{t\}\setminus\{s\},\mathbf{\tau} | \lambda, \gamma, \sigma_{1}\big)}{\mathcal{L}^N_T\big(\mathbf{\sigma},\mathbf{\tau} | \lambda,\gamma,\sigma_{1}\big)}\vee 1.
\end{align*}
\item Remove an infection time chosen at random. If the chosen infection time was at time $s$, the acceptation probability is then:
$$\frac{\mathcal{L}^N_T\big(\mathbf{\sigma}\setminus \{s\}, \mathbf{\tau} | \lambda, \gamma, \sigma_1\big)\frac{1}{T-\sigma_1}}{\mathcal{L}^N_T\big(\mathbf{\sigma},\mathbf{\tau} | \lambda,\gamma,\sigma_1\big)\frac{1}{|\mathbf{\sigma}|-1}}\wedge 1=\frac{\mathcal{L}^N_T\big(\mathbf{\sigma}\setminus \{s\}, \mathbf{\tau} | \lambda, \gamma, \sigma_1\big)(|\mathbf{\sigma}|-1)}{\mathcal{L}^N_T\big(\mathbf{\sigma},\mathbf{\tau} | \lambda,\gamma,\sigma_1\big)(T-\sigma_1)}\wedge 1.$$
\item Add a new infection at a time $t$ drawn uniformly on $[0,T]$:
$$\frac{\mathcal{L}^N_T\big(\mathbf{\sigma}\cup \{t\}, \mathbf{\tau} | \lambda, \gamma, \sigma_1\big)\frac{1}{|\mathbf{\sigma}|}}{\mathcal{L}^N_T\big(\mathbf{\sigma},\mathbf{\tau} | \lambda,\gamma,\sigma_1\big)\frac{1}{(T-\sigma_1)}}\wedge 1=\frac{\mathcal{L}\big(\mathbf{\sigma}+\{t\}\big)(T-\sigma_1)}{\mathcal{L}\big(\mathbf{\sigma}\big) |\mathbf{\sigma}|}\wedge 1.$$
\end{itemize}

A numerical application is performed in \cite{oneillrobertsIV} for small epidemics. This algorithm is simulated and compared with other ones in Section \ref{sectionMCMC}.

\subsection{ EM algorithm for discretely observed Markov jump processes}

We consider now the situation where the Markov jump process is only observed at discrete time points. This has been considered by Bladt and Sorensen \cite{bladtsorensenIV}.
We study the  maximum likelihood estimation of the $Q$-matrix based on a discretely sampled Markov jump process.
The problem of identifiability and of existence and uniqueness of the MLE is related to the following problem in probability: can a given discrete time Markov chain be obtained as a discrete time sampling of a
continuous time Markov jump process?

\subsubsection{Likelihood function }
Let $X= (X(s), s \geq 0)$ be a Markov jump process with finite state space $E=\{1,\dots ,N\}$ and $Q$-matrix
${\mathbf Q}=(q_{kl})$.
If $X$ is continuously observed on the time interval $[0,T]$, the likelihood function is given by,
\begin{equation}\label{likMPJ}
	L_T({\mathbf Q})= \prod_{k=1}^N \prod_{l\neq k} q_{kl}^{N_{kl}(T)}\;\exp(-q_{kl} R_k(T)), \; \mbox{where}
	\end{equation}
 the process $N_{kl}(t)$ is the number of transitions  from state $k$ to state $l$ in the time interval $[0,t]$  and $R_k(t)$ is the time spent in state $k$
before time $t$.
\begin{equation}\label{RkT}
R_{k}(t)= \int_0^t \delta_{\{X(s)= k\}} \;ds.
\end{equation}
For details see e.g. \cite{jacobsenIV} .\\
Therefore, if the process is continuously observed on $[0,T]$, the maximum likelihood estimator of its ${\mathbf Q}$- matrix is
easily obtained:
\begin{equation}\label{MLEQPJP}
\hat{{\mathbf Q}}_{kl}= \frac{N_{kl}(T)}{R_{k}(T)}.
\end{equation}

Assume now that the process is observed with a sampling interval $\Delta$ with $T=n\Delta$. Then, setting $X_i=X(t_i)$ is a discrete time Markov chain with transition matrix
$$P^{\Delta}({\mathbf Q}) \quad \mbox{where  } P^t({\mathbf Q})= \exp(t{\mathbf Q}),\quad t>0,$$
with $\exp(\cdot)$ denoting the matrix exponential function.

Hence the likelihood for the discrete observations  $(x_0, \dots, x_n)$ is
$$ L_{n,\Delta}({\mathbf Q})= \prod_{i=1}^n P^{\Delta}({\mathbf Q})_{x_{i-1}x_i},$$
 with the notation that the  $ij$ entry of a matrix $A$ is denoted $A_{ij}$.   Since it is a discrete time Markov chain, it satisfies,
\begin{eqnarray*}
	L_{n,\Delta}({\mathbf Q})&=& \prod_{k=1}^N \prod_{l=1}^N (P ^{\Delta}({\mathbf Q})_{kl}^{N^{kl}(n)},\\
N^{kl}(n)&=&\sum_{i=1}^{n} \delta_{\{X_{i-1}=k, X_{i}=l\}}.
\end{eqnarray*}
The random variables  $(N^{kl}(n))$ are the number of transitions from state $k$ to state $l$ before $n$.
We have proved in Section \ref{MCcount}) that the associated MLE of the transition matrix  ${\hat {\mathbf P}}$ is explicit.
But  building an estimator of $Q$ from ${\hat {\mathbf P}}$ is not straightforward.

Indeed,
let $\mathcal{ P}_0= \{\exp\; {\mathbf Q} \;|\; {\mathbf Q} \in \mathcal{ Q}\}$ denote the set of transition matrices that correspond to discrete time observation of a continuous time Markov jump process. If
$ {\hat{\mathbf P}} \in \mathcal{ P}_0$, there exists a $\hat{{\mathbf Q}} \in \mathcal{ Q}$ such that $P^{\Delta} (\hat{{\mathbf Q}})= {\hat{\mathbf P}}$. This raises  two distinct  problems. First  the set $\mathcal{ P}_0$ is quite complex, and second the matrix exponential function is not an injection on its domain, so
${\hat{\mathbf Q}}$ may not be unique leading to identifiability questions for the statistical model.
 Additional assumptions are thus required in order to ensure the convergence of stochastic algorithms such as $EM$, $MCMC$. We refer to Bladt and Sorensen \cite{bladtsorensenIV} for details.

\subsubsection{The Expectation-Maximization (EM) algorithm}
This is a broadly used method for optimizing the likelihood function in cases where only partial information is available (see e.g.\ \cite{delyonlaviellemoulinesIV,dempsterlairdrubinIV,vaidaIV,jeffwu-emIV}).
A discretely observed Markov jump process is such an example where only data $Y_i= X(t_i); i=1,\dots,n $ are available. Let $X= \{X(t); 0 \leq t\leq T\}$ and $Y=\{Y_i; i=1 \dots,n\}$. The EM-algorithm aimed at estimating the $Q$-matrix $Q= (q_{ij},;i,j \in E )$  iterating the two steps: \\
\textbf{ E-step}: replace the unobserved parts by their conditional expected values given the data $Y=y $\\
\textbf{ M-step}:  perform maximum likelihood on the complete data.\\

The difficult part in the EM algorithm here is the  \textbf{ E-step}:\\ i.e. compute $\E_{Q_0}[\log L_T({\mathbf Q})| Y=y]$
where $Q_0$ is an arbitrary $Q$-matrix.\\
Indeed, consider the \textbf{ M-step}.  From equation \eqref{likMPJ}, we have
\begin{align*}
\E _{{\mathbf Q_0}}(\log L_T({\mathbf Q})| Y=y) =&\, \sum_{k=1}^N\sum_{l\neq k}\log(q_{kl})
\E _{{\mathbf Q_0}}(N_{kl}(T)|Y=y)\\
&- \sum_{k=1}^N\sum_{l\neq k}q_{kl}\E _{{\mathbf Q_0}}(N_k(T)| Y=y).
\end{align*}
This is the likelihood of a continuous time process with observed statistics\\
$\E _{{\mathbf Q_0}}(N_{kl}(T)|Y=y), \E _{{\mathbf Q_0}}(N_k(T)| Y=y)$.
It is maximized, as a function of $Q$, according to \eqref{MLEQPJP} by
\begin{equation} \label{EMMaxQ}
\hat{Q}_{kl}= \frac{\E _{{\mathbf Q_0}}(N_{kl}(T)|Y=y)}{ \E _{{\mathbf Q_0}}(N_k(T)| Y=y)}.
\end{equation}

 Therefore, to  perform the algorithm, we have to compute the two quantities $\E _{{\mathbf Q_0}}(N_{kl}(T)|Y=y)$ and
$ \E _{{\mathbf Q_0}}(N_k(T)| Y=y)$.\\
%
For this,
let us consider a fixed intensity matrix ${\mathbf Q}$ and omit the index ${\mathbf Q}$.
Denote by $e_i$ the unit vector with $i^{th}$ coordinate equal to $1$, and for $U$ a vector or a matrix, let  $U^*$ the transpose of $U$.

Noting that $ N^k(T)= \sum_{p=1}^n	(N^k(t_p)-N^k(t_{p-1})) $, we get by the Markov property and the  time homogeneity of $X= X(t)$,
\begin{eqnarray*}
\E(N^k(t_p)-N^k(t_{p-1})/Y=y)&=& \E(N^k(t_p)-N^k(t_{p-1})|X(t_p)=y_p,X(t_{p-1})=y_{p-1})\\      &=&\E(N^k(t_p-t_{p-1})|X(t_p-t_{p-1})= y_p,X(0)= y_{p-1}).
\end{eqnarray*}
Similarly $N^{kl}(T)= \sum_{p=1}^n	(N^{kl}(t_p)-N^{kl}(t_{p-1})) $, and
$$\E(N^{kl}(t_p)-N^{kl}(t_{p-1})/Y=y) =\E(N^{kl}(t_p-t_{p-1})|X(t_p-t_{p-1})= y_p,X(0)= y_{p-1}).$$

Hence,
\begin{equation}\label{NkT}
	\E(N^k(T)|Y=y)= \sum_{p=1}^n E^k_{y_{p-1}y_p}(t_p-t_{p-1});\quad
	\E^{kl}(T)|Y=y)=\sum _{p=1}^n F^{kl}_{y_{p-1}y_p}(t_p-t_{p-1});
\end{equation}
where if $(i,j)$ and $ (k, l) \in E$, and $t>0$,
\begin{eqnarray*}
E_{ij}^k(t) &=& \E_{{\mathbf Q_0}}(N^k(t)| X(t)=j,X(0)=i),\\
F_{ij}^{kl}(t) &=& \E _{{\mathbf Q_0}}(N^{kl}(t)| X(t)=j,X(0)=i).
\end{eqnarray*}
Fix $k \in E$ and define the matrix $M^k(t)$ by
\begin{equation}
	M^k_{ij}(t)= \E(N_k(t) 1_{X(t)=j}| X(0)=i).
	\end{equation}
	 Then, according to \cite{bladt2002IV},
$$\frac{d}{dt}M_{ij}^k(t)= \sum_{l=1}^N M_{il}^k (t)q_{lj}+ \exp(t{\mathbf Q})_{ij}\delta_{jk};\quad M_{ij}^k(t_0)=0.$$
This equation has an explicit solution which reads as ${\mathbf M}^k(t)= (M^k_{ij}(t), i,j \in E)$,
\begin{equation}\label{Mk}
	{\mathbf M}^k(t)= \int_0^{t} \exp(s{\mathbf Q}) (e_k e^*_k)\exp((t-s)  {\mathbf Q})\;ds.
\end{equation}
Fix now  $k, l \in E$ and define the matrix
${\mathbf f}^{kl}_{ij}(t)= \E(N^{kl}(t) 1_{X(t)=j}| X(0)=i)$. Similarly
\begin{equation}\label{fkl}
	{\mathbf f}^{kl}(t)= q_{kl}\int_0^t \exp(s{\mathbf Q})(e_k e^*_l)\exp((t-s){\mathbf Q})ds.
	\end{equation}
Hence, using that $\P(X(t)=j|X(0)=i)= e^*_i \exp({\mathbf Q}t)e_j$ yields that
\begin{equation}\label{EMexpli}
E_{ij}^k(t)=\frac{M_{ij}^k(t)}{e^*_i \exp(t{\mathbf Q})e_j}; \quad
F_{ij}^{kl}(t)= \frac{{\mathbf f}_{ij}^{kl}(t)}{{e^*_i \exp(t{\mathbf Q})e_j}}.
\end{equation}

So the EM-algorithm works along the successive iterations. Start from an initial
$Q$-matrix ${\mathbf Q}_0$. Let ${\mathbf Q}_m$ denote the $Q$-matrix of iteration $m$. Then
\begin{itemize}
	\item  For all $k,l \in E$, compute using (\ref{Mk}), (\ref{fkl}), (\ref{EMexpli}) the matrices
	$E_{y_i y_{i+1}}(t_{i+1}-t_i)$, and $F_{y_i y_{i+1}}^{kl}(t_{i+1}-t_i)$ associated to
	$Q= {\mathbf Q}_m$
\item Compute the two quantities $\E(N^k(T)|Y=y)$, $\E(N^{kl}(T)|Y=y)$ using (\ref{NkT})
\item  Define ${\mathbf Q}_{m+1}$ by \eqref{EMMaxQ}.
\end{itemize}

Let ${\mathbf Q}_0, {\mathbf Q}_1,\dots, {\mathbf Q}_p,\dots $ a sequence a $Q$- matrices obtained by the EM algorithm.
Then $L_{n,\Delta} ({\mathbf Q}_{p+1})\geq  L_{n,\Delta}({\mathbf Q}_{p})$ for $p=0,1,2, \dots$ (see e.g. \cite{dempsterlairdrubinIV}).
Under additional regularity conditions, one can prove (cf \cite{bladtsorensenIV}, Theorem 4) that, \\
 If ${\mathbf Q}_0$ satisfies that, for all $k,l \in E$,
 $({\mathbf Q}_0)_{kl} >0$, then the sequence $({\mathbf Q}_p)$ converge to a stationary point of the likelihood function $L_{n,\Delta}$ or
 $det\{exp({\mathbf Q}_p)\} \rightarrow 0$.

\section{ABC estimation}

 Markov Chain Monte Carlo (MCMC) methods that treat the missing data as extra parameters, have become increasingly popular for calibrating stochastic epidemiological models with missing data \cite{cau04IV,one02IV,oneillrobertsIV}. However, MCMC may be computationally prohibitive for high-dimensional missing observations \cite{cau08IV, chisstersinghfergusonIV} and fine tuning of the proposal distribution is required for efficient algorithms \cite{GilksRobertsIV}.
 The computation of the likelihood can sometimes be numerically infeasible because it involves integration over the unobserved events. In discrete time, or when the total population size is known and small as in \cite{oneillrobertsIV}, this is possible. But in \eqref{likelihood} for example, because we are in continuous time, the likelihood of removal times, when the infection times and $K^N_t$ are unknown, involves a summation over all possibilities which is impossible: the sum is over all the possible numbers of infections between each successive removal times, plus on the possible times of these infections. An alternative is given by Approximate Bayesian Computation (ABC), which was originally proposed for making inference in population genetics \cite{bea02IV}. This approach is not based on the likelihood function but relies on numerical simulations and comparisons between simulated and observed summary statistics. We detail here the ABC procedure and its application to epidemiology. For more information on ABC methods, the interested reader is referred to \cite{marinpudlorobertryderIV,sissonfanbeaumontbookIV}. In particular, there have been many refinements of the ABC method presented here, for instance using simulations to modify the sampling distributions (e.g.\ \cite{bea09IV,sissonfantanakaIV,tonietalIV}).\\

In \cite{blumtranIV}, the development of ABC estimation techniques for SIR models is motivated by the study of the Cuban HIV-AIDS database. In this case, the population is separated into the following compartments: 1) susceptible individuals who can be infected by HIV, 2) non-detected HIV positive infectious individuals who can propagate the disease, and 3) detected HIV positive individuals. When an individual is detected as HIV positive, we assume that the transmission of the disease ceases. So detection corresponds here to `recovery' events in the classical SIR model presented in Part I of this book. The Cuban database contains the dates of detection of the 8,662 individuals that have been found to be HIV positive in Cuba between 1986 and 2007 \cite{auvertIV}. The database contains additional covariates including the manner by which an individual has been found to be HIV positive. The individuals can be detected either by \textit{random screening} (individuals `spontaneously' take a detection test) or \textit{contact-tracing}. The total number of infectious individuals as well as the infection times are unknown. Blum and Tran \cite{blumtranIV} proposed an ABC estimation procedure when all detection times are known, which they then extend to noisy or binned detection times. They also propose an extension of ABC to path-valued summary statistics consisting of the cumulated number of detections through time.  They introduce a finite-dimensional vector of summary statistics and compare the statistical properties of point estimates and credibility intervals obtained with full and binned detection times. We present here these methods for a simple SIR model and compare numerically the posterior distributions obtained with ABC and MCMC. We refer the reader to \cite{blumtranIV} for more details and treatment of Cuban HIV data. Other use of ABC estimation techniques in public health can be found in \cite{drovandipettittIV,mckinleyrossdeardoncookIV} for example.

\subsection{Main principles of ABC}\label{section:presentationABC}

For simplicity, we deal here with densities and not general probability measures. Let $\mathbf{x}$ be the available data and $\pi(\theta)$ be the prior where $\theta$ is the parameter. Two approximations are at the core of ABC.\\

\noindent \textbf{Replacing observations with summary statistics} Instead of focusing on the posterior density $p(\theta\, |\, \mathbf{x})$, ABC aims at a possibly less informative \textit{target} density $p(\theta\, |\, S(\mathbf{x})=s_{obs})\propto {\textrm Pr}(s_ {obs} |\theta)\pi(\theta)$ where $S$ is a summary statistic that takes its values in a normed space, and $s_{obs}$ denotes the observed summary statistic. The summary statistic $S$ can be a $d$-dimensional vector or an infinite-dimensional variable such as a $L^1$ function. Of course, if $S$ is sufficient, then the two conditional densities are the same. The target distribution will also be coined as the \textit{partial posterior distribution}.\\

\noindent \textbf{Simulation-based approximations of the posterior} Once the summary statistics have been chosen, the second approximation arises when estimating the partial posterior density  $p(\theta\, |\, S(\mathbf{x})=s_{obs})$ and sampling from this distribution. This step involves nonparametric kernel estimation and possibly correction refinements.

\subsubsection{Sampling from the posterior}

The ABC method with smooth rejection generates random draws from the target distribution as follows (see e.g.\ \cite{bea02IV})
\begin{enumerate}
\item Generate $N$ random draws $(\theta_i,s_i)$, $i=1, \dots , N$. The parameter $\theta_i$ is generated from the prior distribution $\pi$ and the vector of summary statistics $s_i$ is calculated for the $i^{th}$ data set that is simulated from the generative model with parameter $\theta_i$.
\item Associate to the $i^{th}$ simulation the weight $W_i=K_{\delta}(s_i-s_{obs})$, where $\delta$ is a tolerance threshold and $K_\delta$ a (possibly multivariate) smoothing kernel.
\item The distribution $(\sum_{i=1}^N W_i \mathbf{\delta}_{\theta_i})/(\sum_{i=1}^N W_i)$, in which $\mathbf{\delta}_{\theta}$ denotes the Dirac mass at $\theta$, approximates the target distribution. \end{enumerate}

\subsubsection{Point estimation and credibility intervals}

Assume here that $\theta=(\theta_1,\dots \theta_d)$ is a $d$-dimensional vector. We denote by $\theta_i=(\theta_{1,i},\dots \theta_{d,i})$ the simulated vectors of parameters in the previous paragraph. Once a sample from the target distribution has been obtained, several estimators may be considered for point estimation of each one-dimensional component $\theta_j$, $j\in \{1,\dots d\}$. Using the weighted sample $(\theta_{j,i},W_i)$, $i=1,\dots ,N$, the \textit{mean} of the target distribution $p(\theta_j|s_{obs})$ is estimated by
    \begin{equation}
    \hat{\theta_j}=\frac{\sum_{i=1}^N \theta_{j,i} W_i}{\sum_{i=1}^N W_i}=\frac{\sum_{i=1}^N \theta_{j,i} K_\delta(s_i-s_{obs})}{\sum_{i=1}^N K_\delta(s_i-s_{obs})}, \quad j=1,2,3 \label{estimateurbayesien}
    \end{equation}
which is the well-known Nadaraya--Watson regression estimator of the conditional expectation $\E(\theta_j\, |\, s_{obs})$ (see e.g.\ \cite[Chapter 1]{tsybakov_livreIV}). We also compute the \textit{medians}, \textit{modes}, and $95\%$ credibility intervals (CI) of the marginal posterior  distribution (see Section 3 of the supplementary material).

\subsubsection{Summary statistics} We are here interested in estimating the parameter $\theta=(\lambda,\gamma)$ of a SIR model (see Part I of this book). Two different sets of summary statistics are considered.\\

First, we consider the (infinite-dimensional) statistics $(R_t,t\in [0,T])$ consisting of the cumulated number of recoveries at time $t$ since the beginning of the epidemic. Because the data consist of the recovery times this curve $(R_t, t\in [0,T])$ can simply be viewed as a particular coding of the whole dataset. It is thus a sufficient statistic implying that the partial posterior distribution $p(\theta\, |\, R^1,R^2)$ is equal to the posterior distribution $p(\theta\, |\, \mathbf{x})$.\\
The $L^1$-norm between the $i$th simulated path $R_i$ and the observed one $R_{obs}$ is
\begin{equation}
 \label{eqn:l1norm}
\|R_{obs}-R_i\|_1=\int_0^T  |R_{obs,s}- R_{i,s}|\, ds \quad , \; i=1, \dots , N.
\end{equation}
   The weights $W_i$ are then computed as $W_i=K_{\delta}(\|R_{obs}-R_i\|_1) $ where $\delta$ is a tolerance threshold found by accepting a given percentage $P_{\delta}$ of the simulations and where an Epanechnikov kernel is chosen for $K$.\\

Second, when there is noise or when the recovery times have been binned, the full observations $(R_t,t\in [0,T])$ are unavailable. Then, we replace these summary statistics by a vector of summary statistics such as the numbers of recoveries per year during the observation period. We consider a d-dimensional vector of summary statistics of three different types: 1) number $R_T$ of individuals detected by the end of the observation period, 2) for each year $j$, numbers of removed individuals $R_{j+1}-R_j$, 3) numbers of new infectious in the first years (assuming for instance that all of them have been detected since) $I_{j+1}-I_j$ for $j=0,\dots,J_0$, where $J_0$ is a small number of years where the information is supposed to be known, 4) mean time during which an individual is infected but has not been detected in the $J_0$ first years. This mean time corresponds to the mean sojourn time in the class I for the $J_0$ first years. Since these new summary statistics are not sufficient anymore, the new partial posterior distribution may be different from the posterior $p(\theta \,|\,\mathbf{x})$. \\
In order to compute the weights $W_i$, we consider the following spherical kernel $K_{\delta}(x)\propto K(\|\mathbf{H}^{-1}x\|/\delta)$. Here $K$ denotes the one-dimensional Epanechnikov kernel, $\|\cdot\|$ is the Euclidean norm of $\R^d$ and $\mathbf{H}^{-1}$ a matrix. Because the summary statistics may span different scales, $\mathbf{H}$ is taken equal to the diagonal matrix with the standard deviation of each one-dimensional summary statistic on the diagonal.

\subsection{Comparisons between ABC and MCMC methods for a standard SIR model}\label{sectionMCMC}

Following \cite{bea02IV} a performance indicator for ABC techniques consists in their ability to replicate likelihood-based results given by MCMC. Here the situation is particularly favourable for comparing the two methods since the partial and the full posterior are the same. In the following examples, we choose samples of small sizes ($n=3$ and $n=29$) so that the dimension of the missing data is reasonable and MCMC achieves fast convergence. For large sample sizes with high-dimensional missing data, MCMC convergence might indeed be a serious issue and more thorough updating scheme shall be implemented \cite{cau08IV, chisstersinghfergusonIV}.\\

We consider the standard SIR model with infection rate $\lambda$ and recovery rate $\gamma$. The data consist of the recovery times and we assume that the infection times are not observed. We implement the MCMC algorithm of \cite{oneillrobertsIV}. A total of $10,000$ steps are considered for MCMC with an initial burn-in of $5,000$ steps. For ABC, the summary statistic consists of the cumulative number of recoveries as a function of time. A total of $100,000$ simulations are performed for ABC.\\

The first example was previously considered by \cite{oneillrobertsIV}. They simulated recovery times by considering one initial infectious individual and by setting $S_0=9$, $\lambda=0.12$, and $\gamma=1$. We choose gamma distributions for the priors of $\lambda$ and $\gamma$ with a shape parameters of $0.1$ and rate parameters of $1$ and $0.1$. As displayed by Figure \ref{fig:3obs}, the posterior distributions obtained with ABC are extremely close to the ones obtained with MCMC provided that the tolerance rate is sufficiently small. We see that the tolerance rate changes importantly the posterior distribution obtained with ABC (see the posterior distributions for $\lambda$).\\
\begin{figure}[!ht]
\begin{center}
\includegraphics[height = 8cm,angle=270]{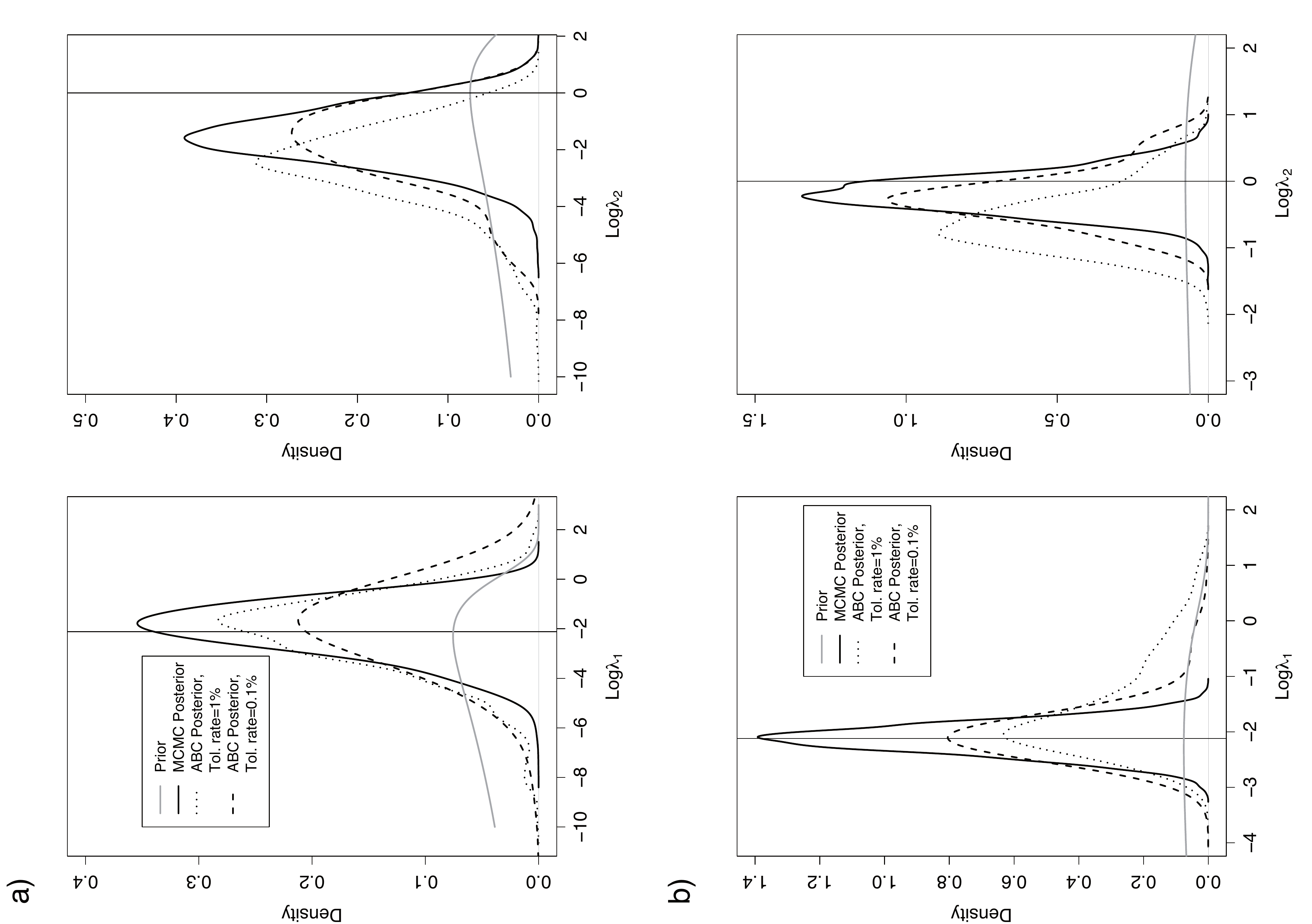}
\caption{{\small \textit{Comparison of the posterior densities obtained with MCMC and ABC. The vertical lines correspond to the values of the parameters used for generating the synthetic data. Left: the data consist of 3 recovery times that have been simulated by \cite{oneillrobertsIV}. Right: The data consist of 29 recovery times that we  simulated by setting $\lambda=0.12$, $\gamma=1$, $S_0=30$, $I_0=1$, and $T=5$ (see the supplementary material of \cite{blumtranIV} for the 29 recovery times).}}}
\label{fig:3obs}
\end{center}
\end{figure}

In a second example, we simulate a standard SIR trajectory with $\lambda=0.12$, $\gamma=1$, $S_0=30$ and $I_0=1$. The data now consist of 29 recovery times (and are given in the supplementary material of \cite{blumtranIV}). Once again, Figure \ref{fig:3obs} shows that the ABC and MCMC posteriors are close provided that the tolerance rate is small enough. ABC produces posterior distributions with larger tails compared to MCMC, even with the lowest tolerance rate of $0.1\%$. This can be explained by considering the extreme scenario in which the tolerance threshold $\delta$ goes to infinity: every simulation has a weight of 1 so that ABC targets the prior instead of the posterior. As the prior has typically larger tails than the posterior, ABC inflates the posterior tails.

\subsection{Comparison between ABC with full and binned recovery times}

\subsubsection{The curse of dimensionality and regression adjustments}\label{sec:correction}

In this case, the first set of summary statistics presented in Section \ref{section:presentationABC} can not be used any more and we have to use the second set of summary statistics, which constitute a vector of descriptive statistics as is much often encountered in the literature. In the case of a $d$-dimensional vector of summary statistics, the estimator of the conditional mean (\ref{estimateurbayesien}) is convergent if the tolerance rate satisfies $\lim_{N\rightarrow +\infty}\delta_N=0$, so that its bias converges to 0, and $\lim_{N\rightarrow +\infty}N\delta_N^d=+\infty$, so that its variance converges to 0 \cite{fan1IV}. As $d$ increases, a larger tolerance threshold shall be chosen to keep the variance small. As a consequence, the bias may increase with the number of summary statistics. This phenomenon known as the \textit{ curse of dimensionality} may be an issue for the ABC-rejection approach. The following paragraph presents regression-based adjustments that cope with the curse of dimensionality.\\

The adjustment principle is presented in a general setting within which the corrections of \cite{bea02IV} and \cite{blumfrancoisIV} can be derived. Correction adjustments aim at obtaining from a random couple $(\theta_i,s_i)$ a random variable distributed according to $p(\theta\,|\,s_{obs})$. The idea is to construct a coupling between the distributions $p(\theta|s_i)$ and $p(\theta | s_{obs})$, through which we can shrink the $\theta_i$'s to a sample of i.i.d.\ draws from $p(\theta | s_{obs})$. In the remaining of this subsection, we describe how to perform the corrections for each of the one-dimensional components separately. For $\theta\in \R$, correction adjustments are obtained by assuming a relationship $\theta=G(s,\varepsilon)=:G_s(\varepsilon)$ between the parameter and the summary statistics. Here $G$ is a (possibly complicated) function and $\varepsilon$ is a random variable with a distribution that does not depend on $s$. A possibility is to choose $G_s=F_s^{-1}$, the (generalized) inverse of the cumulative distribution function of $p(\theta|s)$. In this case, $\varepsilon=F_s(\theta)$ is a uniform random variable on $[0,1]$. The formula for adjustment is given by
\begin{equation}
\label{eqn:correc}
\theta_i^*=G_{s_{obs}}^{-1}(G_{s_i}(\theta_i)) \quad i=1, \dots , N.
\end{equation}For $G_s=F_s^{-1}$, the fact that the $\theta_i^*$'s are i.i.d.\ with density $p(\theta|s_{obs})$ arises from the standard inversion algorithm. Of course, the function $G$ shall be approximated in practice. As a consequence, the adjusted simulations $\theta_i^*$, $i=1,\dots ,N$, constitute an approximate sample of $p(\theta\,|\, s_{obs})$. The ABC algorithm with regression adjustment can be described as follows
\begin{enumerate}
\item Simulate, as in the rejection algorithm, a sample $(\theta_i,s_i)$, $i= 1,\dots, N$.
\item By making use of the sample of the $(\theta_i,s_i)$'s weighted by the $W_i$'s, approximate the function $G$ such that $\theta_i=G(s_i,\varepsilon_i)$ in the vicinity of $s_{obs}$.
\item Replace the $\theta_i$'s by the adjusted $\theta_i^*$'s. The resulting weighted sample $(\theta_i^{*},W_i)$, $i=1, \dots, N$, form a sample from the target distribution.
\end{enumerate}

\noindent \textbf{Local linear regression (LOCL)} The case where $G$ is approximated by a linear model $G(s,\varepsilon)=\alpha+ s^{t}\beta+\varepsilon$, was considered by \cite{bea02IV}. The parameters $\alpha$ and $\beta$ are inferred by minimizing the weighted squared error
\[\sum_{i=1}^N K_{\delta}(s_i-s_{obs}) (\theta_i-(\alpha+(s_i-s_{obs})^{T}\beta))^2.\]
Using (\ref{eqn:correc}), the correction of \cite{bea02IV} is derived as
\begin{equation}
\label{eqn:adj}
\theta_i^{*}=\theta_i-(s_i-s_{obs})^{T}\hat{\beta},\; i=1, \dots, N.
\end{equation}
Asymptotic consistency of the estimators of the partial posterior distribution with the correction (\ref{eqn:adj}) is obtained by \cite{blumIV}.\\

\noindent \textbf{Nonlinear conditional heteroscedastic regressions (NCH)}
To relax the assumptions of homoscedasticity and linearity inherent to local linear regression, Blum and Francois \cite{blumfrancoisIV} approximated $G$ by $G(s,\varepsilon)=m(s)+\sigma(s) \times \varepsilon$
where $m(s)$ denotes the conditional expectation, and $\sigma^2(s)$ the conditional variance. The estimators $\hat{m}$ and $\log\hat{\sigma}^2$ are found by adjusting two feed-forward neural networks using a regularized weighted squared error. For the NCH model, parameter adjustment is performed as follows
$$
\theta_i^{*}=\hat{m}(s_{obs})+(\theta_i-\hat{m}(s_{i}))\times \frac{\hat{\sigma}(s_{obs})}{\hat{\sigma}(s_i)},\; i=1, \dots , N.
$$
In practical applications of the NCH model, we train $L=10$ neural networks for each conditional regression (expectation and variance) and we average the results of the $L$ neural networks to provide the estimates $\hat{m}$ and $\log\hat{\sigma}^2$.\\

\noindent \textbf{Reparameterization} In both regression adjustment approaches, the regressions can be performed on transformations of the responses $\theta_i$ rather that on the responses themselves. Parameters whose prior distributions have finite supports are transformed via the logit function and non-negative parameters are transformed via the logarithm function. These transformations guarantee that the $\theta_i^{*}$'s lie in the support of the prior distribution and have the additional advantage of stabilizing the variance.\\

\noindent \textbf{Comparison between the first and second set of summary statistics} A simulation study is carried to compare the ABC methods based on the two different sets of summary statistics presented in Section \ref{section:presentationABC} has been carried in \cite{blumtranIV} using a slightly more elaborate SIR model with contact-tracing introduced in \cite{arazozaclemencontranIV}. Blum and Tran simulated $M=200$ synthetic data sets epidemic. 
 When using the finite-dimensional vector of summary statistics, they perform the smooth rejection approach as well as the LOCL and NCH corrections with a total of 21 summary statistics. 
 Each of the $M=200$ estimations of the partial posterior distributions are performed using a total of $N=5000$ simulations.


\begin{figure}[!ht]
\begin{center}
\includegraphics[height = 11cm]{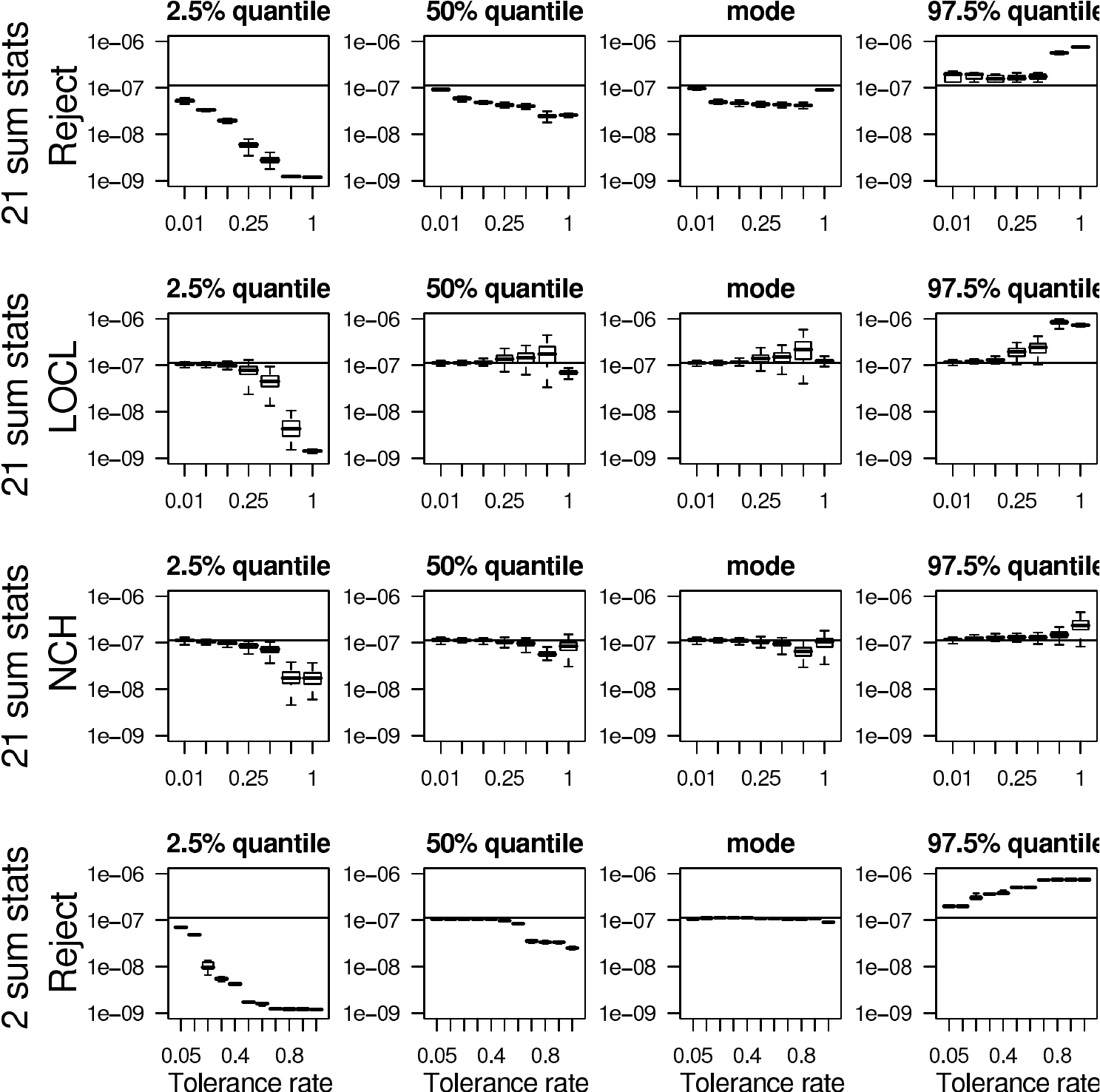}
\caption{{\small \textit{Boxplots of the $M=200$ estimated modes and quantiles ($2.5\%$, $50\%$, and $97.5\%$) of the partial posterior distributions of $\lambda$ in a model presented in Blum and Tran \cite{blumtranIV}. For each ABC method and each value of the tolerance rate, $200$ posterior distributions are computed for each of the 200 synthetic data sets. The horizontal lines correspond to the true value $\lambda=1.14 \times 10^{-7}$ used when simulating the 200 synthetic data sets. The different tolerance rates are 0.01, 0.05, 0.10, 0.25, 0.50, 0.50, 0.75, and 1 for all the ABC methods except the rejection scheme with the two summary statistics. For the latter method, the tolerance rates are $0.007$, $0.02$, $0.06$, $0.13$, $0.27$, $0.37$, $0.45$, $0.53$, $0.66$, $0.80$, $1$.}}}
\label{fig:boxplots}
\end{center}
\end{figure}

Figure \ref{fig:boxplots} displays the boxplots of the 200 estimated modes, medians, $2.5\%$ and $97.5\%$ quantiles of the posterior distribution for $\lambda$ as a function of the tolerance rate $P_\delta$.
First, the medians and modes are found to be equivalent except for the rejection method with 21 summary statistics for which the mode is less biased. For the lowest tolerance rates, the point estimates obtained with the four possible methods are close to the value $\lambda$ used in the simulations, with smaller CI for the LOCL and NCH variants. When increasing the tolerance rate, the bias of the point estimates obtained with the rejection method with 21 summary statistics slightly increases. By contrast, up to tolerance rates smaller than $50\%$, the biases of the point estimates obtained with the three other methods remain small. As can be expected, the widths of the CI obtained with the rejection methods increase with the tolerance rate while they remain considerably less variable for the methods with regression adjustment.\\

For further comparison of the different methods, we can compute the rescaled mean square errors (ReMSEs):
\begin{equation}
\mbox{ReMSE}(\lambda)=\frac{1}{M}\sum_{k=1}^{M} \frac{(\log(\hat{\lambda^k})-\log(\lambda))^2}{\mathrm{Range}(\mathrm{prior}(\lambda))^2},
\end{equation}
where $\hat{\lambda}^k$ is a point estimate obtained with the $k$th synthetic data set.


To compare the whole posterior distributions obtained with the four different methods, we can also compute the different CIs. The rescaled mean CI (RMCI) is defined as follows
\begin{equation}
\mbox{RMCI}=\frac{1}{M}\sum_{k=1}^{M} \frac{|IC^k|}{\mathrm{Range}(\mathrm{prior}(\lambda))},
\end{equation}
where $|IC^k|$ is the length of the $k$th estimated $95\%$ CI for the parameter $\lambda$.
As displayed by Figure \ref{fig:boxplots}, the CIs obtained with smooth rejection increase importantly with the tolerance rate whereas such an important increase is not observed with regression adjustment.



\section{Sensitivity analysis}

Epidemiological models designed in order to test public health scenarios by simulations or disentangle various factors for a better understanding of the disease propagation are often over-parameterized. Input parameters are the rates describing the times that individuals stay in each compartment, for example. The sources that are used to calibrate the model can also be numerous: some parameters are for example obtained from epidemiological studies or clinical trials, but there can be uncertainty on their values due to various reasons. The restricted size of the sample in these studies brings uncertainty on the estimates, which are given with uncertainty intervals (classically, a 95\% confidence interval). Different studies can provide different estimates for the same parameters. The study populations can be subject to selection biases. In the case of clinical trials where the efficacy of a treatment is estimated, the estimates can be optimistic compared with what will be the effectiveness in real-life, due to the protocol of the trials. It is important to quantify how theses uncertainties on the input parameters can impact the results and the conclusion of an epidemiological modelling study. To check the robustness of some output with respect to the parameters, sensitivity analyses are often performed.\\

In a mathematical model where the output $y\in \R$ depends on a set of $p\in \N$ input parameters $x=(x_1,...x_p) \in \mathbb{R}^p$ through the relation $y=f(x)$, there are various ways to measure the influence of the input $x_\ell$, for $\ell\in \{1,\dots,p\}$, on $y$. In this article, we are interested in Sobol indices \cite{sobolIV}, which are based on an ANOVA decomposition (see \cite{saltellilivre2008IV,jacquesthesisIV,jacquesIV} for a review). These indices have been proposed to take into account the uncertainty on the input parameters that are here considered as a realisation of a set of independent random variables $X=(X_1,...X_p)$, with a known distribution and with possibly correlated components. Denoting by $Y = f(X)$ the random response, the first-order Sobol indices can be defined for $\ell\in \{1,\dots, p\}$ by
\begin{equation}
S_\ell=\frac{\Var\big(\E[Y\ |\ X_\ell]\big)}{\Var(Y)}.\label{def:Sobol1}
\end{equation}
This first-order index $S_\ell$ corresponds to the sensitivity of the model to $X_\ell$ alone. Higher order indices can also be defined using ANOVA decomposition: considering $(\ell,\ell') \in \{1,\dots, p\}$, we can define the second order sensitivity, corresponding to the sensitivity of the model to the interaction between $X_\ell$ and $X_{\ell'}$  index by
\begin{equation}
S_{\ell\ell'}=\frac{\Var\big(\E[Y\ |\ X_\ell,X_{\ell'}]\big)}{\Var(Y)} - S_{\ell} - S_{\ell'} \label{def:Sobol2}
\end{equation}
We can also define the total sensitivity indices by
\begin{equation}
S_{T_\ell} = \sum_{L \subset \{1,\dots, p\} \, |\, \ell \in L} S_L. \label{def:SobolTot}
\end{equation}
As the estimation of the Sobol indices can be computer time consuming, a usual practice consists in estimating the first-order and total indices, to assess 1) the sensitivity of the model to each parameter taken separately and 2) the possible interactions, which are quantified by the difference between the total order and the first-order index for each parameter. Several numerical procedures to estimate the Sobol indices have been proposed, in particular by Jansen \cite{jansenIV} (see also \cite{saltellilivre2000IV,saltellilivre2008IV}). These estimators, that we recall in the sequel, are based on Monte Carlo simulations of $(Y,X_1\dots X_p)$.\\

The literature focuses on deterministic relations between the input and output parameters. In a stochastic framework where the model response $Y$ is not unique for given input parameters, few works have been done, randomness being usually limited to input variables. Assume that:
\begin{equation}
Y=f(X,\varepsilon),\label{modelsto}
\end{equation}where $X=(X_1,\dots X_p)$ still denotes the random variables modelling the uncertainty of the input parameters and where $\varepsilon$ is a noise variable. When noise is added in the model, the classical estimators do not always work: $Y$ can be very sensitive to the addition of $\varepsilon$. Moreover, this variable is not always controllable by the user.\\

When the function $f$ is linear, we can refer to \cite{fortkleinlagnouxlaurentIV}. In the literature, meta-models are used: approximating the mean and the dispersion of the response by deterministic functions allows us to come back to the classical deterministic framework (e.g.\ Janon et al.\ \cite{janonnodetprieurIV}, Marrel et al.\ \cite{marrelioossdaveigaribatetIV}). We study here another point of view, which is based on the non-parametric statistical estimation of the term $\Var\big(\E[Y\ |\ X_\ell]\big)$ appearing in the numerator of \eqref{def:Sobol1}. Approaches based on the Nadaraya--Watson kernel estimator have been proposed by Da Veiga and Gamboa \cite{daveigagamboaIV} or Sol\'{\i}s \cite{solisthesisIV} while an approach based on warped wavelet decompositions is proposed by Castellan et al.\ \cite{castellancousientranIV}. An advantage of these non-parametric estimators is that their computation requires less simulations of the model. For Jansen estimators, the number of calls of $f$ required to compute the sensitivity indices is $n(p+1)$, where $n$ is the number of independent random vectors $(Y^i,X^i_1,\dots X^i_p)$ ($i\in \{1,\dots n\}$) that are sampled for the Monte Carlo procedure, making the estimation of the sensitivity indices time-consuming for sophisticated models with many parameters. In addition, for the non-parametric estimators, the convergence of the mean square error to zero may be faster than for Monte Carlo estimators, depending on the regularity of the model.

\subsection{A non-parametric estimator of the Sobol indices of order 1}\label{section:nonparam}

Denoting by $V_\ell=\E\big(\E^2(Y\ |\ X_\ell)\big)$ the expectation of the square conditional expectation of $Y$ knowing $X_\ell$, we have:
\begin{equation}
S_\ell = \frac{V_\ell -\E(Y)^2}{\Var(Y)},\label{def:S_ell}
\end{equation}which can be approximated by
\begin{equation}
\widehat{S}_\ell=\frac{\widehat{V}_\ell - \bar{Y}^2}{\widehat{\sigma}^2_Y}\label{def:generique}
\end{equation}where $$\bar{Y}=\frac{1}{n}\sum_{j=1}^n Y_j\mbox{  and  }\widehat{\sigma}^2_Y=\frac{1}{n} \sum_{j=1}^n (Y_j-\bar{Y})^2$$are the empirical mean and variance of $Y$. We consider here two approximations $\widehat{V}_\ell$ of $V_\ell$, based on Nadaraya--Watson and on warped wavelet estimators.\\

Assume that we have $n$ independent couples $(Y^i, X_1^i,\dots X_p^i)$ in $\R\times \R^p$, for $i\in \{1,\dots, n\}$, generated by \eqref{modelsto}. Let us start with the kernel-based estimator:

\begin{definition}\label{def:estimateur1}Let $K\, :\, \R\mapsto \R$ be a kernel such that $\int_{\R}K(u)du=1$.
Let $h>0$ be a window and let us denote $K_h(x)=K(x/h)/h$. An estimator of $S_\ell$ for $\ell\in \{1,\dots p\}$ is:
\begin{equation}
\widehat{S}_\ell^{(NW)}=\frac{\frac{1}{n}\sum_{i=1}^n \Big(\frac{\sum_{j =1}^n Y_j K_h(X_\ell^j-X_\ell^i)}{\sum_{j =1}^n K_h(X_\ell^j-X_\ell^i)} \Big)^2-\bar{Y}^2}{\widehat{\sigma}_Y^2}.
\end{equation}
\end{definition}

This estimator is based on the Nadaraya--Watson estimator of $\E(Y\, |\, X_\ell=x)$ given by (e.g.\ \cite{tsybakov_livreIV})
$$\frac{\sum_{j=1}^n Y_j K_h(X_\ell^j-x)}{\sum_{j=1}^n K_h(X_\ell^j-x)}.$$
Replacing this expression in \eqref{def:generique} provides $\widehat{S}_\ell^{(NW)}$. This estimator and the rates of convergence have been studied by Sol\'{\i}s \cite{solisthesisIV}. If we instead use a warped wavelet decomposition of $\E(Y\, |\, X_\ell=x)$ (see e.g.\ \cite{chagnyIV,kerkyacharianpicardIV}), this provides an estimator studied by Castellan et al.\ \cite{castellancousientranIV}. Let us present this second estimator.\\

Let us denote by $G_\ell$ the cumulative distribution function of $X_\ell$.
Let $(\psi_{jk})_{j\geq -1,k\in \Z}$ be a Hilbert wavelet basis of $L^2$, the space of real functions that are square integrable with respect to the Lebesgue measure on $\R$. In the sequel, we denote by $\langle f,g\rangle=\int_\R f(u)g(u)du$, for $f,g\in L^2$, the usual scalar product of $L^2$. The wavelet $\psi_{-10}$ is the father wavelet, and for $k\in \Z$, $\psi_{-1k}(x)=\psi_{-10}(x-k)$. The wavelet $\psi_{00}$ is the mother wavelet, and for $j\geq 0$, $k\in \Z$, $\psi_{jk}(x)=2^{j/2} \psi_{00}(2^j x -k)$.

\begin{definition}\label{def:estimateur2}Let us define for $j\geq -1$, $k\in \Z$,
\begin{equation}
\widehat{\beta}^\ell_{jk}=\frac{1}{n} \sum_{i=1}^n Y_i \psi_{jk}(G_\ell(X_\ell^i)).
\end{equation}Then, we define the (block thresholding) estimator of $S_\ell$ as
\begin{equation}
 \widehat{S}_\ell^{(WW)}=\frac{\widehat{V}_\ell-\bar{Y}^2}{\widehat{\sigma}_Y^2},
 \end{equation}
where $\widehat{V}_\ell$ is an estimator of the variance $V_\ell$ given by:
\begin{equation}\widehat{V}_\ell=\sum_{j= -1}^{J_n}  \Big[ \sum_{k\in \Z}\big(\widehat{\beta}^{\ell}_{jk}\big)^2 - w(j)\Big]\ind_{ \sum_{k\in \Z}\big(\widehat{\beta}^{\ell}_{jk}\big)^2 \geq w(j)} \label{def:hat_theta}
\end{equation}
with $w(j)=K \Big(\frac{2^j+\log 2}{n}\Big)$ and $J_n:=\big[\log_2\big(\sqrt{n}\big)\big]$ (where $[\cdot]$ denotes the integer part) and $K$ a positive constant.
\end{definition}

Let us present the idea explaining the estimator proposed in Definition \ref{def:estimateur2}. Let us introduce centered random variables $\eta_{\ell}$ such that
\begin{equation}
Y=f(X,\varepsilon)=\E(Y\, |\, X_\ell)+\eta_\ell.
\end{equation}
Let $g_{\ell}(x)=\E(Y\, |\, X_\ell=x)$ and $h_\ell(u)=g_\ell\circ G_\ell^{-1}(u)$. $h_\ell$ is a function from $[0,1]\mapsto \R$ that belong to $L^2$ since $Y\in L^2$.
Then
\begin{align}
h_\ell(u)=\sum_{j\geq -1} \sum_{k\in \Z} \beta_{jk}^\ell \psi_{jk}(u),\quad \mbox{ with }\\
\beta_{jk}^\ell=\int_0^1 h_\ell(u) \psi_{jk}(u)du=\int_\R g_\ell(x) \psi_{jk}(G_\ell(x)) G_\ell(dx).\end{align}
Notice that the sum in $k$ is finite because the function $h_\ell$ has compact support in $[0,1]$. It is then natural to estimate $h_\ell(u)$ by
\begin{equation}
\widehat{h}_\ell=\sum_{j\geq -1}\sum_{k\in \Z} \widehat{\beta}_{jk}^\ell \psi_{jk}(u),\label{def:hat_h}
\end{equation} and we then have:
\begin{align}
V_\ell&=\E\big(\E^2(Y\, |\, X_\ell)\big)\nonumber\\
&=  \int_{\R} G_\ell(dx)\Big(\sum_{j\geq -1} \sum_{k\in \Z} \beta_{jk}^\ell \psi_{jk}\big(G_\ell(x)\big)\Big)^2\nonumber\\
&=  \int_0^1 \Big(\sum_{j\geq -1} \sum_{k\in \Z} \beta_{jk}^\ell \psi_{jk}(u)\Big)^2 \ du\nonumber\\
&=  \sum_{j\geq -1}\sum_{k\in \Z} \big( \beta_{jk}^\ell\big)^2=\|h_\ell\|^2_2.
\end{align}
Adaptive estimation of $\|h_\ell\|^2_2$ has been studied in
\cite{laurentmassartIV}, which provides the block thresholding estimator $\widehat{V}_\ell$ in Definition \ref{def:estimateur2}. The idea is: 1) to sum the terms $\big( \beta_{jk}^\ell\big)^2$, for $j\geq 0$, by blocks $\{(j,k),\  k\in \Z\}$ for $j\in \{-1,\dots,J_n \}$ with a penalty $w(j)$ for each block to avoid choosing too large $j$'s, 2) to cut the blocks that do not sufficiently contribute to the sum, in order to obtain statistical adaptation.\\

Notice that $\widehat{V}_\ell$ can be seen as an estimator of $V_\ell$ resulting from a model selection on the choice of the blocks $\{(j,k),\ k\in \Z\}$, $j\in \{-1,\dots, J_n\}$ that are kept, with the penalty function $\mbox{pen}(\mathcal{J})=\sum_{j\in \mathcal{J}} w(j)$, for $\mathcal{J}\subset \{-1,\dots, J_n\}$. Indeed:
\begin{align}
\widehat{V}_\ell &= \sup_{\mathcal{J}\subset \{-1,0,\dots,J_n\}} \sum_{j\in \mathcal{J}}  \Big[ \sum_{k\in \N}\big(\widehat{\beta}^{\ell}_{jk}\big)^2 - w(j)\Big]\nonumber\\
&= \sup_{\mathcal{J}\subset \{-1,0,\dots,J_n\}}\sum_{j\in \mathcal{J}}  \sum_{k\in \N}\big(\widehat{\beta}^{\ell}_{jk}\big)^2 - \pen(\mathcal{J}).\label{lien_pen}
\end{align}

Note that the definition of the estimator and the penalization depend on a constant $K$ through the definition of $w(j)$. The value of this constant is chosen in order to obtain oracle inequalities. In practice, this constant is hard to compute, and can be chosen by a slope heuristic approach (see e.g.\ \cite{arlotmassartIV}).

\subsection{Statistical properties}

In this Section, we are interested in the rate of convergence to zero of the mean square error (MSE) $\E\big((S_\ell-\widehat{S}_\ell)^2\big)$. Let us consider the generic estimator $\widehat{S}_\ell$ defined in \eqref{def:generique}, where $\widehat{V}_\ell$ is an estimator of $V_\ell=\E(\E^2(Y\ |\ X_\ell))$ (not necessarily \eqref{def:hat_theta}).
We first start with a Lemma stating that the MSE can be obtained from the rate of convergence of $\widehat{V}_\ell$ to $V_\ell$.

\begin{lemma}\label{lemme1}Consider the generic estimator $\widehat{S}_\ell$ defined in \eqref{def:generique} and $\widehat{V}_\ell$ an estimator of $V_\ell$ (not necessarily \eqref{def:hat_theta}).
Then there is a constant $C$ such that:
\begin{equation}
\E\big((S_\ell-\widehat{S}_\ell)^2\big) \leq \frac{C}{n}+\frac{4}{\Var(Y)^2} \E\Big[\big(\widehat{V}_\ell - V_\ell\big)^2\Big].
\label{erreur:generique}
\end{equation}
\end{lemma}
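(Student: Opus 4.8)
The statement to prove is Lemma~\ref{lemme1}: a bound on the mean square error of the generic Sobol-index estimator $\widehat{S}_\ell = (\widehat{V}_\ell - \bar{Y}^2)/\widehat{\sigma}_Y^2$ in terms of the estimation error of $\widehat{V}_\ell$ for $V_\ell$. The key structural fact is that both $S_\ell$ and $\widehat{S}_\ell$ are ratios of the same denominator type (the variance of $Y$, estimated or exact), so the plan is to decompose the error $S_\ell - \widehat{S}_\ell$ into a piece controlled by $\widehat{V}_\ell - V_\ell$ and a piece controlled by the empirical fluctuations of the simple moment estimators $\bar{Y}$ and $\widehat{\sigma}_Y^2$ around their targets $\E(Y)$ and $\Var(Y)$.

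\textbf{First steps.} I would write $S_\ell = (V_\ell - \E(Y)^2)/\Var(Y)$ from \eqref{def:S_ell} and $\widehat{S}_\ell = (\widehat{V}_\ell - \bar{Y}^2)/\widehat{\sigma}_Y^2$ from \eqref{def:generique}, then form the difference over a common denominator. A clean route is to insert intermediate quantities: compare $\widehat{S}_\ell$ first to $\widetilde{S}_\ell := (\widehat{V}_\ell - \bar Y^2)/\Var(Y)$ (replacing $\widehat\sigma_Y^2$ by $\Var(Y)$ in the denominator), and then compare $\widetilde S_\ell$ to $S_\ell$. The second comparison isolates the numerator error, producing a term $\bigl[(\widehat V_\ell - V_\ell) - (\bar Y^2 - \E(Y)^2)\bigr]/\Var(Y)$; applying $(a+b)^2 \le 2a^2 + 2b^2$ yields the $\tfrac{4}{\Var(Y)^2}\E[(\widehat V_\ell - V_\ell)^2]$ term explicitly (the factor $4 = 2\cdot 2$ absorbing the split), together with a term $\tfrac{2}{\Var(Y)^2}\E[(\bar Y^2 - \E(Y)^2)^2]$. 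The first comparison produces a term involving $\E[(\widehat\sigma_Y^2 - \Var(Y))^2]$ multiplied by bounded factors.

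\textbf{The moment estimates.} The remaining job is to show that the extra terms — those involving $\bar{Y}^2 - \E(Y)^2$ and $\widehat{\sigma}_Y^2 - \Var(Y)$ — are all $O(1/n)$, so they can be swallowed into the $C/n$ summand. Since $\bar Y = \tfrac1n\sum Y_j$ is an empirical mean of i.i.d.\ terms, $\E[(\bar Y - \E(Y))^2] = \Var(Y)/n$, and a standard expansion (using $\bar Y^2 - \E(Y)^2 = (\bar Y - \E(Y))(\bar Y + \E(Y))$ together with a fourth-moment bound) gives $\E[(\bar Y^2 - \E(Y)^2)^2] = O(1/n)$. Likewise $\widehat\sigma_Y^2$ is a standard empirical variance, consistent at rate $1/n$ in mean square, so $\E[(\widehat\sigma_Y^2 - \Var(Y))^2] = O(1/n)$. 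Collecting these, every term except the leading $\widehat V_\ell$ contribution is $O(1/n)$, which is absorbed into $C/n$. This is where I would invoke a moment assumption on $Y$ — a finite fourth moment $\E(Y^4) < \infty$ is what makes all these mean-square rates hold — and I would note it is implicit in the $L^2$ framework already in force.

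\textbf{Main obstacle.} The delicate point is controlling the \emph{random denominator} $\widehat\sigma_Y^2$, since dividing by an empirical variance that can be small is dangerous. To handle this rigorously without extra hypotheses, I would either split on the event $\{\widehat\sigma_Y^2 \ge \tfrac12\Var(Y)\}$ and its complement (the complement having exponentially or polynomially small probability by the concentration of $\widehat\sigma_Y^2$, using that $S_\ell,\widehat S_\ell\in[0,1]$ so the error is bounded there), or assume $Y$ is almost surely bounded (which holds in the epidemic applications, where $Y$ represents proportions). The bounded case trivializes the denominator control, makes all fourth-moment bounds automatic, and yields the stated inequality cleanly; I expect the author's intended proof works in that regime, so the remaining calculations are routine and I would not grind through them.
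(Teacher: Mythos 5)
Your proof is correct and follows essentially the same route as the paper's: two successive applications of $(a+b)^2\le 2(a^2+b^2)$ that isolate the leading term $\frac{4}{\Var(Y)^2}\,\E\big[(\widehat{V}_\ell-V_\ell)^2\big]$, with all fluctuation terms coming from $\bar{Y}$ and $\widehat{\sigma}_Y^2$ absorbed into $C/n$. The paper's proof differs only in bookkeeping (it directly groups $\frac{\E(Y)^2}{\Var(Y)}-\frac{\bar{Y}^2}{\widehat{\sigma}_Y^2}$ and $\frac{V_\ell}{\Var(Y)}-\frac{\widehat{V}_\ell}{\widehat{\sigma}_Y^2}$ rather than pivoting through your intermediate $\widetilde{S}_\ell$), and it is in fact terser on precisely the random-denominator terms you flag, asserting without detail that they are in $C/n$ — a step that, as you note, rests on the boundedness of $Y$ assumed in the surrounding results.
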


\begin{proof}From \eqref{def:S_ell} and \eqref{def:generique},
\begin{align}
\E\big((S_\ell-\widehat{S}_\ell)^2\big)= & \E\Big[\Big(\frac{V_\ell-\E(Y)^2}{\Var(Y)}-\frac{\widehat{V}_\ell-\bar{Y}^2}{\widehat{\sigma}_Y^2}\Big)^2\Big]\nonumber\\
\leq & 2 \E\Big[\Big(\frac{\E(Y)^2}{\Var(Y)}-\frac{\bar{Y}^2}{\widehat{\sigma}_Y^2}\Big)^2\Big]+2\E\Big[\Big(\frac{V_\ell}{\Var(Y)}-\frac{\widehat{V}_\ell}{\widehat{\sigma}_Y^2}\Big)^2\Big].\label{etape1} \end{align}The first term in the right-hand side (r.h.s.)\ is in $C/n$.
For the second term in the right-hand side of \eqref{etape1}:
\begin{align}
\E\Big[\Big(\frac{V_\ell}{\Var(Y)}-\frac{\widehat{V}_\ell}{\widehat{\sigma}_Y^2}\Big)^2\Big] \leq & 2 \E\Big[\widehat{V}_\ell^2 \Big(\frac{1}{\Var(Y)}-\frac{1}{\widehat{\sigma}_Y^2}\Big)^2\Big]+\frac{2}{\Var(Y)^2} \E\Big[\big(\widehat{V}_\ell - V_\ell\big)^2\Big].\label{etape2PartIV}
\end{align}The first term in the r.h.s.\ is also in $C/n$, which concludes the proof.
\end{proof}

The preceding lemma implies that the rate of convergence of $\widehat{V}_\ell$ to $V_\ell$ is determinant for the rate of convergence of $\widehat{S}_\ell$. We recall the result of Sol\'{\i}s \cite{solisthesisIV}, where an elbow effect for the MSE is shown when the regularity of the density of $(X_\ell, Y)$ varies. The case of the warped wavelet estimator introduced by Castellan et al \cite{castellancousientranIV} is studied at the end of the section and the rate of convergence is stated in Corollary \ref{corol:vitesseconvergence}.

\subsubsection{MSE for the Nadaraya--Watson estimator}

Using the preceding Lemma, Loubes Marteau and Sol\'{\i}s prove an elbow effect for the estimator $\widehat{S}^{(NW)}_\ell$.
Let us introduce $\mathcal{H}(\alpha,L)$, for $\alpha,L>0$, the set of functions $\phi$ of class $[\alpha]$, whose derivative $\phi^{([\alpha])}$ is $\alpha-[\alpha]$ H\"{o}lder continuous with constant $L$.

\begin{proposition}[Loubes Marteau and Sol\'{\i}s \cite{loubesmarteausolisIV,solisthesisIV}]Assume that $\E(X_\ell^4)<+\infty$, that the joint density $\phi(x,y)$ of $(X_\ell,Y)$ belongs to $\mathcal{H}(\alpha,L)$, for $\alpha, L>0$ and that the marginal density of $X_\ell$, $\phi_\ell$ belongs to $\mathcal{H}(\alpha',L')$ for $\alpha'>\alpha$ and $L'>0$. Then:\\
If $\alpha\geq 2$, there exists a constant $C>0$ such that
$$\E\big((S_\ell-\widehat{S}_\ell)^2\big)\leq \frac{C}{n}.$$
If $\alpha<2$, there exists a constant $C>0$ such that
$$\E\big((S_\ell-\widehat{S}_\ell)^2\big)\leq C\big(\frac{\log^2 n}{n}\big)^{\frac{2\alpha}{\alpha+2}}.$$
\end{proposition}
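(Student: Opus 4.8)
The plan is to reduce the whole statement to a mean-square bound on $\widehat{V}_\ell$ and then to run a bias--variance analysis of the Nadaraya--Watson plug-in, the elbow at $\alpha=2$ being produced by the saturation of a second-order kernel. First I would invoke Lemma \ref{lemme1}, which gives
\begin{equation*}
\E\big((S_\ell-\widehat{S}_\ell)^2\big)\leq \frac{C}{n}+\frac{4}{\Var(Y)^2}\,\E\big[(\widehat{V}_\ell-V_\ell)^2\big],
\end{equation*}
so that it suffices to bound $\E[(\widehat{V}_\ell-V_\ell)^2]$ by the two announced rates; the term $C/n$ is harmless since $1/n$ is dominated by $(\log^2 n/n)^{2\alpha/(\alpha+2)}$ when $\alpha<2$. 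Writing $g_\ell(x)=\E(Y\,|\,X_\ell=x)$ and $\widehat{g}_\ell$ for its Nadaraya--Watson estimator, so that $\widehat{V}_\ell=\frac1n\sum_i\widehat{g}_\ell(X_\ell^i)^2$ and $V_\ell=\E[g_\ell(X_\ell)^2]$, I would use the identity $\widehat{g}_\ell^{\,2}-g_\ell^2=2g_\ell(\widehat{g}_\ell-g_\ell)+(\widehat{g}_\ell-g_\ell)^2$ to split
\begin{equation*}
\widehat{V}_\ell-V_\ell=\Big(\tfrac1n\sum_i g_\ell(X_\ell^i)^2-V_\ell\Big)+\tfrac2n\sum_i g_\ell(X_\ell^i)\big(\widehat{g}_\ell(X_\ell^i)-g_\ell(X_\ell^i)\big)+\tfrac1n\sum_i\big(\widehat{g}_\ell(X_\ell^i)-g_\ell(X_\ell^i)\big)^2.
\end{equation*}
The first bracket is a centred empirical mean of i.i.d.\ terms, contributing $O(1/n)$ to the MSE; the moment hypothesis $\E(X_\ell^4)<\infty$, together with the boundedness assumptions ensuring $\E[g_\ell(X_\ell)^4]<\infty$, is exactly what makes this term parametric.

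Next I would quantify the regression error. From $\phi\in\mathcal{H}(\alpha,L)$ and $\phi_\ell\in\mathcal{H}(\alpha',L')$ with $\alpha'>\alpha$ one obtains that $g_\ell=\big(\int y\,\phi(\cdot,y)\,dy\big)/\phi_\ell$ belongs to $\mathcal{H}(\alpha,\cdot)$ on the region where $\phi_\ell$ is bounded away from $0$ (the numerator inherits the $x$-regularity $\alpha$ of $\phi$, the denominator being strictly smoother); with a second-order kernel $K$ this yields the pointwise bias $O(h^{\min(\alpha,2)})$ and variance $O(1/(nh))$ for $\widehat{g}_\ell$. Because $\widehat{g}_\ell$ is a ratio with the random denominator $\frac1{nh}\sum_j K((X_\ell^j-x)/h)$, the delicate point is to control this denominator uniformly in $x$: restricting to the event that it stays bounded below and bounding its deviations is what introduces the logarithmic corrections. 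Collecting the cross term and the square term, the leading smoothing bias of $\widehat{V}_\ell$ is of order $h^{\min(\alpha,2)}$, whereas the dominant stochastic remainder — coming from the diagonal and degenerate parts of the $U$-statistic hidden in $\frac1n\sum_i\widehat{g}_\ell(X_\ell^i)^2$ and from the fluctuations of the denominator — scales like $n^{-2}h^{-4}$, the remaining linear fluctuations being $O(1/n)$. This leads, up to logarithmic factors, to a bound of the schematic form
\begin{equation*}
\E\big[(\widehat{V}_\ell-V_\ell)^2\big]\leq C\,\Big(h^{2\min(\alpha,2)}+\frac1n+\frac{1}{n^2 h^4}\Big).
\end{equation*}

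Finally I would optimize over the bandwidth. When $\alpha\geq2$ the squared bias is $O(h^4)$, and the choice $h\sim n^{-1/4}$ balances $h^4$ against $n^{-2}h^{-4}$ to reach the parametric level $O(1/n)$ — the kernel cannot exploit smoothness beyond order $2$, which is why no faster rate appears and why the elbow sits exactly at $\alpha=2$. When $\alpha<2$ the squared bias is $O(h^{2\alpha})$ and the choice $h\sim(\log^2 n/n)^{1/(\alpha+2)}$ equates $h^{2\alpha}$ with $n^{-2}h^{-4}$, producing the rate $(\log^2 n/n)^{2\alpha/(\alpha+2)}$, the $1/n$ term being then negligible. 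I expect the genuine obstacle to be the control of the stochastic remainder: since $\widehat{g}_\ell$ is a nonlinear ratio, one must expand $\widehat{g}_\ell(X_\ell^i)^2$ as a true $U$-statistic in the triple $(i,j,j')$, isolate its Hoeffding components, and verify that the degenerate and diagonal terms are precisely those producing the $n^{-2}h^{-4}$ contribution while the non-degenerate part remains $O(1/n)$ — all carried out on a high-probability event where the random denominator is under control, which is the source of the $\log^2 n$ factors in the final rate.
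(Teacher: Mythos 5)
Your first step coincides exactly with the paper's own treatment: the paper proves Lemma \ref{lemme1} (inequality \eqref{erreur:generique}) and then obtains the Proposition by invoking the bound on $\E\big[(\widehat{V}_\ell-V_\ell)^2\big]$ from Theorem 1 of Loubes--Marteau--Sol\'{\i}s, explicitly describing that proof as ``technical'' and not reproducing it. So everything in your proposal after the invocation of Lemma \ref{lemme1} is material the paper itself never proves but delegates to the cited reference.

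Your attempted reconstruction of that cited bound is structurally sound as far as it goes: the splitting $\widehat{g}_\ell^{\,2}-g_\ell^2=2g_\ell(\widehat{g}_\ell-g_\ell)+(\widehat{g}_\ell-g_\ell)^2$, the identification of the elbow at $\alpha=2$ with the saturation of a second-order kernel, and the bandwidth arithmetic are all consistent with the stated rates; indeed a stochastic term of order $n^{-2}h^{-4}$ is the only one of the form $n^{-2}h^{-c}$ that simultaneously yields the exponent $2\alpha/(\alpha+2)$ and places the elbow at $\alpha=2$, so you have reverse-engineered the correct structure. The gap is that the estimates on which everything rests are asserted, not established: (i) the uniform-in-$x$ lower bound on the random denominator $\frac{1}{nh}\sum_j K_h(X_\ell^j-x)$, which requires $\phi_\ell$ to be bounded away from zero on an appropriate region (not guaranteed by the stated hypotheses) and is the claimed source of the $\log^2 n$ factor; (ii) the Hoeffding decomposition of the $U$-statistic hidden in $\frac1n\sum_i\widehat{g}_\ell(X_\ell^i)^2$, with the verification that its degenerate and diagonal parts contribute precisely $O(n^{-2}h^{-4})$ rather than, say, $O(n^{-2}h^{-1})$ or $O(n^{-2}h^{-2})$ as naive pointwise-MSE accounting would suggest; and (iii) the transfer of the H\"older regularity of $\phi$ to $g_\ell$, which also needs integrability of $y\mapsto y\,\phi(x,y)$ and not regularity alone. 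You flag (i) and (ii) yourself as the ``genuine obstacle,'' which is honest, but it means the proposal is a proof plan whose unexecuted steps constitute the entire content of the theorem being cited. In that sense your proposal is no less complete than the paper's own argument, but it should not be read as a self-contained proof of the Proposition.
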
For smooth functions ($\alpha\geq 2$), Loubes et al.\ recover a parametric rate, while they still have a nonparametric one when $\alpha<2$. Their result is based on \eqref{erreur:generique} and a bound for $\E\Big[\big(\widehat{V}_\ell - V_\ell\big)^2\Big]$ given by \cite[Th.\ 1]{loubesmarteausolisIV}, whose proof is technical. Since their result is not adaptive, they require the knowledge of the window $h$ for numerical implementation. Our purpose is to provide a similar result for the warped wavelet adaptive estimator, with a shorter proof.

\subsubsection{MSE for the warped wavelet estimator}

Let us introduce first some additional notation. We define, for $\mathcal{J}\subset \{-1,\dots,J_n\}$, the projection $h_\mathcal{J,\ell}$ of $h$ on the subspace spanned by $\{\psi_{jk},\mbox{ with } j\in \mathcal{J},\, k\in \Z\}$ and its estimator $\widehat{h}_{\mathcal{J},\ell}$:
\begin{align}
& h_{\mathcal{J},\ell}(u)=\sum_{j\in \mathcal{J}}  \sum_{k\in \Z} \beta^\ell_{jk}\psi_{jk}(u)\\
& \widehat{h}_\mathcal{J,\ell}(u)=\sum_{j\in \mathcal{J}} \sum_{k\in \Z} \widehat{\beta}^\ell_{jk}\psi_{jk}(u).
\end{align}
We also introduce the estimator of $V_\ell$ for a fixed subset of resolutions $\mathcal{J}$:
\begin{align}
\widehat{V}_{\mathcal{J},\ell}= \| \widehat{h}_{\mathcal{J},\ell}\|^2_2=\sum_{j\in \mathcal{J}}   \sum_{k\in \Z}\big(\widehat{\beta}^{\ell}_{jk}\big)^2. \label{def:est_interm2}
\end{align}Note that $\widehat{V}_{\mathcal{J},\ell}$ is one possible estimator $\widehat{V}_\ell$ in Lemma \ref{lemme1}.\\

The estimators $\widehat{\beta}_{jk}$ and $\widehat{V}_{\mathcal{J},\ell}$ have natural expressions in term of the empirical process $\gamma_n(dx)$ defined as follows:
\begin{definition}\label{def:gamma_n}The empirical measure associated with our problem is:
\begin{equation}
\gamma_n(dx)=\frac{1}{n}\sum_{i=1}^n Y_i \delta_{G_\ell(X_\ell^i)}(dx)
\end{equation}where $\delta_a(dx)$ denotes the Dirac mass in $a$.\\
For a measurable function $f$, $
\gamma_n(f)=\frac{1}{n}\sum_{i=1}^n Y_i f\big(G_\ell(X_\ell^i)\big). $ We also define the centered integral of $f$ with respect to $\gamma_n(dx)$ as:
\begin{align}
\bar{\gamma}_n(f)= & \gamma_n(f)-\E\big(\gamma_n(f)\big)\label{def:gamma_bar}\\
= & \frac{1}{n}\sum_{i=1}^n \Big(Y_i f\big(G_\ell(X_\ell^i)\big)-\E\big[Y_if\big(G_\ell(X_\ell^i)\big)\big]\Big).
\end{align}
\end{definition}

Using the empirical measure $\gamma_n(dx)$, we have:
\begin{align*}
\widehat{\beta}^\ell_{jk}=\gamma_n\big(\psi_{jk}\big)=\beta^\ell_{jk}+\bar{\gamma}_n\big(\psi_{jk}\big).
\end{align*}
Let us introduce the correction term
\begin{align}
\zeta_n=& 2 \bar{\gamma}_n\big(h_\ell\big)\label{def:zetan}\\
= & 2 \Big[\frac{1}{n}\sum_{i=1}^n Y_i h_\ell\big(G_\ell(X_\ell^i)\big) - \E\Big(Y_1 h_\ell\big(G_\ell(X_\ell^1)\big)\Big)\Big]\nonumber\\
= & 2 \Big[\frac{1}{n}\sum_{i=1}^n  h^2_\ell\big(G_\ell(X_\ell^i)\big)- \|h_\ell\|_2^2 \Big]+\frac{2}{n} \sum_{i=1}^n \eta_\ell^i h_\ell\big(G_\ell(X_\ell^i)\big).\label{etape:zeta}
\end{align}

The rate of convergence of the estimator \eqref{def:hat_theta} is obtained in \cite{castellancousientranIV} based on the estimate presented in the next theorem. This result is derived using ideas due to Laurent and Massart \cite{laurentmassartIV} who considered estimation of quadratic functionals in a Gaussian setting. Because we are not necessarily in a Gaussian setting here, we rely on empirical processes and use sophisticated technology developed by Castellan \cite{castellanIV}.

\begin{theorem}[Castellan, Cousien, Tran \cite{castellancousientranIV}]\label{th1}
Let us assume that the random variables $Y$ are bounded by a constant $M$, and let us choose a father and a mother wavelets $\psi_{-10}$ and $\psi_{00}$ that are continuous with compact support (and thus bounded).
The estimator $\widehat{V}_\ell$ defined in \eqref{def:hat_theta} is almost surely finite, and:
\begin{align}
\E\Big[\big(\widehat{V}_\ell-V_\ell- \zeta_n \big)^2\Big]\leq  C\ \inf_{\mathcal{J}\subset \{-1,\dots,J_n\}} \Big(\|h_\ell -h_{\mathcal{J},\ell}\|^4_2 + \frac{\Card^2(\mathcal{J})}{n^2}\Big)+\frac{C' \log_2^2(n)}{n^{3/2}},\label{oracle}
\end{align}for constants $C$ and $C'>0$.
\end{theorem}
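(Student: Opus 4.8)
The plan is to follow the strategy for estimating quadratic functionals developed by Laurent and Massart, replacing their Gaussian chi-square bounds by empirical-process concentration in the spirit of Castellan. First I would record the penalized reformulation \eqref{lien_pen}: writing $\widehat{\mathcal{J}}$ for the maximizing subset, one has $\widehat V_\ell=\widehat V_{\widehat{\mathcal{J}},\ell}-\pen(\widehat{\mathcal{J}})$ with $\widehat V_{\mathcal{J},\ell}$ as in \eqref{def:est_interm2}, and for every fixed $\mathcal{J}$ the optimality of $\widehat{\mathcal{J}}$ gives $\widehat V_{\widehat{\mathcal{J}},\ell}-\pen(\widehat{\mathcal{J}})\ge \widehat V_{\mathcal{J},\ell}-\pen(\mathcal{J})$. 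Almost-sure finiteness is then immediate: only the finitely many levels $j\in\{-1,\dots,J_n\}$ enter, each carries finitely many nonzero coefficients $\widehat\beta^\ell_{jk}$ because the wavelets have compact support, and $Y$ is bounded by $M$.

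The second step is to expand each coefficient as $\widehat\beta^\ell_{jk}=\beta^\ell_{jk}+\bar{\gamma}_n(\psi_{jk})$. Since $h_{\mathcal{J},\ell}$ is the orthogonal projection of $h_\ell$ and $\zeta_n=2\bar{\gamma}_n(h_\ell)$ by \eqref{def:zetan}, this yields the exact decomposition
\begin{equation*}
\widehat V_{\mathcal{J},\ell}-V_\ell-\zeta_n=-\|h_\ell-h_{\mathcal{J},\ell}\|_2^2-2\bar{\gamma}_n(h_\ell-h_{\mathcal{J},\ell})+U_{\mathcal{J}},\qquad U_{\mathcal{J}}:=\sum_{j\in\mathcal{J}}\sum_{k\in\Z}\bar{\gamma}_n(\psi_{jk})^2.
\end{equation*}
The bias term $\|h_\ell-h_{\mathcal{J},\ell}\|_2^2$ is deterministic and produces, after squaring, the $\|h_\ell-h_{\mathcal{J},\ell}\|_2^4$ term of \eqref{oracle}. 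The centered linear term is handled by a Bernstein inequality using $|Y|\le M$ and the boundedness of the wavelets: since $G_\ell(X_\ell)$ is uniform, $\E[\bar{\gamma}_n(h_\ell-h_{\mathcal{J},\ell})^2]$ is of order $\|h_\ell-h_{\mathcal{J},\ell}\|_2^2/n$, which Young's inequality $\|g\|_2^2/n\le\tfrac12(\|g\|_2^4+1/n^2)$ splits into a multiple of the bias squared plus a $1/n^2$ contribution. The genuinely new object is the chi-square–type statistic $U_{\mathcal{J}}$, whose mean $\E[U_{\mathcal{J}}]=\tfrac1n\sum_{j\in\mathcal{J}}\sum_k\Var\!\big(Y\psi_{jk}(G_\ell(X_\ell))\big)$ is of order $\Card(\mathcal{J})/n$ up to the boundedness constants, and the weights $w(j)=K(2^j+\log2)/n$ are calibrated precisely so that $\pen(\mathcal{J})$ dominates this mean.

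The hard part will be the uniform-in-$\mathcal{J}$ control of the fluctuation $U_{\mathcal{J}}-\E[U_{\mathcal{J}}]$. Because $Y$ is only bounded rather than Gaussian, I cannot appeal to the explicit chi-square tails used by Laurent and Massart; instead I would invoke Castellan's concentration machinery for suprema of empirical quadratic (degenerate $U$-statistic) processes, based on Talagrand's inequality, to show that with large probability $U_{\widehat{\mathcal{J}}}\le \pen(\widehat{\mathcal{J}})+\text{(remainder)}$ simultaneously over all subsets. The exponentially many candidate subsets are absorbed because the weights grow geometrically in $2^j$ and carry the $\log2$ margin, so the union-bound cost, together with the contribution of the scales just below $J_n$ (where $2^{J_n}\asymp\sqrt n$) and the higher-order cross terms between the linear and quadratic fluctuations, is what generates the lower-order remainder $C'\log_2^2(n)/n^{3/2}$ after integrating the tails.

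Finally, combining the optimality inequality of the first step with the decomposition of the second and the concentration bound of the third, bounding each resulting term in $L^2$ and then taking the infimum over $\mathcal{J}\subset\{-1,\dots,J_n\}$, delivers
\begin{equation*}
\E\Big[\big(\widehat V_\ell-V_\ell-\zeta_n\big)^2\Big]\le C\,\inf_{\mathcal{J}\subset\{-1,\dots,J_n\}}\Big(\|h_\ell-h_{\mathcal{J},\ell}\|_2^4+\frac{\Card^2(\mathcal{J})}{n^2}\Big)+\frac{C'\log_2^2(n)}{n^{3/2}},
\end{equation*}
which is \eqref{oracle}. I expect the concentration of $U_{\mathcal{J}}$ without Gaussianity, and the verification that the chosen penalty weights exactly beat its fluctuations uniformly over levels, to be the central technical difficulty.
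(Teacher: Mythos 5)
The paper contains no proof of this theorem: it is imported from \cite{castellancousientranIV}, the text noting only that the argument adapts Laurent--Massart's penalized estimation of quadratic functionals to a non-Gaussian setting by replacing Gaussian chi-square bounds with Castellan's empirical-process (Talagrand-type) concentration. Your proposal follows exactly that route --- the penalized reformulation \eqref{lien_pen}, the exact decomposition $\widehat V_{\mathcal{J},\ell}-V_\ell-\zeta_n=-\|h_\ell-h_{\mathcal{J},\ell}\|_2^2-2\bar{\gamma}_n(h_\ell-h_{\mathcal{J},\ell})+U_{\mathcal{J}}$, and uniform-in-$\mathcal{J}$ control of the degenerate quadratic process --- and is correct as an outline, the only slip being that $\E[U_{\mathcal{J}}]=\tfrac1n\sum_{j\in\mathcal{J}}\sum_{k}\Var\big(Y\psi_{jk}(G_\ell(X_\ell))\big)$ is of order $\sum_{j\in\mathcal{J}}2^j/n$ rather than $\Card(\mathcal{J})/n$, which is harmless since you immediately invoke the correct calibration $w(j)\asymp(2^j+\log 2)/n$ to dominate it.
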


We deduce the following corollary from the estimate obtained above. Let us consider the Besov space $\mathcal{B}(\alpha,2,\infty)$ of functions $h=\sum_{j\geq -1}\sum_{k\in \Z} \beta_{jk} \psi_{jk}$ of $L^2$ such that
$$|h|_{\alpha,2,\infty}:= \sum_{j\geq 0} 2^{j\alpha}\sqrt{\sup_{0<v\leq 2^{-j}} \int_0^{1-v}|h(u+v)-h(u)|^2du} <+\infty.$$

For a $h\in \mathcal{B}(\alpha,2,\infty)$ and $h_\mathcal{J}$ its projection on
\[\Vect \{\psi_{jk},\ j\in \mathcal{J}=\{-1,\dots J_{\mbox{{\scriptsize max}}}\},\ k\in \Z\},\]
we have the following approximation result from \cite[Th.\ 9.4]{hardlekerkapicardtsybaIV}.
\begin{proposition}[H\"ardle, Kerkyacharian, Picard and Tsybakov]\label{prop:besov}
Assume that the wavelet function $\psi_{-10}$ has compact support and is of class $\mathcal{C}^N$ for an integer $N>0$. Then, if $h\in \mathcal{B}(\alpha,2,\infty)$ with $\alpha<N+1$,
\begin{align}
\sup_{\mathcal{J}\subset \N\cup \{-1\}} 2^{\alpha J_{\mbox{{\scriptsize max}}}} \|h-h_{\mathcal{J}}\|_2 = \sup_{\mathcal{J}\subset \N\cup \{-1\}} 2^{\alpha  J_{\mbox{{\scriptsize max}}}} \Big( \sum_{j\geq J_{\mbox{{\scriptsize max}}}} \sum_{k\in \Z} \beta_{jk}^2 \Big)^{1/2} <+\infty.
\end{align}
\end{proposition}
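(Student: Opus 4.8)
The plan is to reduce the statement to the classical equivalence between Besov regularity and the decay of the wavelet detail coefficients, and then to sum a geometric series. First I would record the identity that makes the two displayed quantities equal: with $\mathcal{J}=\{-1,\dots,J_{\max}\}$ one has $h-h_{\mathcal{J}}=\sum_{j>J_{\max}}\sum_{k\in\Z}\beta_{jk}\psi_{jk}$, so orthonormality of $\{\psi_{jk}\}$ and Parseval's identity give $\norm{h-h_{\mathcal{J}}}_2^2=\sum_{j>J_{\max}}\sum_{k}\beta_{jk}^2$. Hence it suffices to bound, uniformly in $J_{\max}$, the quantity $2^{2\alpha J_{\max}}\sum_{j>J_{\max}}\sum_k\beta_{jk}^2$, and the supremum over $\mathcal{J}\subset\N\cup\{-1\}$ reduces to a supremum over $J_{\max}$.

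The core step is the per-level estimate $\bigl(\sum_k\beta_{jk}^2\bigr)^{1/2}=\norm{P_j h}_2\le C\,2^{-\alpha j}\,|h|_{\alpha,2,\infty}$, where $P_j$ is the projection onto the detail space at resolution $j$. To obtain it I would use that $\psi_{-10}\in\mathcal{C}^N$ with compact support forces the mother wavelet to have $N+1$ vanishing moments, i.e.\ $\int u^m\psi_{00}(u)\,du=0$ for $0\le m\le N$. Writing $\beta_{jk}=\langle h,\psi_{jk}\rangle=\langle h-P,\psi_{jk}\rangle$ for an arbitrary polynomial $P$ of degree $\le N$, and choosing $P$ to be a local best $L^2$-approximant of $h$ on the dyadic support of $\psi_{jk}$, a Cauchy--Schwarz estimate localises $\beta_{jk}$ by the $(N+1)$-th order $L^2$ modulus of smoothness of $h$ at scale $2^{-j}$ near that support. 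Summing the squares over $k$ (the supports at fixed $j$ having bounded overlap) yields $\norm{P_j h}_2\lesssim\omega_{N+1}(h,2^{-j})_2$; since $\alpha<N+1$, membership of $h$ in $\mathcal{B}(\alpha,2,\infty)$ is precisely the assertion $\omega_{N+1}(h,2^{-j})_2\le|h|_{\alpha,2,\infty}\,2^{-\alpha j}$, giving the claimed decay.

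Given the level bound, I would finish by summing the geometric tail:
\begin{align*}
2^{2\alpha J_{\max}}\sum_{j>J_{\max}}\norm{P_j h}_2^2 &\le C^2|h|_{\alpha,2,\infty}^2\,2^{2\alpha J_{\max}}\sum_{j>J_{\max}}2^{-2\alpha j}\\
&= C^2|h|_{\alpha,2,\infty}^2\,\frac{2^{-2\alpha}}{1-2^{-2\alpha}},
\end{align*}
a bound independent of $J_{\max}$ (here $\alpha>0$ guarantees convergence of the series). Taking square roots and then the supremum over $\mathcal{J}$ proves the finiteness.

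The main obstacle is the localisation argument establishing $\norm{P_j h}_2\lesssim\omega_{N+1}(h,2^{-j})_2$: it requires tracing the vanishing-moment property back to the $\mathcal{C}^N$ regularity and compact support of the father wavelet through the multiresolution relations, a careful choice of the local polynomial approximant, and control of the overlaps of the supports of $\psi_{jk}$ at fixed $j$. This is exactly the content for which I would invoke the full machinery of \cite{hardlekerkapicardtsybaIV}; in particular the Besov seminorm must be read through the modulus of smoothness of order $N+1$ rather than the first-order difference literally written (which only characterises the range $\alpha<1$), while the remaining steps are elementary orthogonality and geometric-series manipulations.
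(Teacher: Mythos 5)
The paper offers no proof of this proposition: it is imported wholesale as Theorem 9.4 of \cite{hardlekerkapicardtsybaIV}, accompanied only by the remark that the hypotheses of that theorem are met when $\psi_{-10}$ is compactly supported and of class $\mathcal{C}^N$. Your proposal therefore does something genuinely different: it reconstructs the argument that the paper delegates to the literature, and the reconstruction is sound. Step one, orthonormality plus Parseval, does identify the two displayed quantities (with the tail correctly indexed by $j>J_{\max}$, repairing the paper's slightly inconsistent $j\geq J_{\max}$) and reduces the supremum over $\mathcal{J}$ to a supremum over $J_{\max}$. Step two is the real content: the per-level bound $\bigl(\sum_{k\in\Z}\beta_{jk}^2\bigr)^{1/2}\leq C\,2^{-\alpha j}|h|_{\alpha,2,\infty}$ obtained from vanishing moments of orders $0,\dots,N$ (which do follow from compact support and $\mathcal{C}^N$ regularity of an orthonormal wavelet), local polynomial (Whitney) approximation controlled by the modulus of smoothness of order $N+1$, and the bounded overlap of the supports of $\psi_{jk}$ at fixed $j$; this is precisely the mechanism inside the cited Theorem 9.4, so your deferral of its technical details to \cite{hardlekerkapicardtsybaIV} is legitimate rather than circular. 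Step three, the geometric series, is elementary and uses only $\alpha>0$. Your closing caveat is moreover the one place where you are more careful than the paper itself: the seminorm $|h|_{\alpha,2,\infty}$ as literally written, with first-order differences, defines a nontrivial space only for $\alpha<1$, so on the claimed range $\alpha<N+1$ it must indeed be read through a modulus of smoothness of order exceeding $\alpha$, or equivalently through the wavelet-coefficient characterisation of $\mathcal{B}(\alpha,2,\infty)$. What the paper's citation buys is economy; what your route buys is a self-contained argument that makes visible exactly where each hypothesis enters --- smoothness and compact support yield the vanishing moments and the overlap bound, $\alpha<N+1$ fixes the degree of the local polynomial approximant, and $\alpha>0$ makes the tail summable.
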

Notice that Theorem 9.4 of \cite{hardlekerkapicardtsybaIV} requires assumptions that are fulfilled when $\psi_{-10}$ has compact support and is smooth enough (see the comment after the Corol. 8.2 of \cite{hardlekerkapicardtsybaIV}).

\begin{corollary}\label{corol:vitesseconvergence}If $\psi_{-10}$ has compact support and is of class $\mathcal{C}^N$ for an integer $N>0$ and if $h_\ell$ belongs to a ball of radius $R>0$ of $\mathcal{B}(\alpha,2,\infty)$ for $0<\alpha<N+1$, then
\begin{align}
\sup_{h\in \mathcal{B}(\alpha,2,\infty)}\E\Big[\big(\widehat{V}_\ell-V_\ell\big)^2\Big]\leq & C \Big(n^{-\frac{8\alpha}{4\alpha+1}}+\frac{1}{n}\Big).
\end{align}
As a consequence, we obtain the following elbow effect:\\
If $\alpha\geq \frac{1}{4}$, there exists a constant $C>0$ such that
$$\E\big((S_\ell-\widehat{S}_\ell)^2\big)\leq \frac{C}{n}.$$
If $\alpha<\frac{1}{4}$, there exists a constant $C>0$ such that
$$\E\big((S_\ell-\widehat{S}_\ell)^2\big)\leq C n^{-\frac{8\alpha}{4\alpha+1}}.$$
\end{corollary}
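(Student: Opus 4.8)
The plan is to assemble three ingredients already at our disposal: the deterministic reduction of Lemma~\ref{lemme1}, the oracle inequality of Theorem~\ref{th1}, and the Besov approximation rate of Proposition~\ref{prop:besov}. By \eqref{erreur:generique} the first term is already $O(1/n)$, so it suffices to control $\E[(\widehat{V}_\ell-V_\ell)^2]$ uniformly over the ball of radius $R$ in $\mathcal{B}(\alpha,2,\infty)$; the whole problem thus reduces to the mean square error of the quadratic-functional estimator $\widehat{V}_\ell$ of \eqref{def:hat_theta}.

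First I would split, using the correction term $\zeta_n=2\bar\gamma_n(h_\ell)$ of \eqref{def:zetan},
\begin{equation*}
\E\big[(\widehat{V}_\ell-V_\ell)^2\big]\le 2\,\E\big[(\widehat{V}_\ell-V_\ell-\zeta_n)^2\big]+2\,\E[\zeta_n^2],
\end{equation*}
separating the \emph{linear} fluctuation $\zeta_n$ from the degenerate part governed by Theorem~\ref{th1}. The term $\E[\zeta_n^2]$ is elementary: $\zeta_n$ is a centered empirical average of i.i.d.\ bounded variables, since $Y\le M$ forces $\|h_\ell\|_\infty=\|g_\ell\circ G_\ell^{-1}\|_\infty\le M$, whence $\E[\zeta_n^2]=\Var(\zeta_n)\le C/n$ with a constant depending only on $M$, hence \emph{uniform} over the Besov ball. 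This is exactly the origin of the parametric $1/n$ floor.

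The core step is to optimize the right-hand side of \eqref{oracle}. Restricting the infimum to the nested sieves $\mathcal{J}=\{-1,0,\dots,J\}$, Proposition~\ref{prop:besov} bounds the approximation error uniformly on the ball, $\|h_\ell-h_{\mathcal{J},\ell}\|_2\le C\,2^{-\alpha J}$, so that $\|h_\ell-h_{\mathcal{J},\ell}\|_2^4\lesssim 2^{-4\alpha J}$. This decreasing bias is traded against the stochastic complexity term of \eqref{oracle}, whose effective order is that of the degenerate part of a quadratic-functional estimator, namely $\asymp 2^{J}/n^2$ (linear in the number $\asymp 2^{J}$ of retained coefficients). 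Balancing $2^{-4\alpha J}$ against $2^{J}/n^2$ gives the choice $2^{J}\asymp n^{2/(4\alpha+1)}$, for which both terms are of order $n^{-8\alpha/(4\alpha+1)}$, while the residual $\log_2^2(n)\,n^{-3/2}$ of \eqref{oracle} is $o(1/n)$ and thus absorbed. Combining with the $O(1/n)$ bound on $\E[\zeta_n^2]$ yields
\begin{equation*}
\sup_{h_\ell\in\mathcal{B}(\alpha,2,\infty)}\E\big[(\widehat{V}_\ell-V_\ell)^2\big]\le C\Big(n^{-\frac{8\alpha}{4\alpha+1}}+\tfrac{1}{n}\Big).
\end{equation*}

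Finally I would insert this into Lemma~\ref{lemme1} via \eqref{erreur:generique} to transfer the bound to $\widehat{S}_\ell$, and read off the elbow: since $8\alpha/(4\alpha+1)=1$ precisely at $\alpha=1/4$, for $\alpha\ge 1/4$ the term $n^{-8\alpha/(4\alpha+1)}$ is dominated by $1/n$ and the rate is $C/n$, whereas for $\alpha<1/4$ it dominates and gives $C\,n^{-8\alpha/(4\alpha+1)}$. The main obstacle is the third step: pinning down the exact exponent $8\alpha/(4\alpha+1)$ from the oracle inequality, which rests on identifying the complexity term of \eqref{oracle} as scaling \emph{linearly} in the number of coefficients (the degenerate $U$-statistic variance, the hallmark that produces this non-standard rate), on keeping every constant uniform over the Besov ball through $\|h_\ell\|_\infty\le M$, and on verifying that the balancing resolution level stays admissible within $\{-1,\dots,J_n\}$.
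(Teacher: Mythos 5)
Your proposal is correct and follows the paper's own proof essentially step for step: the reduction through Lemma \ref{lemme1}, the splitting off of $\zeta_n$ with $\E[\zeta_n^2]\le C/n$ uniformly over the Besov ball, the application of the oracle inequality \eqref{oracle} restricted to nested sieves $\mathcal{J}=\{-1,\dots,J\}$ together with Proposition \ref{prop:besov}, the choice $2^J\asymp n^{2/(4\alpha+1)}$ (the paper's $J_{\max}=\frac{2}{4\alpha+1}\log_2 n$) yielding the rate $n^{-8\alpha/(4\alpha+1)}$ with the $\log_2^2(n)n^{-3/2}$ residual absorbed, and the final comparison against the $1/n$ floor to locate the elbow at $\alpha=1/4$. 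The only divergence is notational: you read the complexity term in \eqref{oracle} as $2^J/n^2$ (the degenerate $U$-statistic scaling), whereas the paper writes $\Card^2(\mathcal{J})/n^2$; since at the chosen resolution level both quantities are $O\big(n^{-8\alpha/(4\alpha+1)}\big)$, this difference does not affect the conclusion, and your proof (like the paper's) leaves unverified only the same admissibility point, namely that the balancing level must lie in $\{-1,\dots,J_n\}$ with $J_n=[\log_2\sqrt{n}]$.
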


\begin{proof}Using \eqref{oracle} and the fact that
\begin{equation}
\E\big(\zeta^2_n\big)= \frac{4}{n} \Var\Big(Y_1 h_\ell \big(G_\ell(X^1_\ell)\big)\Big)\leq \frac{2M^2 \|h_\ell\|_2^2}{n},
\end{equation}
we obtain:
\begin{equation}\label{etape13}
\E\Big[\big(\widehat{V}_\ell-V_\ell\big)^2\Big]\leq C \Big[\inf_{\mathcal{J}\subset \{-1,\dots, J_n\}} \Big( \|h_\ell-h_{\mathcal{J},\ell}\|_2^4 + \frac{\Card^2(\mathcal{J})}{n^2} \Big) + \frac{1+\|h_\ell\|_2^2}{n}\Big]. 
\end{equation}
If $h_\ell\in  \mathcal{B}(\alpha,2,\infty)$, then from Proposition \ref{prop:besov}, we have for $\mathcal{J}=\{-1,\dots, J_{\mbox{{\scriptsize max}}}\}$ that $\|h_\ell-h_{\mathcal{J},\ell}\|_2^4 \leq 2^{-4 \alpha \ J_{\mbox{{\scriptsize max}}}}$. Thus, for subsets $\mathcal{J}$ of the form considered, the infimum is attained when choosing $J_{\mbox{{\scriptsize max}}}= \frac{2}{4\alpha+1} \log_2(n)$, which yield an upper bound in $n^{8\alpha/(4\alpha+1)}$. \\

For $h_\ell$ in a ball of radius $R$, $\|h_\ell\|_2^2\leq R^2$, and we can find an upper bound that does not depend on $h$. Because the last term in \eqref{etape13} is in $1/n$, the elbow effect is obtained by comparing the order of the first term in the r.h.s.\ ($n^{8\alpha/(4\alpha+1)}$) with $1/n$ when $\alpha$ varies. \hfill $\Box$

\end{proof}

Let us remark that in comparison with the result of Loubes et al.\ \cite{loubesmarteausolisIV}, the regularity assumption here is on the function $h_\ell$ rather than on the joint density $\phi(x,y)$ of $(X_\ell, Y)$. The adaptivity of the estimator is then welcomed since the function $h_\ell$ is \textit{a priori} unknown. Note that in applications, the joint density $\phi(x,y)$ also has to be estimated and hence has an unknown regularity. \\

When $\alpha<1/4$ and $\alpha\rightarrow 1/4$, the exponent $8\alpha/(4\alpha+1)\rightarrow 1$.
In the case when $\alpha> 1/4$, we can show from the estimate of Th. \ref{th1} that:
\begin{equation}
\lim_{n\rightarrow +\infty} n \E\Big[\big(\widehat{V}_\ell-V_\ell- \zeta_n \big)^2\Big]=0,
\end{equation}which yields that
$\sqrt{n}\big(\widehat{V}_\ell-V_\ell- \zeta_n \big)$ converges to 0 in $L^2$. Since $\sqrt{n}\zeta_n$ converges in distribution to $\mathcal{N}\Big(0,4 \Var\big(Y_1 h_\ell(G_\ell(X^1_\ell))\big)\Big)$ by the central limit theorem, we obtain that:
\begin{equation}
\lim_{n\rightarrow +\infty}\sqrt{n}\big(\widehat{V}_\ell-V_\ell\big)=\mathcal{N}\Big(0,4 \Var\big(Y_1 h_\ell(G_\ell(X^1_\ell))\big)\Big),
\end{equation}in distribution.


\subsubsection{Numerical illustration on an SIR model}

Let us consider an SIR model. The input parameters are the rates $\lambda$ and $\gamma$. The output parameter is the final size of the epidemic, i.e.\  at a time $T>0$  where $I^N_T=0$, $Y=R^N_T $. \\

Recall from Chapter \ref{StatMC} that the fractions $(S^N_t/N,I^N_t/N,R^N_t/N)_{t\in [0,T]}$ can be approximated by the unique solution $(s(t),i(t),r(t))_{t\in [0,T]}$ of a system of ODE (see Example \ref{ex:SEIRS} of Chapter \ref{chap_MarkovMod} in Part I of this volume). These limiting equations provide a natural deterministic approximating meta-model (recall \cite{marrelioossdaveigaribatetIV}) for which sensitivity indices can be computed. \\

For the numerical experiment, we consider a close population of 1200 individuals, starting with $S_0=1190$, $I_0=10$ and $R_0=0$. The parameters distributions are uniformly distributed with $\lambda/N \in [1/15000, 3/15000]$ and $\gamma \in [1/15,3/15]$. Here the randomness associated with the Poisson point measures is treated as the nuisance random factor in \eqref{modelsto}. \\
We compute the Jansen estimators of  $S_{\lambda}$ and $S_{\gamma}$ for the deterministic meta-model constituted by the Kermack--McKendrick ODEs of Chapter 2 in Part I of this volume, with $n=30,000$ simulations. For the estimators of $S_{\lambda}$ and $S_{\gamma}$ in the SDE, we compute the Jansen estimators
with $n=10,000$ (i.e.\  $n(p+1)=30,000$ calls to the function $f$), and the estimators based on Nadaraya--Watson and on wavelet regressions with $n=30,000$ simulations. \\

\begin{figure}[!ht]
\centering
\begin{tabular}{cc}
\includegraphics[scale=0.25]{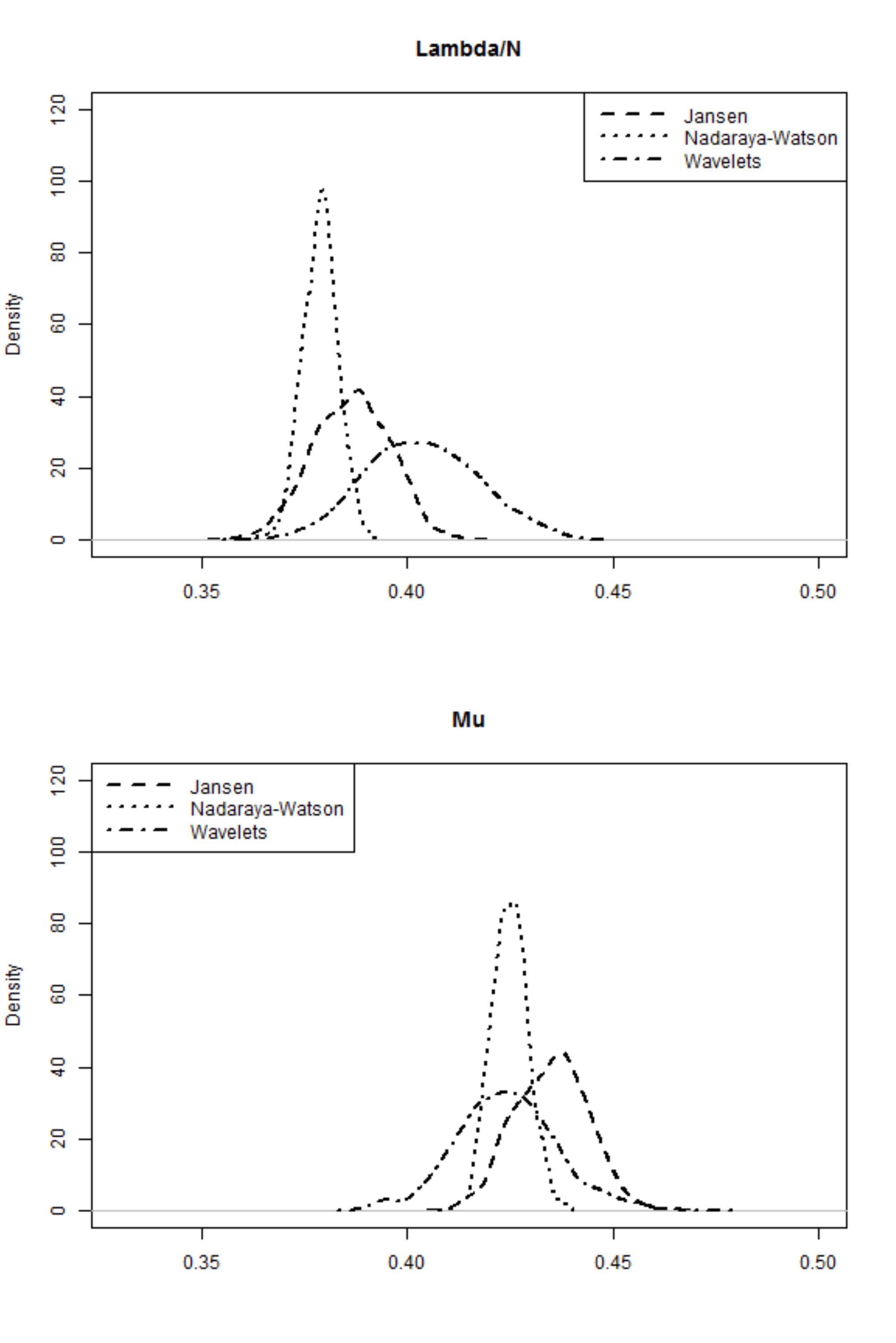} &
 \includegraphics[scale=0.25]{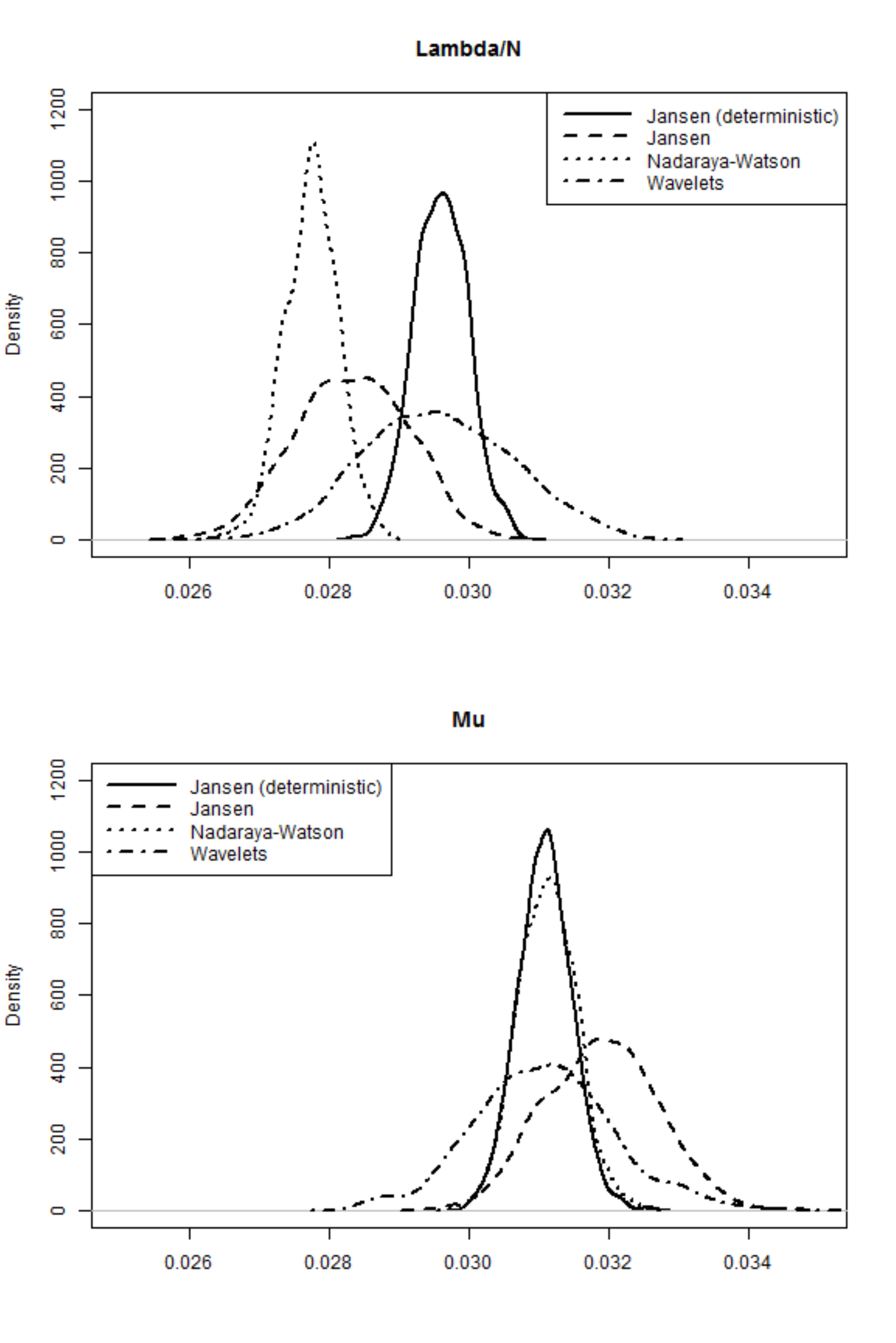}\\
(a) & (b)
\end{tabular}
\caption{{\small \textit{Estimations of the first-order Sobol indices, using Jansen estimators on the meta-model with $n=10,000$ and the non-parametric estimations based on Nadaraya--Watson and wavelet regressions. (a): the distributions of the estimators of $S_{\lambda}$ and $S_{\gamma}$ is approximated by Monte-carlo simulations. (b): the distributions of $\E(Y\ |\ \lambda)$ and $\E(Y\ |\ \gamma)$ are approximated by Monte Carlo simulations.}}}
\label{fig:comp}
\end{figure}

Let us comment on the results. First, the comparison of the different estimation methods is presented in Fig. \ref{fig:comp}. Since the variances in the meta-model and in the stochastic model differ, we start with comparing the distributions of $\E(Y\ |\ \lambda)$ and $\E(Y\ |\ \gamma)$ that are centered around the same value, independently of whether the meta-model or the stochastic model is used. These distributions are obtained from 1,000 Monte-carlo simulations. In Fig. \ref{fig:comp}(b), taking the meta-model as a benchmark, we see that the wavelet estimator performs well for both $\lambda$ and $\gamma$ while Nadaraya--Watson regression estimator performs well only for $\gamma$ and exhibit biases for $\lambda$. Jansen estimator on the stochastic model exhibit biases for both $\lambda$ and $\gamma$. \\

In a second time, we focus on the estimation of the Sobol indices for the stochastic model. The smoothed distributions of the estimators of $S_{\lambda}$ and $S_{\gamma}$, for 1,000 Monte Carlo replications, are presented in Fig. \ref{fig:comp} (a); the means and standard deviations of these distributions are given in Table \ref{table:resultSIR}.
Although there is no theoretical values for $S_\lambda$ and $S_\gamma$, we can see (Table \ref{table:resultSIR}) that the estimators of the Sobol indices with non-parametric regressions all give similar estimates in expectation for $\gamma$. For $\lambda$, the estimators are relatively different, with the Nadaraya--Watson showing the lower estimate. This is linked with the bias seen on Fig. \ref{fig:comp} (b) and discussed below. In term of variance, the Nadaraya--Watson estimator gives the tightest distribution, while the wavelet estimator gives the highest variance. \\

\begin{table}[!ht]
\centering
\begin{tabular}{|lccc|}
\hline
 & Jansen & Nadaraya--Watson & Wavelet \\
 \hline
 $\widehat{S}_\lambda$ & 0.39 & 0.38 & 0.40 \\
s.d. & (9.2e-3) & (4.3e-3) & (1.4e-2)  \\
 \hline
 $\widehat{S}_\gamma$ & 0.44 & 0.42 & 0.42  \\
s.d. & (9.0e-3) & (4.4e-3) & (1.2e-2) \\
\hline
\end{tabular}
\caption{{\small \textit{Estimators of the Sobol indices for $\lambda$ and $\gamma$ and their standard deviations using $n = $10,000 Monte Carlo replications of the stochastic SIR model.}}}
\label{table:resultSIR}
\end{table}

\begin{figure}[!ht]
\centering
\includegraphics[height=8cm]{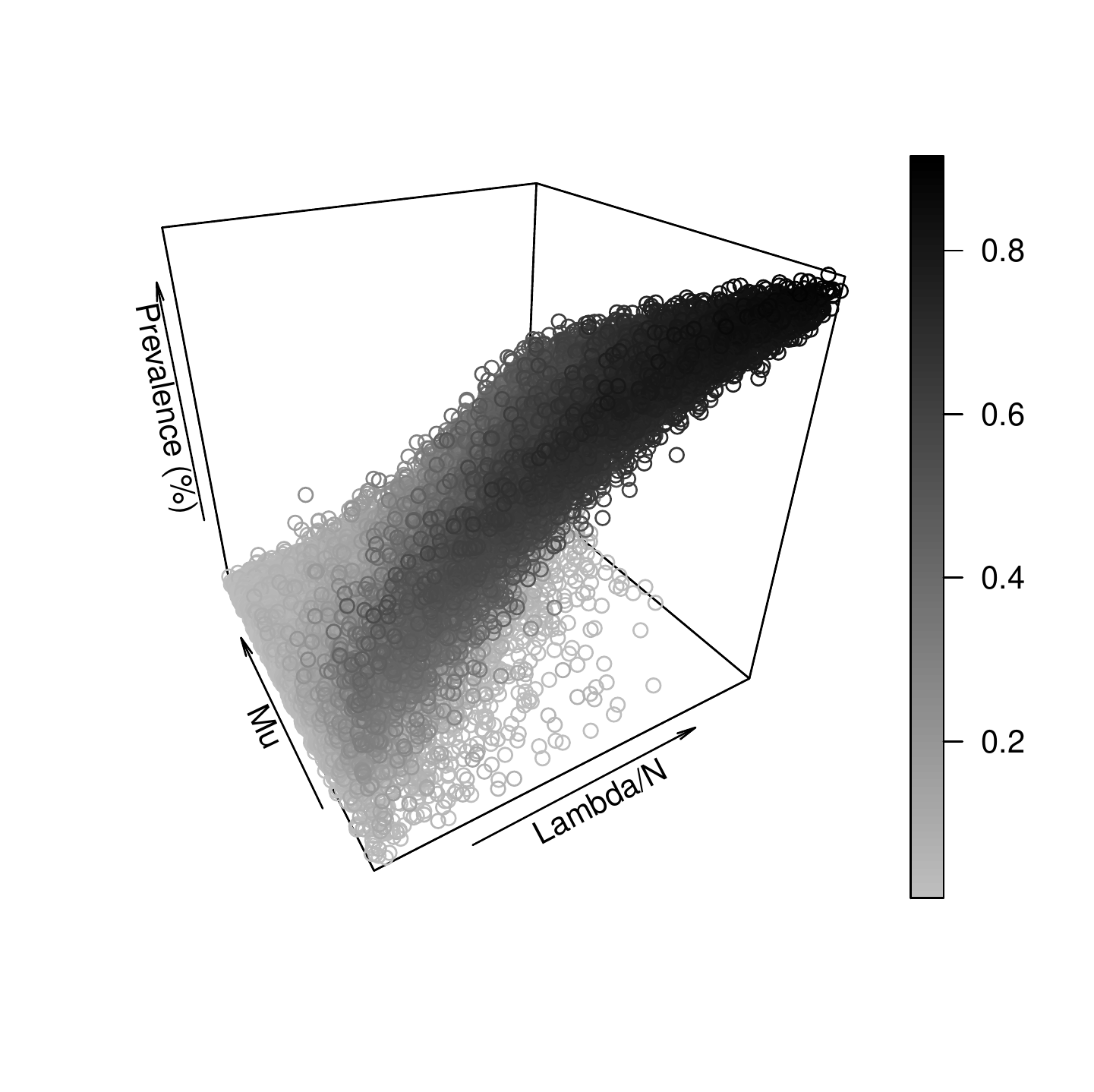}
\caption{{\small \textit{Prevalence ($Y$) simulated from the $n(p+1)=30,000$ simulations of $\lambda$ and $\gamma$, for the SIR model.}}
 \label{fig:exampleSIR30000}}
\end{figure}

The advantage of using the estimators with wavelets lies in their robustness to the inclusion of high frequencies and in the fact that
they can overcome some smoothing biases that the Nadaraya--Watson regressions exhibit (Fig. \ref{fig:comp} (b)). This can be understood when looking at Fig. \ref{fig:exampleSIR30000}: the simulations can give very noisy $Y$'s. For example, extinctions of the epidemics can be seen in very short time in simulations, due to the initial randomness of the trajectories. This produces distributions for $Y$'s that are not unimodal or with peaks at 0, which makes the estimation of $\E(Y\ |\ \lambda)$ or $\E(Y \ |\ \gamma)$ more difficult.
The variance of the estimator with wavelets is however the widest and in practice, finding the thresholding constants for the wavelet coefficients can be somewhat tricky when the number of input parameters is large.

\setcounter{chapter}{1}
\setcounter{section}{0}
\renewcommand{\thechapter}{\Alph{chapter}}
\chapter*{Appendix}
\addcontentsline{toc}{chapter}{Appendix}

\section{Some classical results in statistical inference}\label{Classicstat}
In this section, we have gathered results on inference useful for this part of these notes.
\subsection{Heuristics on Maximum Likelihood Methods}\label{HeuMLE}
 As a guide for statistical inference for epidemic dynamics,  we first describe the heuristics for getting properties of Maximum likelihood Estimators, each family of statistical models having to be studied specifically (see \cite{cap05IV} for more details).

 Definitions and  properties are given for  general discrete time stochastic processes. Consider a sequence  $(X_1,\dots,X_n)$ of random variables with values in  $E$, and
let $P_{\theta}^n $ denote the distribution of $(X_1,\dots,X_n)$ on $(E^n,\mathcal{ E}^n)$.
Assume that the parameter set  $\Theta$ is included in $\R ^q$ and that  $\theta_0$  the true value of the parameter belongs to $Int(\Theta)$.

The properties on the MLE  relies on three basic results that hold  as $n\rightarrow \infty$ under $\P_{\theta_0}^n$:
\begin{enumerate}[aaa]
\item[(i)] a law of large numbers for the log-likelihood $\ell_n(\theta) $,
\item[(ii)] a central limit theorem for the score function  $\nabla_{\theta} \ell_n(\theta_0)$
\item[(iii)] a law of large numbers for the observed information $\nabla_{\theta} ^2 \ell_n(\theta_0)$ under $\P_{\theta_0}^n$.
\end{enumerate}

For a  regular statistical model with a standard rate of convergence $\sqrt{n}$,
\begin{enumerate}[aaa]
\item[\textbf{ (i)}] For all $ \theta \in \Theta$, $n^{-1 }\ell_n(\theta) \rightarrow  J(\theta_0,\theta) $ in  $P_{\theta_0}^n$-probability.
uniformly w.r.t.\ $\theta$,
$ \theta \rightarrow J(\theta_0,\theta) $  is a continuous  function with a global unique maximum at $\theta_0$.
\item[\textbf{ (ii)}] $n^{-1/2} \nabla_{\theta}\ell_n(\theta_0) \rightarrow  \mathcal{ N}(0,\mathcal{ I}(\theta_0))$ in distribution under $P_{\theta_0}^n$,
\item[\textbf{ (iii)}]  $-\frac{1}{n} \nabla^2_{\theta} \ell_n(\theta_0) \rightarrow \mathcal{ I}(\theta_0)$  in $P_{\theta_0}^n$-probability.
\end{enumerate}

Condition (i) ensures consistency of the MLE  $\hat{\theta} _n$.

Assuming that   $\mathcal{ I}(\theta_0)$ is  non-singular, a
Taylor expansion of the score function $\nabla_{\theta} \ell_n$ at point $\theta_0$ leads, using that
$\nabla_{\theta} \ell_n (\hat{\theta}_n)=0$,
\begin{equation}
	0= \nabla_{\theta} \ell_n (\hat{\theta}_n)= \nabla_{\theta} \ell_n (\theta_0)+ \Big(\int_0^1 \nabla^2_{\theta}\ell_n (\theta_0+t(\hat{\theta}_n-\theta_0))dt \Big)\;(\hat{\theta}_n-\theta_0).	
	\end{equation}
	From this expansion, we get, using  that $\mathcal{ I}(\theta_0)$ is non-singular,
	\begin{equation}
	\sqrt{n}(\hat{\theta}_n- \theta_0)= \Big(-\frac{1}{n}\int_0^1 \nabla^2_{\theta}\ell_n(\theta_0+t(\hat{\theta}_n-\theta_0))dt \Big )^{-1} \Big(\frac{1}{\sqrt{n}}\nabla_{\theta} \ell_n(\theta_0)\Big).
	\end{equation}
Since  $\hat{\theta}_n \rightarrow \theta_0$ in $P_{\theta_0}^n$-probability  we get, using  (iii), that
\begin{enumerate}
\item[-] the	first factor of the r.h.s.\ of the equation above converges to $\mathcal{ I}(\theta_0)^{-1}$ $P_{\theta_0}$ a.s.
\item[-] the second factor converges in distribution under $P_{\theta_0}$ to $\mathcal{ N}(0,\mathcal{ I}(\theta_0))$.
\end{enumerate}
Finally, Slutsky's Lemma  yields that
	$\sqrt{n}(\hat{\theta}_n- \theta_0)\rightarrow_\mathcal{ L}\mathcal{ N}(0, \mathcal{ I}(\theta_0)^{-1}) $ under $P_{\theta_0}$.\\

\subsection{Miscellaneous results}\label{Miscstat}
We first state a theorem concerning the properties of the $\phi(\theta)$.
\begin{theorem}\label{phimle}
Let $(X_n)$ be a sequence of random variables with values in $\R^p$ and $a_n >0$ such that $a_n \rightarrow \infty$ as $n\rightarrow \infty$. Assume that $a_n(X_n -m )$  converges in distribution to a random variable $Z$.
Let $\phi: \R^p \rightarrow \R^q$ a continuously differentiable application. Then $a_n(\phi(X_n)-\phi (m))$ converges in distribution to the random variable $\nabla _x\phi (m) Z$, where $\nabla_x \phi$ is the Jacobian matrix of $\phi $: $\nabla_x \phi= (\frac{\partial \phi_k}{\partial x_l})_{1\leq k\leq q,1\leq l \leq p}$.
\end{theorem}
We refer to \cite{vaa00IV} for the proof.\\

For sake of clarity, we also give a recap on Exponential families of distributions (see e.g.\ \cite{bic15IV} or \cite{vaa00IV}). Indeed,
among parametric families of distributions, exponential families of distributions, widely used in statistics, provide here  a nice framework
to study the likelihood.

 Let $X$ be a random variable in $\R^k$ (or $\Z^k$) with distribution $P_{\theta}$ and density $p(\theta,x)$, with $\theta \in \Theta$, subset of $\R^q$.
\begin{definition}\label{expfam2}
The family $\{P_\theta, \theta \in \Theta\}$ is an exponential family if there exist  $q$ functions $(\eta_1,\dots,\eta_q)$ and $\phi$ defined on $\Theta$, $q$ real functions $T_1,\dots, T_q$  and a function $h(\cdot) $ defined on $\R^k$ such that\\
\begin{equation}\label{EF2}
p(\theta,x)= h(x) \exp\{\sum _{j=1}^q \eta_{j}(\theta)T_j(x)-\phi(\theta)\}\; ; x \in \R^k
\end{equation}
\end{definition}

Then $T(X)= (T_1(X),\dots,T_q(X))$ is a sufficient statistic in the i.i.d.\ case.
The random variable $X$ satisfies
\begin{equation}\label{momentexp2}
	m(\theta):= \E_{\theta}(X)= \nabla_{\theta} \phi (\theta);\quad \sigma^2(\theta):= \mathrm{Var}_{\theta}(X)= \nabla_{\theta}^2 \phi (\theta).
\end{equation}

\section{Inference for Markov chains}\label{ApStatMC}
 In order to present a good overview of the statistical problems, we  detail
the statistical inference  for Markov chains.
We have rather focus here on parametric inference since epidemic models always include in their dynamics  parameters that need to be estimated in order to derive predictions.

\subsection{Recap on Markov chains}\label{recapMC}

We first begin setting the notations used throughout this chapter and introducing the basic definitions.

Let  $(X_n, n \geq 0)$  a Markov chain on a probability space $(\Omega,{\mathbb F}, \P)$ with state space $(E,\mathcal{ E})$, transition kernel $Q$  and initial distribution
$\mu$ on $(E,\mathcal{ E})$. \\

The space of observations: $(E^{\N},\mathcal{ E}^{\otimes \N})$. Based on a  classical theorem of probability, there
	exists a unique probability measure on $(E^{\N},\mathcal{ E}^{\otimes \N})$, denoted
	$P_{\mu,Q}$ such that the coordinate process $(X_n,n\geq 0)$ is a Markov chain (with respect to its natural filtration) with initial distribution $\mu$ and transition  kernel $Q$.
Then, based on a classical  theorem in probability, there
	exists a unique probability measure on $(E^{\N},\mathcal{ E}^{\otimes \N})$, denoted
	$P_{\mu,Q}$ such that the coordinate process $(X_n,n\geq 0)$ is a Markov chain (with respect to its natural filtration) with initial distribution $\mu$ and transition kernel $Q$.

The probability $P_{\mu,Q}$ has the property:
\begin{enumerate}	
\item[-] if  $A_0, A_1, \dots, A_n$ are measurable sets in $E$, then
\[P_{\mu,Q}(X_i \in A_i; i=0,\dots,n )= \int_{A_{0}}\mu(dx_0)
\int_{A_1}Q(x_0,dx_1)\dots  \int_{A_n}Q(x_{n-1},dx_n).\]
\end{enumerate}

Let $\Theta$ denote some subset of ``probability measures $\times$ transition kernels on $(E,\mathcal{ E})$''.
The canonical statistical model is
$(E^{\N}, \mathcal{ E}^{\N}, (P_{\mu,Q}, (\mu,Q) \in  {\Theta}))$.
Let us denote by $P_{\mu,Q}^n$ the distribution of $(X_0,\dots,X_n)$ on $ E^{n+1}$.
The successive observations of $(X_i)$ allow to estimate $\mu,Q$.

Let $\alpha$ be a $\sigma$-finite positive measure on $(E, \mathcal{ E})$ dominating all the distributions $\{\mu(dy), (Q(x,dy), x \in E)\}$ and
assume that
$\mu(dy)= \mu (y)\alpha(dy), Q(x,dy)=Q(x,y) \alpha(dy)$.
Then, the likelihood of the observations $(x_0,\dots,x_n)$
is the probability  density function of $(X_0,\dots,X_n)$,
$P^n_{\mu,Q}$, with respect to the measure  $\alpha_n= \otimes_{i=0}^n \alpha^i(dy)$ on $E^{n+1}$, with $ \alpha^i(\cdot)$ copies of $\alpha(\cdot)$.
$$ \frac{d P^n_{\mu,Q}}{d\alpha_n}(x_i,i=0,\dots,n)=  \mu(x_0) Q(x_0,x_1)\dots
Q(x_{n-1},x_n).$$
Then, the likelihood function  at time $n$  is \\
\begin{equation}\label{deflik}
L_n(\mu,Q)=    \frac{d P^n_{\mu,Q}}{d\alpha_n}(X_0,\dots,X_n)=\mu(X_0) Q(X_0,X_1) \dots Q(X_{n-1}, X_n).
\end{equation}
The associated  Loglikelihood is
\begin{equation}\label {loglik} \ell_n(\mu,Q)= \log L_n(\mu,Q).
\end{equation}

\subsubsection{Maximum likelihood method for Markov chains}\label{MLEMC}
Let us consider the  case of positive recurrent Markov chains.
We follow the sketch detailed above to study the properties of MLE estimators.

Assume that the parameter set $\Theta$ is
is a  compact subset of $\R^q$.
\begin{definition}\label{Qf}
A family $(Q_{\theta}(x,dy),\theta \in \Theta)$ of transition probability kernels
on $(E,\mathcal{ E}) \rightarrow [0,1]$ is dominated by the transition kernel $Q(x,dy)$ if\\
$\forall x \in E, Q_{\theta} (x,dy)= f_{\theta}(x,y) Q(x, dy)$, with  $f_{\theta}: (E\times E,\mathcal{ E}\times \mathcal{ E}) \rightarrow \R ^+$
measurable.
\end{definition}

Assume that the initial distribution $\mu$ is known and let $\P_{\theta}$  (resp.\ ${\mathbb Q}$ denote
the distribution of the Markov chain $(X_n)$ with initial distribution $\mu$ and transition kernel  $Q_{\theta}$ (resp.\ $Q(x,dy)$.
Then the likelihood function and loglikelihood write
\begin{equation}\label{Ln}
 L_n(\theta)= \frac{d \P_{\theta}}{d {\mathbb Q}}(X_0,\dots,X_n)= \Pi_{i=1}^n f_{\theta}(X_{i-1},X_i) , \quad  \ell_n(\theta)= \sum_{i=1}^n \log f_{\theta}(X_{i-1},X_i).
 \end{equation}
The maximum likelihood estimator  is defined as: $ \hat{\theta}_n= \mbox{argsup}_ {\theta \in \Theta} \; L_n(\theta).$

\subsubsection{Consistency}
Denote by $\theta_0$ the true value of the parameter.
In order to study the properties of t$ \hat{\theta}_n$  as $n\rightarrow \infty$,
we introduce some assumptions.
\begin{enumerate}[(H5):]
\item[\textbf{ (H0)}:] The family $(Q_{\theta}(x,dy),\theta \in \Theta)$ is dominated by the transition kernel $Q(x,dy)$.
\item[\textbf{ (H1)}:] The Markov chain $(X_n)$ with transition kernel $Q_{\theta_0}$ is irreducible, positive recurrent and aperiodic, with stationary measure $\lambda_{\theta_0}(dx)$ on $E$.
\item[\textbf{ (H2)}:] $\lambda_{\theta_0}(\{x, Q_{\theta}(x,\cdot) \neq Q_{\theta_0}(x,\cdot)\} ) >0$.
\item[\textbf{ (H3)}:] $\forall \theta , \log f_{\theta}(x,y)$ is integrable with respect to $\lambda_{\theta_0}(dx)Q_{\theta_0}(x,dy):= \lambda_{\theta_0}\otimes Q_{\theta_0}$.
\item[\textbf{ (H4)}:] $ \forall (x,y)\in E^2, \; \theta \rightarrow f_{\theta}(x,y)$ is continuous w.r.t.\ $\theta$.
\item[\textbf{ (H5)}:] There exists a function $h(x,y)$ integrable w.r.t.\ $\lambda_{\theta_0}\otimes Q_{\theta_0}$ and such that
$$\forall \theta  \in \Theta  , \vert \log f_{\theta }(x,y)\vert \leq h(x,y).$$
\end{enumerate}
Assumption (H0) ensures the existence of the likelihood, (H1) is
analogous for Markov chains to  repetitions in a $n$ sample of i.i.d.\ random variables,
(H2) corresponds to an identifiability assumption, which ensures that different parameter values lead to distinct distributions for the observations. Assumptions (H3)--(H5)
are regularity assumptions.

\begin{theorem}\label{consist}
Assume (H0)--(H5)  and that $\Theta $ is a compact subset of $\R^q$.
Then the MLE $\hat{\theta}_n$ is consistent: it converges in  $\P_{\theta_0}$-probability to $\theta_0$ as $n\rightarrow \infty$.
\end{theorem}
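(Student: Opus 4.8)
The plan is to prove consistency of the MLE via the classical Wald-type argument for stationary, ergodic processes, exploiting the ergodic theorem that is available for positive recurrent Markov chains. First I would normalize the loglikelihood and introduce the contrast function. Define
\begin{equation}
J(\theta_0,\theta) = \int_{E}\int_{E} \log f_{\theta}(x,y)\, Q_{\theta_0}(x,dy)\,\lambda_{\theta_0}(dx),
\end{equation}
which is well-defined by (H3). The key identification step is to show that $\theta \mapsto J(\theta_0,\theta)$ attains a strict global maximum at $\theta=\theta_0$. This follows from a conditional Jensen/Kullback--Leibler argument: for each fixed $x$, the quantity $\int \log\bigl(f_{\theta}(x,y)/f_{\theta_0}(x,y)\bigr) Q_{\theta_0}(x,dy)$ is nonpositive with equality iff $Q_{\theta}(x,\cdot)=Q_{\theta_0}(x,\cdot)$, so integrating against $\lambda_{\theta_0}$ and invoking the identifiability assumption (H2) gives $J(\theta_0,\theta)<J(\theta_0,\theta_0)$ for $\theta\neq\theta_0$.

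Next I would establish the pointwise law of large numbers. Under (H1) the chain $(X_n)$ started from $\lambda_{\theta_0}$ (or from the known $\mu$, after an almost-sure coupling/transience-of-the-past argument) is ergodic, and the pair chain $(X_{n-1},X_n)$ is positive recurrent with stationary law $\lambda_{\theta_0}\otimes Q_{\theta_0}$. Applying the ergodic theorem to the additive functional $\sum_{i=1}^n \log f_{\theta}(X_{i-1},X_i)$ yields, for each fixed $\theta$,
\begin{equation}
\frac{1}{n}\ell_n(\theta) = \frac{1}{n}\sum_{i=1}^n \log f_{\theta}(X_{i-1},X_i) \longrightarrow J(\theta_0,\theta) \quad \P_{\theta_0}\text{-a.s.}
\end{equation}

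The main obstacle, as usual in consistency proofs, is upgrading this pointwise convergence to a uniform statement over the compact set $\Theta$, which is what legitimately transfers the maximizer of $n^{-1}\ell_n$ to the maximizer of $J$. Here I would use the continuity assumption (H4) together with the domination (H5): the envelope $h(x,y)$, integrable with respect to $\lambda_{\theta_0}\otimes Q_{\theta_0}$, permits a standard bracketing/compactness argument. Concretely, cover $\Theta$ by finitely many small balls, bound the oscillation of $\log f_{\theta}$ on each ball by a single integrable function (via (H4)--(H5) and dominated convergence, shrinking the balls), and apply the ergodic theorem to each bounding functional; compactness of $\Theta$ then gives $\sup_{\theta\in\Theta}\bigl|\tfrac{1}{n}\ell_n(\theta)-J(\theta_0,\theta)\bigr|\to 0$ a.s. Finally, the standard argmax continuity argument concludes: since $\hat{\theta}_n$ maximizes $\ell_n$, uniform convergence plus the strict, well-separated maximum of $J$ at $\theta_0$ (guaranteed by the identification step and continuity of $J$) forces $\hat{\theta}_n\to\theta_0$ in $\P_{\theta_0}$-probability, indeed almost surely. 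I expect the verification of the uniform LLN to be the technically delicate part, whereas the identification and the argmax step are routine once the Jensen inequality and (H2) are in place.
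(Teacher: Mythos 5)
Your proof follows essentially the same route as the paper's: the ergodic theorem applied to the pair chain $(X_{n-1},X_n)$ gives the pointwise law of large numbers for $n^{-1}\ell_n(\theta)$, the Kullback--Leibler/Jensen argument combined with the identifiability assumption (H2) shows $J(\theta_0,\cdot)$ has its unique global maximum at $\theta_0$, and the compactness/argmax sandwich inequality concludes. The only difference is one of detail: you spell out the bracketing argument needed for the uniform convergence $\sup_{\theta\in\Theta}\bigl|\tfrac{1}{n}\ell_n(\theta)-J(\theta_0,\theta)\bigr|\to 0$, a step the paper invokes tersely via (H4)--(H5) and compactness of $\Theta$ without elaboration.
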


\begin{proof}
Using that, under (H0),(H1), the sequence $(Y_n= (X_{n-1},X_n), n \geq 1)$ is a positive recurrent Markov chain on $(E\times E,\mathcal{ E}\times \mathcal{ E})$ with stationary distribution $ \lambda_{\theta_0}(dx) Q_{\theta_0}(x,dy)$, the ergodic theorem applies  to $(Y_n)$ and yields that, under (H3),
 \begin{equation}\label{limlog}
	\frac{1}{n}\sum_{i=1}^n \log f_{\theta}(X_{i-1},X_i) \rightarrow  J(\theta_0,\theta) :=
\int\int_{E\times E} \log f_{\theta}(x,y) \lambda_{\theta_0}(dx) Q_{\theta_0}(x,dy )\quad   P_{\theta_0} \mbox{-a.s.}
\end{equation}
Rewriting this equation yields that $J(\theta_0,\theta)$ defined in (\ref{limlog}),
$$J(\theta_0,\theta)= \int \int \log \frac{f_{\theta}(x,y)}{f_{\theta_0}(x,y)}\lambda_{\theta_0}(dx) Q_{\theta_0}(x,dy)+  A(\theta_0),$$
with $A(\theta_0)= \int \int \log f_{\theta_0}(x,y) \lambda_{\theta_0}(dx) Q_{\theta_0}(x,dy)$.
Under  (H0),
\[Q_{\theta}(x,dy)= f_{\theta}(x,dy) Q(x,dy),\]
so that
\begin{eqnarray*}
J(\theta_0,\theta) &=& \int \lambda_{\theta_0}(dx)\int\log\frac{Q_{\theta}(x,dy)}{Q_{\theta_0}(x,dy)}
Q_{\theta_0}(x,dy) +A(\theta_0)\\
&=& -\int K(Q_{\theta_0}(x,\cdot),Q_{\theta}(x,\cdot))\;\lambda_{\theta_0}(dx) +A(\theta_0),
\end{eqnarray*}
where $K(P,Q)$ denotes  the  Kullback--Leibler divergence between two probabilities.  Recall that it satisfies
\begin{enumerate}
\item[-] if $P << Q $, then $K(P,Q)= \E_P(\log\frac{dP}{dQ})= \int\log \frac{dP}{dQ}dP= E_Q(\phi(\frac{dP}{dQ}))$ with $\phi(x)=x \log(x)+1-x$.
\item[-] $K(P,Q)= +\infty $ otherwise.
\end{enumerate}
A well-known property is that
 $K(P,Q) \geq 0$ and $K(P,Q)=0$ if and only if  $P=Q$ a.s.
Assumption (H2) ensures that $\theta \rightarrow J(\theta_0,\theta)$ possesses a global unique maximum at $\theta=\theta_0$.

The MLE $\hat{\theta}_n$ satisfies that  $\hat{\theta}_n=\mathrm{Argsup}_{\theta}(\frac{1}{n}\ell_n(\theta))$.
The maximum of the right-hand side of (\ref{limlog}) is $\theta_0$.
Hence to get consistency, we have to prove that ``lim Argsup $\frac{1}{n}\ell_n(\theta)$'' is equal to  ``Argsup lim $\frac{1}{n}\ell_n(\theta)$'', which is $\theta_0$.
Note that, for all $\theta \in \Theta$, $\ell_n(\hat{\theta _n}) \geq \ell_n(\theta)$ and
 $J(\theta_0,\theta_0 ) \geq
J(\theta_0, \hat{\theta _n})$. Combining these two inequalities  we get,
\begin{align*}
	0 \leq  J(\theta_0,\theta_0) -J(\theta_0,\hat{\theta _n})
	\leq &\,  J(\theta_0,\theta_0) -\frac{1}{n}\ell_n(\theta_0) + \frac{1}{n}\ell_n(\theta_0)-
	\frac{1}{n}\ell_n(\hat{\theta _n})\\
 &+\frac{1}{n}\ell_n(\hat{\theta _n})- J(\theta_0,\hat{\theta _n})\\
	\leq &\, 2 \sup_{\theta \in \Theta} |J(\theta_0, \theta)- \frac{1}{n}\ell_n(\theta)|.
	\end{align*}
Therefore, by taking $\Theta$ a compact subset of $\R^q$, we get that $J(\theta_0,\hat{\theta _n}) \rightarrow J(\theta_0,\theta_0)$
$\P_{\theta_0}$- a.s. as $n\rightarrow \infty$. Assumptions (H4),(H5)  ensure that $ J(\theta_0,\cdot)$ is continuous with a unique global maximum at $\theta_0$ so that the MLE converges to $\theta_0$ in  $\P_{\theta_0}$-probability.
\end{proof}

\subsubsection{Limit distribution}\label{limitdist}
This section is based on general results presented in \cite{hal80IV}.
 For $V$ a vector or a matrix, let  $V^*$ denote its transposition.
Define  the $q\times q$ matrix
\begin{equation}\label{FisherCh3}
\mathcal{ I}(\theta_0)= \int\int \frac{\nabla_{\theta}f_{\theta_0}(x,y)\;\nabla^*_{\theta}f_{\theta_0}(x,y) }{f_{\theta_0}(x,y)^{2}} \lambda_{\theta_0}(dx)Q_{\theta_0} (x,dy).
\end{equation}
 Let us introduce the additional assumptions.
\begin{enumerate}[(H8)]
\item[\textbf{ (H6)}] $ \theta \rightarrow \ell_n(\theta)$ is $C^2(\Theta)$  $\P_{\theta_0}$ -a.s.
\item[\textbf{ (H7)}] $\mathcal{ I}(\theta_0) $  defined in (\ref{FisherCh3}) is non-singular.
\item[\textbf{ (H8)}]   $\int \phi_{\theta_0}(r,x,y)\lambda_{\theta_0}(dx)Q_{\theta_0}(x,dy) \rightarrow 0 $ as $r\rightarrow 0$ where
\[\phi_{\theta_0}(r,x,y)=\sup \{\parallel \nabla^2_{\theta}\log f_{\theta} (x,y)- \nabla^2_{\theta}\log f_{\theta_0}(x,y)\parallel\cdot \parallel \theta-\theta_0\parallel \leq r \}.\]
\end{enumerate}
We can state the result on the asymptotic normality of the MLE
\begin{theorem}\label{AsymptdistrMC}
Assume  (H0)--(H8). Then the MLE $\hat{\theta}_n$ is asymptotically Gaussian:  under  $P_{\theta_0}$,
 $$\sqrt{n}(\hat{\theta}_n-\theta_0) \rightarrow_{\mathcal{ L}}  \mathcal{ N}_{q}(0, \mathcal{ I}(\theta_0)^{-1}).$$
\end{theorem}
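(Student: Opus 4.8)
The plan is to follow the maximum-likelihood heuristics of Section \ref{HeuMLE}, specialised to the ergodic Markov chain setting, using the three ingredients listed there: a law of large numbers for the observed information, a central limit theorem for the score, and consistency. The starting point is the consistency of $\hat{\theta}_n$ already granted by Theorem \ref{consist} under (H0)--(H5). Since $\theta_0 \in \mathrm{Int}(\Theta)$ and $\ell_n$ is $C^2$ by (H6), with $\P_{\theta_0}$-probability tending to $1$ the estimator is an interior solution of the score equation $\nabla_{\theta}\ell_n(\hat{\theta}_n)=0$. A second-order Taylor expansion of the score at $\theta_0$ then gives
$$0=\nabla_{\theta}\ell_n(\hat{\theta}_n)=\nabla_{\theta}\ell_n(\theta_0)+\left(\int_0^1\nabla^2_{\theta}\ell_n(\theta_0+t(\hat{\theta}_n-\theta_0))\,dt\right)(\hat{\theta}_n-\theta_0),$$
so that $\sqrt{n}(\hat{\theta}_n-\theta_0)$ is written as the product of the inverse integrated Hessian and the normalised score. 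It remains to identify the limits of these two factors.

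First I would treat the score $\nabla_{\theta}\ell_n(\theta_0)=\sum_{i=1}^n\nabla_{\theta}\log f_{\theta_0}(X_{i-1},X_i)$. The key observation is that its increments form a square-integrable $\mathcal{F}_n$-martingale difference sequence: differentiating the identity $\int f_{\theta}(x,y)\,Q(x,dy)=1$ under the integral sign and using (H0) shows $\E_{\theta_0}[\nabla_{\theta}\log f_{\theta_0}(X_{i-1},X_i)\mid\mathcal{F}_{i-1}]=0$. Applying the ergodic theorem to the positive recurrent chain $(Y_i=(X_{i-1},X_i))$, whose invariant law is $\lambda_{\theta_0}\otimes Q_{\theta_0}$, the normalised angle bracket converges $\P_{\theta_0}$-a.s.\ to the matrix $\mathcal{I}(\theta_0)$ of \eqref{FisherCh3}. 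The multidimensional martingale central limit theorem recalled in Section \ref{TCLMartmult}, with (H7) ensuring non-degeneracy, then yields $n^{-1/2}\nabla_{\theta}\ell_n(\theta_0)\to\mathcal{N}_q(0,\mathcal{I}(\theta_0))$ in distribution under $\P_{\theta_0}$.

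Next I would analyse the observed information. By the ergodic theorem applied again to $(Y_i)$, one gets $-\frac{1}{n}\nabla^2_{\theta}\ell_n(\theta_0)\to\mathcal{I}(\theta_0)$ a.s., where the identification of the limit uses the information identity $\E_{\theta_0}[-\nabla^2_{\theta}\log f_{\theta_0}(X_{i-1},X_i)\mid\mathcal{F}_{i-1}]$ equals the conditional Fisher information, obtained by differentiating $\int f_{\theta}\,Q=1$ twice. The remaining and most delicate step is to transfer this convergence from $\theta_0$ to the random point $\theta_0+t(\hat{\theta}_n-\theta_0)$ occurring in the Taylor expansion. Here (H8) is exactly what is needed: it supplies an integrable, locally uniform modulus of continuity for $\nabla^2_{\theta}\log f_{\theta}$ near $\theta_0$, so that combining the consistency of $\hat{\theta}_n$ with the ergodic theorem bounds the operator norm of $-\frac{1}{n}\int_0^1\nabla^2_{\theta}\ell_n(\theta_0+t(\hat{\theta}_n-\theta_0))\,dt-\mathcal{I}(\theta_0)$ by a quantity tending to $0$ in $\P_{\theta_0}$-probability. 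This uniform control of the second derivatives over a shrinking neighbourhood, rather than mere pointwise convergence, is the main obstacle. Once the integrated Hessian is shown to converge in probability to the nonsingular matrix $\mathcal{I}(\theta_0)$, its inverse converges to $\mathcal{I}(\theta_0)^{-1}$, and a final application of Slutsky's lemma to the product representation of $\sqrt{n}(\hat{\theta}_n-\theta_0)$ delivers the stated limit $\mathcal{N}_q(0,\mathcal{I}(\theta_0)^{-1})$.
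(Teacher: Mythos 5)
Your proposal is correct and follows essentially the same route as the paper's proof: a Taylor expansion of the score at $\theta_0$, the martingale CLT combined with the ergodic theorem for the bracket to get $n^{-1/2}\nabla_{\theta}\ell_n(\theta_0)\to\mathcal{N}_q(0,\mathcal{I}(\theta_0))$, the ergodic theorem plus the information identity for the observed information, and Slutsky's lemma to conclude. If anything, you are more explicit than the paper on two points it glosses over — that consistency and $\theta_0\in\mathrm{Int}(\Theta)$ are needed so that $\hat{\theta}_n$ solves the score equation with probability tending to one, and that (H8) is precisely what justifies replacing the Hessian at the random intermediate point by the Hessian at $\theta_0$.
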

\begin{proof}
Under (H6), the score function is well defined and reads as
\begin{equation}\label{ScoreMC}
\nabla_{\theta}\ell_n (\theta)=\sum_{i=1}^n \nabla_{\theta} \log f_{\theta}(X_{i-1},X_i)= \sum_{i=1}^n  v_i(\theta).
\end{equation}
The score function satisfies
\begin{proposition} \label{Martingalescore}
	Under assumptions (H0)--(H5), $\nabla_{\theta}\ell_n (\theta_0)$ is a   $q$-dimensional  $\P_{\theta_0}$-martingale
 w.r.t.\ $(\mathcal{ F}_n)_{n\geq 0}$, which  is centered  and square integrable.
\end{proposition}
Proof:  By (\ref{ScoreMC}), we have,   $\nabla_{\theta}\ell_n (\theta_0) = \nabla_{\theta}\ell_{n-1} (\theta_0) +  v_n(\theta_0)$.
 We get, using that, under (H5), $\int \nabla _{\theta}f =\nabla _{\theta}(\int f)$ holds true,
\begin{eqnarray*}
\E_{\theta_0} (v_i(\theta_0)|\mathcal{ F}_{i-1})&=& \E_{{\mathbb Q}} (\;\nabla_{\theta} \log f_{\theta_0}(X_{i-1},X_i) f_{\theta_0}(X_{i-1},X_i)|\mathcal{ F}_{i-1})\\
&=& \E_{{\mathbb Q}} (\nabla_{\theta}  f_{\theta_0}(X_{i-1},X_i)|\mathcal{ F}_{i-1})\\
&=& \nabla_{\theta} \; \E_{{\mathbb Q}}(f_{\theta_0}(X_{i-1},X_i)|\mathcal{ F}_{i-1})= \nabla_{\theta}1=0.
\end{eqnarray*}
Noting that $E_{\theta_0}(\nabla_{\theta}\ell_1(\theta_0))=\nabla_{\theta} (E_{\theta_0} 1)=0$,  $\nabla_{\theta}\ell_n (\theta_0)$ is a centered martingale.

Consider now the increasing process associated with this martingale.  We have\\
$ \langle\nabla_{\theta}\ell_n (\theta_0)\rangle =\sum_{i=1}^n E_{\theta_0}(v_i(\theta_0) \;v_i^*(\theta_0)|\mathcal{ F}_{i-1}).$ \\
An application of the ergodic theorem yields
$\frac{1}{n}\sum v_i(\theta_0) \; v_i^*(\theta_0) \rightarrow  \mathcal{ I}(\theta_0)$ $\P_{\theta_0}$ a.s.  \\
Therefore  for $j=1,\dots q$, $E_{\theta_0}\langle\nabla_{\theta}\ell_n (\theta_0)\rangle_{jj} \rightarrow \infty$ as $n\rightarrow \infty$ .
 Applying a central limit theorem, we get that
 $$ \frac{1}{\sqrt{n}}\nabla_{\theta}\ell_n (\theta_0) \rightarrow \mathcal{ N}_q(0, \mathcal{ I}(\theta_0)).$$
The matrix $\mathcal{ I}(\theta_0)$ is  the Fisher information matrix .

 A Taylor expansion of the score function $\nabla_{\theta} \ell_n$ at point $\theta_0$ leads, using that
$\nabla_{\theta} \ell_n (\hat{\theta}_n)=0$, to
\begin{equation}\label{nablalCh3}
	0= \frac{1}{\sqrt{n}}\nabla_{\theta} \ell_n (\hat{\theta}_n)= \frac{1}{ \sqrt{n}}  \nabla_{\theta} \ell_n (\theta_0)+ \frac{1}{n} \Big(\int_0^1 \nabla^2_{\theta}\ell_n(\theta_0+t(\hat{\theta}_n-\theta_0))dt \Big)\;\frac{\hat{\theta}_n-\theta_0 }{\sqrt{n}}.\end{equation}
Now, \eqref{limlog} yields, using \eqref{deflik},
\[\frac{1}{n} \nabla^2_{\theta}\ell_n(\theta_0) \rightarrow \int \lambda_{\theta_0}(dx) \int \nabla^2_{\theta}(\log f_{\theta_0}(x,y)) Q_{\theta_0} (x,dy)= - \mathcal{ I}(\theta_0),\]
Indeed, the last equality is obtained  using   Assumptions (H3)--(H6) and
\[\int \frac{\nabla^2_{\theta} f_{\theta_0}(x,y)}{ f_{\theta_0}(x,y)}Q_{\theta_0} (x,dy)= \nabla_{\theta}^2 (\int  f_{\theta_0}(x,y) Q(x,dy) )=0.\]

Therefore, from  expansion (\ref{nablalCh3}), we get,
	\begin{equation}\label{normle}
	\sqrt{n}(\hat{\theta}_n- \theta_0)= \Big(-\frac{1}{n}\int_0^1 \nabla^2_{\theta}\ell_n(\theta_0+t(\hat{\theta}_n-\theta_0))dt \Big )^{-1} \Big(\frac{1}{\sqrt{n}}\nabla_{\theta} \ell_n(\theta_0)\Big).
	\end{equation}
Since  $\hat{\theta}_n \rightarrow \theta_0$ in $P_{\theta_0}^n$-probability  we get  that
 the	first factor of the r.h.s.\ of (\ref{normle}) converges to $\mathcal{ I}(\theta_0)^{-1}$ under  $\P_{\theta_0}$ a.s., and that
the second factor converges in distribution under $P_{\theta_0}$ to $\mathcal{ N}(0,\mathcal{ I}(\theta_0))$.
Finally, Slutsky's Lemma  yields that
	$\sqrt{n}(\hat{\theta}_n- \theta_0)$ converges to  $\mathcal{ N}(0, \mathcal{ I}(\theta_0)^{-1} \mathcal{ I}(\theta_0) \mathcal{ I}(\theta_0)^{-1})=\mathcal{ N}(0, \mathcal{ I}(\theta_0)^{-1}) $ in distribution.
	\end{proof}

\subsection{Other approaches than the likelihood}
It often occurs in practice that  the likelihood is difficult to  compute.  One way to overcome this problem relies on stochastic algorithms.
However, another way round is to build other processes than the likelihood to derive estimators. These methods include for the i.i.d.\  case the $M$-estimators (\cite{vaa00IV})  and, for stochastic processes, Estimating equations, approximate likelihoods, pseudolikelihoods.
(\cite{kes12IV}), Generalized Moment Methods  (\cite{hans95IV}), Contrast functions (\cite{dac93IV}).

\subsubsection{Minimum contrast approaches}
What if, instead of the likelihood, another process  (contrast process) $ U_n(\theta)$ is used as for instance \ the C.L.S.\ method (in essence think of $U_n \simeq -\ell_n$))\\

Let us assume that   $U_n(\theta)=  U_n(\theta,X_0,\dots,X_n) $ satisfies
\begin{enumerate}[(H4b)]
\item[\textbf{ (H1b)}]  For all $ \theta \in \Theta$, $U_n(\theta)$ is $\mathcal{ F}_n$-measurable and $\theta \rightarrow  U_n(\theta)$ is under $\P_{\theta_0}$ a.s.  continuous and  twice continuously differentiable on a  subset $V(\theta_0)$.\\
\item[\textbf{ (H2b)}] For all $ \theta$, $n^{-1 }U _n(\theta) \rightarrow  K(\theta_0,\theta) $ in $P_{\theta_0}$-probability
	uniformly over compacts subsets of $\Theta$, where $ \theta \rightarrow K(\theta_0,\theta) $  is continuous with a unique global  minimum at $\theta_0$.\\
\item[\textbf{ (H3b)}] $n^{-1/2} \nabla_{\theta} U_n(\theta_0) \rightarrow
 {\mathcal N}_{q} (0, I_U(\theta_0))$ in distribution under $\P_{\theta_0}$.\\
\item[\textbf{ (H4b)}]  There exists a   symmetric positive matrix $J_U (\theta_0)$ such that
	\[\lim_{n \rightarrow \infty}\sup_{|\theta-\theta_0|\leq \delta}
	\parallel \frac{1}{n}\nabla^2_{\theta} U_n(\theta)- {J_U}(\theta_0)\parallel  \rightarrow 0\mbox{ as }\delta \rightarrow 0\quad P_{\theta_0}\mbox{-a.s.}\]
\end{enumerate}
	
Define  the MCE  estimator $\tilde{\theta}_n$ associated with $  U_n(\theta)$ as any solution of
\begin{equation}\label{MCE}
U_n(\tilde{\theta}_n)= \inf_{\theta \in \Theta} U_n(\theta).
\end{equation}

Then, using similar proofs than in Section	\ref{MLEMC} yields that
\begin{theorem}\label{LimDMCE}
 Assume that  (H1b)--(H4b) hold. Then, the MCE defined in \eqref{MCE}
\begin{enumerate}
\item[(1)] $\tilde{\theta}_n \rightarrow \theta_0$ in $P_{\theta_0}-$ probability.
\item[(2)] $\sqrt{n}(\tilde{\theta}_n- \theta_0) \rightarrow _{\mathcal{ L}} \mathcal{ N}_q (0,J_U(\theta_0)^{-1} I_U(\theta_0) J_U ^{-1}(\theta_0))$ under $P_{\theta_0}$.
\end{enumerate}
\end{theorem}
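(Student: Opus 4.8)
The statement to prove is Theorem \ref{LimDMCE}, the consistency and asymptotic normality of a minimum contrast estimator under assumptions (H1b)--(H4b). The paper says this follows by "similar proofs than in Section \ref{MLEMC}," so my plan is to transcribe the three-step argument used for the MLE (consistency via uniform convergence of the normalized contrast, a martingale-type CLT for the score, and a law of large numbers for the second derivative), replacing the Kullback--Leibler structure by the abstract hypotheses.

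The plan is to proceed as follows. First I would establish (1), consistency. The starting point is the defining property $U_n(\tilde\theta_n) = \inf_{\theta\in\Theta} U_n(\theta)$, so in particular $U_n(\tilde\theta_n) \leq U_n(\theta_0)$. Combining this with the fact that $\theta_0$ is the global minimum of the limit $K(\theta_0,\cdot)$ (so $K(\theta_0,\tilde\theta_n) \geq K(\theta_0,\theta_0)$), I would sandwich the nonnegative quantity $K(\theta_0,\tilde\theta_n) - K(\theta_0,\theta_0)$ exactly as in the proof of Theorem \ref{consist}:
\begin{align*}
0 \leq K(\theta_0,\tilde\theta_n) - K(\theta_0,\theta_0) \leq{}& \Big(K(\theta_0,\theta_0) - \tfrac{1}{n}U_n(\theta_0)\Big) + \tfrac1n\big(U_n(\theta_0) - U_n(\tilde\theta_n)\big)\\
&+ \Big(\tfrac1n U_n(\tilde\theta_n) - K(\theta_0,\tilde\theta_n)\Big) \leq 2\sup_{\theta\in\Theta}\Big|\tfrac1n U_n(\theta) - K(\theta_0,\theta)\Big|.
\end{align*}
By (H2b) the right-hand side tends to $0$ in $\P_{\theta_0}$-probability, so $K(\theta_0,\tilde\theta_n)\to K(\theta_0,\theta_0)$; since $K(\theta_0,\cdot)$ is continuous with a \emph{unique} global minimum at $\theta_0$ (again (H2b)), this forces $\tilde\theta_n \to \theta_0$ in probability. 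Here I would note that a compactness argument on $\Theta$ (from (H1b)/(H4b)) is what upgrades pointwise identification of the minimizer to convergence of the argmin.

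Next I would prove (2), asymptotic normality. Using (H1b), $\theta\mapsto U_n(\theta)$ is $C^2$ near $\theta_0$, so a Taylor expansion of the gradient about $\theta_0$, evaluated at $\tilde\theta_n$ where $\nabla_\theta U_n(\tilde\theta_n)=0$, gives
\begin{equation*}
0 = \frac{1}{\sqrt n}\nabla_\theta U_n(\tilde\theta_n) = \frac{1}{\sqrt n}\nabla_\theta U_n(\theta_0) + \Big(\frac1n\int_0^1 \nabla^2_\theta U_n(\theta_0 + t(\tilde\theta_n-\theta_0))\,dt\Big)\sqrt n(\tilde\theta_n-\theta_0).
\end{equation*}
By (H4b) together with the consistency just established, the bracketed matrix converges to $J_U(\theta_0)$ in $\P_{\theta_0}$-probability (the supremum over a shrinking neighborhood controls the difference between $\tfrac1n\nabla^2_\theta U_n$ evaluated along the segment and $J_U(\theta_0)$). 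By (H3b), $\tfrac{1}{\sqrt n}\nabla_\theta U_n(\theta_0)\to \mathcal N_q(0,I_U(\theta_0))$ in distribution. Solving for $\sqrt n(\tilde\theta_n-\theta_0)$, inverting $J_U(\theta_0)$ (positive definite by (H4b)), and applying Slutsky's lemma yields
\begin{equation*}
\sqrt n(\tilde\theta_n-\theta_0) \to \mathcal N_q\big(0,\,J_U(\theta_0)^{-1} I_U(\theta_0) J_U(\theta_0)^{-1}\big),
\end{equation*}
which is exactly the sandwich (``meat in the bread'') covariance of the statement. The main obstacle is not any single estimate but the justification that the integrated Hessian along the random segment converges to $J_U(\theta_0)$: this requires the uniform-over-neighborhood control in (H4b) combined with $\tilde\theta_n\to\theta_0$, and it is the one place where the abstract contrast hypotheses must be invoked with care rather than appealing to the explicit likelihood structure (martingale property of the score, integrability of $\log f_\theta$) available in the MLE proof. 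Everything else is a direct translation of the MLE argument with $I_U, J_U$ in place of the single Fisher information $\mathcal I(\theta_0)$.
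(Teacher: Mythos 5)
Your overall route is exactly the one the paper takes: the paper proves this theorem only by the remark that it follows ``using similar proofs than in Section \ref{MLEMC}'', i.e.\ the sandwich argument of Theorem \ref{consist} for consistency and the Taylor/Slutsky argument of Theorem \ref{AsymptdistrMC} for asymptotic normality, with $J_U(\theta_0)^{-1}I_U(\theta_0)J_U(\theta_0)^{-1}$ replacing $\mathcal{I}(\theta_0)^{-1}$. Your step (2) is a faithful and correct transcription of that argument, including the one genuinely delicate point (using (H4b) plus consistency to control the integrated Hessian along the random segment).

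However, your consistency display contains a sign error that makes the stated chain of inequalities false as written: you copied the MLE (maximization) decomposition verbatim instead of adapting it to minimization. Your three bracketed terms telescope to $K(\theta_0,\theta_0)-K(\theta_0,\tilde{\theta}_n)$, which is the \emph{negative} of the left-hand side $K(\theta_0,\tilde{\theta}_n)-K(\theta_0,\theta_0)\geq 0$, so your first inequality asserts $x\leq -x$ for $x\geq 0$. Moreover your middle term $\tfrac1n\big(U_n(\theta_0)-U_n(\tilde{\theta}_n)\big)$ is \emph{nonnegative} (since $\tilde{\theta}_n$ minimizes $U_n$), so it cannot be discarded when passing to the final upper bound. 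The decomposition must run the other way:
\begin{align*}
0\leq K(\theta_0,\tilde{\theta}_n)-K(\theta_0,\theta_0)
={}&\Big(K(\theta_0,\tilde{\theta}_n)-\tfrac1n U_n(\tilde{\theta}_n)\Big)
+\tfrac1n\Big(U_n(\tilde{\theta}_n)-U_n(\theta_0)\Big)\\
&+\Big(\tfrac1n U_n(\theta_0)-K(\theta_0,\theta_0)\Big)
\leq 2\sup_{\theta\in\Theta}\Big|\tfrac1n U_n(\theta)-K(\theta_0,\theta)\Big|,
\end{align*}
where now the middle term is nonpositive and can be dropped. With this repair (and the standard remark that, by consistency and $\theta_0\in\mathrm{Int}(\Theta)$, the first-order condition $\nabla_\theta U_n(\tilde{\theta}_n)=0$ holds with probability tending to one, so the Taylor expansion in your step (2) is legitimate), your proof is correct and coincides with the paper's intended argument.
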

Note that contrary to the MLE where   $J_U(\theta_0)=I_U(\theta_0)$,  the asymptotic covariance matrix of  $\tilde{\theta}_n $ is no longer  $I_U(\theta_0) ^{-1}$.
Analytic properties of matrices yield that $J_U(\theta_0)^{-1} I_U(\theta_0) J_U ^{-1}(\theta_0))$ is always greater (as a linear form) than $I_U(\theta_0)^{-1}$.


\subsubsection{Conditional Least Squares}\label{CLS}
A classical approach associated to this method is 	the Conditional Least Squares  method.\\
Let 	$(X_n)$ be an Markov chain on $\R^p$  with transition kernel $Q_{\theta}(x, dy)$ on  $\R^p$ and initial distribution $\mu$.
Assume that it is positive recurrent with stationary distribution $\lambda_{\theta}(dx)$.

Define the two functions
\begin{align*}
g(\theta,x) &= \int y Q_{\theta}(x, dy)\mbox{ and}\\
V(\theta,x) &= \int  \;^t(y-g(\theta,x)) \; (y- g(\theta,x)) Q_{\theta}(x,dy).
\end{align*}
Clearly,  $E_{\theta}(X_i |X_{i-1})= g( \theta, X_{i-1})$ and $\mathrm{Var}_{\theta}(X_i |X_{i-1})= V(\theta,X_{i-1})$. We assume\\
 The CLS  method is  associated with the process
\begin{equation}\label{CLSMC}
U_n(\theta)= \frac{1}{2} \sum_{i=1}^n \;  (X_i -E_{\theta}(X_i | X_{i-1}))^*\;(X_i -E_{\theta}(X_i|X_{i-1})).
\end{equation}
 Applying the ergodic theorem to $((X_{i-1},X_i), i\geq 1)$  yields  that,
 under $\P_{\theta_0}$
 $$\frac{1}{n}U_n(\theta) \rightarrow  K (\theta_0,\theta)= \frac{1}{2}\int\int \; (y- g(\theta, x)) ^* (y- g(\theta,x)) \lambda_{\theta_0}(dx)Q_{\theta_0}(x,dy)  \mbox{ a.s.}$$
Rewriting this limit yields that
\[K(\theta_0,\theta)=
  \frac{1}{2}\int\int \; (g(\theta ,x)-g(\theta_0,x))^* (g(\theta ,x)-g(\theta_0,x))  \lambda_{\theta_0}(dx)Q_{\theta_0}(x,dy) +  A(\theta_0)\]
  with
\[A(\theta_0)=\frac{1}{2}\int\int \; (y- g(\theta, x))^*  (y- g(\theta,x)) \lambda_{\theta_0}(dx)Q_{\theta_0}(x,dy).\]

To study the MCE $\tilde{\theta}_n= \mathrm{Argmin} \{U_n(\theta) , \theta \in \Theta\}$, we  assume
\begin{enumerate}[(A4)]
\item[\textbf{ (A1)}] For all $x \in\R^p$,  $ g(\theta,x) $ and  $V(\theta,x)$ are finite and  $C^2$ with respect to $\theta$.
\item[\textbf{ (A2)}] $\theta\rightarrow K(\theta_0,\theta)$ continuous and  $\lambda_{\theta_0}(\{ x, g(\theta ,x)\neq g(\theta_0,x)\})>0$.
\item[\textbf{ (A3)}] The matrix  $J_U(\theta)= \int \;(\nabla_{\theta}g(\theta, x)\; \nabla^*_{\theta} g(\theta,x)\lambda_{\theta}(dx) $ is non-singular at $\theta_0$.
\item[\textbf{(A4)}] The function $\phi(\delta,x) = \sup_{||\theta-\theta_0||\leq \delta} ||\nabla ^2_{\theta} g(\theta,x) -\nabla^2 _{\theta} g(\theta_0,x) ||$  satisfies
\[\int \phi(\delta,x )\lambda_{\theta_0}(dx)\rightarrow 0\mbox{ as }\delta \rightarrow 0.\]
\end{enumerate}
 	 	
Assumption (A1) ensures that $U_n$ is well defined, (A2) that $\theta \rightarrow K(\theta_0,\theta)$ has a global unique minimum at $\theta_0$. Assumption (A3),(A4) ensure that (H3b), (H4b) hold.

Let us study  $ \nabla_{\theta} U_n(\theta)$.
We have that\\
 \begin{equation}\label{nablaU}
	0= \nabla_{\theta} U_n (\tilde{\theta}_n)= \nabla_{\theta} U_n (\theta_0)+ \Big(\int_0^1 \nabla^2_{\theta} U_n(\theta_0+t(\hat{\theta}_n-\theta_0))dt \Big)\;(\hat{\theta}_n-\theta_0).\end{equation}
 The first term of the r.h.s.\ of (\ref{nablaU}) reads as
\[\nabla_{\theta} U_n(\theta_0) = -\sum_{i=1}^n \;  (\nabla_{\theta}g(\theta_0,X_{i-1}))^*\; (X_i -g(\theta_0,X_{i-1})).\]
 Hence,  under (A1), $ \nabla_{\theta} U_n(\theta_0)$  is a centered $L^2$-martingale under $P_{\theta_0}$ with
\[\langle\nabla_{\theta} U_n(\theta_0)\rangle =\sum_{i=1}^n  E_{\theta_0}\Big( \;(\nabla_{\theta} g(\theta_0,X_{i-1}))^*\; V(\theta_0,X_{i-1}) \nabla_{\theta}g(\theta_0,X_{i-1})\Big).\]

Applying the ergodic theorem yields	
\[\frac{1}{n}\langle\nabla_{\theta} U_n(\theta_0)\rangle_n \rightarrow \int \;(\nabla_{\theta}g_{\theta_0}(x)^*)\; V(\theta_0,x) \; \nabla_{\theta} g_{\theta_0}(x)\lambda_{\theta_0}(dx) := I_U (\theta_0)\mbox{ a.s.}\]
Therefore, we can apply the central limit theorem for martingales (see Theorem \ref{TCLMartmult}) and obtain,
	$$ \frac{1}{\sqrt{n}}\nabla_{\theta} U_n(\theta_0) \rightarrow_{\mathcal{ L}}  \mathcal{ N}_q(0, I_U(\theta_0)) \mbox{ under } P_{\theta_0}.$$
For the second term, we get
$\nabla^2_{\theta}U_n(\theta_0) = \sum_{i=1}^n \; \nabla_{\theta} g(\theta_0,X_{i-1}) ^* \nabla_{\theta}g(\theta_0,X_{i-1})$ which satisfies\\
\[\frac{1}{n}\nabla^2_{\theta}U_n(\theta_0) \rightarrow   J_U(\theta_0) :=\int \; \nabla _{\theta}g(\theta_0,x)^*  \; \nabla _{\theta} g(\theta_0,x) \; \lambda_{\theta_0}(dx) \quad P_{\theta_0}\mbox{ a.s.}\]

 Therefore under (A3), (A4), $J_U(\theta_0)$ is invertible.  Therefore, $\tilde{\theta}_n$ is consistent and
 $\sqrt{n}(\tilde{\theta}_n-\theta_0)\rightarrow \mathcal{ N}(0, \Sigma(\theta_0))$ with $ \Sigma(\theta_0)= J_U^{-1}(\theta_0)I_U(\theta_0) J_U^{-1}(\theta_0)$.

\subsection{Hidden Markov Models}\label{sec:HMM}
A Hidden Markov Model is, roughly speaking, a Markov chain observed with noise.
This raises new problems for the statistical inference of parameters ruling the Markov chain model $(X_n)$.\\
 Consider  a Markov chain $(X_n, n \geq 0)) $ with state space $E$.
The term "hidden " corresponds to the situation where the Markov chain cannot be directly observable,
Instead of $(X_n)$ , the observations consists in another stochastic process $(Y_n)$ whose distribution is ruled by $(X_n)$.
The simplest case  is for instance  the case of measurements errors $Y_n= X_n+ \epsilon_n$, with $(\epsilon_i)$ i.i.d. random variables.
All the statistical inference for $(X_n)$ has to be done in terms of $(Y_n)$ only, since $(X_n)$ cannot be observed.\\

For epidemic data, this situation occurs when the exact status of individuals cannot be observed or when there is a systematic error in the reporting rate
of Infected individuals. \\

The precise definition of a Hidden Markov Model (HMM) is:
\begin{definition}\label{def:HMM}
A Hidden Markov Model (HMM) is a bivariate discrete time process $ ((X_n,Y_n), n\geq 0)$  with state space $\mathcal{ X} \times \mathcal{ Y}$ such that\\
(i) $(X_n)$  is a Markov chain  with state space $\mathcal{ X}$.\\
(ii) For all  $i\leq n$, the conditional distribution of $Y_i$ given $(X_0,\dots,X_n)$ only depends on $X_i$.
\end{definition}

\noindent
A  classical example of Hidden Markov models is obtained as follows:\\
 Let  $(\epsilon_n)$ is a sequence of i.i.d.  random variables  on $E$ and  $F(.,.): \mathcal{ X} \times E \rightarrow \mathcal{ Y}$  a given measurable function.
  Then, if $Y_n=F(X_n,\epsilon_n)$,
the bivariate sequence $(X_n,Y_n)$ is a Hidden Markov Model.\\

\noindent
It follows from this definition that $(X_n,Y_n)$ is a Markov chain on $\mathcal{ X}\times \mathcal{ Y}$, while the sequence
$(Y_n)$ is no longer Markov: \\
$\mathcal{ L}(Y_n|Y_0,\dots,Y_{n-1})$ effectively depends on all the past observations.

This is why the inference for parameters ruling $(X_n)$ is difficult and rely on specific  tools  (see e.g. \cite{cap05IV}, \cite{vanh08IV}).

\section{Results for statistics of diffusions processes} \label{Recapdiff}
Inference for diffusion processes observed on a finite time-interval presents some specific properties.  For sake of comprehensiveness, a short recap of classical results for diffusion processes inference is then given.We first present the general framework required for time-dependent diffusions and then detail these results.
(see \cite{kes12IV} for a presentation of available results).

On a probability space $(\Omega, \mathcal{ F},(\mathcal{ F}_t,t\geq 0),\P)$, consider the stochastic differential equation
\begin{equation}\label{defxi}
d\xi_t= b(t,\xi_t) dt +\sigma(t,\xi_t) dB_t, \xi_0=\eta.
\end{equation}

We assume that  $(B_t)$ is a $p$-dimensional Brownian motion, that $b$  and $\sigma$
 satisfy regularity assumptions which ensure the existence and uniqueness of solutions of \eqref{defxi} and that
$\eta $  is $\mathcal{ F}_0$-measurable and that

We detail results on the inference  on  parameters in the drift and diffusion coefficient depending on various kinds of observations  of $(\xi_t,t \in [0,T])$.
For this, let us recall some basic definitions concerning  these processes.
The state space of $(\xi_t, t\leq T)$ is
$ C_T= \{x= (x(t)): [0,T] \rightarrow \R^p \mbox{ continuous}, \mathcal{ C}_T\} $, where $ \mathcal{ C}_T$  denote the Borel filtration associated with the uniform topology.
Denote by $X_t: C_T \rightarrow \R^p ,\;\; X_t(x)=x(t)$.  the coordinate functions defined for $0\leq t\leq T$.
The distribution of  $ \xi^T: (\xi_t,t\in [0,T])$ on $(C_T,\mathcal{ C}_T)$ is denoted by $P_{b,\sigma}^T$ .
\subsection{Continuously observed diffusions on [0,T]}
The distributions $  P_{b,\sigma} P_{b',\sigma'} $ of two diffusion processes having distinct diffusion coefficients are singular.
 Therefore, we assume that $\sigma(\cdot)= \sigma'(\cdot)$. From a statistical point of view, this means that $\sigma(\cdot)$ can be identified from the continuous observation
 of $(\xi_t)$.
%
%
%
Consider the parametric model associated to the diffusion $(\xi_t)$ in $\R^p$:
\begin{equation}\label{Appen-SDEgen}
 d\xi_t= b( \theta,t,\xi_t) dt +\sigma(t,\xi_t) dB_t, \xi_0= x_0.
 \end{equation}
 Define the diffusion matrix $ \Sigma (t,x)= \sigma (t,x)  \sigma^* (t,x).$\\

Consider the estimation of a $q$-dimensional parameter  $\theta \in \Theta$, with $\Theta$ a subset of $\R^q$.
Then, under  conditions ensuring existence and uniqueness of solutions (see e.g.\ \cite{kar00IV}) and additional assumptions for the Girsanov formula (cf.\ \cite{hop14IV}, \cite{lip01IV}) on $ C([0,T],\R^p),\mathcal{ C}_T)$,
\begin{align}
&L_T(\theta) =	\frac{dP^T_{\theta}}{dP^T_{0}}(X)\label{Girsanovgen}\\
&= \exp\left[ \int_0 ^T \Sigma^{-1}(t,X_t) b(\theta;t, X_t) dX_t -\frac{1}{2} \int_0 ^T \;  b^*(\theta;t,X_t)\Sigma^{-1}(t, X_t) b(\theta,t,X_t)dt\right].\nonumber
\end{align}
The statistical model is  $(C_T, \mathcal{ C}_T, (P_{\theta,\sigma}^T,\theta \in \Theta))$.
The loglikelihood is $\ell_T(\theta) = \log L_T(\theta)$.
The Maximum Likelihood Estimator is $\hat{\theta}_T$ s.t.
	 \begin{equation}\label{MLEc}
	\ell_T(\hat{\theta}_T) = \sup\{\ell_T(\theta), \theta \in \Theta \} .
	 \end{equation}
 There is no  general theory for the properties of the MLE as $T \rightarrow \infty$, except in the case of ergodic diffusions.
%

Consider the case of an autonomous diffusion $\xi_t$  satisfying the stochastic differential equation on $\R^p$:
$$d\xi_t= b(\theta,\xi_t) dt +\sigma(\xi_t) dB_t; \;  \xi_0 \simeq \eta.$$
Assume that, for $\theta \in \Theta \in \R^q$, $(\xi_t)$ positive recurrent diffusion  process with stationary distribution $\lambda (\theta,x) dx$ on $ \R^p$.
Then, under assumptions ensuring that the statistical model is regular (see
\cite{ibr81IV} for general results   and  \cite{kut04IV} for ergodic diffusions), then, as $T\rightarrow \infty$,
 the MLE $\hat{\theta}_T$ is  consistent and
 \begin{eqnarray*}
 \sqrt{T}(\hat{\theta}_T-\theta_0)&\overset{\mathcal{ L}} \rightarrow& \mathcal{ N}_k(0,I^{-1}(\theta_0)) \; \mbox{under }\P_{\theta_0} , \mbox { with }\\
I(\theta)&=&
 I(\theta)= \int_{\R^p} \; \nabla_{\theta }b^*(\theta,x) \Sigma^{-1}(x) \nabla_{\theta}b(\theta,x) \lambda(\theta,x) dx.
 \end{eqnarray*}

\subsection{Discrete observations with sampling $\Delta$ on a time interval [0,T]}
Consider the stochastic differential equation \eqref{Appen-SDEgen}, where parameters in the drift are $\alpha$ and in the diffusion coefficient $\beta$.
\begin{equation}\label{SDEgen}
d\xi_t= b( \alpha,t,\xi_t) dt +\sigma(\beta,t,\xi_t) dB_t, \xi_0= x_0.
 \end{equation}
 Let  $ T=n\Delta$ and assume that the observations are obtained at times  $(t_i^n=  i\Delta; i=0,\dots n$) .\\
 The  space of observations is $((\R^p)^n, (\mathcal{ B}(\R^p))^n$. Let $\P_{\alpha,\beta}^n$ denote the distribution of the  $n$-tuple. Contrary to continuous observations, the probabilities $ \P_{\alpha,\beta}^n,\P_{\alpha',\beta'}^n$ are absolutely continuous, leading to a  likelihood $L_n(\alpha,\beta)$ for  the $n$-tuple.  However, it depends on the transition probabilities    $ \P_{\theta}(X(t_{i+1} )\in A | X(t_i)= x)$ of the underlying Markov chain.
The main difficulty here lies in the intractable likelihood. This is a well known problem for discrete observations of diffusion processes.  Alternative approaches based on M-estimators or contrast processes (see \cite{vaa00IV} for i.i.d.\ observations, \cite{kes12IV} for SDE) have to be investigated.\\


Several cases can be considered according to $T$ and $\Delta$  with $T=n\Delta$.\\

\noindent\underline{\textbf{(a) $T\rightarrow \infty$}}. Results are obtained for ergodic diffusions.\\

\noindent 1- \underline{ $\Delta$ fixed}:
 Both parameters in the drift coefficient $\alpha$ and in the diffusion coefficient $\beta$ can be consistently estimated and
(\cite{kes97IV}),
\begin{equation}
 \sqrt{n} \begin{pmatrix} \hat{\alpha}_n - \alpha_0\\
\hat{\beta}_n -\beta_0\end{pmatrix} \rightarrow  \mathcal{ N}(0, I_{\Delta} ^{-1}(\alpha_0,\beta_0).
\end{equation}\\

\noindent 2- \underline{$\Delta= \Delta_n \rightarrow 0 $  and $T= n\Delta_n \rightarrow \infty$  as $n\rightarrow \infty$}.
As $ n \rightarrow \infty$, there is a double asymptotics
 $\Delta_n \rightarrow 0$ and $T=n\Delta_n \rightarrow \infty $. \\
 Both parameters in the drift coefficient $\alpha$ and in the diffusion coefficient $\beta$ can be consistently estimated and the following holds  (see \cite{kes97IV} and \cite{kes00IV}
\begin{enumerate}
\item[-] Parameters in the drift coefficient $\alpha$ are estimated at rate $\sqrt{n\Delta_n}$.
\item[-] Parameters in the diffusion coefficient $\beta$  are estimated at rate $\sqrt{n}$.
\end{enumerate}
\begin{equation}
  \begin{pmatrix}\sqrt{n\Delta_n} (\hat{\alpha}_n - \alpha_0)\\
\sqrt{n}(\hat{\beta}_n -\beta_0)\end{pmatrix}\rightarrow  \mathcal{ N}(0, I ^{-1}(\alpha_0,\beta_0).)
\end{equation}\\

\noindent\underline{\textbf{(b)}  $T= n\Delta_n$ fixed and $\Delta= \Delta_n \rightarrow 0 $ as $n\rightarrow \infty$}.\\
It presents the following properties.
\begin{enumerate}
\item[-]  Except for specific models,  there is no consistent estimators for parameters in the drift.
\item[-]  Parameters in the diffusion coefficient can be consistently estimated and satisfy
   $$ \sqrt{n}(\hat{\beta}_n-\beta_0) \overset{\mathcal{ L}} \rightarrow  Z = \eta \; U, \mbox{ with  }  \eta, U  \mbox{ independent, }  U \sim \mathcal{ N}( 0,I). $$
   The random variable $Z$ is not normally distributed but  Gaussian but  has Mixed variance Gaussian law.
 It corresponds to a Local Asymptotic Mixed Normal statistical model (see \cite{vaa00IV}, \cite{hop14IV} for general references on LAMN; \cite{doh87IV}, \cite{gen93IV}
and \cite{gob01IV} for diffusion processes).
\end{enumerate}

\subsection{Inference for diffusions with small diffusion matrix on  $[0,T]$}

The asymptotic properties of estimators are now studied with respect to the asymptotic framework ``$\epsilon\rightarrow 0$''.
Consider the SDE
$$ d\xi_t= b(\alpha,\xi_t) dt + \epsilon \sigma(\xi_t) dB_t, \xi_0=x_0.$$
Contrary to the previous section, it is possible to estimate parameters in the drift  $\alpha$.\\
For continuous observations on $[0,T]$, Kutoyants (\cite{kut84IV}) has studied the estimation of $ \alpha$ using the likelihood and proved that the MLE is consistent and satisfies
\begin{align}
\epsilon^{-1} ({\hat \alpha}_{\epsilon} - \alpha_0)&\rightarrow  \mathcal{N}(0, I_b^{-1} (\alpha_0))\mbox{ with }\label{IFischkKut}\\
I_b(\alpha)&= \int_0^T (\nabla_{\alpha}b)^*(\alpha,z(\alpha,t))\Sigma ^{-1}( z(\alpha,t))\nabla_{\alpha}b(\alpha,z(\alpha,t))dt.\nonumber
\end{align}
The Fisher information of this statistical model is $I_b( \alpha)$.\\

\noindent
 The statistical inference based on discrete observations of the sample path with sampling interval $\Delta= \Delta_n\rightarrow 0 $ has
 first been studied for one-dimensional diffusions with $\sigma\equiv 1$ (\cite{gen90IV}), and \cite{sor03IV}, \cite{glo09IV} assuming  a parameter
$\beta$ in the diffusion coefficient $\sigma(\beta,x)$.
Under assumptions linking the two asymptotics $\epsilon$ and $n$, \cite{glo09IV}   proved the existence of  consistent and asymptotically Gaussian estimators
$(\tilde{\alpha}_{\epsilon,n},\tilde{\beta}_{\epsilon,n})$ of $(\alpha_0,\beta_0)$, which converge at different rates, parameters in the drift function being estimated at rate $\epsilon ^{-1}$ and parameters in the diffusion coefficient at rate $\sqrt{n}=\Delta_n^{-1/2}$.
\begin{equation}\label{CVR}	
\begin{pmatrix}
	    \epsilon^{-1}(\hat{\alpha}_{\epsilon,n}-\alpha_0)\\
	\sqrt{n}(\hat{\beta}_{\epsilon,n}-\beta_0)
	   \end{pmatrix}
	\underset{n\rightarrow \infty, \epsilon \rightarrow 0}
{\longrightarrow}
	\mathcal{N} \left(0,\begin{pmatrix} I^{-1}_b(\alpha_0,\beta_0)&0\\0&
	I^{-1}_\sigma(\alpha_0,\beta_0)\end{pmatrix}\right) .
\end{equation}
The matrix $I_b$ is the matrix \eqref{IFischkKut} and the matrix  $I_{\sigma}$ is
\begin{align}
	&I_{\sigma}(\alpha,\beta)_{ij}= \label{Isigma2}\\
&\left( \frac{1}{2T}\int_0^T \mathrm{Tr}(\nabla_{\beta_i}\Sigma(\beta,s,z(\alpha,s)) \Sigma^{-1}(\beta,s,z(\alpha,s))\nabla_{\beta_j}\Sigma(\beta,s,z(\alpha,s)) ds \right),\nonumber
\end{align}
 where $I_{b}(\alpha_0,\beta_0)$ and $I_{\sigma}(\alpha_0,\beta_0)$ are assumed invertible.

\section{Some limit theorems for martingales and triangular arrays}\label{LimitTheo}

\subsection{Central limit theorems for  martingales}

This  Central Limit Theorem for martingales in $\R$  is stated  in \cite{hal80IV}.\\

Let  $M_n= \sum_{i=1}^n X_i$ and  $\langle M\rangle_n= \sum_{i=1}^nE(X_{i}^2/\mathcal{ F}_{i-1})$.
Set  $s_n^2= EM_n^2=E\langle M\rangle_n$.
\begin{theorem}\label{TCLMart1}
Assume that the sequence $(M_n)$ of $L^2$ centered martingales satisfy that, as $n\rightarrow \infty$,  $s_n^2 \rightarrow \infty$ and
\begin{enumerate}[(H2):]
\item[\textbf{ (H1)}:] $\forall \epsilon>0, \frac{1}{s_n^2}\sum_{i=1}^n E(X_i^{2} {\mathbf 1}_{|X_i|\geq s_n\epsilon}|\mathcal{ F}_{i-1}) \rightarrow 0 $ in probability.
\item[\textbf{ (H2)}:] $\frac{1}{s_n^2}\langle M\rangle_n \rightarrow \eta^2 $ in probability ($\eta$ is an r.v.\ such that, if $\eta^2< \infty$, $E\eta^2=1$).
\end{enumerate}
Then $(\frac{M_n}{s_n}, \frac{\langle M\rangle_n}{s_n^2}) \rightarrow_\mathcal{ L} (\eta\; N, \eta^2)$
with $\eta, N$ independent r.v.s, $N \sim \mathcal{ N}(0,1)$.
\end{theorem}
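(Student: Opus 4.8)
The plan is to follow the classical characteristic-function argument for martingale arrays, as in \cite{hal80IV}, and to extract the \emph{stable} convergence that is needed to obtain the joint limit with $\eta$ and $N$ independent. First I would recast everything in terms of the normalised increments $X_{ni}=X_i/s_n$, $1\le i\le n$, which are martingale differences for the fixed filtration $(\mathcal{F}_i)$. With this normalisation $M_n/s_n=\sum_{i=1}^n X_{ni}$, while $\langle M\rangle_n/s_n^2=\sum_{i=1}^n E(X_{ni}^2\mid\mathcal{F}_{i-1})\to^{P}\eta^2$ is exactly hypothesis \textbf{(H2)}, and \textbf{(H1)} becomes the Lindeberg condition $\sum_i E(X_{ni}^2\,\mathbf{1}_{|X_{ni}|\ge\epsilon}\mid\mathcal{F}_{i-1})\to^{P}0$ for every $\epsilon>0$.

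Second, I would derive asymptotic negligibility from the Lindeberg condition: \textbf{(H1)} gives $\max_{i\le n}|X_{ni}|\to^{P}0$ and $\sum_i E(X_{ni}^2\,\mathbf{1}_{|X_{ni}|\ge\epsilon})\to 0$, which allows truncating the increments at a fixed level without affecting the limit. This is the technical backbone making the subsequent Taylor expansions legitimate. The core estimate then introduces the conditional characteristic functions $\phi_{ni}(t)=E[\exp(itX_{ni})\mid\mathcal{F}_{i-1}]$ and the complex martingale $\prod_{i\le k}\exp(itX_{ni})/\phi_{ni}(t)$, which has mean one. Using the expansion $\exp(itX_{ni})=1+itX_{ni}-\tfrac12 t^2X_{ni}^2+\text{(remainder)}$ together with negligibility and the Lindeberg bound on the remainder, I would show $\prod_{i\le n}\phi_{ni}(t)=\exp\!\big(-\tfrac12 t^2\,\langle M\rangle_n/s_n^2+o_P(1)\big)$. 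Combined with the mean-one property and \textbf{(H2)}, this yields, for every bounded $\mathcal{F}_m$-measurable $Z$ and every real $t$, the stable convergence $E[\exp(itM_n/s_n)\,Z]\to E[\exp(-\tfrac12 t^2\eta^2)\,Z]$.

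Finally I would identify the limit and assemble the joint statement. The displayed stable limit says $M_n/s_n$ converges $\mathcal{F}$-stably to a variable whose conditional characteristic function given the limiting $\sigma$-field is $\exp(-\tfrac12 t^2\eta^2)$, which is precisely the law of $\eta N$ with $N\sim\mathcal N(0,1)$ independent of $\eta$ (the normalisation $E\eta^2=1$ ensuring $N$ is standard on $\{\eta^2<\infty\}$). Because stable convergence upgrades to joint convergence against any random variable measurable with respect to the limiting field, and $\langle M\rangle_n/s_n^2\to^{P}\eta^2$, I can pair the two coordinates and conclude $(M_n/s_n,\langle M\rangle_n/s_n^2)\to_{\mathcal L}(\eta N,\eta^2)$.

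The main obstacle I anticipate is the passage from the marginal statement $E[\exp(itM_n/s_n)]\to E[\exp(-\tfrac12 t^2\eta^2)]$ to the \emph{stable} version required for the independence of $\eta$ and $N$; controlling $\prod_i\phi_{ni}(t)$ with enough uniformity—keeping $\phi_{ni}(t)$ bounded away from $0$ after truncation and showing the remainder in the logarithm is $o_P(1)$—is where the genuine care lies, the remaining steps being routine once negligibility is in hand.
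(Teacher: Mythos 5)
The paper itself offers no proof of this theorem: it is recalled in the Appendix as a known result and attributed to \cite{hal80IV}, and your sketch reproduces precisely the classical argument from that reference (normalised differences, truncation/negligibility, the mean-one product martingale built from conditional characteristic functions $\phi_{ni}(t)$, and the upgrade to stable convergence, which is what yields the independence of $\eta$ and $N$ and the joint limit with $\langle M\rangle_n/s_n^2$). Your outline is correct, and you rightly locate the genuine technical work in the localization step—stopping-time truncation so that the in-probability hypotheses (H1)--(H2) can be used inside expectations and the $\phi_{ni}(t)$ stay bounded away from zero—so the proposal matches the approach the paper relies on.
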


\noindent
Note that  $Z= \eta N $ satisfies $E(\exp(iuZ))= E(\exp(-u^2\eta ^2/2))$.\\

\noindent
The Lindberg condition (H1) is often replaced by the stronger assumption:
\begin{enumerate}[(H1b):]
\item[\textbf{ (H1b)}:] $\exists \;\delta>0 ,\;\frac{1}{s_n^{2+\delta}} \sum_{i=1}^n E(|X_i|^{2+\delta}|\mathcal{ F}_{i-1}) \rightarrow 0 $ in probability.
\end{enumerate}

If the dimension of the parameter  is $q$, the  score function $\nabla_{\theta}\ell_n(\theta_0)$ is a $\P_{\theta_0}$-martingale in $\R^q$.
 So we need theorems for multidimensional martingales in $\R^q$.\\

\noindent
Let $(M_n)$ be a sequence of random variables in $\R^q$ with $  M_n^*=(M_n^1,\dots, M_n^q)$.
Then $(M_n)$
is a $\mathcal{ F}_n$-martingale if $(M_n^p)$ is a $\mathcal{ F}_n$-martingale for $p=1,\dots q$. \\

\noindent
Assume that $(M_n)$ is a centered $L^2$-martingale in $\R^q$ and set  $X_i= M_i-M_{i-1}$ with $X_i^*= (X_i^1,\dots,X_i^q)$.\\
 Then  the increasing process $\langle M\rangle_n$ is the $q\times q$ random matrix defined by $  \langle M\rangle_0= 0$ and
  $\langle M\rangle_n-\langle M\rangle_{n-1}= E(X_n\;   X_n^* |\mathcal{ F}_{n-1})=
 \big( E(X_n^pX_n^l|\mathcal{ F}_{n-1})\big)_{1\leq p,l\leq q}$.\\
Hence, for $1\leq p$, $l\leq q$,  $\langle M\rangle_n^{pl}= \sum_{i=1}^n E(X_i^p \;X_i^l|\mathcal{ F}_{i-1})$.\\

This theorem is derived from a convergence theorem  for triangular arrays stated in  \cite{jac12IV}.\\
For each $p$, assume that $ \E(\langle M_n^p\rangle)= (s_n^p) ^2 \rightarrow \infty $ and define
 $$\zeta_i^{n,p}=\frac{X_i^p}{s_n^p} \quad \mbox{and  } (\zeta_i^{n})^*= (\zeta_i^{n,1}, \dots \zeta_i^{n,q}).$$
\begin{theorem}\label{TCLMartmult}
Assume that there exists a  positive random matrix $\Gamma$  such that, as $n\rightarrow \infty$,
\begin{enumerate}[(H2):]
\item[\textbf{ (H1):}]  $\sum_{i=1}^n \E(\zeta_i^{n}(\zeta_i^{n})^*|\mathcal{ F}_{i-1})  \rightarrow  \Gamma$  in probability.
\item[\textbf{ (H2):}] There exists $\delta>0$, $\sum_{i=1}^n E(\norm{\zeta_i^n} ^{2+\delta}|\mathcal{ F}_{i-1} ) \rightarrow 0 $ in probability.
\end{enumerate}
Then the following holds
\[\left(\sum_{i=1}^n \zeta_i^n, \sum_{i=1}^n \E(\zeta_i^n (\zeta^n_i)^* |\mathcal{ F} _{i-1}\right) \overset{\mathcal{ L} } \rightarrow
\left(\Gamma ^{1/2}N_q\;,   \Gamma \right)\]
with    $N_q \sim \mathcal{ N}_q(0, I)$ and  $\Gamma, N_q$   independent.
\end{theorem}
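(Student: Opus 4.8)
The plan is to reduce the multidimensional convergence to the scalar martingale central limit theorem already recorded as Theorem \ref{TCLMart1} by means of the Cram\'er--Wold device, and then to recover the joint limit through characteristic functions. First I would fix $u\in\R^q$ and consider the scalar array $u^{*}\zeta_i^n$, $1\le i\le n$. Since $(M_n)$ is a centered martingale, $\E(X_i\mid\mathcal{F}_{i-1})=0$, hence $\E(u^{*}\zeta_i^n\mid\mathcal{F}_{i-1})=0$, so that $S_n^u:=u^{*}\sum_{i=1}^n\zeta_i^n=\sum_{i=1}^n u^{*}\zeta_i^n$ is a centered, already normalized, scalar martingale (in triangular-array form, for which Theorem \ref{TCLMart1} holds equally).

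Second, I would verify the two hypotheses of the scalar theorem for $S_n^u$. The conditional-variance condition follows from \textbf{(H1)}:
$$\sum_{i=1}^n\E\bigl((u^{*}\zeta_i^n)^2\mid\mathcal{F}_{i-1}\bigr)=u^{*}\Bigl(\sum_{i=1}^n\E(\zeta_i^n(\zeta_i^n)^{*}\mid\mathcal{F}_{i-1})\Bigr)u\;\longrightarrow\;u^{*}\Gamma u$$
in probability, which identifies the limiting conditional variance $(\eta^u)^2=u^{*}\Gamma u$. The negligibility condition comes from \textbf{(H2)} via Cauchy--Schwarz, $|u^{*}\zeta_i^n|\le\|u\|\,\|\zeta_i^n\|$, whence
$$\sum_{i=1}^n\E\bigl(|u^{*}\zeta_i^n|^{2+\delta}\mid\mathcal{F}_{i-1}\bigr)\le\|u\|^{2+\delta}\sum_{i=1}^n\E\bigl(\|\zeta_i^n\|^{2+\delta}\mid\mathcal{F}_{i-1}\bigr)\longrightarrow 0,$$
which is the Lyapunov form \textbf{(H1b)} and therefore implies the Lindeberg condition \textbf{(H1)} of the scalar theorem.

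Third, Theorem \ref{TCLMart1} then gives $S_n^u\to\eta^u N$ with $N\sim\mathcal{N}(0,1)$ independent of $\eta^u=(u^{*}\Gamma u)^{1/2}$; this built-in independence is precisely the stable/mixing content I want to propagate to the vector limit. Passing to characteristic functions,
$$\E\bigl(e^{iS_n^u}\bigr)\longrightarrow\E\bigl(e^{i\eta^u N}\bigr)=\E\bigl(e^{-(\eta^u)^2/2}\bigr)=\E\bigl(e^{-\frac12 u^{*}\Gamma u}\bigr),$$
and the right-hand side is exactly the value at $u$ of the characteristic function of $\Gamma^{1/2}N_q$ with $N_q\sim\mathcal{N}_q(0,I)$ independent of $\Gamma$. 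By Cram\'er--Wold together with L\'evy continuity, $\sum_{i=1}^n\zeta_i^n\to\Gamma^{1/2}N_q$ in distribution. For the joint statement I would observe that by \textbf{(H1)} the bracket $\sum_i\E(\zeta_i^n(\zeta_i^n)^{*}\mid\mathcal{F}_{i-1})\to\Gamma$ in probability, and combine this with the stable convergence above by showing convergence of the joint characteristic function $\E\exp\bigl(i u^{*}\sum_i\zeta_i^n+i\,\mathrm{Tr}(V\sum_i\E(\zeta_i^n(\zeta_i^n)^{*}\mid\mathcal{F}_{i-1}))\bigr)$, evaluated on test matrices $V$, using that the second argument converges in probability to $\Gamma$.

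The routine parts are the Cram\'er--Wold reduction and the deduction of Lindeberg from the Lyapunov bound. The delicate point, and the main obstacle, is the random limiting matrix $\Gamma$: one must establish that the limiting Gaussian $N_q$ is independent of $\Gamma$ and that the convergence holds jointly with the normalized bracket. This is genuinely a statement of stable (mixing) convergence rather than plain weak convergence, and the cleanest route is to carry the factorization of the limiting characteristic function over the conditioning on $\Gamma$ — furnished in one dimension by the independent pair $(\eta,N)$ of Theorem \ref{TCLMart1} — through the Cram\'er--Wold step. This is exactly why Theorem \ref{TCLMart1} is stated with the joint conclusion $(M_n/s_n,\langle M\rangle_n/s_n^2)\to(\eta N,\eta^2)$ and not merely $M_n/s_n\to\eta N$.
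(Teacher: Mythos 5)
Your Cram\'er--Wold reduction is sound for the \emph{marginal} statement: applying Theorem \ref{TCLMart1} to the scalar arrays $u^{*}\zeta_i^n$ (your verification of the Lyapunov bound from \textbf{(H2)} and of the quadratic-form convergence from \textbf{(H1)} is correct), and the characteristic-function computation $\E(e^{iS_n^u})\rightarrow \E(e^{-\frac12 u^{*}\Gamma u})$ does identify the mixture law, so $Z_n:=\sum_i\zeta_i^n\rightarrow \Gamma^{1/2}N_q$ in distribution. Note also that the paper offers no proof of this theorem at all: it derives it by citing the triangular-array convergence theorem of \cite{jac12IV}, so your route is necessarily a different one.

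The genuine gap is in the joint statement. From $Z_n\Rightarrow\Gamma^{1/2}N_q$ (a statement about the limit \emph{law} only) together with $B_n:=\sum_i\E(\zeta_i^n(\zeta_i^n)^{*}\mid\mathcal{F}_{i-1})\rightarrow\Gamma$ in probability, one cannot deduce convergence of the pair $(Z_n,B_n)$: the Slutsky-type step you invoke (``using that the second argument converges in probability to $\Gamma$'') is valid only when the in-probability limit is a \emph{constant}. When $\Gamma$ is random, your joint characteristic function reduces to $\E[\exp(iu^{*}Z_n)\exp(i\,\mathrm{Tr}(V\Gamma))]$, which depends on the asymptotic dependence between $Z_n$ and $\Gamma$; plain weak convergence of $Z_n$ does not control this (two sequences with the same marginal mixture limit can have different joint limits with $\Gamma$ -- this is exactly the distinction between weak and stable convergence). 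What Theorem \ref{TCLMart1} gives you, per direction $u$, is joint convergence of $(u^{*}Z_n,\,u^{*}B_nu)$, i.e.\ stability tested only against the single quadratic form $u^{*}\Gamma u$; but the Cram\'er--Wold reduction of the claimed pair convergence requires $u^{*}Z_n$ jointly with $\mathrm{Tr}(VB_n)$ for \emph{arbitrary} symmetric $V$, equivalently with all quadratic forms $v^{*}\Gamma v$, $v\in\R^q$, and separate applications of the scalar theorem in different directions give no control of joint laws across directions. So the final step as written would fail. To close it you must establish genuine stable convergence: either reprove the scalar martingale CLT in its stable form ($\E[We^{itS_n^u}]\rightarrow\E[We^{-t^2(\eta^u)^2/2}]$ for every bounded random variable $W$, as in \cite{hal80IV}, Theorem 3.2), after which your characteristic-function argument goes through with $W=e^{i\,\mathrm{Tr}(V\Gamma)}$, or invoke directly the stable triangular-array theorem of \cite{jac12IV}, which is what the paper does.
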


\noindent
 Here again, if $Z= \Gamma^{1/2} N_q  $, then, for $u \in \R^q $,
$E(\exp(iuZ))= E(\exp(-\frac{u^* \Gamma u}{2}))$.\\

\noindent

\subsection{Limit theorems for triangular arrays}\label{sect:appenCh5}
When dealing with discrete observations with small sampling interval, classical limit theorems for martingales can no longer be used since the $\sigma$-algebras
$\mathcal{ G}_k^n= \sigma(Z(s),s\leq k/n) $ do not satisfy the nesting property.   We need general theorems for triangular arrays as  stated in \cite{jac12IV}.

\subsubsection{Recap on triangular arrays}
Let $(\Omega,\mathcal{ F}, (\mathcal{ F}_{t}, t\geq 0),\P)$ be a filtered probability space satisfying the usual conditions.
Assume that for each $n$, there is a strictly increasing sequence  $(T(n,k),k\geq 0)$ of finite $(\mathcal{ F}_t)$-stopping times with limit  $+\infty$ and $T(n,0)=0$.
The stopping rule is defined as
$$N_n(t)= \sup\{ k, T(n,k)\leq t \}  = \sum_{k\geq 1}1_{T(n,k)\leq t}.$$
 A $q$-dimensional triangular array is a double sequence $(\zeta_k^n), n,k \geq 1)$ of $q$-dimensional variables $\zeta_k^n=(\zeta_k^{n,j})_{1 \leq j \leq q}$.
 such that each $\zeta_k^n$ is $\mathcal{ F}_{T(n,k)} $-measurable.

 We consider the behavior of the sums
$$ S_t^n= \sum_{k=1}^{N_n(t)} \;\zeta_k^n.$$

  The triangular array is asymptotically negligible (A.N.) if \\
  $$\sum_{k=1}^{N_n(t)} \zeta_k^n \overset{u.c.p.}\rightarrow 0 \quad  \mbox {i.e.\  }
 \sup_{s \leq t} |\sum _{k=1}^{N_n(s)} \zeta_k^n | \overset{\P} \rightarrow 0.$$

 In the sequel, we assume that the  $ T(n,k)$ are non-random and set  $ \mathcal{ G}_k^n= \mathcal{ F}_{T(n,k)}$.\\
 The example we have in mind  consists in the deterministic  times
 \begin{equation}\label{exNnt}
  T(n,k)= \inf \{t,  [nt]  \geq k \Delta \}   \Rightarrow   N_n(t)= \sup\{ k,\; \frac{k \Delta}{n} \leq t\}.
 \end{equation}
Triangular arrays often occur as follows: $\zeta_k^n$ may be a function of the increment $Y_{T(n,k)}- Y_{T(n,k-1)}$ for some underlying adapted c\`adl\`ag  process $Y$. For discretely observed diffusion processs, we have $\zeta_k^n= X(k\Delta/n)- X((k-1)\Delta/n).$
 We first state a lemma proved in \cite{gen93IV}.
\begin{lemma} \label{VGCJJ}
Let $\zeta_k^n, U$ be random variables with $\zeta_k^n$  being $\mathcal{ G}_k^n$-measurable. Assume that
\begin{enumerate}
\item[(i)] $\sum_{k=1}^n  \E(\zeta_k^n |\mathcal{ G}_{k-1}^n) \rightarrow U$  in  $\P$-probability,
\item[(ii)] $\sum_{k=1}^n  \E[(\zeta_k^n)^2 |\mathcal{ G}_{k-1}^n)] \rightarrow 0 $  in  $\P$-probability,
\end{enumerate}
Then
 $$ \sum_{k=1}^n \zeta_k^n  \rightarrow U \quad \mbox {in  $\P$-probability}.$$
\end{lemma}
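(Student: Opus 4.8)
The plan is to split each increment into its $\mathcal{ G}_{k-1}^n$-conditional mean and a martingale-difference remainder, dispose of the predictable part using hypothesis (i), and control the remainder by an $L^2$ estimate that is localized by a stopping time, so that only the convergence-in-probability furnished by (ii) is needed.

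First I would set $\xi_k^n = \zeta_k^n - \E(\zeta_k^n \mid \mathcal{ G}_{k-1}^n)$. For \emph{fixed} $n$ the times $T(n,k)$ increase with $k$, so the $\sigma$-algebras $\mathcal{ G}_k^n = \mathcal{ F}_{T(n,k)}$ form a filtration in the index $k$, and $(\xi_k^n)_k$ is a martingale-difference array: $\E(\xi_k^n \mid \mathcal{ G}_{k-1}^n)=0$ while each $\xi_k^n$ is $\mathcal{ G}_k^n$-measurable. Then
\[ \sum_{k=1}^n \zeta_k^n = \sum_{k=1}^n \E(\zeta_k^n \mid \mathcal{ G}_{k-1}^n) + \sum_{k=1}^n \xi_k^n, \]
and by hypothesis (i) the first sum tends to $U$ in $\P$-probability. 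It remains to prove that $S_n := \sum_{k=1}^n \xi_k^n \to 0$ in $\P$-probability.

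The hard part is precisely this last step: since (ii) only gives convergence \emph{in probability} of the conditional variances, one cannot simply pass to expectations in $\E[S_n^2] = \sum_k \E[(\xi_k^n)^2]$, there being no uniform integrability at hand. I would circumvent this with a stopping-time localization. Write $\tilde V_m^n = \sum_{k=1}^m \E[(\xi_k^n)^2\mid \mathcal{ G}_{k-1}^n]$, which is $\mathcal{ G}_{m-1}^n$-measurable and nondecreasing, and note $\tilde V_m^n \le V_m^n := \sum_{k=1}^m \E[(\zeta_k^n)^2\mid \mathcal{ G}_{k-1}^n]$, because $\E[(\xi_k^n)^2\mid \mathcal{ G}_{k-1}^n] = \Var(\zeta_k^n \mid \mathcal{ G}_{k-1}^n) \le \E[(\zeta_k^n)^2 \mid \mathcal{ G}_{k-1}^n]$. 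Fix $\eta>0$ and define the $(\mathcal{ G}_m^n)_m$-stopping time $\tau = \inf\{m\ge 1 : \tilde V_{m+1}^n > \eta\}$; it is a stopping time since $\{\tau\le m\}=\{\tilde V_{m+1}^n>\eta\}\in \mathcal{ G}_m^n$, and by monotonicity of $\tilde V$ one checks $\tilde V_{n\wedge\tau}^n \le \eta$ on all of $\Omega$. The stopped sum $S_{n\wedge\tau}$ is a martingale, so by orthogonality of the martingale differences and optional stopping,
\[ \E\big[S_{n\wedge\tau}^2\big] = \E\Big[\sum_{k=1}^{n\wedge\tau} \E[(\xi_k^n)^2 \mid \mathcal{ G}_{k-1}^n]\Big] = \E\big[\tilde V_{n\wedge\tau}^n\big] \le \eta. \]

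Finally I would assemble the estimate. Chebyshev gives $\P(|S_{n\wedge\tau}|>\epsilon)\le \eta/\epsilon^2$; moreover $\{\tau \ge n\}=\{\tilde V_n^n \le \eta\}$, and on this event $S_{n\wedge\tau}=S_n$, whence
\[ \P(|S_n|>\epsilon) \le \P(|S_{n\wedge\tau}|>\epsilon) + \P(\tau<n) \le \frac{\eta}{\epsilon^2} + \P(\tilde V_n^n > \eta) \le \frac{\eta}{\epsilon^2} + \P(V_n^n > \eta). \]
Hypothesis (ii) states $V_n^n \to 0$ in $\P$-probability, so $\P(V_n^n>\eta)\to 0$; letting $n\to\infty$ and then $\eta\downarrow 0$ yields $\P(|S_n|>\epsilon)\to 0$ for every $\epsilon>0$. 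Together with the convergence of the predictable part this gives $\sum_{k=1}^n \zeta_k^n \to U$ in $\P$-probability, as claimed. The only points demanding care are verifying that $\tau$ is genuinely a stopping time and that $\tilde V_{n\wedge\tau}^n \le \eta$ holds identically (both following from the $\mathcal{ G}_{m-1}^n$-measurability and monotonicity of $\tilde V_m^n$), which are routine.
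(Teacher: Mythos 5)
Your overall strategy is sound, and it is in fact a genuinely different route from the paper's, by necessity: the paper offers no proof of this lemma at all, merely citing \cite{gen93IV}. A self-contained argument via centering ($\zeta_k^n=\E(\zeta_k^n\mid\mathcal{G}_{k-1}^n)+\xi_k^n$) followed by a Lenglart-type stopping-time localization of the martingale part is exactly the standard way to prove such triangular-array statements, and your reduction to showing $S_n=\sum_k\xi_k^n\to 0$ in probability, the comparison $\E[(\xi_k^n)^2\mid\mathcal{G}_{k-1}^n]\le\E[(\zeta_k^n)^2\mid\mathcal{G}_{k-1}^n]$, and the final Chebyshev-plus-union-bound assembly are all correct.

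There is, however, one step that fails as written. With your definition $\tau=\inf\{m\ge 1:\tilde V_{m+1}^n>\eta\}$, the claim that $\tilde V_{n\wedge\tau}^n\le\eta$ holds on all of $\Omega$ is false: on the event $\{\tilde V_1^n>\eta\}$, monotonicity forces $\tau=1$, the stopped sum still contains the first increment, and $\tilde V_{n\wedge\tau}^n=\tilde V_1^n>\eta$. Worse, since nothing controls $\E[(\xi_1^n)^2\mid\mathcal{G}_0^n]$ on that event, $\xi_1^n$ need not even be square integrable there, so $\E[S_{n\wedge\tau}^2]$ may be infinite and the optional-stopping identity you invoke breaks down. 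The root cause is that your stopping rule cannot stop \emph{before} the first step, whereas the whole point of the localization is to discard increments whose predictable variance is already too large. The fix is immediate: set $\tilde V_0^n=0$, $S_0=0$, and define $\tau=\inf\{m\ge 0:\tilde V_{m+1}^n>\eta\}$, so that $\tau=0$ (an empty stopped sum) is permitted. One still has $\{\tau\le m\}=\{\tilde V_{m+1}^n>\eta\}\in\mathcal{G}_m^n$, while now on $\{\tau\ge k\}$ the bound $\tilde V_k^n\le\eta$ holds for every $k\ge 1$; hence each stopped increment $\xi_k^n\ind_{\{\tau\ge k\}}$ is square integrable, $\E[S_{n\wedge\tau}^2]=\E[\tilde V_{n\wedge\tau}^n]\le\eta$, and $\{\tau<n\}=\{\tilde V_n^n>\eta\}$ is unchanged. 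With this single correction the rest of your argument goes through verbatim.
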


\begin{corollary}
 Let $\zeta_k^n, U$ be $d$-dimensional random variables with $\zeta_k^n$  being $\mathcal{ G}_k^n$-measurable. Assume
 \begin{enumerate}
\item[(i)] $\sum_{k=1}^n  \E(\zeta_k^n |\mathcal{ G}_{k-1}^n) \rightarrow U$  in  $\P$-probability,
\item[(ii)] $\sum_{k=1}^n  \E[\norm{\zeta_k^n }^2  |\mathcal{ G}_{k-1}^n)] \rightarrow 0 $  in  $\P$-probability,
\end{enumerate}
Then
 $$ \sum_{k=1}^n \zeta_k^n  \rightarrow U \quad \mbox {in  $\P$-probability}.$$
\end{corollary}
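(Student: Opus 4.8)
The plan is to deduce the $d$-dimensional statement from the scalar Lemma~\ref{VGCJJ} by treating each coordinate separately. Write $\zeta_k^n = (\zeta_k^{n,1},\dots,\zeta_k^{n,d})$ and $U = (U_1,\dots,U_d)$, and recall that convergence in $\P$-probability in $\R^d$ is equivalent to coordinatewise convergence in $\P$-probability, since $\norm{x}$ and $\sum_{j=1}^d |x_j|$ are equivalent norms on $\R^d$. Hence it suffices to prove that, for each fixed $j \in \{1,\dots,d\}$, the scalar sum $\sum_{k=1}^n \zeta_k^{n,j}$ converges to $U_j$ in $\P$-probability.

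First I would fix such a $j$ and check that the scalar array $(\zeta_k^{n,j})$ satisfies the two hypotheses of Lemma~\ref{VGCJJ} with limit $U_j$. Each $\zeta_k^{n,j}$ is $\mathcal{G}_k^n$-measurable because $\zeta_k^n$ is. Condition (i) of the corollary states that the $\R^d$-valued sum $\sum_{k=1}^n \E(\zeta_k^n\mid \mathcal{G}_{k-1}^n)$ tends to $U$; reading this coordinatewise gives $\sum_{k=1}^n \E(\zeta_k^{n,j}\mid \mathcal{G}_{k-1}^n) \to U_j$ in probability, which is exactly hypothesis (i) of the lemma for the $j$-th array.

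The only point that requires a small argument is the transfer of condition (ii). Since $(\zeta_k^{n,j})^2 \le \norm{\zeta_k^n}^2$ pointwise, monotonicity of the conditional expectation yields $0 \le \sum_{k=1}^n \E[(\zeta_k^{n,j})^2\mid \mathcal{G}_{k-1}^n] \le \sum_{k=1}^n \E[\norm{\zeta_k^n}^2\mid \mathcal{G}_{k-1}^n]$. By assumption (ii) of the corollary the right-hand side tends to $0$ in $\P$-probability, so the nonnegative left-hand side does as well, giving hypothesis (ii) of Lemma~\ref{VGCJJ}. An application of the lemma then produces $\sum_{k=1}^n \zeta_k^{n,j} \to U_j$ in probability. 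Carrying this out for every $j$ and recombining the coordinates via the norm equivalence noted above yields $\sum_{k=1}^n \zeta_k^n \to U$ in $\P$-probability. There is no genuine obstacle here: the whole content is the scalar lemma, and the only care needed is the domination step ensuring the second-moment hypothesis descends to each component.
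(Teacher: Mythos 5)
Your proof is correct and follows exactly the route the paper intends: the corollary is stated immediately after Lemma~\ref{VGCJJ} without proof, as the coordinatewise consequence of the scalar lemma. Your two observations --- the domination $(\zeta_k^{n,j})^2 \le \norm{\zeta_k^n}^2$ passing hypothesis (ii) to each component, and the equivalence of coordinatewise and joint convergence in $\P$-probability on $\R^d$ --- supply precisely the details the paper leaves implicit.
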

\subsubsection{Convergence in law of triangular arrays}
Let  $(\zeta_k^n) $ be a triangular array of $d$-dimensional random variables such that  $\zeta_k^n$ is $\mathcal{ G}_k^n$-measurable.
\begin{theorem} Assume that $(\zeta_k^n)$ satisfy for $N_n(t)$  defined in \eqref{exNnt}
\begin{enumerate}
\item[(i)] $ \sum_{k=1}^{N_n(t)}   \E(\zeta_k^n|\mathcal{ G}_{k-1}^n) \overset{u.c.p.} \rightarrow A_t $ with  $A$ an $\R^d$-valued deterministic function.
\item[(ii)] $ \sum_{k=1}^{N_n(t)} \E(\zeta_k^{n,i} \zeta_k^{n,j} |\mathcal{ G}_{k-1}^n)-  \E(\zeta_k^{n,i} |\mathcal{ G}_{k-1}^n) \E(\zeta_k^{n,j}|\mathcal{ G}_{k-1}^n)
\overset{\P} \rightarrow C_t^{ij}  $ for $1\leq i,j\leq d$ and for all $t\geq 0$,
where $C=(C^{ij})$ is a deterministic  continuous $\mathcal{ M}^{+} _{d\times d} $-valued function.
\item[(iii)] For some $p >2$,   $\sum_{k=1}^{N_n(t)}  \E(\norm{\zeta_k^n}^p|\mathcal{ G}_{k-1}^n) \overset{\P} \rightarrow 0$.
\end{enumerate}
Then, we have
\begin{equation} \label{CLTTT}
\sum_{k=1}^{N_n(t)}  \zeta_k^n  \overset{\mathcal{ L}} \rightarrow A+Y, \mbox{ w.r.t.\  the Skorokhod topology,}
\end{equation}
where $Y$ is a continuous centered Gaussian process on $\R^d$ with independent increments s.t.  $ \E(Y_t^i Y_t^j)= C_t^{ij}$.
\end{theorem}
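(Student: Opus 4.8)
The plan is to follow the classical route for a functional central limit theorem for martingale difference arrays, the only delicate point being that the filtrations $(\mathcal{G}_k^n)_k$ change with $n$, so the ordinary martingale statements (Theorem~\ref{TCLMartmult}) cannot be applied directly and one must work within the triangular array framework of \cite{jac12IV}. First I would center the array: writing $a_k^n=\E(\zeta_k^n\mid\mathcal{G}_{k-1}^n)$ and $\bar\zeta_k^n=\zeta_k^n-a_k^n$, one has the decomposition
\[
S_t^n:=\sum_{k=1}^{N_n(t)}\zeta_k^n = A_t^n+M_t^n,\qquad A_t^n=\sum_{k=1}^{N_n(t)}a_k^n,\quad M_t^n=\sum_{k=1}^{N_n(t)}\bar\zeta_k^n .
\]
Hypothesis (i) gives immediately $A^n\to A$ uniformly on compacts in probability, with $A$ deterministic and continuous. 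It therefore remains to analyse the centered part $M^n$ and to recombine at the end.

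Second, I would treat $M^n$ as a sum of martingale increments: for each fixed $n$ the variables $(\bar\zeta_k^n)_k$ satisfy $\E(\bar\zeta_k^n\mid\mathcal{G}_{k-1}^n)=0$ by construction, so $(M^n_t)$ is, for that $n$, a discrete-time martingale with respect to the nested filtration $(\mathcal{G}_k^n)_k$. Its conditional covariance is exactly the quantity appearing in (ii), since
\[
\E(\bar\zeta_k^{n,i}\bar\zeta_k^{n,j}\mid\mathcal{G}_{k-1}^n)=\E(\zeta_k^{n,i}\zeta_k^{n,j}\mid\mathcal{G}_{k-1}^n)-a_k^{n,i}a_k^{n,j},
\]
whence the predictable quadratic variation of $M^n$ converges in probability to the deterministic continuous limit $C_t^{ij}$. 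The negligibility (Lindeberg) requirement is then supplied by the Lyapunov condition (iii): for every $\varepsilon>0$,
\[
\sum_{k=1}^{N_n(t)}\E\bigl(\norm{\bar\zeta_k^n}^2\mathbf{1}_{\{\norm{\bar\zeta_k^n}>\varepsilon\}}\mid\mathcal{G}_{k-1}^n\bigr)\le \varepsilon^{2-p}\sum_{k=1}^{N_n(t)}\E\bigl(\norm{\bar\zeta_k^n}^p\mid\mathcal{G}_{k-1}^n\bigr)\longrightarrow 0,
\]
using that centering inflates the conditional $L^p$ norms by at most a universal constant. This condition simultaneously forces each individual increment to be asymptotically negligible, so that any limit process has no fixed discontinuities.

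Third, with the conditional covariances converging to the continuous deterministic $C$ and the conditional Lindeberg condition verified, I would invoke the functional martingale central limit theorem for triangular arrays (the version in \cite{jac12IV}, of which Lemma~\ref{VGCJJ} is the degenerate companion) to conclude $M^n\Rightarrow Y$ in the Skorokhod topology, where $Y$ is the continuous centered Gaussian process with independent increments and $\E(Y_t^iY_t^j)=C_t^{ij}$; the independence and Gaussianity of the increments follow from the determinism and continuity of the limiting bracket. Finally, combining the u.c.p.\ convergence $A^n\to A$ (to a deterministic continuous limit) with the weak convergence $M^n\Rightarrow Y$ through a Slutsky-type argument for Skorokhod convergence yields $S^n=A^n+M^n\Rightarrow A+Y$, which is \eqref{CLTTT}.

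The main obstacle is the genuinely functional, process-level nature of the convergence: passing from convergence of the finite-dimensional marginals (which the conditions above readily give at each fixed $t$) to convergence in the Skorokhod space requires establishing C-tightness of $M^n$ and ruling out surviving jumps. This is precisely where condition (iii) does the heavy lifting, controlling the increments uniformly and guaranteeing the asymptotic continuity needed for tightness; once the martingale CLT machinery is in place the conceptual heart—that a deterministic continuous limiting bracket forces independent Gaussian increments—is standard, but the tightness verification is the step that genuinely demands care.
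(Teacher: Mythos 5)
The paper does not actually prove this theorem: it is stated as a quoted result, with the triangular-array machinery attributed to \cite{jac12IV} and the accompanying remark pointing to Theorem VII-2-36 of \cite{jac03IV}. Your proposal is therefore more detailed than the source it is compared against, and its outline is the standard one by which the cited references establish such results: center the array, observe that condition (ii) is exactly the predictable bracket of the centered part, upgrade the Lyapunov condition (iii) to the conditional Lindeberg condition (your bound by $\varepsilon^{2-p}\sum_{k}\E(\norm{\bar\zeta_k^n}^p\mid\mathcal{G}_{k-1}^n)$, together with Jensen's inequality to absorb the centering, is correct), invoke the functional martingale CLT for triangular arrays, and recombine by a Slutsky-type argument using that addition on the Skorokhod space is continuous at pairs whose limit has a continuous component. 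Two remarks. First, your claim that condition (i) alone makes $A$ continuous is not quite right: continuity of $A$ comes from (i) \emph{together with} (iii), since $\norm{\E(\zeta_k^n\mid\mathcal{G}_{k-1}^n)}^p\leq \E(\norm{\zeta_k^n}^p\mid\mathcal{G}_{k-1}^n)$ forces the individual drift increments to vanish uniformly, so the uniform-in-probability limit of $A^n$ can have no jumps; this imprecision is harmless, because your final addition step only needs continuity of $Y$, not of $A$. Second, be aware that your third step quotes precisely the centered case of the theorem being proved (tightness and identification of the Gaussian limit with independent increments are delegated to the cited functional CLT), so the proposal is a reduction to the literature rather than a self-contained proof; since the paper itself gives no argument and rests on the same citation, this matches, and indeed exceeds, the paper's own level of detail.
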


Remark: If (ii) holds for a single time $t$, the convergence  $\sum_{k=1}^{N_n(t)}  \zeta_k^n  \overset{\mathcal{ L}} \rightarrow A_t+Y_t$ for this particular $t$
fails in general. There is an exception detailed below (Theorem VII-2-36 of \cite{jac03IV}).

\begin{theorem}\label{TCLTb}
Assume that for each $n$, the variables $(\zeta_k^n, k\geq 1)$ are independent
and let $l_n$ be integers, or $\infty$. Assume that, for all $ i,j=1,\dots,d$ and for some $p>2$,\\
\begin{eqnarray*}
& \sum&_{k=1}^{l_n} \E(\zeta_k^{n,i}) \overset{\P} \rightarrow  A_i ,\\
& \sum&_{k=1}^{l_n}\left( \E (\zeta_k^{n,i} \zeta_k^{n,j} ) -  \E(\zeta_k^{n,i}) \E(\zeta_k^{n,j}) \right)  \overset{\P}\rightarrow C^{ij},\\
& \sum&_{k=1}^{l_n} \E(\norm{\zeta_k^{n}} ^p) \overset{\P} \rightarrow 0,
\end{eqnarray*}
where $  C^{ij} $ and $A_i$ are deterministic numbers. Then the variables
$ \sum_{k=1}^{l_n} \zeta_k^{n}$ converge in distribution to a Gaussian vector with mean  $A=(A^i)$ and covariance matrix $C=(C^{ij})$.
\end{theorem}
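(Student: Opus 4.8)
The plan is to prove convergence in distribution through characteristic functions and the L\'evy continuity theorem, exploiting crucially the row-wise independence of the $(\zeta_k^n)_k$, which is precisely the feature that makes convergence available at a single (here terminal) index $l_n$, in contrast with the general triangular-array situation discussed in the preceding remark.

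First I would reduce to the centered case. Set $a_k^n=\E(\zeta_k^n)$ and $\eta_k^n=\zeta_k^n-a_k^n$. By Jensen's inequality $\norm{a_k^n}^p\le \E\norm{\zeta_k^n}^p$, so the third hypothesis gives $\sum_{k=1}^{l_n}\norm{a_k^n}^p\to 0$ and in particular $\max_k\norm{a_k^n}\to 0$; together with the first hypothesis this yields $\sum_k a_k^n\to A$. The centered variables satisfy $\E(\eta_k^n)=0$ and $\Cov(\eta_k^n)=\Cov(\zeta_k^n)$, hence $\sum_k\E(\eta_k^{n,i}\eta_k^{n,j})\to C^{ij}$, while $\sum_k\E\norm{\eta_k^n}^p\to 0$ follows from $\norm{\eta_k^n}^p\le 2^{p-1}(\norm{\zeta_k^n}^p+\norm{a_k^n}^p)$. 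Since $\sum_k a_k^n\to A$ is deterministic, Slutsky's lemma reduces the claim to $\sum_{k=1}^{l_n}\eta_k^n\Rightarrow\mathcal{N}(0,C)$. I would then pass from Lyapunov to Lindeberg: for every $\epsilon>0$,
\[
\sum_{k=1}^{l_n}\E\bigl(\norm{\eta_k^n}^2\,\ind_{\norm{\eta_k^n}>\epsilon}\bigr)\le \epsilon^{-(p-2)}\sum_{k=1}^{l_n}\E\norm{\eta_k^n}^p\to 0,
\]
which also gives the asymptotic negligibility $\max_k\E\norm{\eta_k^n}^2\to 0$. This is the step that makes the argument valid for every $p>2$, including $p>3$, where a direct cubic Taylor bound is unavailable.

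Next, fixing $u\in\R^d$ and using independence to factorize, I would write
\[
\E\Bigl(\exp\bigl(i u^*\textstyle\sum_k \eta_k^n\bigr)\Bigr)=\prod_{k=1}^{l_n}\psi_k^n(u),\qquad \psi_k^n(u)=\E\bigl(\exp(i u^*\eta_k^n)\bigr),
\]
and Taylor-expand each factor as $\psi_k^n(u)=1-\tfrac12\E\bigl((u^*\eta_k^n)^2\bigr)+R_k^n(u)$, using $\E(\eta_k^n)=0$ and the elementary inequality $|e^{ix}-1-ix+x^2/2|\le\min(|x|^2,|x|^3/6)$ to bound $R_k^n(u)$; the Lindeberg condition then forces $\sum_k|R_k^n(u)|\to 0$. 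Writing $z_k^n=\psi_k^n(u)-1$, the negligibility $\max_k|z_k^n|\to 0$ together with $\sum_k|z_k^n|$ bounded allows me to compare the product with $\exp(\sum_k z_k^n)$ through the standard estimate $\bigl|\prod_k(1+z_k^n)-\exp(\sum_k z_k^n)\bigr|\to 0$, while $\sum_k z_k^n\to -\tfrac12 u^*Cu$ by the covariance hypothesis. Hence the characteristic function of the centered sum converges to $\exp(-\tfrac12 u^*Cu)$, and restoring the drift yields $\exp\bigl(i u^*A-\tfrac12 u^*Cu\bigr)$, the characteristic function of $\mathcal{N}(A,C)$; L\'evy's theorem concludes.

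The main obstacle I anticipate is the uniform control of the remainder together with the product-versus-exponential comparison in the case $l_n=\infty$: one must verify that the infinite product genuinely converges and that the tail $\sum_{k>K}$ contributes negligibly uniformly in $n$. This is guaranteed by the summability of the second moments (giving $\sum_k|z_k^n|$ bounded) and by the Lindeberg bound controlling the remainder, but making it fully rigorous requires a careful $\epsilon$/tail splitting rather than a routine finite-sum manipulation.
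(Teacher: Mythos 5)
Your proof is correct, but note that the paper itself gives no proof of this statement: Theorem \ref{TCLTb} is quoted directly from Jacod and Shiryaev (Theorem VII-2-36 of \cite{jac03IV}), as announced in the remark immediately preceding it, so there is no internal proof to compare against — what you have supplied is a self-contained elementary derivation. Your route (centering via $a_k^n=\E(\zeta_k^n)$, passing from the Lyapunov hypothesis $\sum_k\E\norm{\zeta_k^n}^p\to 0$ to the Lindeberg condition, factorizing the characteristic function by row-wise independence, Taylor-expanding each factor with the bound $|e^{ix}-1-ix+x^2/2|\le\min(|x|^2,|x|^3/6)$, and comparing $\prod_k(1+z_k^n)$ with $\exp(\sum_k z_k^n)$) is the classical Lindeberg--L\'evy proof for independent triangular arrays, carried out in the multivariate case through the linear functionals $u^*\eta_k^n$, i.e.\ in effect Cram\'er--Wold. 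The individual steps all check out: the Jensen bound $\norm{a_k^n}^p\le\E\norm{\zeta_k^n}^p$, the convexity bound $\norm{\eta_k^n}^p\le 2^{p-1}(\norm{\zeta_k^n}^p+\norm{a_k^n}^p)$, the truncation giving both the Lindeberg condition and $\max_k\E\norm{\eta_k^n}^2\to 0$, and the boundedness of $\sum_k\E\norm{\eta_k^n}^2$ (it converges to $\mathrm{Tr}(C)$), which is what makes both the remainder estimate and the product-versus-exponential comparison close. Your treatment of $l_n=\infty$ is also the right one: for fixed $n$, summable variances give almost sure convergence of the centered series by Kolmogorov's theorem, and dominated convergence identifies the characteristic function of the infinite sum with the infinite product, after which your estimates are uniform in the number of terms. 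The trade-off between the two routes is clear: the citation to Jacod--Shiryaev embeds the result in the general theory of convergence of infinitely divisible laws and semimartingale characteristics, which is what powers the process-level theorem stated just above it in the paper (functional convergence in the Skorokhod topology), whereas your argument is much shorter and self-contained but proves exactly the single-time-point statement — which is all that Theorem \ref{TCLTb} asserts.
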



\section{Inference for pure jump processes}\label{LikPJP}

In statistical applications, we study likelihood ratios formed by taking Radon--Nikodym derivatives of members of the family of probability measures
$(P_{\theta},\theta \in \Theta \subset \R^q)$ with respect to one fixed reference distribution.

\subsection{Girsanov type formula for counting processes}\label{Gtffcp}

Rather than giving the general expression of the Girsanov formula for semi-martin\-gales (see  \cite{jac03IV}), we  state it first for the case of a counting process on $\N$ and then for multivariate counting processes.\\

Let $X$ be a stochastic process such that the predictable compensator $\Lambda$ of $X$ satisfies  $\Lambda(t)=\int_0^t\lambda (s) ds)$. assume that,
 under $\P_{\theta}$, it  is a counting process
with intensity $\lambda^{\theta}(t)$ where $\lambda^{\theta}(t)>0 $ for all $t>0$.
Denote by $T_1,T_2,\dots$ the sequence of jump times of $X$ and let $N(t)$ denotes the number of jumps up to time $t$. Then
\begin{equation}\label{Girsanovpp}
\frac{d\P_{\theta}}{d\P_{\theta_0}}|\mathcal{ F}_t= \exp\{\sum_{i=1}^{N(t)}[\log(\lambda^{\theta}(T_i))-\log(\lambda^{\theta_0}(T_i))]-
\int_0^t [\lambda^{\theta}(s)-\lambda^{\theta_0}(s)] ds. \} 	
\end{equation}

Consider now multivariate counting processes $N(t)=(N_1(t),\dots,N_k(t))$.
We refer to Jacod's  formula  (see e.g.\ Andersen \cite[II.7]{andersenborgangillkeidingIV}) for a general expression of two probability measures $\P,{\tilde \P}$ on a filtered probability space under which ${\mathbf N}$ has compensators $\bm{\Lambda}$, $\tilde{\bm{\Lambda}}$ respectively. Usually, we will have continuous or absolutely continuous compensators with intensities $\lambda_l(t),{\tilde \lambda_l(t)}$.
Since no jumps can occur simultaneously, the sequence of jump times $T_i$  is well defined, together with the mark $ J_i \in \{1,\dots,k \}$ ($J_i=l$ if the jump $T_i$ occurs in $N_l$ ($\Delta N_l(T_i) =1$).
The process $ N_.(t)= \sum _{l=1}^k N_l(t)$ is a counting process with compensator $\Lambda_.(t)= \sum_{l=1}^k \Lambda_l(t)$.
Assume ${\tilde {\P}}$ is absolutely continuous with respect to ${\P}$ (written ${\tilde {\P}}<< {\P}$).
\begin{theorem}\label{theoA51}
	Assume that ${\tilde {\P}}<< {\P}$. Then
	\[{\tilde \Lambda}_l << \Lambda_l\mbox{  for all }l=1,\dots,k,\quad  P\mbox{- a.s.}\]
	\[\Delta\Lambda_.(t)=1\mbox{ for any time } t \mbox{ implies }\Delta {\tilde \Lambda}_.(t)=1,\quad P\mbox{-a.s.}\]
	\begin{align*}
		&\frac{d{\tilde \P}}{d\P}|{\mathcal{ F}_t} = \frac{d{\tilde P}}{dP}|{\mathcal{ F}_0}\;
	\frac{\prod_{l=1}^k \prod_{s\leq t} {\tilde \lambda}_l(t)^{\Delta N_l(t)} \exp(-\int_0^t {\tilde \lambda}_{.}(s)ds)}{\prod_{l=1}^k \prod_{s\leq t} {\lambda}_l(t)^{\Delta N_l(t)} \;\exp(-\int_0^t { \lambda}_{.}(s)ds)}\\
	&= \frac{d{\tilde \P}}{d\P}|{\mathcal{ F}_0}\;\exp\left\{\sum_{l=1}^k \sum_{i=1}^{N(t)}\; [\log{\tilde{\lambda}_l(T_i)}-\log{\lambda}_l(T_i)]\Delta N_l(T_i)
	-\sum_{l=1}^k \int_0^t [\tilde{\lambda}_l(s)-\lambda_l(s)] ds\right\}.
\end{align*}
\end{theorem}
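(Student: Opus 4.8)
The plan is to reduce the multivariate statement to the univariate Girsanov formula \eqref{Girsanovpp} already established for a single counting process, exploiting the fact that under both $\P$ and $\tilde{\P}$ no two of the coordinates $N_1,\dots,N_k$ jump simultaneously. Under this non-coincidence, the aggregated process $N_.(t)=\sum_{l=1}^k N_l(t)$ is itself a univariate counting process with $\P$-intensity $\lambda_.(t)=\sum_{l=1}^k\lambda_l(t)$ and $\tilde{\P}$-intensity $\tilde\lambda_.(t)=\sum_{l=1}^k\tilde\lambda_l(t)$, while the pairs $(T_i,J_i)$ record the successive jump times of $N_.$ together with the coordinate $J_i\in\{1,\dots,k\}$ in which the jump occurs. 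The marked point process therefore splits into a timing component (the law of the $T_i$, governed by $\lambda_.$ resp. $\tilde\lambda_.$) and a mark component (the conditional law of $J_i$ given a jump at time $t$, equal to $\lambda_l(t)/\lambda_.(t)$ under $\P$ and $\tilde\lambda_l(t)/\tilde\lambda_.(t)$ under $\tilde{\P}$). Before computing, I would first record the two structural consequences of $\tilde{\P}\ll\P$ asserted in the theorem: since $\Lambda_l$ is the $\P$-dual predictable projection of $N_l$, any predictable set carrying no $\Lambda_l$-mass carries no $N_l$-jumps $\P$-a.s., hence none $\tilde{\P}$-a.s., which forces $\tilde\Lambda_l\ll\Lambda_l$; and a fortiori $N_.$ can acquire no new jump location under $\tilde{\P}$, giving the implication $\Delta\Lambda_.(t)=1\Rightarrow\Delta\tilde\Lambda_.(t)=1$.

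Next I would assemble the density. Applying \eqref{Girsanovpp} to the single counting process $N_.$ gives the timing factor, and multiplying by the ratio of the mark-conditional laws yields
\[
\frac{d\tilde{\P}}{d\P}\Big|_{\mathcal{F}_t}=\frac{d\tilde{\P}}{d\P}\Big|_{\mathcal{F}_0}\;
\frac{\exp\!\big(-\int_0^t\tilde\lambda_.\,ds\big)\prod_i\tilde\lambda_.(T_i)}{\exp\!\big(-\int_0^t\lambda_.\,ds\big)\prod_i\lambda_.(T_i)}\;
\prod_i\frac{\tilde\lambda_{J_i}(T_i)/\tilde\lambda_.(T_i)}{\lambda_{J_i}(T_i)/\lambda_.(T_i)}.
\]
The factors $\tilde\lambda_.(T_i)$ and $\lambda_.(T_i)$ cancel between the timing and the mark components, leaving $\prod_i\tilde\lambda_{J_i}(T_i)/\lambda_{J_i}(T_i)$ multiplied by the exponential drift term. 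Rewriting the product over jumps as a product over coordinates, $\prod_i\tilde\lambda_{J_i}(T_i)=\prod_{l=1}^k\prod_{s\le t}\tilde\lambda_l(s)^{\Delta N_l(s)}$, reproduces exactly the product form in Theorem \ref{theoA51}. The two displayed expressions there are then identified by taking logarithms: each jump of $N_l$ contributes $\Delta N_l(T_i)[\log\tilde\lambda_l(T_i)-\log\lambda_l(T_i)]$, which is the summand of the exponential form, so the equivalence of the product and exponential expressions is a pure algebraic identity.

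To make the reduction rigorous I would prefer to verify it through the Dol\'eans--Dade description rather than the informal marked-point-process factorisation. Writing $M_l=N_l-\Lambda_l$ for the compensated $\P$-martingales and $\phi_l=\tilde\lambda_l/\lambda_l-1$ (well defined $\P$-a.s. precisely because of $\tilde\Lambda_l\ll\Lambda_l$), the right-hand side is $L_0\,\mathcal{E}\big(\sum_l\int\phi_l\,dM_l\big)_t$, a nonnegative $\P$-local martingale whose Dol\'eans--Dade exponential, because the $M_l$ are finite-variation counting-process martingales with unit jumps that never coincide, equals $\exp(-\sum_l\int\phi_l\lambda_l\,ds)\prod_{s\le t}\prod_l(1+\phi_l(s)\Delta N_l(s))$, with $1+\phi_l=\tilde\lambda_l/\lambda_l$ at a jump of $N_l$. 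The Girsanov theorem for counting processes then shows that the measure with density $L_t$ gives each $N_l$ the intensity $(1+\phi_l)\lambda_l=\tilde\lambda_l$, i.e. the compensator $\tilde\Lambda_l$.

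The main obstacle is the final uniqueness step: one must argue that a measure absolutely continuous with respect to $\P$ is determined on each $\mathcal{F}_t$ by the compensators it assigns to $\mathbf{N}$. This rests on the martingale representation theorem for multivariate counting processes, which requires $(\mathcal{F}_t)$ to be (the completion of) the natural filtration of $\mathbf{N}$; under that hypothesis every $\P$-martingale is a stochastic integral against the $M_l$, so $L_t=d\tilde{\P}/d\P|_{\mathcal{F}_t}$ necessarily carries the integrand $\phi_l$ forced above, and the candidate density coincides with the true one. Confirming that $L$ is a genuine martingale of mean one, rather than merely a local martingale, is automatic once $\tilde{\P}\ll\P$ is assumed, since $L$ is then itself the Radon--Nikodym density process; the genuinely delicate points are therefore the two absolute-continuity assertions and the representation result, for which I would ultimately invoke Jacod's general formula as presented in Andersen et al. \cite{andersenborgangillkeidingIV}.
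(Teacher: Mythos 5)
Your outline is correct, but you should know that the paper does not prove Theorem \ref{theoA51} at all: it states the result as Jacod's formula for marked point processes and simply refers to Andersen, Borgan, Gill and Keiding \cite[II.7]{andersenborgangillkeidingIV}, the univariate display \eqref{Girsanovpp} being likewise quoted without proof. Your proposal therefore supplies an argument where the text supplies a citation, and both halves of it are the standard one: the timing/mark factorisation (aggregate process $N_.$ with intensity $\lambda_.$, conditional mark law $\lambda_l/\lambda_.$ at a jump, cancellation of the $\lambda_.(T_i)$ factors) is exactly the heuristic behind Jacod's formula, and the Dol\'eans--Dade computation with $\phi_l=\tilde\lambda_l/\lambda_l-1$, together with the Girsanov check that the candidate density gives $N_l$ the compensator $\tilde\Lambda_l$, is its rigorous form --- your explicit product formula for the exponential is right precisely because the $M_l$ are finite-variation martingales with unit, non-coinciding jumps. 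The genuinely valuable point in your write-up is the one the paper leaves implicit: identifying the candidate with the true density process requires the martingale representation theorem for multivariate counting processes, hence that $(\mathcal{F}_t)$ be the ($\mathcal{F}_0$-augmented) natural filtration of $\mathbf{N}$; this hypothesis is part of the setting of \cite[II.7]{andersenborgangillkeidingIV} and is why the statement cannot be read off from a general semimartingale Girsanov theorem. What your route buys is a derivation anchored to \eqref{Girsanovpp} and self-contained up to that representation result; what the paper's citation buys is the measure-theoretic fine print (the two absolute-continuity assertions, and the fact that a dominated measure is determined on each $\mathcal{F}_t$ by the compensators it assigns), which you acknowledge and, reasonably, defer to the same reference.
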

Note that the products in the above formula are just $\prod_n{\tilde \lambda}_{J_n}(T_n)$,
$\prod_n{ \lambda}_{J_n}(T_n)$.

\subsection{Likelihood for Markov pure jump processes}

Let us consider a pure jump process with  state space  $E= \{0,\dots,N\}$ and $Q$-matrix ${\mathbf Q}= (q_{ij})$ observed up to time $T$.
The likelihood is
\begin{equation}\label{likMPJ2}
	L_T({\mathbf Q})= \prod_{i=0}^N\prod_{j\neq i} q_{ij}^{N_{ij}(T)}\;\exp(-q_{ij} N_i(T)),
\end{equation}
where  the process $ N_{ij}(t)$ counts the number of transitions from state $i$ to state $j$ on the time interval $[0,t]$	and $N_i(t)$ is the time spent in state $i$
before time $t$:
$$N_i(t)= \int _0^t  \delta_{\{X(s)=i\}} ds.$$
We refer  to  \cite{jacobsenIV} for a complete study of Marked point processes.\\
This yields that the maximum likelihood estimator of ${\mathbf Q}$ is
\begin{equation}\label{MLEQ2}
	 \hat{q}_{ij}(T)= \frac{N_{ij}(T)}{N_{i}(T)}, \quad \mbox{for  }j\neq i  \quad \mbox{and  } N_i(T)>0 .
	\end{equation}
If $N_T(i)= 0$, the process has not been in state $ i$: there is no information about $q_{ij}$ in the observations and the MLE of $q_{ij}$ does not exist.
As for Markov chains with countable state space, $\hat{q}_{ij}(T)$ is the empirical estimate of $q_{ij}$.

\subsection{Martingale properties of likelihood processes}
In statistical applications, we want to consider a whole family of probability measures $\P$, not necessarily mutually absolutely continuous and therefore cannot apply the above theorem to obtain
$\frac{d{\tilde \P}}{d\P}|_{\mathcal{ F}_t}$ for each ${\tilde \P},\P $ considered. However, for any two probability measures ${\tilde \P},\P$, the measure ${\mathbf Q}=\frac{1}{2}({\tilde \P}+\P)$
dominates both ${\tilde \P}$ and ${\P}$. We can therefore calculate $d\P/d{\mathbf Q}$ and
$d{\tilde \P}/d{\mathbf Q}$ and finally set,
\begin{eqnarray*}
\frac{d{\tilde \P}}{d\P} & = & \frac{d{\tilde \P}}{d {\mathbf Q}}/\frac{d\P}{d{\mathbf Q}} \mbox{ where }
\frac{d \P}{d{ \mathbf Q}}>0,\\
\frac{d{\tilde \P}}{d\P} & = &
 \infty \mbox{ where }\frac{d \P}{d {\mathbf Q}}=0.
\end{eqnarray*}

Suppose now  that we have a statistical model $(\P_{\theta}, \theta \in \Theta)$ for some subset $\Theta\in \R^q$. Suppose that all $\P_{\theta}$ are dominated by a fixed probability measure ${\mathbf Q}$.
For simplicity, we assume that all the $\P_{\theta}$'s  coincide on $\mathcal{ F}_0$ and consider only the absolute continuous case: \\
under $\P_{\theta}$, ${\mathbf N}= (N_1,\dots,N_k)$ has compensator $\bm{\Lambda}^{\theta}= (\int \lambda_l^{\theta}), l=1,\dots,k )$ for certain intensity process $\lambda^{\theta}$. We consider the likelihood function as depending on both $t\in \R^+$ and $\theta \in \Theta$. Dropping the denominator in
Theorem \ref{theoA51} (which does not depend on $\theta$), we have that the likelihood at time $t$ as a function of $\theta$ is proportional to
\begin{eqnarray*}
	L(\theta,t)&=&\exp(-\sum_{l=1}^k \int_0^t \lambda_l^{\theta}(s) ds)\prod_{T_n\leq t} \lambda^{\theta}_{J_n}(T_n),\\
	&=& \exp \{\sum _{l=1}^k \int_0^t [\log \lambda_l^{\theta}(s) dN_l(s)-\lambda_l^{\theta}(s) ds]\}.
\end{eqnarray*}

\begin{remark}This is another expression of the general Girsanov formula given in the appendix of Part 1 of these notes.
\end{remark}

The likelihood process $L(\theta,t)$ is a $({\mathbf Q},(\mathcal{ F}_t))$-martingale.
Indeed, let $Y$ a $\mathcal{ F}_s$ measurable random variable. We have
$ E_{{\mathbf Q}} (Y L(\theta,t)) = E_{{\mathbf Q}} ( Y \frac{d\P_{\theta}}{dQ})= \E_{\theta}(Y) =
E_{{\mathbf Q}} (Y L(\theta,s))$ since $Y \in \mathcal{ F}_s$. \\
Hence $E_{{\mathbf Q}}(L(\theta,t)|\mathcal{ F}_s) = L(\theta,s)$.

Consider now the log-likelihood
\begin{equation}\label{loglikcounting}
\log L(\theta,t)= \sum _{l=1}^k \int_0^t (\log \lambda _l ^{\theta}(s) dN_l(s)-\lambda_l^{\theta}(s) ds).
\end{equation}
The score process is defined as $\nabla_{\theta}\log L(\theta,t)$.
Assuming that differentiation  may be taken under the integral sign, we get
\begin{align}
	\nabla_{\theta_j} \log L(\theta, t)&=\frac{\partial}{\partial \theta_j}\log L(\theta,t)\label{scorecounting}\\
&= \sum _{l=1}^k\;\int_0^t \nabla_{\theta_j} \log \lambda_l^{\theta}(s)\; ( dN_l(s)-\lambda_l^{\theta}(s) ds ),\; j=1,\dots,q.\nonumber
\end{align}
Hence the score process is a $(P_{\theta},(\mathcal{ F}_t))$- local martingale in $\R^q$. It is a centered $L^2$-martingale with
associated predictable $q\times q$ matrix variation process
\begin{equation}\label{varscore}
	\langle\nabla_{\theta}\log L(\theta;\cdot)\rangle_{r,j}= \sum_{l=1}^k \int_0^t \nabla_{\theta_r} \log \lambda_l^{\theta}(s) \nabla_{\theta_j}\log \lambda_l^{\theta}(s) \;\lambda_l^{\theta}(s)  ds.
\end{equation}

The ``observed information'' at $\theta$ is obtained by differentiating again with respect to $\theta$. If differentiation can be taken under the integral sign, we get
\begin{align}
	\nabla ^2_{\theta_r\theta_j} \log L (\theta;t)=&\, \sum _{l=1}^k \;\int_0^t \nabla ^2_{\theta_r \theta_j j}\lambda_l^{\theta}(s) (dN_l(s)-\lambda_l(s) ds)\nonumber\\
& - \int_0^t \nabla_{\theta_r}\log \lambda_l^{\theta}(s)\nabla_{\theta_j}\log \lambda_l^{\theta}(s) \lambda_l^{\theta}(s) ds.\label{Fisherinfcounting}
	\end{align}
Using (\ref{varscore}) yields that the compensator of the process $-\nabla^2\log L(\theta;\cdot)$ is  $\langle\nabla_{\theta}\log L(\theta;\cdot)\rangle$.
This is a version of a well-known result: the variance matrix of the score coincides with the expected information
matrix.

\newpage

\renewcommand{\bibsection}{\chapter*{References for Part IV}}
\addcontentsline{toc}{chapter}{References for Part IV}

\end{document}